\numberwithin{equation}{section}
\theoremstyle{plain}
\newtheorem{thm}{Theorem}[section]
\newtheorem{prop}[thm]{Proposition}
\newtheorem{defi}[thm]{Definition}
\newtheorem{lem}[thm]{Lemma}
\newtheorem{cor}[thm]{Corollary}
\theoremstyle{remark}
\newtheorem{rema}[thm]{Remark}
\newcommand{\N}{\mathbb{N}}
\newcommand{\Z}{\mathbb{Z}}
\newcommand{\C}{\mathbb{C}}
\title[DAHA and bispectral quantum KZ equations]{Double affine
Hecke algebras and bispectral quantum Knizhnik-Zamolodchikov equations}
\author{Michel van Meer and Jasper V. Stokman}
\address{KdV Institute for Mathematics, University of Amsterdam,
Science Park 904, 1098 XH Amsterdam, The Netherlands.}
\email{m.vanmeer@uva.nl, j.v.stokman@uva.nl}
\subjclass[2000]{33D80, 33D52}
\begin{document}
\keywords{Double affine Hecke algebras, quantum Knizhnik-Zamolodchikov
equations, Macdonald polynomials, bispectral problems}
\begin{abstract}
We use the double affine Hecke algebra of type $\textup{GL}_N$
to construct an explicit consistent system of $q$-difference
equations, which we call the bispectral quantum Knizhnik-Zamolodchikov
(BqKZ) equations. BqKZ includes, besides
Cherednik's quantum affine KZ
equations associated to principal series representations of the underlying
affine Hecke algebra, a compatible system of $q$-difference equations
acting on the central character of the principal series representations.
We construct a meromorphic self-dual
solution $\Phi$ of BqKZ which, upon suitable specializations of the central
character, reduces to
symmetric self-dual Laurent polynomial solutions of
quantum KZ equations.
We give an explicit correspondence between solutions of
BqKZ and solutions of a particular bispectral problem for
Ruijsenaars' commuting trigonometric $q$-difference
operators. Under this correspondence $\Phi$ becomes
a self-dual Harish-Chandra series solution $\Phi^+$
of the bispectral problem. Specializing the central character as above,
we recover from $\Phi^+$
the symmetric self-dual Macdonald polynomials.
\end{abstract}
\maketitle
\setcounter{tocdepth}{1}
\tableofcontents

\section{Introduction}

Let $M$ be an $N!$-dimensional complex vector space and
write $T=(\C\setminus\{0\})^N$.
We derive an explicit
holonomic system of $q$-difference equations on $M$-valued meromorphic
functions on $T\times T$, which we call the bispectral quantum
Knizhnik-Zamolodchikov (BqKZ) equations. It is related to
Cherednik's \cite{C} quantum affine
KZ equations in the following way.

Let $H$ be the extended affine Hecke algebra of type
$\textup{GL}_N$. For fixed $\zeta\in T$ the vector space $M$
admits an $H$-module structure, turning $M$ into the principal
series module $M(\zeta)$ of $H$ with central character $\zeta$.
The BqKZ naturally splits up into two subsystems of $q$-difference
equations. The first subsystem acts only on the first
$T$-component of $T\times T$ and as such it realizes, for any
fixed $\zeta\in T$, Cherednik's \cite{C} quantum affine KZ
equation $\textup{qKZ}_\zeta$ acting on $M(\zeta)$-valued
meromorphic functions on $T\times \{\zeta\}$. The second, dual
subsystem is obtained from the first by replacing the role of
$(t,\zeta)\in T\times T$ by $(\zeta^{-1},t^{-1})$ and conjugating
the $q$-connection matrices by an explicit complex linear
automorphism $C_\iota$ of $M$. Hence, it acts only on the second
$T$-component of $T\times T$ and it essentially realizes
$\textup{qKZ}_{t^{-1}}$ for fixed $t\in T$. In particular, this
provides a quantum isomonodromic interpretation of qKZ. This
should be compared with the interpretation of rational KZ
equations as quantizations of Schlesinger equations, see \cite{Re}
and \cite{Ha}.

The BqKZ is constructed using Cherednik's \cite{C} double affine
Hecke algebra $\mathbb{H}$ of type $\textup{GL}_N$. As a vector
space $\mathbb{H}$ is isomorphic to $\C[T]\otimes
H\simeq\C[T]\otimes H_0\otimes\C[T]\simeq\C[T\times T]\otimes H_0$
with $H_0$ the finite Hecke algebra of type $A_{N-1}$. Cherednik's
anti-algebra involution $*\colon\mathbb{H}\to\mathbb{H}$
essentially interchanges, under the above vector space
identification, the role of the two copies of $\C[T]$. For
$w,w^\prime\in W=S_N\ltimes\Z^N$ we consider the map $h\mapsto
\widetilde{S}_wh\widetilde{S}_{w^\prime}^*$ ($h\in\mathbb{H}$),
where the $\widetilde{S}_w\in\mathbb{H}$ are Cherednik's
nonnormalized ($X$-)intertwiners. Restricted to $w,w^\prime
\in\Z^N$, suitable renormalizations of these maps become the
$q$-connection matrices of BqKZ, with $H_0$ playing the role of
$M$. The anti-involution $*$ of $\mathbb{H}$ gives rise to the
automorphism $C_\iota$ interchanging the qKZ subsystem of BqKZ
with its dual subsystem in BqKZ.

$\textup{qKZ}_\zeta$ is gauge equivalent
to Frenkel and Reshetikhin's \cite{FR} quantum
KZ equations associated with the $N$-fold tensor product
of the vector representation of quantum $\mathfrak{sl}_N$
(see \cite[\S 1.3.2]{C}). A special case of $\textup{qKZ}_\zeta$
was considered earlier by Smirnov \cite{Sm}.
Etingof and Varchenko \cite{EV} used quantum group methods to
construct systems of $q$-difference equations (so-called
dynamical $q$-difference
equations) that are compatible with Frenkel and Reshetikhin's
quantum KZ equations associated to evaluation
representations of quantum affine algebras. It is likely that the system of
dynamical $q$-difference equations associated with $\textup{qKZ}_\zeta$
is equivalent to the dual qKZ subsystem in BqKZ.

Preceding the above mentioned work \cite{EV} of Etingof and
Varchenko, dynamical equations for various degenerations of
quantum KZ equations have been analyzed in detail; see, e.g.,
\cite{FMTV}, \cite{TL}, \cite{T}, \cite{TV}, \cite{TV2} and
\cite{Le}. An interesting aspect in, e.g., \cite{TL} and
\cite{TV}, is the observation that various degenerations of
quantum KZ equations are the duals of their associated dynamical
equations with respect to $(\mathfrak{gl}_r,\mathfrak{gl}_s)$
duality. In the present set-up (which corresponds to $r=s=N$),
this duality is incorporated by the automorphism $C_\iota$, which
reflects Cherednik's duality anti-involution of the double affine
Hecke algebra $\mathbb{H}$ on the level of BqKZ.

We investigate the space $\textup{SOL}$ of $M$-valued meromorphic
solutions of BqKZ in detail. We first analyze BqKZ in a suitable
asymptotic region. It leads to a solution $\Phi$ of BqKZ which is
self-dual, in the sense that $\Phi(t,\gamma)=C_\iota
\Phi(\gamma^{-1},t^{-1})$ as $M$-valued meromorphic functions in
$(t,\gamma)\in T\times T$. We construct a basis of solutions of
$\textup{SOL}$ in terms of $\Phi$, and we give an explicit formula
for the leading term of $\Phi(t,\gamma)$ as function of $t$.

Cherednik \cite[Thm. 3.4]{Cref} constructed for arbitrary root
systems a correspondence between solutions of quantum KZ equations
and solutions of a system of $q$-difference equations. In
\cite[Thm. 4.4]{Cref}, Cherednik made the correspondence precise
for $\textup{GL}_N$. It yields an explicit map $\chi_+$ from
solutions of $\textup{qKZ}_{\zeta}$ to solutions of the spectral
problem of Ruijsenaars' \cite{R} commuting trigonometric
$q$-difference operators with spectral parameter $\zeta^{-1}$ (the
Ruijsenaars operators are also frequently referred to as
Macdonald-Ruijsenaars operators). The latter result has been
generalized to arbitrary root systems in \cite[Thm. 4.6]{Ka} and
\cite{CInd}. We analyze the map $\chi_+$ in the present bispectral
setting. It leads to the interpretation of $\chi_+$ as an
embedding of SOL into the space of meromorphic solutions of a
bispectral problem involving the above Ruijsenaars operators as
well as Ruijsenaars operators acting on the spectral parameter.

The application of the correspondence to the self-dual solution
$\Phi$ of BqZK leads to a self-dual Harish-Chandra series solution
$\Phi^+$ of the bispectral problem. Harish-Chandra series
solutions of the Ruijsenaars operators with fixed spectral
parameter were investigated before in, e.g., \cite{EK1},
\cite{EK2}, \cite{KK} and \cite{LS}. The present approach to
Harish-Chandra series, which uses quantum KZ equations in an
essential way, has the advantage that it leads to new results on
the convergence and singularities of the Harish-Chandra series.
These results, together with Cherednik's recent work \cite{CWhit},
form important building blocks in deriving the $c$-function
expansion of Cherednik's global $(q,t)$-spherical function (this
will be detailed in a forthcoming paper of the second author).

The $q$-connection matrices of the dual qKZ subsystem of BqKZ can
be used to map solutions of $\textup{qKZ}_\zeta$ for a fixed
central character $\zeta\in T$ to solutions of
$\textup{qKZ}_{\zeta^\prime}$ with respect to a suitably shifted
central character $\zeta^\prime$. Applied to a constant solution
we obtain a symmetric Laurent polynomial solution $Q_\lambda$ for
any non-increasing $N$-tuple $\lambda$ of integers, which is
automatically self-dual. We express $Q_\lambda$ in terms of a
suitable specialization of the solution $\Phi$ of BqKZ. Applying
$\chi_+$ to $Q_\lambda$ we obtain a symmetric self-dual Laurent
polynomial eigenfunction of the Ruijsenaars operators, which is
the normalized Macdonald \cite{MacBook} polynomial of degree
$\lambda$. We show that the well-known duality and evaluation
formula for the Macdonald polynomials (see \cite[Chpt.
VI]{MacBook}) are direct consequences of the properties of the
Laurent polynomials $Q_\lambda$.

For various other approaches to the construction of
solutions of quantum KZ equations see, e.g., \cite{FR}, \cite{EFK},
\cite{JM}, \cite{KK}, \cite{Mi}, \cite{FZJ} and \cite{KT}.

Many results in the present paper, in particular the BqZK
equations themselves, can be extended to arbitrary root systems.
These extensions use the root system generalizations of
$\mathbb{H}$ and their fundamental properties (see \cite{C}). We
have chosen to restrict the present exposition to the
$\textup{GL}_N$ theory, not only in order to present the basic
ideas while limiting the technicalities, but also because the
$\textup{GL}_N$ theory deserves extra attention by its
particularly strong ties with quantum groups and quantum
integrable lattice models (see, e.g., \cite{EFK}, \cite{JM}). The
theory for arbitrary root systems is detailed in a forthcoming
paper of the first author.

\noindent
{\bf Conventions}\\
{\bf --} $\otimes$ always stands for tensor product over $\C$
and $\textup{End}(M)$, for a module $M$ over $\C$,
stands for $\C$-linear endomorphisms.\\
{\bf --} $\mathbb{N}=\{1,2,\ldots\}$.\\
{\bf --} For a module $M$ over a commutative ring $R$
and a ring extension $R\subset S$, we write $M^S=S\otimes_RM$.

\noindent
{\bf Acknowledgments}\\
Both authors are supported by the Netherlands
Organization for Scientific Research (NWO) in the VIDI-project
``Symmetry and modularity in exactly solvable models''.
We thank the referee for valuable comments.

\section{The double affine Hecke algebra}\label{SectionPrelim}

\subsection{The extended affine Weyl group}
Let $N\geq 2$ and let $D=D_N$ be the affine Dynkin diagram of
affine type $\widehat{A}_{N-1}$ (the cyclic graph with $N$
vertices if $N\geq 3$). The $N$ vertices are labeled by the
numbers $0,1,\ldots,N-1$ (anticlockwise if $N\geq 3$). We identify
occasionally the set of labels by the group $\mathbb{Z}_N$ of
integers modulo $N$.

Write $W_Q$ for the affine Weyl group of affine type
$\widehat{A}_{N-1}$. In terms of its Coxeter generators $s_i$
($i\in\Z_N$), the characterizing group relations are the quadratic
relations $s_i^2=1$ and, if $N\geq 3$, the braid relations
\begin{equation}\label{braid}
\begin{split}
s_is_{i+1}s_i&=s_{i+1}s_is_{i+1},\\
s_is_j&=s_js_i,\qquad\quad i-j\not=0,\pm 1.
\end{split}
\end{equation}
The subgroup generated by $s_1,\ldots,s_{N-1}$ is isomorphic to the
symmetric group $S_N$ in $N$ letters, where $s_i$ is identified with
the simple transposition $i\leftrightarrow i+1$.

Let $\textup{Aut}(D)$ be the group of automorphisms of the
affine Dynkin diagram of type $\widehat{A}_{N-1}$. Let $c\in\textup{Aut}(D)$
the element of order $N$, acting on the
label set $\Z_N$ of the vertices of $D$ by $c(i)=i+1$.
We view $c$ as automorphism of $W_Q$ by $c(s_i)=s_{i+1}$.

Let $\Omega=\langle \pi\rangle$ be the infinite cyclic group with cyclic
generator $\pi$. It acts by
group automorphisms on $W_Q$ by $\pi\mapsto c$. Accordingly we can
define the semi-direct product group $W=\Omega\ltimes W_Q$, which is
called the extended affine Weyl group (associated to
$\textup{GL}_N$). We denote $e$ for the identity element
of $W$.

Since $s_0=\pi s_{N-1}\pi^{-1}$, the subgroups $S_N$ and $\Omega$
already generate $W$ as a group. Furthermore we have $W\simeq
S_N\ltimes\Z^N$. The cyclic generator $\pi$ of $\Omega$
corresponds to $\pi=\sigma\epsilon_N$, where
$\{\epsilon_i\}_{i=1}^N$ denotes the standard $\Z$-basis of $\Z^N$
and $\sigma=s_1s_2\cdots s_{N-1}\in S_N$ is the ``clockwise
rotation" which maps $N$ to 1 and all other $i$ to $i+1$.
Conversely,
\begin{equation}\label{ej}
\epsilon_j=s_{j-1}\cdots s_2s_1\pi s_{N-1}s_{N-2}\cdots s_j
\end{equation}
for $j=1,\ldots,N$.

\begin{rema}
Under the identification $W\simeq S_N\ltimes\Z^N$, we have
$W_Q=S_N\ltimes Q$ with $Q\subset \Z^N$ the sublattice of rank
$N-1$ consisting of $N$-tuples of integers that sum up to zero
(this is the (co)root lattice of the root system
$R=\{\epsilon_i-\epsilon_j\}_{1\leq i\not=j\leq N}$ of type
$A_{N-1}$).
\end{rema}

For $w\in W$ let $w^\prime\in W_Q$ and $\omega\in\Omega$ be the unique
group elements such that $w=w^\prime\omega$, then we define
the length $\ell(w)$ of $w$
to be the length of $w^\prime\in W_Q$, i.e., it is the minimal number $r$
such that $w^\prime$ can be expressed as
\[w^\prime=s_{i_1}\cdots s_{i_r}
\]
for some $i_k\in\Z_N$ (such an expression of $w^\prime$,
as well as the resulting expression for $w=w^\prime\omega$, is called a
reduced expression). Thus $\Omega$
consists of the elements of $W$ of length zero.

A central role in this paper is played by an
action of the extended affine Weyl group $W$ by $q$-difference
reflection operators on suitable function spaces on $T:=(\C^\times)^N$,
where $\C^\times:=\C\setminus\{0\}$. Here $q$ is taken to be
real and strictly between zero and one (with minor technical
adjustments the condition on $q$ may be relaxed to
$0<|q|<1$, and a parallel theory can be
developed for $|q|>1$). Since $q$ is fixed once and for all,
we will in general suppress the dependence on $q$ in notations.
We start with an action of $W$ on $T$ by
\begin{equation}\label{WactionT}
\begin{split}
wt&=(t_{w^{-1}(1)},\ldots,t_{w^{-1}(N)}),\qquad w\in S_N,\\
\lambda t&=(q^{\lambda_1}t_1,\ldots,q^{\lambda_N}t_N),\qquad
\lambda=(\lambda_1,\ldots,\lambda_N)\in\Z^N,
\end{split}
\end{equation}
for $t=(t_1,\ldots,t_N)\in T$. It is convenient to
introduce $\kappa^{\lambda}:=(\kappa^{\lambda_1},\ldots,\kappa^{\lambda_N})$
for $\kappa\in\C^\times$ and $\lambda\in\Z^N$,
so that the action of $\lambda\in\Z^N$ on $t\in T$ can simply be
written as
\[
\lambda t=q^\lambda t
\]
in standard vector notation.
Note that the action of $\pi\in\Omega$ is given by
\[\pi(t_1,\ldots,t_N)=(qt_N,t_1,\ldots,t_{N-1}).
\]

Consider the algebra $\C[T]=\C[x_1^{\pm1},\ldots,x_N^{\pm1}]$ of
complex-valued, regular functions on $T$, where $x_i(t):=t_i$ for
$t=(t_1,\ldots,t_N)\in T$ are the standard coordinate functions.
We write
\[
x^{\lambda}:=x_1^{\lambda_1}\cdots x_N^{\lambda_N}\in\C[T]
\]
for $\lambda=(\lambda_1,\ldots,\lambda_N)\in\Z^N$, which form the monomial
basis of $\C[T]$.

Let $\C(T)$ be the field of rational functions on $T$,
$\mathcal{O}(T)$ be the ring of analytic functions on $T$, and
$\mathcal{M}(T)$ be the field of meromorphic functions on $T$.
Note that $\mathcal{M}(T)$ is the quotient field of
$\mathcal{O}(T)$ (cf. \cite[Thm. 7.4.6]{Ho}). The $W$-action on
$T$ gives rise to a left $W$-action by algebra automorphisms on
$\C[T]$, $\C(T)$, $\mathcal{O}(T)$ and $\mathcal{M}(T)$, via
\[
(wf)(t)=f(w^{-1}t)
\]
for $w\in W$, $t\in T$. We can, in particular, form the smashed
product algebra $\C(T)\#W$. Recall that if $G$ is a group and $A$
is a $G$-algebra over $\C$ (that is, a unital associative algebra
over $\C$ endowed with a left $G$-action by algebra
automorphisms), then the smashed product algebra $A\#G$ is the
unique complex unital associative algebra such that
\begin{itemize}
    \item[(i)] $A\#G=A\otimes\mathbb{C}[G]$ as a complex vector space;
    \item[(ii)] the canonical linear
    embeddings $A\hookrightarrow A\#G$, $\mathbb{C}[G]\hookrightarrow A\#G$
    are algebra homomorphisms; and
    \item[(iii)] the cross relations
    \[(a\otimes g)\cdot(b\otimes h)=ag(b)\otimes gh,\]
    are satisfied for $a,b\in A$ and $g,h\in G$.
\end{itemize}
We will always write $ag:=a\otimes g\in A\#G$ ($a\in A$, $g\in G$).
Observe that $A\#G$ canonically acts on any $G$-algebra $B$
containing $A$ as a $G$-subalgebra.

Note that the smashed product algebra $\C(T)\#W$ depends on $q$,
since the $W$-action on $\C(T)$ depends on $q$ (see
\eqref{WactionT}). Sometimes it is convenient to emphasize its
$q$-dependence, in which case we write $\C(T)\#_qW$ instead of
$\C(T)\#W$.

The canonical left $\C(T)\#W$-action on $\C(T)$ (and
$\mathcal{M}(T)$) is faithful and realizes $\C(T)\#W$ as the
algebra of $q$-difference $S_N$-reflection operators with
coefficients in $\C(T)$. If $f\in\C(T)$ then we write $f(X)$ for
the associated element in $\C(T)\#W$ (it is the operator defined
as multiplication by $f$). In particular, $X_i$ is multiplication
by the coordinate function $x_i$.

\subsection{The extended affine Hecke algebra and Cherednik's
basic representation}
In this subsection we recall some constructions and results
due to Cherednik (see, e.g., \cite[Chpt. 1]{C} and references therein).

Fix a nonzero complex number $k$.
\begin{defi}
The affine Hecke algebra $H_Q=H_Q(k)$ is the complex, unital,
associative algebra generated by $T_i$ ($i\in\Z_N$) and satisfying
\begin{enumerate}
\item[(1)] if $N\geq 3$, the braid relations
\begin{equation*}
\begin{split}
T_iT_{i+1}T_i&=T_{i+1}T_iT_{i+1},\\
T_iT_j&=T_jT_i,\qquad i-j\not=0,\pm 1;
\end{split}
\end{equation*}
\item[(2)] the quadratic relations $(T_i-k)(T_i+k^{-1})=0$.
\end{enumerate}
\end{defi}
The Dynkin diagram automorphism $c\in\textup{Aut}(D)$
can also be viewed as automorphism of $H_Q$ by $c(T_i)=T_{i+1}$.
Accordingly, $\Omega$ acts by algebra automorphisms
on $H_Q$ by $\pi\mapsto c$. The extended affine Hecke algebra
$H=H(k)$ is the associated smashed product algebra $H_Q\#\Omega$.

For a reduced expression
$w=s_{i_1}\cdots s_{i_r}\omega\in W$ ($i_k\in\Z_N$,
$\omega\in\Omega$), the element
\[T_w:=T_{i_1}\cdots T_{i_r}\omega\in H
\]
is well defined. The $T_w$ ($w\in W$) form a linear basis of $H$.
For $k=1$, the extended affine Hecke algebra $H$ is isomorphic to
the group algebra $\C[W]$ of $W$ via the identification
$T_w\leftrightarrow w$ ($w\in W$).

The finite Hecke algebra is the subalgebra $H_0$ of $H$ generated by
$T_1,\ldots,T_{N-1}$. The elements $T_w$ ($w\in S_N$) form a linear
basis of $H_0$. Note that $H$ is already generated as algebra by
$H_0$ and $\pi^{\pm 1}$, since $T_0=\pi T_{N-1}\pi^{-1}$.

Put
\begin{equation}\label{eqYdef}
Y_i:=T_{i-1}^{-1}\cdots T_2^{-1}T_1^{-1}\pi T_{N-1}T_{N-2}\cdots
T_i\in H
\end{equation}
for $i=1,\ldots,N$. Note that $Y_i$ becomes the translation element
$\epsilon_i$ in $W$ if $k=1$. We furthermore write
$Y^\lambda:=Y_1^{\lambda_1}\cdots Y_N^{\lambda_N}$ for
$\lambda=(\lambda_1,\ldots,\lambda_N)\in\Z^N$. We have the
following characterization of $H$, due to Bernstein. For details
we refer to Lusztig \cite{L} or Macdonald \cite[\S 4.2]{M}.
\begin{thm}
$H$ is the unique unital complex associative algebra, such that
\begin{itemize}
    \item[\bf{(i)}] $H_0\otimes\C[T]\simeq H$ as complex vector
    spaces, via $h\otimes f\mapsto hf(Y)$ for $h\in H_0$,
    $f\in\C[T]$, where $f(Y)=\sum_\lambda c_\lambda Y^{\lambda}$
    if $f=\sum_\lambda c_\lambda x^\lambda\in\C[T]$;
    \item[\bf{(ii)}] the canonical maps $H_0,\C[T]\hookrightarrow H$
    are algebra embeddings; we write
    $\C_Y[T]=\textup{span}_\C\{Y^\lambda\}_{\lambda\in\Z^N}$
    for the image of $\C[T]$ in $H$; and
    \item[\bf{(iii)}] the following cross relations
    \begin{equation*}
    \begin{split}
    T_i^{-1}Y_iT_i^{-1}&=Y_{i+1},\\
    Y_jT_i&=T_iY_j,\qquad i\neq j-1,j
    \end{split}
    \end{equation*}
are satisfied for $1\leq i<N$ and $1\leq j\leq N$.
\end{itemize}
\end{thm}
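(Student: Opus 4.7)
The plan is to proceed in three stages: (a) verify directly that the elements $Y_i$ defined in \eqref{eqYdef} satisfy the cross relations in (iii); (b) establish pairwise commutativity of the $Y_i$, so that $\C_Y[T]$ is a commutative subalgebra of $H$ and $f \mapsto f(Y)$ is a well-defined algebra homomorphism $\C[T] \to H$; (c) prove the vector space isomorphism in (i), together with uniqueness. The algebra embedding of $H_0$ in (ii) is immediate from the defining presentation of $H$, while the embedding of $\C[T]$ reduces to step (c).

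For (a), the identity $T_i^{-1} Y_i T_i^{-1} = Y_{i+1}$ is a telescoping calculation: substituting the definition of $Y_i$, the outer factors $T_i^{-1}$ combine with the adjacent $T_j$'s using the braid relations and $T_0 = \pi T_{N-1} \pi^{-1}$ to produce the defining product for $Y_{i+1}$. The relation $Y_j T_i = T_i Y_j$ for $i \ne j-1, j$ follows because each $T_k$ appearing in the expression for $Y_j$ satisfies $|k - i| \ge 2$ and so commutes with $T_i$ by the non-adjacent braid relation; the interaction of $\pi$ with $T_i$ is absorbed via $\pi T_i = T_{i+1} \pi$.

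For (b), the key nontrivial fact is nearest-neighbor commutativity $Y_i Y_{i+1} = Y_{i+1} Y_i$. Starting from $Y_{i+1} = T_i^{-1} Y_i T_i^{-1}$ and the quadratic relation in the form $T_i^{-1} = T_i - (k - k^{-1})$, one derives an intertwining identity of Lusztig type
\[
T_i Y_{i+1} - Y_i T_i = (k^{-1} - k)\, Y_i,
\]
from which, together with its companion obtained by conjugation, the commutator $[Y_i, Y_{i+1}]$ vanishes. Non-adjacent commutativity $Y_i Y_j = Y_j Y_i$ with $|i - j| \ge 2$ then reduces to the nearest-neighbor case by a telescoping use of the cross relations in (iii).

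For (c), recall that $\{T_w\}_{w \in W}$ is already a $\C$-basis of $H$. Under the decomposition $W \simeq S_N \ltimes \Z^N$, I would induct on $\sum_i |\lambda_i|$ to rewrite each $T_{u\lambda}$ with $u \in S_N$, $\lambda \in \Z^N$ as $T_u Y^\lambda$ plus a $\C$-linear combination of monomials $T_{u'} Y^\mu$ with $u' \in S_N$ and $|\mu|$ strictly smaller; here \eqref{ej} is crucial, since it identifies $\epsilon_j \in W$ (at $k = 1$) with the product defining $Y_j$. This shows the map in (i) is surjective. For injectivity, the cleanest route is to invoke Cherednik's basic (polynomial) representation of $H$ on $\C[T]$, under which the commuting operators $Y^\lambda$ act by Macdonald $q$-difference operators with pairwise distinct leading symbols; this forces $\{T_u Y^\lambda\}_{u \in S_N,\,\lambda \in \Z^N}$ to be linearly independent. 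Uniqueness of $H$ is then automatic, since any algebra satisfying (i)--(iii) has exactly the same defining relations. The main obstacle is precisely this injectivity step: a purely relation-based PBW argument demands a delicate filtration of $H_0 \otimes \C[T]$ together with a verification that the cross relations in (iii) produce no unexpected collapses, whereas appealing to the basic representation bypasses this but depends on its separately established faithfulness, itself a nontrivial input from Cherednik's theory.
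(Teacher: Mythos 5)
The paper does not prove this theorem: it attributes the presentation to Bernstein and refers to Lusztig and Macdonald for the details, so your proposal has to stand on its own against the standard argument. It does not, because step (b) contains a genuine gap. The identity $T_iY_{i+1}-Y_iT_i=(k^{-1}-k)Y_i$ is correct (it is just $T_iY_{i+1}=Y_iT_i^{-1}$ rewritten via $T_i^{-1}=T_i-(k-k^{-1})$), and together with its companion $Y_{i+1}T_i-T_iY_i=(k^{-1}-k)Y_i$ it yields $T_i(Y_i+Y_{i+1})=(Y_i+Y_{i+1})T_i$ and $T_iY_{i+1}Y_i=Y_iY_{i+1}T_i$ --- but not $[Y_i,Y_{i+1}]=0$. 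Setting $C=[Y_i,Y_{i+1}]$, these identities only give $T_iC+CT_i=(k-k^{-1})C$, which does not force $C=0$: in the algebra generated by a single $T$ with $(T-k)(T+k^{-1})=0$ and an invertible $Y$, with $Y':=T^{-1}YT^{-1}$, one checks on $2\times 2$ matrices (take $T=\mathrm{diag}(k,-k^{-1})$ and $Y$ generic) that $Y$ and $Y'$ need not commute. Commutativity of the $Y_i$ is precisely the nontrivial content of Bernstein's theorem; the standard proofs obtain it either from length additivity --- $\ell(t_\lambda t_\mu)=\ell(t_\lambda)+\ell(t_\mu)$ for dominant $\lambda,\mu$, so that $Y^\lambda:=T_{t_\lambda}$ is defined for dominant $\lambda$ and $T_{t_\lambda}T_{t_\mu}=T_{t_{\lambda+\mu}}=T_{t_\mu}T_{t_\lambda}$, extending to all of $\Z^N$ by differences of dominant weights --- or by verifying it in a representation already known to be faithful. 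Your sketch supplies neither input at this stage.

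Two further points. In (a), the justification of $Y_jT_i=T_iY_j$ for $i\neq j-1,j$ is wrong as stated: the word \eqref{eqYdef} for $Y_j$ involves $T_k$ for every $k$ between $1$ and $N-1$, so far-commutation alone cannot suffice; pushing $T_i$ through the word genuinely uses the braid relations $T_{i\pm1}T_iT_{i\pm1}=T_iT_{i\pm1}T_i$, as one already sees for $Y_1=\pi T_{N-1}\cdots T_1$. In (c), relying on faithfulness of the basic representation for the linear independence is legitimate only if that faithfulness is established without the PBW basis you are proving; as the paper presents it (the remark following Theorem \ref{thmRho}), faithfulness is the deeper fact, so you must either prove injectivity of $\rho$ directly by a leading-symbol/triangularity argument on $\C[T]$ (note the individual $Y_i$ act there as Cherednik's $q$-Dunkl-type operators, not as Macdonald $q$-difference operators) or run the filtration argument on $H$ itself. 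The overall architecture is the right one, but the two central computations --- commutativity of the $Y_i$ and linear independence of the $T_uY^\lambda$ --- are not actually carried by the arguments you give.
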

Cherednik realized the affine Hecke algebra $H$ inside the algebra
$\C(T)\#W$ of $q$-difference reflection operators as follows.
\begin{thm}\label{thmRho}
    There is a unique injective algebra homomorphism
    $\rho=\rho_{k,q}\colon H(k)\to\C(T)\#_qW$ satisfying
    \[
    \begin{split}
        \rho(T_i)&=k+c_k(X_i/X_{i+1})(s_i-1),\\
        \rho(\pi)&=\pi,
    \end{split}
    \]
    for $i=1,\ldots,N-1$, where
    \begin{equation}\label{ck}
    c_k(z):=\frac{k^{-1}-kz}{1-z}.
    \end{equation}
\end{thm}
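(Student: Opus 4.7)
The plan is to establish existence by verifying the defining relations of $H(k)$ on generators, deduce uniqueness from the fact that $T_1,\ldots,T_{N-1}$ and $\pi^{\pm 1}$ generate $H(k)$, and finally establish injectivity via the Bernstein presentation by analyzing $\rho(Y_i)$.

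For existence, I would check the relations one by one inside $\C(T)\#_qW$. The quadratic relation $(T_i-k)(T_i+k^{-1})=0$ reduces, after writing $\rho(T_i)-k=c_k(X_i/X_{i+1})(s_i-1)$, to the identity
\[
c_k(z)(s_i-1)\bigl(c_k(z)(s_i-1)+k+k^{-1}\bigr)=0
\]
(with $z=x_i/x_{i+1}$), which is a direct computation using $s_i(z)=z^{-1}$ together with the explicit form \eqref{ck} of $c_k$. The commutation relations $\rho(T_i)\rho(T_j)=\rho(T_j)\rho(T_i)$ for $|i-j|\geq 2$ are immediate since the pairs of variables $(x_i,x_{i+1})$ and $(x_j,x_{j+1})$ are disjoint and $s_i,s_j$ commute. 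The braid relation $\rho(T_i)\rho(T_{i+1})\rho(T_i)=\rho(T_{i+1})\rho(T_i)\rho(T_{i+1})$ is the standard Demazure–Lusztig identity in rank two; it reduces to a rational function identity in $x_i,x_{i+1},x_{i+2}$ which is most efficiently verified by projecting onto each $W$-component. Finally, the $\pi$-relations $\rho(\pi)\rho(T_i)\rho(\pi^{-1})=\rho(T_{i+1})$ (for $1\leq i\leq N-2$) follow immediately from the action $\pi(x_i)=x_{i+1}$, while the case involving $T_0=\pi T_{N-1}\pi^{-1}$ imposes no extra condition once $\rho$ is defined on $\pi$ and $T_{N-1}$. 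Uniqueness is then clear since $H(k)$ is generated by $T_1,\ldots,T_{N-1}$ and $\pi^{\pm 1}$.

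For injectivity, the idea is to use the Bernstein PBW decomposition $H\simeq H_0\otimes\C_Y[T]$ stated in the preceding theorem. Given $h=\sum_{w\in S_N}T_w f_w(Y)\in H$ with $f_w\in\C[T]$, I would show that $\rho(h)=0$ forces each $f_w=0$. The key computation is that $\rho(Y_i)$, as an element of $\C(T)\#_qW$, has the form
\[
\rho(Y_i)=a_i(X)\,\epsilon_i+\sum_{w\neq\epsilon_i}b_{i,w}(X)\,w,
\]
with an explicit nonzero rational coefficient $a_i(X)$ in front of the pure translation $\epsilon_i\in W$; this follows by unraveling the definition \eqref{eqYdef} and keeping track of the leading term of each factor $\rho(T_j^{\pm 1})=\pm k^{\pm 1}+c_k(\cdot)(s_j-1)$. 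Consequently $\rho(f(Y))$ contains the translation $q^\lambda$-shift $\lambda\in\Z^N$ with nonzero coefficient whenever $x^\lambda$ appears in $f$, and multiplying on the left by $\rho(T_w)$ only mixes these translations with other elements of $W$ of the form $w'\lambda'$, $w'\in S_N$. Projecting $\rho(h)=0$ onto each translation component in the basis $\{x^\mu w\}$ of $\C(T)\#W$ then yields $f_w=0$ for every $w\in S_N$.

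The main obstacle is the bookkeeping in the last step: one must keep precise track of which elements of $W$ contribute to each monomial in $\rho(Y^\lambda)$ and $\rho(T_w Y^\lambda)$, in order to conclude that the leading translation coefficients are linearly independent over $\C(T)$. An efficient shortcut is to exhibit a faithful module for $\rho(H)$—namely, $\C(T)$ itself, on which $\rho(Y_i)$ acts as a Cherednik–Dunkl-type $q$-difference operator whose joint eigenbasis separates monomials—so that any $h$ with $\rho(h)=0$ must annihilate every $x^\mu$; the triangularity of the $Y$-action on $\C[T]$ with respect to a suitable dominance ordering then forces all the $f_w$ to vanish. Either route suffices to complete the proof.
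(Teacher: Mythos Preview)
The paper does not supply its own proof of this theorem; it is recalled from Cherednik's work (see the opening sentence of \S\ref{SectionPrelim}.2 and the remark immediately following the statement). Your approach is the standard one and is correct. In particular, the ``shortcut'' you describe at the end---exhibiting the polynomial module $\C[T]$ and using the triangularity of the Cherednik--Dunkl operators $\rho(Y_i)$ on monomials with respect to a dominance-type order---is exactly what the paper's subsequent remark alludes to when it says the basic representation is faithful, and is the cleanest route to injectivity. The direct projection argument you sketch first is workable but, as you note, requires careful bookkeeping; the polynomial-module argument bypasses this entirely.
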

Note that for the affine Hecke algebra $H=H(k)$ with fixed parameter
$k$, Theorem \ref{thmRho} yields a one-parameter family of realizations of
$H$ (the additional parameter being $q$).
\begin{rema}
    The image $\rho(H)$ preserves $\C[T]$, viewed as a
    subspace of the canonical $\C(T)\#W$-module $\C(T)$. The
    resulting representation of $H$ on $\C[T]$ is faithful and is
    called the basic representation of $H$.
\end{rema}
 We frequently
identify $H$ with its image under $\rho$ in $\C(T)\#W$.

We now come to the definition of Cherednik's double affine Hecke
algebra which depends, besides on $k$, on the additional parameter
$q$.

\begin{defi}
The double affine Hecke algebra $\mathbb{H}=\mathbb{H}(k,q)$ is
the subalgebra of $\C(T)\#_qW$ generated by $\rho_{k,q}(H)$ and by
the multiplication operators $f(X)$ ($f\in \C[T]$).
\end{defi}

Let $\mathbb{L}=\C[T]\otimes\C[T]\simeq\C[T\times T]$ denote the
complex-valued regular functions on $T\times T$. We view
$\mathbb{H}$ as $\mathbb{L}$-module by
\begin{equation}\label{Laction}
(f\otimes g)\cdot h:=f(X)hg(Y)
\end{equation}
for $f,g\in\C[T]$ and $h\in\mathbb{H}$. The following theorem is
the so-called Poincar\'e-Birkhoff-Witt (PBW) property of the
double affine Hecke algebra.
\begin{thm}
We have $\mathbb{H}\simeq H_0^{\mathbb{L}}=\mathbb{L}\otimes H_0$
as $\mathbb{L}$-modules.
\end{thm}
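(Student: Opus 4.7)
The plan is to prove that the canonical $\mathbb{L}$-linear map
\[
\Phi\colon \mathbb{L}\otimes H_0 \longrightarrow \mathbb{H},\qquad
(f\otimes g)\otimes h_0 \longmapsto f(X)\,h_0\,g(Y),
\]
is bijective; $\mathbb{L}$-linearity is immediate from \eqref{Laction}.

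For surjectivity, I would rewrite every word in the generators $\rho(T_i)$, $\rho(\pi^{\pm 1})$, $\rho(Y_j^{\pm 1})$, $X_j^{\pm 1}$ of $\mathbb{H}$ in the normal form $X^\mu T_{w_0}Y^\lambda$. The necessary commutation relations are present inside the ambient algebra $\C(T)\#_qW$: a direct calculation using Theorem \ref{thmRho} and the formula $c_k(z)=(k^{-1}-kz)/(1-z)$ yields Demazure-type identities
\[
\rho(T_i)X_i-X_{i+1}\rho(T_i)=-(k-k^{-1})X_{i+1},\qquad \rho(T_i)X_{i+1}-X_i\rho(T_i)=(k-k^{-1})X_{i+1},
\]
with $[\rho(T_i),X_j]=0$ for $j\neq i,i+1$, while \eqref{WactionT} gives $\pi X_j\pi^{-1}=X_{j+1}$ for $j<N$ and $\pi X_N\pi^{-1}=q^{-1}X_1$. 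Combined with Bernstein's theorem $H\simeq H_0\otimes\C_Y[T]$ and the commutativity of the $X_j$, these relations push every $X$-factor to the left and every $Y$-factor to the right of any word in the generators.

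For injectivity, I would work inside $\C(T)\#_qW$ with its $\C(T)$-basis $\{w\}_{w\in W}$ and use a triangular leading-term argument. A length induction on $w_0\in S_N$, exploiting $\rho(T_i)=k+c_k(X_i/X_{i+1})(s_i-1)$, produces the Bruhat-triangular expansion
\[
\rho(T_{w_0})=\beta_{w_0}(X)\,w_0+\sum_{v<w_0}\gamma_{w_0,v}(X)\,v,\qquad \beta_{w_0}\in\C(T)^\times;
\]
applying $\rho$ to \eqref{eqYdef}, expanding each $\rho(T_i^{\pm 1})$ as a two-term sum, and identifying via \eqref{ej} the full subword contribution yields
\[
\rho(Y^\lambda)=\delta_\lambda(X)\cdot \lambda+\sum_{w\prec\lambda}\eta_{\lambda,w}(X)\cdot w,\qquad \delta_\lambda\in\C(T)^\times,
\]
for a partial order $\prec$ on $W=S_N\ltimes\Z^N$ built from the extended-affine length and refined by Bruhat. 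Composing and multiplying on the left by $X^\mu$ gives
\[
X^\mu\rho(T_{w_0}Y^\lambda)=x^\mu\,\beta_{w_0}(X)\,\delta_\lambda(X)\,(w_0\lambda)+(\text{strictly $\prec$-lower $W$-terms}).
\]
A hypothetical nontrivial relation $\sum c_{\mu,w_0,\lambda}X^\mu T_{w_0}Y^\lambda=0$ would then, upon extracting the $w_0\lambda$-component at a $\prec$-maximal pair $(w_0,\lambda)$ with some $c_{\mu,w_0,\lambda}\neq 0$, reduce to $\beta_{w_0}\delta_\lambda\sum_\mu c_{\mu,w_0,\lambda}x^\mu=0$ in $\C(T)$; nonvanishing of $\beta_{w_0}\delta_\lambda$ forces $\sum_\mu c_{\mu,w_0,\lambda}x^\mu=0\in\C[T]$, contradicting the independence of distinct Laurent monomials.

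The principal obstacle is pinning down the triangular expansion of $\rho(Y^\lambda)$: because each $\rho(T_i^{\pm 1})$ is a two-term sum, $\rho(Y^\lambda)$ is supported on many $W$-elements coming from the various partial subwords of \eqref{eqYdef}, and one must choose $\prec$ so that the full subword --- equal to $\lambda$ by \eqref{ej} --- provides the unique $\prec$-maximal contribution, and verify that the leading coefficient $\delta_\lambda$ remains nonzero also for negative exponents (where $\rho(Y^{-1})=\rho(Y)^{-1}$ is taken inside $\C(T)\#_qW$).
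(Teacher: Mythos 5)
The paper itself offers no proof of this theorem: it is quoted as Cherednik's PBW property with the argument deferred to \cite{C}, so there is no in-paper proof to compare against. Your overall strategy---realize $\mathbb{H}$ inside $\C(T)\#_qW$, get surjectivity of the normal-form map from the cross relations, and get injectivity from a triangularity argument relative to the $\C(T)$-basis $\{w\}_{w\in W}$ of $\C(T)\#_qW$---is the standard one and is sound. The relations you state are correct (e.g. $\rho(T_i)X_{i+1}-X_i\rho(T_i)=(k-c_k(X_i/X_{i+1}))(X_{i+1}-X_i)=(k-k^{-1})X_{i+1}$), and together with the resulting Lusztig--Demazure identity $\rho(T_i)f(X)-(s_if)(X)\rho(T_i)=(k-k^{-1})X_{i+1}(f-s_if)/(X_{i+1}-X_i)\in\C[T]$ and $\pi f(X)\pi^{-1}=(\pi f)(X)$, they do yield surjectivity once Bernstein's theorem is invoked.

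The injectivity half, however, is incomplete precisely at the point you flag. For non-dominant $\lambda$ one has $Y^\lambda=Y^\mu (Y^\nu)^{-1}$ with $\mu,\nu$ dominant, the $W$-support of $\rho(Y^\lambda)$ consists of products $v_1v_2$ with $v_1,v_2$ Bruhat-below the two translations, and you have neither constructed the order $\prec$ nor shown that $t_\lambda$ is the unique $\prec$-maximal element of this support with nonvanishing coefficient; as it stands this is an announced plan, not a proof. The good news is that the obstacle can be bypassed entirely: run the triangularity induction only for the elements $\rho(T_v)$, $v\in W$, along reduced expressions $v=s_{i_1}\cdots s_{i_r}\pi^m$. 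This gives $\rho(T_v)=\beta_v(X)\,v+\sum_{u<v}\gamma_{v,u}(X)\,u$ with $\beta_v\in\C(T)^\times$ and $<$ the Bruhat order (within each $\Omega$-coset), so $\{\rho(T_v)\}_{v\in W}$ is a $\C(T)$-basis of $\C(T)\#_qW$ and the multiplication map $\C(T)\otimes H\to\C(T)\#_qW$, $f\otimes h\mapsto f(X)\rho(h)$, is injective. Since $\{T_wY^\lambda\}_{w\in S_N,\,\lambda\in\Z^N}$ and $\{T_v\}_{v\in W}$ are both $\C$-bases of $H$ by Bernstein's theorem, a relation $\sum_{\mu,w,\lambda}c_{\mu,w,\lambda}X^\mu\rho(T_wY^\lambda)=0$ forces $\sum_\mu c_{\mu,w,\lambda}x^\mu=0$ in $\C(T)$ for every $(w,\lambda)$, hence all coefficients vanish. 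This removes any need to expand $\rho(Y^\lambda)$ for non-dominant $\lambda$.
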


The PBW property is an essential ingredient in deriving the
characterizing relations for the double affine Hecke algebra
$\mathbb{H}$ in terms of its algebraic generators $T_i$ ($1\leq
i<N$), $\pi^{\pm 1}$ and $X_j^{\pm 1}$ ($1\leq j\leq N$). Since we
are not going to use this presentation explicitly in this paper,
we refer the reader to \cite{C} for further details. We use though
one of its direct consequences, namely the existence of the
duality anti-isomorphism (see Cherednik \cite[Thm. 1.4.8]{C}):

\begin{thm}\label{dualthm}
There exists a unique $\C$-linear anti-algebra involution $*\colon
\mathbb{H}\rightarrow \mathbb{H}$ determined by
\begin{equation*}
\begin{split}
T_w^*&=T_{w^{-1}},\qquad\: w\in S_N,\\
(Y^{\lambda})^*&=X^{-\lambda},\qquad \lambda\in \Z^N,\\
(X^{\lambda})^*&=Y^{-\lambda},\qquad \lambda\in \Z^N.
\end{split}
\end{equation*}
\end{thm}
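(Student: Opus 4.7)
Uniqueness will follow from the fact that $\{T_w\}_{w\in S_N}$, $\{X^\mu\}_{\mu\in\Z^N}$ and $\{Y^\lambda\}_{\lambda\in\Z^N}$ together generate $\mathbb{H}$ as an algebra. For existence, I use the PBW theorem to define $*$ linearly on a basis by the natural formula and then verify anti-multiplicativity on a generating set; this reduces to checking that each defining relation of $\mathbb{H}$ is sent to another valid relation.

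\textbf{Uniqueness.} By Theorem \ref{thmRho} together with the definition of $\mathbb{H}$, the algebra is generated by $T_1,\ldots,T_{N-1}$, $\pi^{\pm 1}$ and $X_1^{\pm 1},\ldots,X_N^{\pm 1}$. Since $\pi$ is expressible in terms of $Y_1$ and $T_1,\ldots,T_{N-1}$ through \eqref{eqYdef}, the algebra is also generated by $\{T_w\}_{w\in S_N}\cup\{X^\mu\}_{\mu\in\Z^N}\cup\{Y^\lambda\}_{\lambda\in\Z^N}$. An anti-algebra involution is determined by its values on any algebra generating set, so $*$ is unique.

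\textbf{Existence.} By the PBW theorem for $\mathbb{H}$, the monomials $\{X^\mu T_w Y^\lambda\}_{\mu,\lambda\in\Z^N,\,w\in S_N}$ form a $\C$-basis of $\mathbb{H}$. Define the $\C$-linear map $*\colon\mathbb{H}\to\mathbb{H}$ by
\[
*(X^\mu T_w Y^\lambda)\;:=\;X^{-\lambda}T_{w^{-1}}Y^{-\mu},
\]
i.e.\ by the bijection $(\mu,w,\lambda)\mapsto(-\lambda,w^{-1},-\mu)$ on PBW indices, from which $*^2=\textup{id}$ is immediate and which takes each basis element to another basis element. Setting $\mu=\lambda=0$ recovers $T_w^*=T_{w^{-1}}$; setting $w=e$ and $\lambda=0$ (resp.\ $\mu=0$) recovers $(X^\mu)^*=Y^{-\mu}$ (resp.\ $(Y^\lambda)^*=X^{-\lambda}$). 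To show $*$ is an anti-algebra homomorphism it suffices, by $\C$-bilinearity, to check $(ab)^*=b^*a^*$ on the generators $T_i, X_j^{\pm 1}, Y_j^{\pm 1}$, equivalently that each defining relation is mapped to another valid relation of $\mathbb{H}$. The finite Hecke relations on the $T_i$'s are preserved because each simple reflection is an involution, so $w\mapsto w^{-1}$ reverses reduced expressions while fixing quadratic relations; the $X$--$X$ and $Y$--$Y$ commutations are tautologically preserved; and the Bernstein cross-relations $T_i^{-1}Y_iT_i^{-1}=Y_{i+1}$ and $Y_jT_i=T_iY_j$ ($j\ne i,i+1$) are transformed under $*$ into the companion relations $T_iX_iT_i=X_{i+1}$ and $X_jT_i=T_iX_j$ ($j\ne i,i+1$), which hold in $\mathbb{H}$ by a direct computation in $\C(T)\#W$ from the explicit formula $\rho(T_i)=k+c_k(X_i/X_{i+1})(s_i-1)$ of Theorem \ref{thmRho}.

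\textbf{Main obstacle.} The decisive step, and the only truly non-routine one, is verifying that $*$ respects the \emph{mixed} $X$--$Y$ cross-relations of $\mathbb{H}$. These form part of Cherednik's presentation of $\mathbb{H}$---not reproduced in the present paper, but detailed in \cite{C}---and are arranged to be manifestly invariant under the substitution $X^\mu\leftrightarrow Y^{-\mu}$ combined with reversal of multiplication, which is precisely the action prescribed by $*$. Carrying out this verification against Cherednik's complete list of relations (the symmetric presentation underlying \cite[Thm.\ 1.4.8]{C}) completes the argument; this is the only place where one genuinely needs to unpack the full defining relations of the double affine Hecke algebra.
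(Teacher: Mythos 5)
The paper does not actually prove this theorem: it is quoted from Cherednik \cite[Thm.\ 1.4.8]{C}, and the authors explicitly decline to reproduce the presentation of $\mathbb{H}$ by generators and relations on which the proof rests. Your outline is the standard argument and, as far as one can tell, the one in the cited source: uniqueness because the listed elements generate $\mathbb{H}$, and existence by checking that the defining relations are stable under the substitution $T_i\mapsto T_i$, $X^\mu\mapsto Y^{-\mu}$, $Y^\lambda\mapsto X^{-\lambda}$ combined with reversal of products. So there is no divergence of approach to report, only two caveats.

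First, a logical wrinkle in the existence step: you \emph{define} $*$ as a linear map on the PBW basis $\{X^\mu T_wY^\lambda\}$ and then assert that anti-multiplicativity need only be checked on generators. By bilinearity alone one reduces only to pairs of \emph{basis elements}; reducing further to generators presupposes the anti-multiplicativity you are trying to establish. The clean fix is to run the argument in the other direction: the criterion ``each defining relation is sent to the opposite of a valid relation'' produces, via the universal property of the presentation, an anti-homomorphism $\sigma$ with the stated values on generators; one then \emph{deduces} $\sigma(X^\mu T_wY^\lambda)=X^{-\lambda}T_{w^{-1}}Y^{-\mu}$ from anti-multiplicativity (using that reversing a reduced word for $w$ gives one for $w^{-1}$), and gets $\sigma^2=\textup{id}$, hence bijectivity, by checking it on generators. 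Second, the step you yourself flag as the main obstacle --- that the mixed $X$--$Y$ relations of Cherednik's presentation are invariant under $X^\mu\leftrightarrow Y^{-\mu}$ plus product reversal --- is the entire content of the theorem, and it is deferred wholesale to \cite{C}; since those relations are never written down (here or in the paper), your argument is not self-contained. That said, the paper is no more self-contained on this point, so in substance your proposal matches what the paper does: cite Cherednik for the decisive verification.
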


\subsection{Intertwiners}\label{intsec}
In this subsection we recall the construction of the
(nonnormalized) affine intertwiners associated to the double affine
Hecke algebra $\mathbb{H}$.
The intertwiners play an important role in the construction of
a nontrivial $W\times W$-cocycle in the next section.
Consider the elements
\[
\begin{split}
\widetilde{S}_i&=(k-k^{-1}X_{i+1}/X_{i})s_i,\qquad 1\leq i<N,\\
\widetilde{S}_0&=(k-k^{-1}q^{-1}X_1/X_N)s_0,\\
\widetilde{S}_\pi&=\pi
\end{split}
\]
in $\C(T)\#_qW$. The following facts are well known (cf., e.g.,
\cite[\S 1.3]{C}). For the convenience of the reader, we give a short
sketch of the proof.
\begin{prop}\label{lemIntProp}
Let $w\in W$ and let $w=s_{j_1}\cdots s_{j_r}\pi^m$ be a reduced
expression ($j_l\in\Z_N$, $m\in\Z$).
\begin{itemize}
  \item[{\bf (i)}] $\widetilde{S}_w:=\widetilde{S}_{j_1}\cdots
\widetilde{S}_{j_r}\widetilde{S}_\pi^m$
is a well-defined element of $\C(T)\#W$;
\item[{\bf (ii)}] $\widetilde{S}_w\in\mathbb{H}$;
    \item[\bf{(iii)}] the $\widetilde{S}_{i}$ ($i\in\Z_N$)
    satisfy the $\widehat{A}_{N-1}$-type braid relations;
    \item[\bf{(iv)}] $\widetilde{S}_wf(X)=(wf)(X)\widetilde{S}_w$
    in $\C(T)\#W$ for all $f\in\C(T)$; and
    \item[\bf{(v)}]
    $\widetilde{S}_i\widetilde{S}_i=(k-k^{-1}X_{i+1}/X_{i})
    (k-k^{-1}X_{i}/X_{i+1})$ for $1\leq i<N$.
\end{itemize}
\end{prop}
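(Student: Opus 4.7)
The plan is to treat the five parts in the order (iv) for generators, (v), (iii) together with the auxiliary $\pi$-conjugation $\widetilde{S}_\pi\widetilde{S}_i\widetilde{S}_\pi^{-1}=\widetilde{S}_{c(i)}$, then (i), (iv) in general, and finally (ii). The first two steps are essentially immediate. For (iv) on the generators, writing $\widetilde{S}_j=a_js_j$ with $a_j\in\C(T)$ and using that $\C[T]$ is commutative gives
\[
\widetilde{S}_jf(X)=a_js_jf(X)=a_j(s_jf)(X)s_j=(s_jf)(X)\widetilde{S}_j\qquad (j\in\Z_N),
\]
while $\widetilde{S}_\pi f(X)=\pi f(X)=(\pi f)(X)\pi=(\pi f)(X)\widetilde{S}_\pi$ is the defining cross relation in $\C(T)\#W$. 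For (v), the identity $s_i(k-k^{-1}X_{i+1}/X_i)=(k-k^{-1}X_i/X_{i+1})s_i$ combined with $s_i^2=1$ yields the claimed factorization of $\widetilde{S}_i^2$.

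The main obstacle is (iii). Using the factorization $\widetilde{S}_j=a_js_j$, the braid relation $\widetilde{S}_i\widetilde{S}_{i+1}\widetilde{S}_i=\widetilde{S}_{i+1}\widetilde{S}_i\widetilde{S}_{i+1}$ reduces, modulo the braid relation $s_is_{i+1}s_i=s_{i+1}s_is_{i+1}$ in $W$, to the scalar identity
\[
a_i\cdot(s_ia_{i+1})\cdot(s_is_{i+1}a_i)=a_{i+1}\cdot(s_{i+1}a_i)\cdot(s_{i+1}s_ia_{i+1})
\]
in $\C(T)$, a routine direct check from the explicit formulas for the $a_j$. The commutations $\widetilde{S}_i\widetilde{S}_j=\widetilde{S}_j\widetilde{S}_i$ for $|i-j|\geq 2$ reduce analogously to $s_ia_j=a_j$, which holds since $s_i$ fixes the coordinates appearing in $a_j$. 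The cyclic cases involving $\widetilde{S}_0$ (when $N\geq 3$) go the same way, with the $q^{-1}$-factor in $a_0=k-k^{-1}q^{-1}X_1/X_N$ precisely compensating the $q$-shift in the cross relation $s_0X_1=qX_Ns_0$ coming from the $q$-dependent $W$-action. In parallel I would verify $\widetilde{S}_\pi\widetilde{S}_i\widetilde{S}_\pi^{-1}=\widetilde{S}_{c(i)}$; in the representative case $i=N-1$ this follows from $\pi X_{N-1}\pi^{-1}=X_N$, $\pi X_N\pi^{-1}=q^{-1}X_1$, and $\pi s_{N-1}\pi^{-1}=s_0$, which together yield $\pi\widetilde{S}_{N-1}\pi^{-1}=\widetilde{S}_0$.

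Part (i) then follows by Matsumoto's theorem for the extended affine Weyl group: any two reduced expressions of $w\in W$ of the chosen form $s_{j_1}\cdots s_{j_r}\pi^m$ differ by a sequence of braid moves among the $s_{j_l}$'s, and each such move leaves $\widetilde{S}_w$ unchanged by (iii). Part (iv) for arbitrary $w$ is then an immediate induction on the length $r$, repeatedly pushing $f(X)$ past one generator at a time via the generator case.

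For (ii), I would solve Cherednik's formula $\rho(T_i)=k+c_k(X_i/X_{i+1})(s_i-1)$ for $s_i$ as a $\C(T)$-linear combination of $1$ and $\rho(T_i)$, and substitute into $\widetilde{S}_i=(k-k^{-1}X_{i+1}/X_i)s_i$; a short manipulation shows the rational coefficients telescope and actually lie in $\C[T]$, whence $\widetilde{S}_i\in\mathbb{H}$ for $1\leq i<N$. Since $\widetilde{S}_\pi=\rho(\pi)\in\mathbb{H}$ by definition and $\widetilde{S}_0=\widetilde{S}_\pi\widetilde{S}_{N-1}\widetilde{S}_\pi^{-1}$ by the already verified $\pi$-conjugation, every generator in a reduced expression of $\widetilde{S}_w$ belongs to $\mathbb{H}$, and hence so does $\widetilde{S}_w$ for every $w\in W$.
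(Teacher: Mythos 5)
Your proof is correct, but it inverts the logical order the paper uses for the two substantive parts. The paper proves (i) first: it factors $\widetilde{S}_w=d_w(X)w$ with $d_w=d_{j_1}(s_{j_1}d_{j_2})\cdots(s_{j_1}\cdots s_{j_{r-1}}d_{j_r})$ and cites Macdonald's result \cite[(2.2.9)]{M} that this $c$-function--type product is independent of the reduced expression; the braid relations (iii) are then ``immediate from (i).'' You go the other way: you verify the rank-two braid identities by hand (reducing them, exactly as the paper's factorization would suggest, to the scalar cocycle identity $a_i(s_ia_{i+1})(s_is_{i+1}a_i)=a_{i+1}(s_{i+1}a_i)(s_{i+1}s_ia_{i+1})$, which indeed holds because both sides equal $d_{\alpha_i}d_{\alpha_i+\alpha_{i+1}}d_{\alpha_{i+1}}$) and then invoke Matsumoto's theorem for the Coxeter group $W_Q$ to get (i); this is fine since the $\Omega$-component $\pi^m$ of $w$ is uniquely determined. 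Your route is more self-contained at the cost of the explicit rank-two checks, which the paper's citation absorbs; both are standard. For (ii) you rederive the identity \eqref{Si} by solving Cherednik's formula for $s_i$ and simplifying $(k-k^{-1}X_{i+1}/X_i)\,c_k(X_i/X_{i+1})^{-1}=1-X_{i+1}/X_i$, whereas the paper simply exhibits \eqref{Si}; the conclusion via $\widetilde{S}_0=\pi\widetilde{S}_{N-1}\pi^{-1}$ and $\pi^{\pm1}\in\mathbb{H}$ is identical. Parts (iv) and (v) are handled the same way in both (the paper declares them ``clear from the definition'').
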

\begin{proof}
(i) Set $d_i:=(k-k^{-1}X_{i+1}/X_{i})$ ($1\leq i<N$) and
$d_0:=(k-k^{-1}q^{-1}X_1/X_N)$. We have
\[\widetilde{S}_w
=d_{j_1}(s_{j_1}d_{j_2})\cdots
(s_{j_1}\cdots s_{j_{r-1}}d_{j_r})w
\]
in $\C(T)\#W$.
By, e.g., Macdonald \cite[(2.2.9)]{M}, we know that
\begin{equation}\label{cw}
d_w:=d_{j_1}(s_{j_1}d_{j_2})\cdots (s_{j_1}\cdots s_{j_{r-1}}d_{j_r})
\end{equation}
is independent of the reduced expression of $w$. Hence
$\widetilde{S}_w\in\C(T)\#W$ is well defined.\\
(ii) Note that $\widetilde{S}_i$ can be written as
\begin{equation}\label{Si}
\widetilde{S}_i=(1-X_{i+1}/X_{i})(T_i-k)+k-k^{-1}X_{i+1}/X_i
\end{equation}
for $1\leq i<N$, which shows that it lies in $\mathbb{H}$.
Furthermore, $\pi^{\pm 1}\in\mathbb{H}$, hence $\widetilde{S}_\pi^{\pm 1}
\in\mathbb{H}$
and  $\widetilde{S}_0=\pi\widetilde{S}_{N-1}\pi^{-1}\in\mathbb{H}$.
Consequently, $\widetilde{S}_w\in\mathbb{H}\subset\C(T)\#W$.\\
(iii) is immediate from (i), while (iv) and (v) are clear from the
definition of the $\widetilde{S}_i$ and $\widetilde{S}_\pi$.
\end{proof}

\begin{defi}
The elements $\widetilde{S}_w$ ($w\in W$) are called the
affine intertwiners of $\mathbb{H}$.
\end{defi}

\section{The bispectral quantum KZ equations}\label{SectionBqKZ}

\subsection{Construction of the cocycle}\label{constr}
Let $\iota$ denote the nontrivial element of the two group
$\mathbb{Z}_2$. We define the group $\mathbb{W}$ as the semidirect
product
\[
\mathbb{W}:=\mathbb{Z}_2\ltimes(W\times W),
\]
where $\iota\in\mathbb{Z}_2$ acts on $W\times W$ by switching the
components: $\iota(w,w')=(w',w)\iota$ for $w,w'\in W$.
We first use the double affine Hecke algebra, its
affine intertwiners, and its duality
anti-isomorphism to construct a group homomorphism
$\tau=\tau_k: \mathbb{W}\rightarrow \textup{GL}_{\C}(H_0^{\mathbb{K}})$
depending on the Hecke algebra parameter $k$, where
$\mathbb{K}:=\mathcal{M}(T\times T)$ is the field of meromorphic
functions on $T\times T$.

The representation $\tau$ will be constructed from
the complex linear endomorphisms
$\widetilde{\sigma}_{(w,w^\prime)}$ ($w,w^\prime\in W$) and
$\widetilde{\sigma}_{\iota}$ of the double affine Hecke algebra
$\mathbb{H}$, defined by
\begin{equation*}
\begin{split}
\widetilde{\sigma}_{(w,w^\prime)}(h)&=
\widetilde{S}_wh\widetilde{S}_{w^\prime}^*,\\
\widetilde{\sigma}_\iota(h)&=h^*
\end{split}
\end{equation*}
for $h\in\mathbb{H}$. In the following lemma we collect some
elementary properties of the maps
$\widetilde{\sigma}_{(w,w^\prime)}$ and $\widetilde{\sigma}_\iota$.
First we introduce some auxiliary notations.

For a regular function $g\in\C[T]$, we write $g(x)\in\C[T\times
T]$ (respectively $g(y)\in\C[T\times T]$) for the corresponding
regular function on $T\times T$ constant with respect to the
second (respectively first) $T$-component. In particular, the
$x_i$ (respectively $y_i$) are the standard coordinate functions
of the first (respectively second) copy of $T$ in $T\times T$.
Recall the regular function $d_w\in\C[T]$ ($w\in W$) such that
\[\widetilde{S}_w=d_w(X)w
\]
in $\C(T)\#W$; see \eqref{cw}.

\begin{lem}\label{sigmaprop}
The complex linear endomorphisms $\widetilde{\sigma}_{(w,w^\prime)}$
and $\widetilde{\sigma}_\iota$ of $\mathbb{H}$ satisfy the following
properties:
\begin{enumerate}
\item[{\bf (i)}] the $\widetilde{\sigma}_{(s_i,e)}$ ($i\in\Z_N$)
satisfy the $\widehat{A}_{N-1}$ braid relations;
\item[{\bf (ii)}] $\widetilde{\sigma}_{(s_i,e)}^2=d_{s_i}(x)(s_id_{s_i})(x)
\cdot\textup{id}$ for $i\in\Z_N$;
\item[{\bf (iii)}] $\widetilde{\sigma}_{(\pi,e)}\widetilde{\sigma}_{(s_i,e)}
\widetilde{\sigma}_{(\pi^{-1},e)}=\widetilde{\sigma}_{(s_{i+1},e)}$
for $i\in\Z_N$;
\item[{\bf (iv)}] $\widetilde{\sigma}_\iota^2=\textup{id}$ and
$\widetilde{\sigma}_{(e,w)}=
\widetilde{\sigma}_\iota\widetilde{\sigma}_{(w,e)}
\widetilde{\sigma}_{\iota}$ for $w\in W$; and
\item[{\bf (v)}] $\widetilde{\sigma}_{(w,e)}\widetilde{\sigma}_{(e,w^\prime)}=
\widetilde{\sigma}_{(w,w^\prime)}=\widetilde{\sigma}_{(e,w^\prime)}
\widetilde{\sigma}_{(w,e)}$ for $w,w^\prime\in W$.
\end{enumerate}
\end{lem}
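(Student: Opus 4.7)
The plan is to unwind the definitions in each of the five parts and then invoke Proposition~\ref{lemIntProp} together with the anti-involution property of $*$ from Theorem~\ref{dualthm}. The key opening observation I would make is that $\widetilde{S}_e^{*}=1$, so $\widetilde{\sigma}_{(w,e)}(h)=\widetilde{S}_wh$ is plain left multiplication on $\mathbb{H}$ and $\widetilde{\sigma}_{(e,w')}(h)=h\widetilde{S}_{w'}^{*}$ is plain right multiplication; everything else is organised around this.

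With that in place, (i) is a direct transfer of Proposition~\ref{lemIntProp}(iii) from $\mathbb{H}$ to endomorphisms of $\mathbb{H}$. For (ii), I would substitute Proposition~\ref{lemIntProp}(v), namely
\[\widetilde{S}_i^{\,2}=(k-k^{-1}X_{i+1}/X_{i})(k-k^{-1}X_{i}/X_{i+1})=d_{s_i}(X)(s_id_{s_i})(X),\]
so that $\widetilde{\sigma}_{(s_i,e)}^{\,2}(h)$ is obtained by left-multiplying $h$ by this expression inside $\mathbb{H}$; under the PBW identification $\mathbb{H}\simeq H_0^{\mathbb{L}}$ and the $\mathbb{L}$-action \eqref{Laction}, this left multiplication is exactly scaling by the $\mathbb{L}$-element $d_{s_i}(x)(s_id_{s_i})(x)$, giving the stated formula. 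Parts (iv) and (v) are similarly one-line unwindings: the first claim in (iv) is $*\circ*=\textup{id}$ from Theorem~\ref{dualthm}, and the second follows from
\[\widetilde{\sigma}_\iota\widetilde{\sigma}_{(w,e)}\widetilde{\sigma}_\iota(h)=(\widetilde{S}_wh^{*})^{*}=h\,\widetilde{S}_w^{*}=\widetilde{\sigma}_{(e,w)}(h),\]
using that $*$ is anti-multiplicative; for (v), both compositions reduce tautologically to $h\mapsto \widetilde{S}_wh\widetilde{S}_{w'}^{*}$ because left multiplication by $\widetilde{S}_w$ and right multiplication by $\widetilde{S}_{w'}^{*}$ on $\mathbb{H}$ commute.

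The only part that requires a genuine calculation is (iii). Since $\widetilde{S}_\pi=\pi$, the composition in question is left multiplication by the operator $\pi\widetilde{S}_i\pi^{-1}\in\C(T)\#W$. To evaluate this I would combine the commutation rules $\pi X_j\pi^{-1}=X_{j+1}$ for $1\leq j<N$ and $\pi X_N\pi^{-1}=q^{-1}X_1$ with $\pi s_i\pi^{-1}=s_{i+1}$ (cyclic indexing), and then check in the three cases $1\leq i\leq N-2$, $i=N-1$, and $i=0$ that $\pi\widetilde{S}_i\pi^{-1}=\widetilde{S}_{i+1}$. The main obstacle is the two boundary cases: one needs the extra $q^{\pm1}$ appearing in the definition of $\widetilde{S}_0$ to cancel precisely against the $q^{\pm1}$ coming from $\pi X_N\pi^{-1}$ and its inverse, and this is the single spot in the argument where one must pay real attention to the bookkeeping, though it remains a routine verification.
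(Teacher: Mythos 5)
Your proof is correct and fills in exactly the verification that the paper leaves implicit (its entire proof is the one line that the lemma is a direct consequence of Proposition \ref{lemIntProp} and Theorem \ref{dualthm}): parts (i), (ii), (iv), (v) are the unwindings you give once one notes $\widetilde{S}_e^*=1$, and (iii) is the conjugation computation $\pi\widetilde{S}_i\pi^{-1}=\widetilde{S}_{i+1}$, whose boundary cases you handle correctly. The only minor point is that in (ii) the case $i=0$ is not literally covered by Proposition \ref{lemIntProp}(v), which is stated for $1\leq i<N$; it follows from the identical one-line computation with $d_{s_0}=k-k^{-1}q^{-1}x_1/x_N$, or alternatively from your part (iii) applied to the case $i=N-1$.
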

\begin{proof}
These are direct consequences of Proposition \ref{lemIntProp} and Theorem
\ref{dualthm}.
\end{proof}
To construct a $\mathbb{W}$-action from the maps
$\widetilde{\sigma}_{(w,w^\prime)}$ and
$\widetilde{\sigma}_\iota$, we need to renormalize the maps
appropriately. To do so, we describe as a first step the behavior
of the maps $\widetilde{\sigma}_{(w,w^\prime)}$ and
$\widetilde{\sigma}_\iota$ with respect to the $\mathbb{L}$-module
structure \eqref{Laction} on $\mathbb{H}$. This will allow us to
extend the maps $\widetilde{\sigma}_{(w,w^\prime)}$ and
$\widetilde{\sigma}_\iota$ to endomorphisms of
$H_0^{\mathbb{K}}\simeq \mathbb{K}\otimes_{\mathbb{L}}\mathbb{H}$,
which is a suitably flexible surrounding for the normalizations of
the maps to take place in.

Consider the group involution ${}^\diamondsuit: W\rightarrow W$
defined by $w^\diamondsuit=w$ for $w\in S_N$ and
$\lambda^\diamondsuit=-\lambda$ for $\lambda\in \Z^N$. Then
$\mathbb{W}$ acts on $T\times T$ by
\begin{equation*}
\begin{split}
(w,w^\prime)(t,\gamma)&=(wt,w^\prime{}^\diamondsuit\gamma),\\
\iota(t,\gamma)&=(\gamma^{-1},t^{-1})
\end{split}
\end{equation*}
for $w,w^\prime\in W$, where
$t^{-1}:=(t_1^{-1},\ldots,t_N^{-1})\in T$ and the action of $W$ on
$T$ is by $q$-dilations and permutations; see \eqref{WactionT}. By
transposition, this defines an action of $\mathbb{W}$ on
$\mathbb{K}=\mathcal{M}(T\times T)$ by field automorphisms,
\begin{equation}\label{doubleaction}
(\mathrm{w}f)(t,\gamma)=f(\mathrm{w}^{-1}(t,\gamma)),
\qquad \mathrm{w}\in\mathbb{W}.
\end{equation}
Note that $\mathbb{L}=\C[T\times T]$ is a $\mathbb{W}$-subalgebra
of $\mathbb{K}$.

\begin{lem}\label{actok}
For $h\in\mathbb{H}$ and $f\in \mathbb{L}$ we have
\begin{equation}\label{cat}
\begin{split}
\widetilde{\sigma}_{(w,w^\prime)}(f\cdot h)&=
((w,w^\prime)f)\cdot \widetilde{\sigma}_{(w,w^\prime)}(h),\\
\widetilde{\sigma}_{\iota}(f\cdot h)&=
(\iota f)\cdot \widetilde{\sigma}_{\iota}(h)
\end{split}
 \end{equation}
for $w,w^\prime\in W$.
\end{lem}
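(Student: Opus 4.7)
The strategy is a direct computation that leverages only the module-theoretic data from Theorem \ref{dualthm} and part (iv) of Proposition \ref{lemIntProp}, together with the compatibility of the group involution ${}^\diamondsuit$ with componentwise inversion on $T$.

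By $\C$-bilinearity it suffices to treat elementary tensors $f=f_1(x)f_2(y)$ with $f_1,f_2\in\C[T]$, so that $f\cdot h=f_1(X)\,h\,f_2(Y)$. For the first identity I would expand
\[
\widetilde{\sigma}_{(w,w^\prime)}(f\cdot h)=\widetilde{S}_w f_1(X)\,h\,f_2(Y)\widetilde{S}_{w^\prime}^*,
\]
and move scalars to the outside in two steps. On the left, Proposition \ref{lemIntProp}(iv) immediately gives $\widetilde{S}_w f_1(X)=(wf_1)(X)\widetilde{S}_w$. The right-hand commutation $f_2(Y)\widetilde{S}_{w^\prime}^*=\widetilde{S}_{w^\prime}^*\,(w^\prime{}^\diamondsuit f_2)(Y)$ is the main step, and I would derive it by conjugating by $*$: using that $*$ is an anti-involution and sends $Y^\lambda\mapsto X^{-\lambda}$, $X^\lambda\mapsto Y^{-\lambda}$ (Theorem \ref{dualthm}), one reduces to $\widetilde{S}_{w^\prime}f_2^-(X)=(w^\prime f_2^-)(X)\widetilde{S}_{w^\prime}$, which is another application of Proposition \ref{lemIntProp}(iv); here $g^-(t):=g(t^{-1})$. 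Undoing $*$ yields the claim provided $(w^\prime f_2^-)^-=w^\prime{}^\diamondsuit f_2$, i.e.\ $(w^{-1}t)^{-1}=(w^\diamondsuit)^{-1}t^{-1}$ for all $w\in W$ and $t\in T$.

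This last identity is the only genuinely new input, and it is essentially the defining property of ${}^\diamondsuit$: since componentwise inversion on $T$ intertwines the translation action $\lambda\cdot t=q^\lambda t$ with $(-\lambda)\cdot t=q^{-\lambda}t$ and commutes with $S_N$-permutations, a short check on the generators $\sigma\in S_N$ and $\lambda\in\Z^N$ of $W=S_N\ltimes\Z^N$ establishes it, and it then extends to all of $W$ because ${}^\diamondsuit$ is a group automorphism. Combining the two commutations gives
\[
\widetilde{\sigma}_{(w,w^\prime)}(f\cdot h)=(wf_1)(X)\,\widetilde{S}_w h\widetilde{S}_{w^\prime}^*\,(w^\prime{}^\diamondsuit f_2)(Y)=((w,w^\prime)f)\cdot\widetilde{\sigma}_{(w,w^\prime)}(h),
\]
where the last equality uses the explicit form of the $\mathbb{W}$-action on $T\times T$.

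For the $\iota$-identity I would simply apply the anti-algebra involution $*$:
\[
\widetilde{\sigma}_\iota(f\cdot h)=(f_1(X)\,h\,f_2(Y))^*=f_2(Y)^*\,h^*\,f_1(X)^*=f_2^-(X)\,h^*\,f_1^-(Y),
\]
again by Theorem \ref{dualthm}. On the other hand, $\iota(t,\gamma)=(\gamma^{-1},t^{-1})$ gives $\iota f=f_2^-(x)f_1^-(y)$, whence $(\iota f)\cdot\widetilde{\sigma}_\iota(h)=f_2^-(X)h^*f_1^-(Y)$, matching the previous expression. The only real obstacle in the argument is the bookkeeping around the three involutions ($*$, ${}^\diamondsuit$, and inversion on $T$); once the identity $(w^{-1}t)^{-1}=(w^\diamondsuit)^{-1}t^{-1}$ is isolated and verified on generators, the rest is mechanical.
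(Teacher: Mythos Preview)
Your proof is correct and follows essentially the same route as the paper: both reduce to elementary tensors, use Proposition~\ref{lemIntProp}(iv) for the $X$-commutation, obtain the $Y$-commutation by conjugating with the anti-involution $*$ from Theorem~\ref{dualthm}, and isolate the identity $(w' g^-)^- = w'^{\diamondsuit} g$ (the paper writes $g^-$ as $g^\diamondsuit$) as the key compatibility. Your treatment of the $\iota$-case is exactly what the paper means by ``immediate from the definition of the duality anti-involution,'' just spelled out in more detail.
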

\begin{proof}
From Proposition \ref{lemIntProp} we know that
$\widetilde{S}_wp(X)=(wp)(X)\widetilde{S}_w$ in $\mathbb{H}$
for $p\in\C[T]$ and $w\in W$. For $p\in\C[T]$ let $p^\diamondsuit\in\C[T]$
be defined by $p^\diamondsuit(t)=p(t^{-1})$, then we also have
\[
p(Y)\widetilde{S}_{w^\prime}^*=(\widetilde{S}_{w^\prime}p^\diamondsuit(X))^*=
((w^\prime p^\diamondsuit)(X)\widetilde{S}_{w^\prime})^*=
\widetilde{S}_{w^\prime}^*(w^\prime p^\diamondsuit)^\diamondsuit(Y)
\]
in $\mathbb{H}$. Hence for $p,r\in\C[T]$,
\[
\widetilde{\sigma}_{(w,w^\prime)}(p(X)hr(Y))=
(wp)(X)\widetilde{S}_wh\widetilde{S}_{w^\prime}^*(w^\prime
r^\diamondsuit)^\diamondsuit(Y).
\]
The first formula of \eqref{cat} now follows since
$(w^\prime r^\diamondsuit)^\diamondsuit=w^\prime{}^\diamondsuit r$.
The second is immediate
from the definition of the duality anti-involution.
\end{proof}

As a direct consequence of Lemma \ref{actok} the maps
$\widetilde{\sigma}_{(w,w^\prime)}$ ($w,w^\prime\in W$) and
$\widetilde{\sigma}_{\iota}$ uniquely extend to complex linear
endomorphisms of
$H_0^{\mathbb{K}}\simeq\mathbb{K}\otimes_{\mathbb{L}}\mathbb{H}$
such that \eqref{cat} is valid for all $f\in\mathbb{K}$ and $h\in
H_0^{\mathbb{K}}$. We keep the same notations
$\widetilde{\sigma}_{(w,w^\prime)}$ and $\widetilde{\sigma}_\iota$
for these maps. Note that the properties of
$\widetilde{\sigma}_{(w,w^\prime)}$ and $\widetilde{\sigma}_\iota$
as described in Lemma \ref{sigmaprop} also hold true as
identities between endomorphisms of $H_0^{\mathbb{K}}$.

\begin{thm}
There is a unique group homomorphism
\[
\tau\colon\mathbb{W}\to\textup{GL}_\C(H_0^{\mathbb{K}})
\]
satisfying
\begin{equation}\label{fish}
\begin{split}
\tau(w,w^\prime)(f)&=d_w(x)^{-1}d_{w^\prime}^\diamondsuit(y)^{-1}\cdot
\widetilde{\sigma}_{(w,w^\prime)}(f),\\
\tau(\iota)(f)&=\widetilde{\sigma}_{\iota}(f)
\end{split}
\end{equation}
for $w,w^\prime\in W$ and $f\in H_0^{\mathbb{K}}$. It satisfies
$\tau(\mathrm{w})(g\cdot f)=\mathrm{w}g\cdot \tau(\mathrm{w})(f)$ for
$g\in\mathbb{K}$, $f\in H_0^{\mathbb{K}}$ and
$\mathrm{w}\in\mathbb{W}$.
\end{thm}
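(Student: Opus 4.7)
The plan is to first observe that $d_w(x)$ and $d_{w'}^\diamondsuit(y)$ are nonzero elements of the integral domain $\mathbb{L}\subset\mathbb{K}$, hence invertible in $\mathbb{K}$, so formula \eqref{fish} unambiguously defines a $\C$-linear endomorphism $\tau(\mathrm{w})$ of $H_0^{\mathbb{K}}$ for every $\mathrm{w}\in\mathbb{W}$. Since every element of $\mathbb{W}$ has the form $(w,w')$ or $\iota(w,w')$, uniqueness of $\tau$ is automatic; the content of the theorem is that the prescription is multiplicative, with invertibility of each $\tau(\mathrm{w})$ following as a by-product (via $\tau(\mathrm{w})\tau(\mathrm{w}^{-1})=\tau(e)=\mathrm{id}$).

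The core step is multiplicativity on $W\times\{e\}$. First I would establish the intertwiner cocycle identity
\[
\widetilde{S}_{w_1}\widetilde{S}_{w_2}\;=\;d_{w_1}(X)\,(w_1d_{w_2})(X)\,d_{w_1w_2}(X)^{-1}\,\widetilde{S}_{w_1w_2}
\]
in $\C(T)\#W$, obtained by writing each intertwiner in factored form $\widetilde{S}_v=d_v(X)v$ (a restatement of \eqref{cw}) and applying the smash-product cross relation $w_1 f(X)=(w_1f)(X)w_1$. Transferring this to the $\mathbb{L}$-module structure on $\mathbb{H}$ (under which left multiplication by $X_i$ corresponds to the action of $x_i$) and then using the extended equivariance \eqref{cat} to pull the factor $d_{w_2}(x)^{-1}$ through $\widetilde{S}_{w_1}$, which turns it into $(w_1d_{w_2})(x)^{-1}$, one finds that the two sources of twist cancel precisely and yield $\tau(w_1,e)\tau(w_2,e)=\tau(w_1w_2,e)$. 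A parallel computation on the $y$-side, based on the identity $p(Y)\widetilde{S}_{w'}^*=\widetilde{S}_{w'}^*(w'^\diamondsuit p)(Y)$ established in the proof of Lemma \ref{actok} (and noting that on the second $T$-component the $\mathbb{W}$-action employs $w'^\diamondsuit$, which is precisely why the normalization uses $d_{w'}^\diamondsuit$ rather than $d_{w'}$), gives $\tau(e,w_1')\tau(e,w_2')=\tau(e,w_1'w_2')$. Combining these via Lemma \ref{sigmaprop}(v) yields multiplicativity on all of $W\times W$.

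For the $\mathbb{Z}_2$-factor, $\tau(\iota)^2=\widetilde{\sigma}_\iota^2=\mathrm{id}$ is immediate from $(h^*)^*=h$ (Theorem \ref{dualthm}). The twist identity $\tau(\iota)\tau(w,w')\tau(\iota)=\tau(w',w)$ reduces, after the $\ast$-computation $(\widetilde{S}_wh\widetilde{S}_{w'}^*)^*=\widetilde{S}_{w'}h^*\widetilde{S}_w^*$ (so $\widetilde{\sigma}_\iota\widetilde{\sigma}_{(w,w')}\widetilde{\sigma}_\iota=\widetilde{\sigma}_{(w',w)}$), to the elementary observations $\iota(d_w(x))=d_w^\diamondsuit(y)$ and $\iota(d_{w'}^\diamondsuit(y))=d_{w'}(x)$ (both direct from $\iota(t,\gamma)=(\gamma^{-1},t^{-1})$), combined with the extended equivariance \eqref{cat} for $\widetilde{\sigma}_\iota$ that pulls these normalizing scalars through correctly. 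Finally, the equivariance assertion $\tau(\mathrm{w})(g\cdot f)=(\mathrm{w}g)\cdot\tau(\mathrm{w})(f)$ follows immediately from \eqref{cat} applied with $g\in\mathbb{K}$, since the $\mathbb{K}$-valued normalizing factors in \eqref{fish} commute with $\mathbb{K}$-scalar multiplication on $H_0^{\mathbb{K}}$.

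The main obstacle is the bookkeeping in the multiplicativity step: one must carefully verify that the twist $(w_1d_{w_2})(x)^{-1}$ picked up when $d_{w_2}(x)^{-1}$ crosses $\widetilde{S}_{w_1}$ exactly cancels the corresponding factor produced by the intertwiner cocycle, leaving the ratio $d_{w_1}(x)/d_{w_1w_2}(x)$ to be absorbed by the prefactor $d_{w_1}(x)^{-1}$. That $d_w(x)^{-1}$ is the precise normalization making this cancellation occur reflects the defining cocycle property of $d_w$ built into \eqref{cw}.
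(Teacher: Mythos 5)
Your proof is correct, but it takes a genuinely different route from the paper's. The paper proves multiplicativity on $W\times\{e\}$ via the presentation of the extended affine Weyl group: using Lemma \ref{sigmaprop}(i)--(iii) and Lemma \ref{actok} it checks that the $\tau(s_i,e)$ satisfy the braid and quadratic relations and that $\tau(\pi,e)$ conjugates them correctly, so that the assignment on generators extends to a group homomorphism $W\times\{e\}\to\textup{GL}_\C(H_0^{\mathbb{K}})$, and only afterwards identifies this abstract extension with formula \eqref{fish} for general $w$ by appealing to Proposition \ref{lemIntProp} and its proof; the action of $\{e\}\times W$ is then obtained by conjugation with $\tau(\iota)$ rather than by a second direct computation. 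You instead verify $\tau(w_1,e)\tau(w_2,e)=\tau(w_1w_2,e)$ for arbitrary $w_1,w_2$ straight from the factorization $\widetilde{S}_w=d_w(X)w$ together with the equivariance \eqref{cat}, the cancellation being governed by your identity $\widetilde{S}_{w_1}\widetilde{S}_{w_2}=d_{w_1}(X)(w_1d_{w_2})(X)d_{w_1w_2}(X)^{-1}\widetilde{S}_{w_1w_2}$, which is just the well-definedness of $d_w$ in \eqref{cw} restated. Both arguments rest on the same Matsumoto-type input (Proposition \ref{lemIntProp}(i)), but yours is more uniform: it bypasses the step of matching the abstractly extended homomorphism with \eqref{fish} for general $w$ (which the paper treats tersely), at the cost of redoing the bookkeeping on the $y$-side, where the needed compatibility $\bigl((e,w_1')d_{w_2'}^{\diamondsuit}(y)\bigr)=(w_1'd_{w_2'})^{\diamondsuit}(y)$ does hold because $(v^{\diamondsuit}\gamma)^{-1}=v(\gamma^{-1})$ for all $v\in W$ --- the same fact that makes $d_{w'}^{\diamondsuit}(y)^{-1}$ the correct normalization. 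Your treatment of $\iota$ and the final combination via Lemma \ref{sigmaprop}(v) agree with the paper's.
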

\begin{proof}
The last statement is clear.

The action $\tau$ of $W\times \{e\}$ arises naturally from left
multiplication by normalized affine intertwiners on a suitable
localization of the double affine Hecke algebra (see Cherednik
\cite[\S 1.3]{C}). In the present set-up, one observes that Lemma
\ref{actok} and Lemma \ref{sigmaprop}(i)-(ii) imply that the
$\tau(s_i,e)$ ($i\in\Z_N$) satisfy the $\widehat{A}_{N-1}$ braid
relations and the quadratic relations $\tau(s_i,e)^2=
\textup{id}_{H_0^{\mathbb{K}}}$. Since furthermore $\tau(\pi,e)$
is a complex linear automorphism of $H_0^{\mathbb{K}}$ with
inverse $\tau(\pi^{-1},e)$, and
$\tau(\pi,e)\tau(s_i,e)\tau(\pi^{-1},e)=\tau(s_{i+1},e)$ for
$i\in\Z_N$ by Lemma \ref{actok} and Lemma \ref{sigmaprop}(iii), we
conclude that the formulas \eqref{fish} for the maps $\tau(s_i,e)$
($i\in\Z_N$) and $\tau(\pi,e)$ uniquely extend to a group
homomorphism $\tau: W\times \{e\}\rightarrow
\textup{GL}_\C(H_0^{\mathbb{K}})$. It follows from Proposition
\ref{lemIntProp} and its proof that the resulting group
homomorphism satisfies
\[\tau(w,e)f=d_w(x)^{-1}\cdot \widetilde{\sigma}_{(w,e)}f
\]
for $w\in W$. This is in accordance with formula \eqref{fish}.

Combining Lemma \ref{sigmaprop}(iv) with Lemma \ref{actok} we
can relate the complex endomorphism $\tau(e,w)$ (see \eqref{fish})
of $H_0^{\mathbb{K}}$ to $\tau(w,e)$ by the formula
\begin{equation}\label{switch}
\tau(e,w)=\tau(\iota)\tau(w,e)\tau(\iota),\qquad w\in W,
\end{equation}
where $\tau(\iota)$ is given by the second formula of \eqref{fish}.
Since $\tau(\iota)^2=\widetilde{\sigma}_\iota^2=
\textup{id}_{H_0^\mathbb{K}}$ we conclude that
$W\ni w\mapsto \tau(e,w)$ (see \eqref{fish}) defines a left
$W$-action on $H_0^{\mathbb{K}}$.

By Lemma \ref{sigmaprop} and Lemma \ref{actok} (v)
we have
\[\tau(w,e)\tau(e,w^\prime)=
\tau(w,w^\prime)=\tau(e,w^\prime)\tau(w,e)
\]
for all $w,w^\prime\in W$. Thus $\tau\colon W\times W\rightarrow
\textup{GL}_{\mathbb{C}}(H_0^{\mathbb{K}})$, defined by the first
formula of \eqref{fish}, is a group homomorphism. Combined with
\eqref{switch} and $\tau(\iota)^2=\textup{id}_{H_0^{\mathbb{K}}}$
we conclude that $\tau$ \eqref{fish} indeed defines a complex
linear action of $\mathbb{W}$ on $H_0^{\mathbb{K}}$.
\end{proof}

For $\mathrm{w}\in\mathbb{W}$ and $f\in H_0^{\mathbb{K}}=\mathbb{K}\otimes H_0$
we write $\mathrm{w}f$ for the action of $\mathrm{w}$
on the $\mathbb{K}$-coefficients
of $f$ in its expansion along a basis of $H_0$.
In other words, viewing $f(t,\gamma)$ as $H_0$-valued meromorphic function
in $(t,\gamma)\in T\times T$, the action is given by
$(\mathrm{w}f)(t,\gamma)=f(\mathrm{w}^{-1}(t,\gamma))$.
Consider $\textup{GL}_{\mathbb{K}}(H_0^{\mathbb{K}})$ as a $\mathbb{W}$-group
by the corresponding conjugation action
\begin{equation}\label{star}
(\mathrm{w},A)\mapsto
\mathrm{w} A \mathrm{w}^{-1},\qquad \mathrm{w}\in\mathbb{W},
\,\,\, A\in\textup{GL}_{\mathbb{K}}(H_0^{\mathbb{K}})
\end{equation}
by group automorphisms. We have the following direct consequence of
the previous theorem.
\begin{cor}\label{CorCoc}
The map $\mathrm{w}\mapsto
C_{\mathrm{w}}:=\tau(\mathrm{w})\mathrm{w}^{-1}$ is a cocycle of
$\mathbb{W}$ with values in the $\mathbb{W}$-group
$\textup{GL}_{\mathbb{K}} (H_0^{\mathbb{K}})$. In other words,
$C_{\mathrm{w}}\in\textup{GL}_{\mathbb{K}}(H_0^{\mathbb{K}})$ and
\[C_{\mathrm{w}\mathrm{w}^\prime}=
C_{\mathrm{w}}\mathrm{w}C_{\mathrm{w}^\prime}\mathrm{w}^{-1}
\]
for all $\mathrm{w},\mathrm{w}^\prime\in\mathbb{W}$.
\end{cor}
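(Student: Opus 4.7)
The plan is to deduce both assertions directly from the equivariance formula $\tau(\mathrm{w})(g\cdot f)=\mathrm{w}g\cdot \tau(\mathrm{w})(f)$ and the fact that $\tau$ is a group homomorphism, both of which are established in the preceding theorem. The only subtle point is that $\tau(\mathrm{w})$ is merely a $\C$-linear automorphism of $H_0^{\mathbb{K}}$, so the assertion $C_{\mathrm{w}}\in\textup{GL}_{\mathbb{K}}(H_0^{\mathbb{K}})$ requires an actual argument.

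First I would show that $C_{\mathrm{w}}$ is $\mathbb{K}$-linear. For $g\in\mathbb{K}$ and $f\in H_0^{\mathbb{K}}$, the action of $\mathrm{w}^{-1}$ on $H_0^{\mathbb{K}}=\mathbb{K}\otimes H_0$ is by the $\mathbb{W}$-action on the $\mathbb{K}$-coefficients, so $\mathrm{w}^{-1}(gf)=(\mathrm{w}^{-1}g)\cdot(\mathrm{w}^{-1}f)$, and hence
\[
C_{\mathrm{w}}(gf)=\tau(\mathrm{w})\bigl((\mathrm{w}^{-1}g)\cdot(\mathrm{w}^{-1}f)\bigr)=\mathrm{w}(\mathrm{w}^{-1}g)\cdot \tau(\mathrm{w})(\mathrm{w}^{-1}f)=g\cdot C_{\mathrm{w}}(f).
\]
Invertibility is immediate since $\tau(\mathrm{w})$ is a $\C$-linear automorphism and $\mathrm{w}^{-1}$ acts on $H_0^{\mathbb{K}}$ as an invertible $\C$-linear map; this shows $C_{\mathrm{w}}\in\textup{GL}_{\mathbb{K}}(H_0^{\mathbb{K}})$.

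For the cocycle relation, I would simply compute both sides. Using that $\tau$ is a homomorphism,
\[
C_{\mathrm{w}\mathrm{w}'}=\tau(\mathrm{w}\mathrm{w}')(\mathrm{w}\mathrm{w}')^{-1}=\tau(\mathrm{w})\tau(\mathrm{w}')\,\mathrm{w}'^{-1}\mathrm{w}^{-1},
\]
while
\[
C_{\mathrm{w}}\cdot\mathrm{w}C_{\mathrm{w}'}\mathrm{w}^{-1}=\tau(\mathrm{w})\mathrm{w}^{-1}\cdot\mathrm{w}\tau(\mathrm{w}')\mathrm{w}'^{-1}\mathrm{w}^{-1}=\tau(\mathrm{w})\tau(\mathrm{w}')\,\mathrm{w}'^{-1}\mathrm{w}^{-1},
\]
and the two expressions agree.

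The main (and only real) obstacle is making sure the twisted equivariance of $\tau$ is applied correctly and that one does not confuse the $\mathbb{W}$-action on $H_0^{\mathbb{K}}$ (on coefficients only) with the $\tau$-action; everything else is formal manipulation using the group homomorphism property of $\tau$.
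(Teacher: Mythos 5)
Your proof is correct and is exactly the formal verification the paper has in mind: the corollary is stated there as a direct consequence of the preceding theorem with no written proof, and your three steps ($\mathbb{K}$-linearity via the twisted equivariance, invertibility, and the two-line cocycle computation using that $\tau$ is a homomorphism) supply precisely the omitted details. The one point worth noting explicitly is that $C_{\mathrm{w}}^{-1}$ is automatically $\mathbb{K}$-linear as the inverse of a bijective $\mathbb{K}$-linear map, which completes the claim $C_{\mathrm{w}}\in\textup{GL}_{\mathbb{K}}(H_0^{\mathbb{K}})$.
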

For more details on non-abelian group cohomology, see, e.g.,
the appendix in \cite{S}.

\begin{rema}\label{Wtriv}
Interpreting $A\in\textup{End}_{\mathbb{K}}(H_0^{\mathbb{K}})$ as
$\textup{End}(H_0)$-valued meromorphic function $A(t,\gamma)$ in
$(t,\gamma)\in T\times T$, the action \eqref{star} becomes
$(\mathrm{w}A)(t,\gamma)=A(\mathrm{w}^{-1}(t,\gamma))$. In
particular, $\mathrm{w}A\mathrm{w}^{-1}=A$ for all
$\mathrm{w}\in\mathbb{W}$ if
$A\in\textup{End}_{\mathbb{K}}(H_0^{\mathbb{K}})$ is the
$\mathbb{K}$-linear extension of a complex linear endomorphism of
$H_0$. This is, for instance,  the case for the cocycle value
$C_{\iota}$ (see Subsection \ref{cocyclevalues}).
\end{rema}

\begin{rema}\label{rationality}
One may replace in this subsection $\mathbb{K}$ by the field
$\mathbb{C}(T\times T)$ of rational functions on $T\times T$.
Consequently, the cocycle value $C_{\mathrm{w}}(t,\gamma)$ for
$\mathrm{w}\in \mathbb{W}$ is a rational
$\textup{End}(H_0)$-valued function in $(t,\gamma)\in T\times T$.
We presented the results with respect to
$\mathbb{K}=\mathcal{M}(T\times T)$ since this is the natural
setting for the applications of the cocycle $C$ in the analytic
theory of the quantum KZ equations (to which we come at a later
stage).
\end{rema}

\subsection{Bispectral quantum KZ equations}\label{BqKZsection}
In this subsection, we use the cocycle
$C_{\mathrm{w}}\in\textup{GL}_{\mathbb{K}}(H_0^{\mathbb{K}})$
($\mathrm{w}\in\mathbb{W}$) to define a holonomic system of
$q$-difference equations on the space $H_0^{\mathbb{K}}$ of
$H_0$-valued meromorphic functions on $T\times T$.

The constructions thus far have led to a $\C$-linear action $\tau$
of $\mathbb{W}$ on $H_0^{\mathbb{K}}$. In terms of the cocycle
$C_{\mathrm{w}}$ ($\mathrm{w}\in\mathbb{W}$), it is given by
\begin{equation}\label{actionexpl}
(\tau(\mathrm{w})f)(t,\gamma)=
C_\mathrm{w}(t,\gamma)f(\mathrm{w}^{-1}(t,\gamma))
\end{equation}
for $\mathrm{w}\in\mathbb{W}$ and $f\in H_0^{\mathbb{K}}$, where
\eqref{actionexpl} should be read as identities between $H_0$-valued
meromorphic functions in $(t,\gamma)\in T\times T$.
It follows that $f\in H_0^{\mathbb{K}}$ is $\tau(\Z^N\times\Z^N)$-invariant
if and only if
\begin{equation}\label{BqKZeqn}
C_{(\lambda,\mu)}(t,\gamma)f(q^{-\lambda}t,q^\mu\gamma)=f(t,\gamma)
\qquad \forall\, \lambda,\mu\in\Z^N,
\end{equation}
viewed as identities between $H_0$-valued meromorphic functions on $T\times T$.
\begin{defi}\label{BqKZ}
We call the $q$-difference equations \eqref{BqKZeqn}
the bispectral quantum KZ (BqKZ) equations.
We write $\textup{SOL}$ for the set of functions $f\in H_0^{\mathbb{K}}$
satisfying the BqKZ equations \eqref{BqKZeqn}.
\end{defi}

Let $\mathbb{F}\subset\mathbb{K}$ denote the subfield consisting
of $f\in \mathbb{K}$ satisfying $(\lambda,\mu)f=f$ for all
$\lambda,\mu\in\Z^N$. Let furthermore $\mathbb{S}_N$ denote the
subgroup $\Z_2\ltimes (S_N\times S_N)$ of $\mathbb{W}$.
\begin{cor}
{\bf (i)} The BqKZ equations \eqref{BqKZeqn} form a holonomic
system of $q$-difference equations. In other words, the connection matrices
$C_{(\lambda,\mu)}$ ($\lambda,\mu\in\Z^N$) satisfy the compatibility conditions
\begin{equation}\label{compatible}
C_{(\lambda,\mu)}(t,\gamma)C_{(\nu,\xi)}(q^{-\lambda}t,q^{\mu}\gamma)=
C_{(\nu,\xi)}(t,\gamma)C_{(\lambda,\mu)}(q^{-\nu}t,q^{\xi}\gamma)
\end{equation}
for $\lambda,\mu,\nu,\xi\in\Z^N$, as $\textup{End}(H_0)$-valued meromorphic
functions in $(t,\gamma)\in T\times T$.\\
{\bf (ii)} The solution space $\textup{SOL}$ of BqKZ is a
$\tau(\mathbb{S}_N)$-invariant $\mathbb{F}$-subspace of
$H_0^{\mathbb{K}}$.
\end{cor}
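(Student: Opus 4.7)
The plan for (i) is to read off \eqref{compatible} as the pointwise form of the cocycle identity in Corollary \ref{CorCoc}, specialized to the abelian subgroup $\Z^N\times\Z^N\subset\mathbb{W}$. By Remark \ref{Wtriv} the $\mathbb{W}$-action on $\textup{GL}_{\mathbb{K}}(H_0^{\mathbb{K}})$ acts only on the meromorphic variable, so $(\mathrm{w} A\mathrm{w}^{-1})(t,\gamma)=A(\mathrm{w}^{-1}(t,\gamma))$; using $\mu^\diamondsuit=-\mu$, the inverse of $(\lambda,\mu)\in\Z^N\times\Z^N$ sends $(t,\gamma)$ to $(q^{-\lambda}t,q^{\mu}\gamma)$. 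Applying the cocycle identity to $\mathrm{w}=(\lambda,\mu)$, $\mathrm{w}'=(\nu,\xi)$ and to the reversed product, and using $(\lambda,\mu)(\nu,\xi)=(\nu,\xi)(\lambda,\mu)$ in $\mathbb{W}$, one gets two pointwise expressions for $C_{(\lambda+\nu,\mu+\xi)}(t,\gamma)$ whose equality is exactly \eqref{compatible}.

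For (ii) the first step is to observe that $\Z^N\times\Z^N$ is a normal subgroup of $\mathbb{W}$. It is normal in $W\times W$ since $W=S_N\ltimes\Z^N$, and it is stable under conjugation by $\iota$ because $\iota(\lambda,\mu)\iota^{-1}=(\mu,\lambda)\in\Z^N\times\Z^N$. Consequently, for any $\mathrm{w}\in\mathbb{W}$, $f\in\textup{SOL}$ and $\mathrm{v}\in\Z^N\times\Z^N$, setting $\mathrm{v}':=\mathrm{w}^{-1}\mathrm{v}\mathrm{w}\in\Z^N\times\Z^N$ gives
\[
\tau(\mathrm{v})\tau(\mathrm{w})f=\tau(\mathrm{w})\tau(\mathrm{v}')f=\tau(\mathrm{w})f,
\]
so $\tau(\mathrm{w})f\in\textup{SOL}$. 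In particular $\textup{SOL}$ is $\tau(\mathbb{S}_N)$-invariant (and, more strongly, $\tau(\mathbb{W})$-invariant).

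To verify that $\textup{SOL}$ is an $\mathbb{F}$-subspace, I apply the intertwining identity $\tau(\mathrm{w})(g\cdot f)=(\mathrm{w}g)\cdot\tau(\mathrm{w})(f)$ from the preceding theorem. Taking $g\in\mathbb{F}$, $f\in\textup{SOL}$ and $\mathrm{w}=(\lambda,\mu)\in\Z^N\times\Z^N$, both $\mathrm{w}g=g$ (by definition of $\mathbb{F}$) and $\tau(\mathrm{w})f=f$ (by definition of $\textup{SOL}$) hold, so $\tau(\mathrm{w})(g\cdot f)=g\cdot f$ and $g\cdot f\in\textup{SOL}$.

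Conceptually, the entire proof is formal unwinding of the cocycle property and the intertwining identity already established. There is no genuine obstacle; the only point requiring attention is the normality of $\Z^N\times\Z^N$ inside $\mathbb{W}$, where one must check the $\iota$-conjugation in addition to the obvious semidirect-product structure of each factor $W=S_N\ltimes\Z^N$.
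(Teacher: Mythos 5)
Your proposal is correct and follows essentially the same route as the paper: part (i) is the cocycle identity applied to the two commuting factorizations of $(\lambda+\nu,\mu+\xi)$, so both sides of \eqref{compatible} equal $C_{(\lambda+\nu,\mu+\xi)}(t,\gamma)$, and part (ii) follows from the normality of $\Z^N\times\Z^N$ in $\mathbb{W}$ with quotient $\mathbb{S}_N$ together with the intertwining property $\tau(\mathrm{w})(g\cdot f)=\mathrm{w}g\cdot\tau(\mathrm{w})(f)$. Your explicit verification of the $\iota$-conjugation and of the $\mathbb{F}$-linearity merely spells out steps the paper treats as immediate.
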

\begin{proof}
(i) The cocycle condition implies that both sides of
\eqref{compatible} are equal to
$C_{(\lambda+\nu,\mu+\xi)}(t,\gamma)$.\\
(ii) Clearly, $\textup{SOL}$ is a $\mathbb{F}$-subspace of
$H_0^{\mathbb{K}}$. Note, furthermore, that $\Z^N\times\Z^N$ is a
normal subgroup of $\mathbb{W}$ with quotient group isomorphic to
$\mathbb{S}_N$. Hence the $\mathbb{F}$-subspace $\textup{SOL}$ of
$\tau(\Z^N\times\Z^N)$-invariant elements in the
$\tau(\mathbb{W})$-module $H_0^{\mathbb{K}}$ is
$\tau(\mathbb{S}_N)$-invariant.
\end{proof}

\section{The explicit form of the bispectral quantum KZ equations}
\label{SectionExplForm}

In this section we derive explicit expressions for the cocycle
values $C_{\mathrm{w}}$ ($\mathrm{w}\in\mathbb{W}$) and, in
particular, for the $q$-connection matrices $C_{(\lambda,\mu)}$
($\lambda,\mu\in\Z^N$) of the BqKZ. It will become apparent that the
$C_{(\lambda,e)}(\cdot,\zeta)$ ($\lambda\in\Z^N$) with $\zeta\in T$
fixed coincide with the $q$-connection matrices of Cherednik's
quantum affine KZ equation associated to the principal series module
of $H(k)$ with central character $\zeta$. They also turn up as
gauged $q$-connection matrices for a Frenkel-Reshetikhin \cite{FR} type
quantum KZ equation associated to the quantum affine algebra
$\mathcal{U}_k(\widehat{sl}_N)$.
\subsection{Formal principal series}\label{formal}
View the commutative subalgebra $\C_Y[T]$ of $H$ as left $\C_Y[T]$-module
by left multiplication. Let $M=\textup{Ind}_{\C_Y[T]}^H\bigl(\C_Y[T]\bigr)$
be the corresponding induced left $H$-module. With respect to the
$\C[T]\simeq\C[\{1\}\times T]$-module structure
\[
f\cdot (h\otimes_{\C_Y[T]}g(Y))=h\otimes_{\C_Y[T]}(fg)(Y)\qquad
f,g\in\C[T],\,\, h\in H
\]
on $M$ we have $M\simeq H_0^{\C[\{1\}\times T]}=\C[\{1\}\times
T]\otimes H_0$ as $\C[\{1\}\times T]$-modules. The left $H$-action
on $M$ is $\C[\{1\}\times T]$-linear, hence we obtain an algebra
homomorphism
\[
\eta:
H\rightarrow \textup{End}_{\C[\{1\}\times T]}\bigl(H_0^{\C[\{1\}\times T]}\bigr).
\]
We occasionally view $\eta(h)$ as $\textup{End}(H_0)$-valued
regular function in $\gamma\in T$, in which case we write it as
$T\ni \gamma\mapsto\eta(h)(\gamma)$. Extending the ground ring
$\C[\{1\}\times T]$ to $\mathbb{K}=\mathcal{M}(T\times T)$ we
obtain an algebra homomorphism
\[H\rightarrow \textup{End}_{\mathbb{K}}(H_0^{\mathbb{K}}),
\]
which we shall also denote by $\eta$. {}From this viewpoint,
$\eta(h)(\gamma)$ is the regular $\textup{End}(H_0)$-valued
function in $(t,\gamma)\in T\times T$ which is constant in $t$.
Note that $\eta(h)$ for $h\in H_0$ is constant as
$\textup{End}(H_0)$-valued function on $T\times T$ (see Remark
\ref{Wtriv}).
\begin{lem}\label{etaexplicit}
For $w\in S_N$ and $1\leq i<N$ we have
\begin{equation}\label{formulaTI}
\eta(T_i)T_w=
\begin{cases}
T_{s_iw}\qquad &\hbox{ if } \ell(s_iw)=\ell(w)+1,\\
(k-k^{-1})T_w+T_{s_iw}\qquad &\hbox{ if } \ell(s_iw)=\ell(w)-1,
\end{cases}
\end{equation}
and
\begin{equation}\label{formulaEtaPi}
\eta(\pi)(\gamma)T_w=\gamma_{w^{-1}(N)}T_{\sigma w}
\end{equation}
as regular $H_0$-valued functions in $\gamma\in T$.
\end{lem}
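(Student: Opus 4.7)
The plan is to handle the two formulas separately, using the identification $M \simeq H_0^{\C[\{1\}\times T]}$ via $h \otimes g(Y) \leftrightarrow h \otimes g$ with cyclic generator $1 \in \C_Y[T]$ satisfying $f(Y)\cdot 1 = f(\gamma)\cdot 1$.

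For \eqref{formulaTI}: since $T_i \in H_0$ and $T_w \in H_0$, the product $T_i T_w$ already lies in $H_0$ with no $Y$ involved, so $\eta(T_i)T_w$ is just the finite Hecke algebra product. The two cases are then the standard Hecke multiplication formulas: if $\ell(s_iw)=\ell(w)+1$ this is the definition of $T_{s_iw}$; otherwise $T_w = T_i T_{s_iw}$ and applying the quadratic relation $T_i^2 = (k-k^{-1})T_i + 1$ gives the second line.

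For \eqref{formulaEtaPi}: I would reduce it to the claim
\[
\pi T_w = T_{\sigma w}\, Y_{w^{-1}(N)} \qquad \text{in } H,
\]
after which applying both sides to $1$ immediately gives the formula, as $Y_{w^{-1}(N)}\cdot 1 = \gamma_{w^{-1}(N)}\cdot 1$ and $T_{\sigma w}$ acts by left multiplication. The starting observation is that \eqref{eqYdef} at $i=N$ has an empty trailing product $T_{N-1}\cdots T_N$ and thus yields $\pi = T_1T_2\cdots T_{N-1} Y_N = T_\sigma Y_N$, which is the base case $w=e$ of the claim.

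I would then prove the claim by induction on $\ell(w)$. Given the identity for $w$, take $w' = ws_i$ with $\ell(w')=\ell(w)+1$, i.e.\ $w(i)<w(i+1)$, and right-multiply by $T_i$:
\[
\pi T_{w'} = T_{\sigma w}\, Y_{w^{-1}(N)}\, T_i.
\]
The analysis splits by whether $w(i+1)=N$ (the case $w(i)=N$ is excluded by $w(i)<w(i+1)$). If neither of $w(i),w(i+1)$ equals $N$ then $Y_{w^{-1}(N)}$ commutes past $T_i$ by the Bernstein cross relation, and since $\sigma$ preserves the order of $1,\ldots,N-1$ one has $(\sigma w)(i)<(\sigma w)(i+1)$, so $T_{\sigma w}T_i = T_{\sigma w s_i}=T_{\sigma w'}$; together with $w'^{-1}(N)=w^{-1}(N)$ this gives the claim.

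The only subtle case — and the main obstacle — is $w(i+1)=N$, where both $Y_{w^{-1}(N)}T_i$ and $T_{\sigma w}T_i$ pick up extra $(k-k^{-1})$ terms. Here I would use $Y_{i+1}T_i = T_i^{-1}Y_i = (T_i - (k-k^{-1}))Y_i$ from \eqref{eqYdef}, together with $T_{\sigma w}T_i = (k-k^{-1})T_{\sigma w}+T_{\sigma w s_i}$ coming from $\ell(\sigma w s_i)=\ell(\sigma w)-1$ (which holds because $\sigma w(i)=w(i)+1>1=\sigma(N)=\sigma w(i+1)$) and the quadratic relation. Substituting these two expansions, the two $(k-k^{-1})T_{\sigma w}Y_i$ contributions cancel and one is left with $\pi T_{w'} = T_{\sigma w'}Y_i = T_{\sigma w'}Y_{w'^{-1}(N)}$, as required. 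Verifying this cancellation cleanly is the only nontrivial computation in the whole proof; everything else is bookkeeping with lengths and standard Bernstein/Hecke relations.
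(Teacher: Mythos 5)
Your proof is correct, and both formulas are handled the right way: \eqref{formulaTI} really is just finite Hecke multiplication inside $H_0\otimes 1\subset H_0\otimes\C_Y[T]$, and \eqref{formulaEtaPi} reduces, exactly as in the paper, to the algebra identity $\pi T_w=T_{\sigma w}Y_{w^{-1}(N)}$ in $H$, proved by induction on $\ell(w)$ from the base case $\pi=T_\sigma Y_N$. Where you diverge from the paper is in the organization of the induction step. The paper strips a simple reflection off the \emph{left}, writing $w=s_iu$ with $\ell(s_iu)=\ell(u)+1$, and uses the conjugation relation $\pi T_i=T_{i+1}\pi$; the price is that the case $i=N-1$ cannot be avoided for the chain elements $w=s_{N-1}s_{N-2}\cdots s_j$, which therefore get a separate preliminary computation via repeated use of $T_rY_{r+1}T_r=Y_r$. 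You instead strip a reflection off the \emph{right}, $w'=ws_i$, leave $\pi$ alone, and push $T_i$ through $Y_{w^{-1}(N)}$ using the Bernstein cross relations; the price is the explicit cancellation of the two $(k-k^{-1})T_{\sigma w}Y_i$ terms in the case $w(i+1)=N$, which I have checked and which does go through (note $\ell(\sigma w s_i)=\ell(\sigma w)-1$ there, as you say). The two arguments use the same toolbox — your case $w(i+1)=N$ is morally the paper's chain-element computation relocated into the induction step — so neither is shorter, but yours has a single uniform induction with no exceptional family of $w$'s. One small attribution slip: $Y_{i+1}T_i=T_i^{-1}Y_i$ comes from the Bernstein cross relation $T_i^{-1}Y_iT_i^{-1}=Y_{i+1}$ in the presentation theorem, not from the definition \eqref{eqYdef} itself.
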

\begin{proof}
The first formula follows directly from the definitions.
For the second formula it suffices to verify that
$\pi T_w=T_{\sigma w}Y_{w^{-1}(N)}$ in $H$.

If $w=1$ then $\pi=T_\sigma Y_N$
since $\sigma=s_1s_2\cdots s_{N-1}$ is a reduced expression.
If $w=s_{N-1}$ then $$\pi T_w=\pi T_{N-1}=T_1\cdots
T_{N-2}Y_{N-1}=T_{s_1\cdots s_{N-2}}Y_{w^{-1}(N)}= T_{\sigma
w}Y_{w^{-1}(N)}.$$

Next, we prove that $\pi T_w=T_{\sigma w}Y_{w^{-1}(N)}$ in $H$ if
$w\not=1$ and $\ell(s_iw)=\ell(w)+1$ for all $1\leq i\leq N-2$.
Then $w=s_{N-1}s_{N-2}\cdots s_j$ for some $1\leq j<N$ (and this
is a reduced expression of $w$). We find, making repetitive use of
the cross relation $T_rY_{r+1}T_r=Y_r$ ($1\leq r< N$) in $H$,
\begin{equation*}
    \begin{split}
    \pi T_w&=\pi T_{N-1}T_{N-2}\cdots T_j = T_1\cdots
    T_{N-2}Y_{N-1}T_{N-2}\cdots T_j\\
        &=T_1\cdots T_{N-3}Y_{N-2}T_{N-3}\cdots T_j\\
        &\:\:\vdots\\
        &=T_1\cdots T_{j-1}Y_j = T_{s_1\cdots s_{j-1}}Y_j\\
        &=T_{\sigma w}Y_j=T_{\sigma w}Y_{w^{-1}(N)},
    \end{split}
\end{equation*}
which is the desired relation in $H$.

The general case is now proved by induction on $\ell(w)$.
Let $w\not=1$ and decompose it as $w=s_iu$ with $1\leq i<N$ and
$u\in S_N$ such that $\ell(s_iu)=\ell(u)+1$. Suppose that
$\pi T_u=T_{\sigma u}Y_{u^{-1}(N)}$ in $H$. In order to prove
that $\pi T_w=T_{\sigma w}Y_{w^{-1}(N)}$ we may, in view of the previous
paragraph, assume without loss of generality that
$1\leq i\leq N-2$. Then $s_i(N)=N$
and $\ell(s_{i+1}\sigma u)=\ell(\sigma u)+1$. (The latter equality
is equivalent to $(\sigma u)^{-1}(i+1)<(\sigma u)^{-1}(i+2)$, which
follows from $u^{-1}(i)<u^{-1}(i+1)$, which again is equivalent to
the assumption $\ell(s_iu)=\ell(u)+1$.) Then
\begin{equation*}
    \begin{split}
    \pi T_w&=\pi T_iT_u = T_{i+1}\pi T_u = T_{i+1}T_{\sigma u}Y_{u^{-1}(N)}\\
        &=T_{s_{i+1}\sigma u}Y_{u^{-1}s_i(N)} = T_{\sigma s_{i}
        u}Y_{w^{-1}(N)}\\
        &=T_{\sigma w}Y_{w^{-1}(N)}
    \end{split}
\end{equation*}
in $H$, which completes the proof.
\end{proof}

In view of the explicit expression \eqref{Si} for the intertwiner
$\widetilde{S}_i\in\mathbb{H}$ ($1\leq i<N$) and the definition of
the duality anti-involution, we have $\widetilde{S}_w^\ast\in H$
for all $w\in S_N$. We now set
\[
\xi_w:=\eta(\widetilde{S}_{w^{-1}}^*)T_e\in H_0^{\mathbb{K}},\qquad
w\in S_N.
\]
Note that $\xi_w\in H_0^{\C[\{1\}\times T]}\subset
H_0^{\mathbb{K}}$ for $w\in S_N$.
We view $\xi_w$ as regular $H_0$-valued function in $\gamma\in T$, as well as
meromorphic $H_0$-valued function in $(t,\gamma)\in T\times T$
constant in $t\in T$.
\begin{lem}\label{commoneig}
$\{\xi_w\}_{w\in S_N}$ is a $\mathbb{K}$-basis of $H_0^{\mathbb{K}}$
consisting of common eigenfunctions for the $\eta$-action of $\C_Y[T]$
on $H_0^{\mathbb{K}}$. For $p\in\C[T]$ and $w\in S_N$ we have
\begin{equation}\label{eigfc}
\eta(p(Y))(\gamma)\xi_w(\gamma)=(w^{-1}p)(\gamma)\xi_w(\gamma)
\end{equation}
as $H_0$-valued regular functions in $\gamma\in T$.
\end{lem}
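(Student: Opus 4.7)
The plan is to establish the eigenvalue equation \eqref{eigfc} first by transporting the $X$-intertwining property of $\widetilde{S}_w$ across the duality anti-involution $*$, and then to deduce the $\mathbb{K}$-basis claim by a dimension count combined with a joint-eigenvalue linear independence argument.

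For \eqref{eigfc}, I will apply $*$ to the intertwining relation $\widetilde{S}_w f(X) = (wf)(X) \widetilde{S}_w$ of Proposition \ref{lemIntProp}(iv). Since $(X^\lambda)^* = Y^{-\lambda}$ one has $f(X)^* = f^\diamondsuit(Y)$ with $f^\diamondsuit(t) := f(t^{-1})$, so applying $*$ gives
\[
f^\diamondsuit(Y)\, \widetilde{S}_w^* = \widetilde{S}_w^*\, (wf)^\diamondsuit(Y)
\]
in $H$. For $w \in S_N$ the operations $w$ and $\diamondsuit$ on $\C[T]$ commute (permutations commute with coordinatewise inversion on $T$), so with $g := f^\diamondsuit$ this rewrites as the clean intertwining $g(Y)\widetilde{S}_w^* = \widetilde{S}_w^*(wg)(Y)$ in $H$. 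The construction of the induced module $M$ together with the identification $M \simeq H_0^{\C[\{1\}\times T]}$ immediately yields $\eta(p(Y))(\gamma) T_e = p(\gamma) T_e$, and $\eta(\widetilde{S}_{w^{-1}}^*)(\gamma)$ is by construction a $\C$-linear endomorphism of $H_0$. Combining these gives
\[
\eta(p(Y))(\gamma)\,\xi_w(\gamma) = \eta\bigl(\widetilde{S}_{w^{-1}}^*\bigr)(\gamma)\bigl((w^{-1}p)(\gamma) T_e\bigr) = (w^{-1}p)(\gamma)\,\xi_w(\gamma),
\]
which is \eqref{eigfc}.

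For the basis statement, since $\dim_\mathbb{K} H_0^\mathbb{K} = |S_N| = N!$, it suffices to prove $\mathbb{K}$-linear independence. I will pick a generic $\gamma_0 \in T$ at which the $N!$ points $\{w\gamma_0 : w \in S_N\}$ are pairwise distinct. By \eqref{eigfc}, $\xi_w(\gamma_0)$ is then a joint eigenvector of the commuting family $\{\eta(p(Y))(\gamma_0) : p \in \C[T]\}$ with pairwise distinct character $p \mapsto p(w\gamma_0)$. Nonvanishing of $\xi_w$ on a dense open set in $T$ follows from $\widetilde{S}_{w^{-1}}^* \neq 0$ in $H$ together with its PBW expansion $\widetilde{S}_{w^{-1}}^* = \sum_{u \in S_N} T_u a_u^{(w)}(Y)$, which yields $\xi_w(\gamma) = \sum_u a_u^{(w)}(\gamma) T_u$ with some $a_u^{(w)} \not\equiv 0$. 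A standard commutative-algebra argument then shows $\{\xi_w(\gamma_0)\}_{w \in S_N}$ is $\C$-linearly independent in $H_0$, so a $\mathbb{K}$-relation $\sum_w f_w\xi_w = 0$ evaluated at generic $(t_0,\gamma_0)$ forces $f_w(t_0,\gamma_0) = 0$ on a dense open subset of $T\times T$, hence $f_w \equiv 0$. The main subtle point is the commutation $(wf)^\diamondsuit = w(f^\diamondsuit)$ for $w \in S_N$; for translation elements it would fail without the modification $w \mapsto w^\diamondsuit$, which is precisely why the lemma is stated only for $w \in S_N$.
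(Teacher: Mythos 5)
Your proof is correct and follows essentially the same route as the paper's: the eigenvalue equation \eqref{eigfc} is obtained by applying the duality anti-involution $*$ to the $X$-intertwining relation of Proposition \ref{lemIntProp}(iv) (exactly the computation carried out in the proof of Lemma \ref{actok}, which the paper cites) and combining with $\eta(p(Y))(\gamma)T_e=p(\gamma)T_e$, after which linear independence follows from the distinctness of the joint eigenvalues together with the nonvanishing of the $\xi_w$. The only cosmetic difference is that you establish nonvanishing via the PBW expansion of $\widetilde{S}_{w^{-1}}^*$ rather than via Proposition \ref{lemIntProp}(iv)--(v); both are valid.
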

\begin{proof}
For $p\in\C[T]$ we have $\eta(p(Y))(\gamma)T_e=p(\gamma)T_e$.
Note, furthermore, that
$p(Y)\widetilde{S}_{w^{-1}}^\ast=\widetilde{S}_{w^{-1}}^\ast
(w^{-1}p)(Y)$ in $H$ for $p\in\C[T]$ and $w\in S_N$; see the proof
of Lemma \ref{actok}. Combining the two observations gives
\eqref{eigfc}. It follows from Proposition
\ref{lemIntProp}(iv)-(v) that the $\xi_w$ ($w\in S_N$) are nonzero
in $H_0^{\mathbb{K}}$. The eigenvalue equations \eqref{eigfc} then
show that the $\xi_w$ ($w\in S_N)$ are $\mathbb{K}$-linearly
independent in $H_0^{\mathbb{K}}$.
\end{proof}

\subsection{The cocycle values}\label{cocyclevalues}
We define
\[
R_i(z)=c_k(z)^{-1}(\eta(T_i)-k)+1,\qquad 1\leq i<N,
\]
viewed as a rational $\textup{End}(H_0)$-valued function in $z$.
The results of the previous subsection implies that the $R_i(z)$
satisfy the following Yang-Baxter type equations (see Cherednik
\cite[\S 1.3.2]{C}).
\begin{lem}\label{YBlem}
We have
\[C_{(s_i,e)}(t,\gamma)=R_i(t_i/t_{i+1}),\qquad 1\leq i<N,
\]
as rational $\textup{End}(H_0)$-valued
functions in $(t,\gamma)\in T\times T$. In particular, the $R_i(z)$
satisfy
\begin{equation}\label{YBform}
\begin{split}
R_i(z)R_i(z^{-1})&=\textup{id},\\
R_j(z)R_{j+1}(zz^\prime)R_j(z^\prime)&=
R_{j+1}(z^\prime)R_j(zz^\prime)R_{j+1}(z),
\end{split}
\end{equation}
for $1\leq i<N$ and $1\leq j<N-1$
as $\textup{End}(H_0)$-valued rational functions.
\end{lem}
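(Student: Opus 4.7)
The plan is to read off $C_{(s_i,e)}(t,\gamma)$ by evaluating $\tau(s_i,e)$ on a constant element of $H_0$, and then to derive both Yang-Baxter identities formally from the cocycle relation of Corollary \ref{CorCoc}.

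For the first statement, I would pick $h\in H_0$, view it as a constant function in $H_0^{\mathbb{K}}$, and note that by \eqref{actionexpl} this yields $(\tau(s_i,e)h)(t,\gamma)=C_{(s_i,e)}(t,\gamma)h$, reducing the task to an explicit computation of $\tau(s_i,e)h$. Using \eqref{fish} and the expansion \eqref{Si} of $\widetilde{S}_i$, together with the observation that $(T_i-k)h\in H_0$, the PBW isomorphism $\mathbb{H}\simeq\mathbb{L}\otimes H_0$ presents $\widetilde{S}_{s_i}h$ as the $H_0$-valued function $(1-t_{i+1}/t_i)(T_i-k)h+d_{s_i}(t)h$ on $T\times T$ (constant in $\gamma$). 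Dividing by $d_{s_i}(t)$ and invoking the elementary identity $(1-t_{i+1}/t_i)/d_{s_i}(t)=c_k(t_i/t_{i+1})^{-1}$, which follows at once from \eqref{ck}, together with the fact that left multiplication by $T_i$ on $H_0$ agrees with $\eta(T_i)$, one reads off $R_i(t_i/t_{i+1})h$, as required.

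For the Yang-Baxter identities I would use Corollary \ref{CorCoc}. The unitarity $R_i(z)R_i(z^{-1})=\mathrm{id}$ follows from $(s_i,e)^2=(e,e)$ and $C_{(e,e)}=\mathrm{id}$, which yield $\mathrm{id}=C_{(s_i,e)}(t,\gamma)C_{(s_i,e)}(s_it,\gamma)$; the ratio $(s_it)_i/(s_it)_{i+1}=t_{i+1}/t_i$ then concludes. For the braid relation, I would apply the cocycle identity iteratively to both sides of $s_js_{j+1}s_j=s_{j+1}s_js_{j+1}$ in $W$. Tracking how each intermediate $(s_\cdot,e)\in\mathbb{W}$ translates the first $T$-coordinate, one finds that $C_{(s_js_{j+1}s_j,e)}(t,\gamma)$ and $C_{(s_{j+1}s_js_{j+1},e)}(t,\gamma)$ equal, respectively, the triple products $R_j(t_j/t_{j+1})R_{j+1}(t_j/t_{j+2})R_j(t_{j+1}/t_{j+2})$ and $R_{j+1}(t_{j+1}/t_{j+2})R_j(t_j/t_{j+2})R_{j+1}(t_j/t_{j+1})$; setting $z=t_j/t_{j+1}$ and $z'=t_{j+1}/t_{j+2}$ (which range over independent parameters) yields \eqref{YBform}.

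The only real obstacle is the bookkeeping in the first step: one must correctly interpret the product $\widetilde{S}_{s_i}h$ through the $\mathbb{L}$-module (PBW) structure and match the normalization $d_{s_i}(t)^{-1}$ against the rational function $c_k$ appearing in the definition of $R_i$. Once this identification is in place, the Yang-Baxter relations are formal consequences of the cocycle property together with the quadratic and braid Coxeter relations satisfied by the $(s_i,e)\in\mathbb{W}$.
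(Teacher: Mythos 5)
Your proposal is correct and follows essentially the same route as the paper: evaluate $\tau(s_i,e)$ on a constant $h\in H_0$ using \eqref{fish} and the expansion \eqref{Si} to identify $C_{(s_i,e)}(t,\gamma)$ with $R_i(t_i/t_{i+1})$, then deduce unitarity and the braid relation formally from the cocycle identity applied to $s_i^2=e$ and $s_js_{j+1}s_j=s_{j+1}s_js_{j+1}$. Your explicit tracking of how each $(s_\cdot,e)$ shifts the $t$-arguments, and the resulting substitutions $z=t_j/t_{j+1}$, $z'=t_{j+1}/t_{j+2}$, correctly fills in the step the paper leaves implicit.
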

\begin{proof}
For $1\leq i<N$ and $h\in H_0$ we have, as $H_0$-valued
meromorphic functions in $(t,\gamma)\in T\times T$,
\begin{equation*}
\begin{split}
C_{(s_i,e)}(t,\gamma)h&=(\tau(s_i,e)h)(t,\gamma)\\
&=d_{s_i}(t)^{-1}(\widetilde{S}_ih)(t,\gamma)\\
&=c_k(t_i/t_{i+1})^{-1}(\eta(T_i)-k)h+h,
\end{split}
\end{equation*}
in view of the explicit expression \eqref{Si} for
$\widetilde{S}_i$, $c_k$ \eqref{ck} and $d_{s_i}$ (see the proof
of Lemma \ref{lemIntProp}). For the second statement of the lemma,
note that the cocycle property of $C$ implies for $1\leq i<N$ and
$1\leq j<N-1$ that
\begin{equation*}
\begin{split}
C_{(s_i,e)}(t,\gamma)C_{(s_i,e)}(s_it,\gamma)&=\textup{id},\\
C_{(s_j,e)}(t,\gamma)C_{(s_{j+1},e)}(s_jt,\gamma)&C_{(s_j,e)}
(s_{j+1}s_jt,\gamma)\\
&=C_{(s_{j+1},e)}(t,\gamma)C_{(s_j,e)}(s_{j+1}t,\gamma)
C_{(s_{j+1},e)}(s_js_{j+1}t,\gamma),
\end{split}
\end{equation*}
as rational $\textup{End}(H_0)$-valued functions in $(t,\gamma)\in T\times T$.
Using $C_{(s_i,e)}(t,\gamma)=R_i(t_i/t_{i+1})$ these formulas imply
\eqref{YBform}.
\end{proof}
Observe that $C_\iota$ is the $\mathbb{K}$-linear
extension of the anti-algebra involution of $H_0$ mapping $T_w$ to
$T_{w^{-1}}$ for all $w\in S_N$. Note furthermore that
\begin{equation}\label{Cpi}
C_{(\pi,e)}=\eta(\pi).
\end{equation}
Together with the explicit description of $C_{(s_i,e)}$ ($1\leq i<N$)
from the previous lemma, these formulas determine the
values $C_{\mathrm{w}}$ ($\mathrm{w}\in\mathbb{W}$) uniquely
(cf. Corollary \ref{CorCoc}).
In particular, the cocycle property of $C$ implies that
\[C_{(e,w)}(t,\gamma)=C_\iota C_{(w,e)}(\gamma^{-1},t^{-1})C_\iota,\qquad
w\in W,
\]
as $\textup{End}(H_0)$-valued rational functions
in $(t,\gamma)\in T\times T$.
\begin{lem}\label{singularC}
Let $w\in W$.\\
{\bf (i)} $C_{(w,e)}\in (\C(T)\otimes \C[T])\otimes
\textup{End}(H_0)$.\\
{\bf (ii)} The $\C[T]\otimes\textup{End}(H_0)$-valued rational function
$t\mapsto C_{(w,e)}(t,\cdot)$ in $t\in T$ is regular at
$t\in T\setminus \mathcal{S}$, where
\begin{equation}\label{Scal}
\mathcal{S}=\{t\in T \,\, | \,\,
t^\alpha\in k^{-2}q^\Z\quad \textup{for some } \alpha\in R \}.
\end{equation}
\end{lem}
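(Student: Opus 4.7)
The plan is to factor $C_{(w,e)}$ using the cocycle identity, analyze each elementary factor, and then combine everything using the $W$-invariance of the singular set $\mathcal{S}$.

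First I would choose any decomposition $w = \sigma_1 \sigma_2 \cdots \sigma_n$ with $\sigma_j\in\{s_1,\ldots,s_{N-1}\}\cup\{\pi^{\pm 1}\}$, which is possible since $W$ is generated by $S_N$ and $\Omega$. Iterating the cocycle relation from Corollary \ref{CorCoc}, and noting that the conjugation action of $(\mathrm{w},e)$ satisfies $(\mathrm{w}A)(t,\gamma) = A(\mathrm{w}^{-1}(t,\gamma))$, gives
\[
C_{(w,e)}(t,\gamma) \;=\; \prod_{j=1}^{n} C_{(\sigma_j,e)}\bigl((\sigma_1\cdots\sigma_{j-1})^{-1} t,\,\gamma\bigr).
\]
This reduces the lemma to controlling each factor and understanding how the pullback by $(\sigma_1\cdots\sigma_{j-1})^{-1}\in W$ affects its singular locus.

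Next I would treat the two types of factors. By Lemma \ref{YBlem}, $C_{(s_i,e)}(t,\gamma) = R_i(t_i/t_{i+1})$ for $1\le i<N$; this is independent of $\gamma$, and its only poles as a rational function in $t$ lie where $c_k(t_i/t_{i+1})=0$, i.e. on the hyperplane $t^{\alpha_{i,i+1}}=k^{-2}$ with $\alpha_{i,i+1}=\epsilon_i-\epsilon_{i+1}\in R$. For the $\pi^{\pm 1}$ factors, \eqref{Cpi} gives $C_{(\pi^{\pm 1},e)}(t,\gamma) = \eta(\pi^{\pm 1})(\gamma)$, which is constant in $t$; the explicit formula \eqref{formulaEtaPi} of Lemma \ref{etaexplicit} shows that $\eta(\pi)(\gamma)$ acts on the basis $\{T_u\}_{u\in S_N}$ of $H_0$ by Laurent monomials in the coordinates of $\gamma$, whence $\eta(\pi^{\pm 1})\in\C[T]\otimes\textup{End}(H_0)$.

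To pass to the product, consider a simple-reflection factor $C_{(s_i,e)}\bigl(v^{-1}t,\gamma\bigr)$ with $v=\sigma_1\cdots\sigma_{j-1}$. Writing $v=u\lambda$ with $u\in S_N$ and $\lambda\in\Z^N$, one reads off that $(v^{-1}t)_i/(v^{-1}t)_{i+1}=q^n t^{u\alpha_{i,i+1}}$ for some $n\in\Z$, so the pole of this factor occurs exactly on $t^{u\alpha_{i,i+1}}\in k^{-2}q^\Z$; since $u\alpha_{i,i+1}\in R$, this hyperplane lies in $\mathcal{S}$. Therefore every factor belongs to $\C(T)\otimes\C[T]\otimes\textup{End}(H_0)$ and is regular on $(T\setminus\mathcal{S})\times T$, and the same holds for the product, yielding both (i) and (ii). The main issue is the bookkeeping of how translational pieces of $W$ shift the ratios $t_i/t_{i+1}$ into the coset $q^\Z\cdot(t_i/t_{i+1})$; the appearance of $k^{-2}q^\Z$ (rather than just $k^{-2}$) in the definition of $\mathcal{S}$ is precisely what absorbs this shift and makes the bookkeeping trivial.
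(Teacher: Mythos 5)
Your proposal is correct and follows essentially the same route as the paper: factor $C_{(w,e)}$ via the cocycle condition into factors $C_{(s_i,e)}(ut,\gamma)$ and $\eta(\pi^{\pm 1})(\gamma)$, then use the single pole of $R_i(z)$ at $z=k^{-2}$ together with Lemma \ref{etaexplicit}. You merely make explicit the bookkeeping (the $q^{\Z}$-shifts and the $S_N$-rotation of the root) that the paper's two-line proof leaves implicit.
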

\begin{proof}
By the cocycle condition, $C_{(w,e)}(t,\gamma)$
can be written as a product of factors $C_{(s_i,e)}(ut,\gamma)$
($1\leq i<N$, $u\in W$) and $\eta(\pi^{\pm 1})(\gamma)$.
By Lemma \ref{etaexplicit} and the fact that $R_i(z)$ has a single pole
at $z=k^{-2}$, we conclude (i) and (ii).
\end{proof}

\subsection{The $q$-connection matrices}\label{qconSection}
Besides the standard $\Z$-basis $\{\epsilon_i\}_{i=1}^N$ of
$\Z^N$, we also have the $\Z$-basis $\{\varpi_i\}_{i=1}^N$
consisting of the fundamental weights
$\varpi_i:=\sum_{j=1}^i\epsilon_j\in\Z^N$. Note that
\begin{equation}\label{omegasigma}
\varpi_i=\pi^i\sigma^{-i}
\end{equation}
in $W$ for all $1\leq i\leq N$. In the following lemma, we compute
the $q$-connection matrices $C_{(\lambda,e)}$ for $\lambda\in\Z^N$
of BqKZ explicitly in case $\lambda$ is one of these two types of
basis elements of $\Z^N$.
\begin{lem}\label{explicit}
{\bf (i)} For $1\leq j\leq N$ we have
\begin{equation*}
\begin{split}
C_{(\epsilon_j,e)}(t,\gamma)&=
R_{j-1}(t_{j-1}/t_j)R_{j-2}(t_{j-2}/t_{j})\cdots R_1(t_1/t_j)\\
&\times\eta(\pi)(\gamma) R_{N-1}(qt_N/t_j)\cdots
R_{j+1}(qt_{j+2}/t_j)R_j(qt_{j+1}/t_j)
\end{split}
\end{equation*}
as rational $\textup{End}(H_0)$-valued functions in $(t,\gamma)\in T\times T$.\\
{\bf (ii)} For $1\leq i<N$ we have
\begin{equation*}
\begin{split}
C_{(\varpi_i,e)}(t,\gamma)&=\bigl(\eta(\pi)(\gamma)\bigr)^i
(R_{N-i}(qt_N/t_1)\cdots R_2(qt_{i+2}/t_1)R_1(qt_{i+1}/t_1))\\
&\times\cdots\times (R_{N-2}(qt_N/t_{i-1})\cdots R_{i}(qt_{i+2}/t_{i-1})
R_{i-1}(qt_{i+1}/t_{i-1}))\\
&\qquad\times (R_{N-1}(qt_N/t_i)\cdots
R_{i+1}(qt_{i+2}/t_i)R_i(qt_{i+1}/t_i))
\end{split}
\end{equation*}
as rational $\textup{End}(H_0)$-valued functions in $(t,\gamma)\in T\times T$.\\
{\bf (iii)} We have
\[C_{(\varpi_N,e)}(t,\gamma)=\gamma^{\varpi_N}\textup{id}
\]
as rational $\textup{End}(H_0)$-valued functions in $(t,\gamma)\in T\times T$.
\end{lem}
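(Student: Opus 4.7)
The plan is to compute each cocycle value by iteratively applying the cocycle identity $C_{\mathrm{w}\mathrm{w}'}(t,\gamma)=C_{\mathrm{w}}(t,\gamma)\,C_{\mathrm{w}'}(\mathrm{w}^{-1}(t,\gamma))$ from Corollary \ref{CorCoc}, using the elementary inputs $C_{(s_i,e)}(t,\gamma)=R_i(t_i/t_{i+1})$ from Lemma \ref{YBlem} and $C_{(\pi,e)}(t,\gamma)=\eta(\pi)(\gamma)$ from \eqref{Cpi}. The structural point that makes the combinatorics clean is that $\eta(\pi)(\gamma)$ is $t$-independent: each $\pi$-factor contributes an unchanged copy of $\eta(\pi)$ while acting on the running $t$-argument by $\pi^{-1}$.

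For (i), I take the reduced expression $\epsilon_j=s_{j-1}\cdots s_1\,\pi\,s_{N-1}\cdots s_j$ from \eqref{ej} and expand the cocycle factor by factor. Applying the initial string $s_{j-1},s_{j-2},\ldots,s_1$ cyclically pulls $t_j$ leftward through positions $j-1,j-2,\ldots,1$ of the argument, producing exactly $R_{j-1}(t_{j-1}/t_j),\ldots,R_1(t_1/t_j)$. The $\pi$-step contributes $\eta(\pi)(\gamma)$ and, via $\pi^{-1}(u_1,\ldots,u_N)=(u_2,\ldots,u_N,q^{-1}u_1)$, places $q^{-1}t_j$ at position $N$ of the new argument. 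The trailing string $s_{N-1},\ldots,s_j$ then migrates this entry leftward, contributing $R_{N-1}(qt_N/t_j),\ldots,R_j(qt_{j+1}/t_j)$.

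For (ii), I use the reduced expression $\varpi_i=\pi^i\sigma_1\sigma_2\cdots\sigma_i$, where $\sigma_j:=s_{N-i+j-1}s_{N-i+j-2}\cdots s_j$. Using \eqref{omegasigma} it suffices to check that $\sigma_1\sigma_2\cdots\sigma_i$ equals $\sigma^{-i}$ in $S_N$ and has length $i(N-i)=\ell(\varpi_i)$, both of which are straightforward permutation computations. Iterated cocycle immediately yields $C_{(\pi^i,e)}(t,\gamma)=\eta(\pi)(\gamma)^i$, and the remainder $C_{(\sigma_1\cdots\sigma_i,e)}(\pi^{-i}t,\gamma)$ is expanded block by block starting from $\pi^{-i}t=(t_{i+1},\ldots,t_N,q^{-1}t_1,\ldots,q^{-1}t_i)$. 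An induction on $j$ (modeled on the calculation in (i)) shows that at the start of block $\sigma_j$ the running argument has $q^{-1}t_j$ at position $N-i+1$, with the unshifted entries $t_{i+1},\ldots,t_N$ occupying positions $j,\ldots,N-i+j-1$; reading off $\sigma_j$ then produces precisely the claimed factors $R_{N-i+j-1}(qt_N/t_j),\ldots,R_j(qt_{i+1}/t_j)$.

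For (iii), I observe that $\varpi_N=\pi^N$ in $W\simeq S_N\ltimes\Z^N$, since $\pi=(\sigma,\epsilon_1)$ gives $\pi^N=(\sigma^N,\sum_{k=0}^{N-1}\sigma^k\epsilon_1)=(e,\varpi_N)$. Iterated cocycle together with the $t$-independence of $\eta(\pi)$ yields $C_{(\varpi_N,e)}(t,\gamma)=\eta(\pi)(\gamma)^N$. Iterating the formula $\eta(\pi)(\gamma)T_w=\gamma_{w^{-1}(N)}T_{\sigma w}$ from Lemma \ref{etaexplicit} $N$ times and using $\sigma^N=e$ together with $\{\sigma^{-k}(N)\colon 0\le k\le N-1\}=\{1,\ldots,N\}$, one finds $\eta(\pi)(\gamma)^NT_w=\gamma_1\gamma_2\cdots\gamma_N\,T_w=\gamma^{\varpi_N}T_w$ for each $w\in S_N$; hence the operator equals the scalar $\gamma^{\varpi_N}\cdot\textup{id}$. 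The main obstacle throughout is the bookkeeping in (ii), namely identifying the correct reduced expression for $\varpi_i$ and tracking the successive migration of the entries $q^{-1}t_k$ through the running argument in just the order that delivers the $R$-factors as stated.
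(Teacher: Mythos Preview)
Your proof is correct and follows essentially the same approach as the paper: for each part you use the same reduced expressions (namely \eqref{ej} for (i), \eqref{omegasigma} together with a reduced expression for $\sigma^{-i}$ for (ii), and $\varpi_N=\pi^N$ for (iii)) and then iterate the cocycle identity together with $C_{(s_i,e)}(t,\gamma)=R_i(t_i/t_{i+1})$ and $C_{(\pi,e)}=\eta(\pi)$. Your write-up is more explicit about the bookkeeping of the running $t$-argument (and your $\pi=(\sigma,\epsilon_1)$ is just the rewriting $\sigma\epsilon_N=\epsilon_{\sigma(N)}\sigma=\epsilon_1\sigma$), but the method is identical to the paper's.
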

\begin{proof}
(i) By the cocycle property of $C$ and by the expression
\eqref{ej} for $\epsilon_j\in W$, we obtain an explicit expression
for $C_{(\epsilon_j,e)}$ in terms of the $C_{(s_i,e)}$ ($1\leq
i<N$) and $C_{(\pi,e)}$. Combining \eqref{Cpi} and the previous
lemma then gives the desired expression for
$C_{(\epsilon_j,e)}(t,\gamma)$.\\
(ii) $\sigma^i$ is the permutation
$$\left(\begin{array}{cccccccc}
    1   & 2   & \cdots & N-i & N-i+1 & N-i+2 & \cdots & N\\
    i+1 & i+2 & \cdots & N   & 1     & 2     & \cdots & i
    \end{array}\right),$$
so we find a reduced expression
\begin{equation}\label{redex}
\sigma^i=(s_i\cdots s_{N-1})(s_{i-1}\cdots s_{N-2})\cdots
(s_2\cdots s_{N-i+1})(s_1\cdots s_{N-i}).
\end{equation}
Combined with \eqref{omegasigma} we get a reduced expression for
$\varpi_i$. Using the cocycle condition for $C_{\mathrm{w}}$
repeatedly, we get the desired
result.\\
(iii) Since $\sigma^N=1$, we get $C_{(\varpi_N,e)}(t,\gamma)=
\bigl(\eta(\pi)(\gamma)\bigr)^N$, which maps $T_w$ to
$\gamma^{\varpi_N}T_w$ for all $w\in S_N$ in view of Lemma
\ref{etaexplicit}.
\end{proof}

We end this subsection by computing the asymptotic leading terms
of the $q$-connection matrices $C_{(\lambda,e)}(t,\gamma)$
($\lambda\in\Z^N$) as $|t^{-\alpha_i}|\rightarrow 0$ ($1\leq
i<N$), where we take $\alpha_i:=\epsilon_i-\epsilon_{i+1}$ ($1\leq
i<N$) as a base of the root system
$R=\{\epsilon_i-\epsilon_j\}_{1\leq i\not=j\leq N}$ of type
$A_{N-1}$. Let $R_+=\{\epsilon_i-\epsilon_j\}_{1\leq i<j\leq N}$
denote the associated set of positive roots and
$Q_+=\bigoplus_{i=1}^{N-1}\mathbb{Z}_{\geq 0}\alpha_i$ the
corresponding cone in the root lattice $Q$ of $R$. Let furthermore
$\delta:=(N-1,N-3,\ldots,1-N)\in\Z^N$ and write $w_0\in S_N$ for
the longest Weyl group element (mapping $i$ to $N-i+1$ for $1\leq
i\leq N$).

Consider the subring
$\mathcal{A}:=\C[x^{-\alpha_1},\ldots,x^{-\alpha_{N-1}}]$ of
$\C[T\times\{1\}]=\C[x_1^{\pm 1},\ldots,x_N^{\pm
1}]\subset\C[T\times T]$. We write $Q(\mathcal{A})$ for its
quotient field and $Q_0(\mathcal{A})$ for the subring of
$Q(\mathcal{A})$ consisting of rational functions which are
analytic at the point $x^{-\alpha_i}=0$ ($1\leq i<N$). We consider
$Q_0(\mathcal{A})\otimes\C[T]$ as a subring of $\C(T\times T)$ in
the natural way. The first part of the following corollary is a
refinement of Lemma \ref{singularC}(i) in case $w\in\Z^N$.
\begin{cor}\label{asymptotic}
Let $\lambda\in\Z^N$.
We have
\begin{equation}\label{convok}
C_{(\lambda,e)}\in (Q_0(\mathcal{A})\otimes \C[T])\otimes
\textup{End}(H_0)).
\end{equation}
Writing
\[C_{(\lambda,e)}^{(0)}=
C_{(\lambda,e)}|_{x^{-\alpha_1}=0,\ldots,x^{-\alpha_{N-1}}=0}
\in\C[T]\otimes\textup{End}(H_0),
\]
we have $C_{(\lambda,e)}^{(0)}=k^{\langle\delta,\lambda\rangle}
\eta(T_{w_0}Y^{w_0(\lambda)}T_{w_0}^{-1})$, where $\langle\cdot,\cdot\rangle$
is the standard scalar product on $\mathbb{R}^N$.
\end{cor}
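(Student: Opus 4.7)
The proof has an analytic and an algebraic component. For the membership statement~\eqref{convok}, I expand $C_{(\lambda,e)}(t,\gamma)$ along a reduced expression of $\lambda\in\Z^N\subset W$ via the cocycle identity, obtaining a finite product of factors of the form $R_i(q^m t_a/t_b)$ (with $a\neq b$) and $\eta(\pi^{\pm 1})(\gamma)$. From \eqref{ck} and the explicit definition of $R_i$, each $R_i(z)$ has its only pole at $z=k^{-2}$ and is therefore regular at $z=0$ and $z=\infty$; hence each such factor lies in $Q_0(\mathcal{A})\otimes\C[T]\otimes\textup{End}(H_0)$ and so does the product.

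For the leading-term formula, the cocycle identity
\[C_{(\lambda+\mu,e)}(t,\gamma)=C_{(\lambda,e)}(t,\gamma)\,C_{(\mu,e)}(q^{-\lambda}t,\gamma)\]
specializes at $x^{-\alpha_i}\to 0$ to $C^{(0)}_{(\lambda+\mu,e)}=C^{(0)}_{(\lambda,e)}\,C^{(0)}_{(\mu,e)}$, since $(q^{-\lambda}t)^{-\alpha_i}\to 0$ whenever $t^{-\alpha_i}\to 0$. Since the $Y_i$'s commute and $\eta$ is an algebra homomorphism, the proposed right-hand side $k^{\langle\delta,\lambda\rangle}\eta(T_{w_0}Y^{w_0(\lambda)}T_{w_0}^{-1})$ is also multiplicative in $\lambda$, so it suffices to verify the formula on the generators $\{\epsilon_j\}_{j=1}^N$. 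Combining Lemma~\ref{explicit}(i) with the limits $R_i(z)\to k^{-1}\eta(T_i)$ as $z\to\infty$ (immediate from $c_k(\infty)^{-1}=k^{-1}$) and $R_i(z)\to k\eta(T_i^{-1})$ as $z\to 0$ (via the unitarity $R_i(z)R_i(z^{-1})=\textup{id}$ from Lemma~\ref{YBlem}), direct evaluation yields
\[C^{(0)}_{(\epsilon_j,e)}=k^{N-2j+1}\eta\bigl(T_{j-1}\cdots T_1\pi T_{N-1}^{-1}\cdots T_j^{-1}\bigr),\]
and the scalar matches $k^{\langle\delta,\epsilon_j\rangle}=k^{N-2j+1}$ since $\delta_j=N-2j+1$.

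The remaining task is the purely Hecke-algebraic identity
\[T_{j-1}\cdots T_1\pi T_{N-1}^{-1}\cdots T_j^{-1}=T_{w_0}Y_{N-j+1}T_{w_0}^{-1}\]
in $H$. Expanding $Y_{N-j+1}$ via~\eqref{eqYdef} and conjugating by $T_{w_0}$ using $T_{w_0}T_iT_{w_0}^{-1}=T_{N-i}$ for $1\leq i\leq N-1$, the identity reduces after canceling common factors to the single $j$-independent identity $T_{w_0}\pi T_{w_0}^{-1}=T_{\sigma^{-1}}\pi T_\sigma^{-1}$. I expect this to be the main technical obstacle: on the level of $W$ both sides realize the same element $w_0\pi w_0^{-1}=\sigma^{-1}\pi\sigma^{-1}$, but lifting the identity to $H$ is not automatic. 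I would establish it by using the length identity $\ell(w_0)=\ell(w_0\sigma^{-1})+\ell(\sigma)$ (which gives the decomposition $T_{w_0}=T_{w_0\sigma^{-1}}T_\sigma$) together with systematic application of $\pi T_i^{\pm 1}=T_{i+1}^{\pm 1}\pi$ (indices mod $N$) to move $\pi$ to the right on both sides and match the resulting expressions via the braid relations of $H_0$.
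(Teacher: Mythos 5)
Your computations are correct as far as they go, but you take a genuinely different route from the paper at the decisive step, and that difference is precisely what leaves you with the unproven Hecke-algebra identity at the end. You evaluate the limit of $C_{(\epsilon_j,e)}$ directly in the chamber $|t^{-\alpha_i}|\to 0$, obtaining $k^{N-2j+1}\eta(T_{j-1}\cdots T_1\pi T_{N-1}^{-1}\cdots T_j^{-1})$, and must then identify this element with $T_{w_0}Y_{N-j+1}T_{w_0}^{-1}$ by hand. The paper first computes the limit in the opposite chamber $|t^{\alpha_i}|\to 0$, where each factor of Lemma \ref{explicit}(i) degenerates to $R_i(0)=k\eta(T_i^{-1})$ resp.\ $R_i(\infty)=k^{-1}\eta(T_i)$ in exactly the pattern of the defining formula \eqref{eqYdef}, so the limit is literally $k^{2j-N-1}\eta(Y_j)$ with no further algebra; it then transports this to the chamber $|t^{-\alpha_i}|\to 0$ by writing $\lambda=w_0\cdot w_0(\lambda)\cdot w_0$ in $W$ and applying the cocycle identity, using $C_{(w_0,e)}(t,\gamma)^{\pm1}\to k^{\pm\ell(w_0)}\eta(T_{w_0}^{\mp1})$ in the relevant limits. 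The conjugation by $T_{w_0}$ and the substitution $\lambda\mapsto w_0(\lambda)$ then come out of the cocycle for free, and no separate identity in $H$ is needed. Your route buys a fully explicit chamber-by-chamber computation; its cost is exactly the identity you flag.

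That identity is true, your multiplicativity argument is sound (both sides of the corollary are group homomorphisms on $\Z^N$, so the generators $\epsilon_j$ suffice), and your reduction to the $j$-independent identity $T_{w_0}\pi T_{w_0}^{-1}=T_{\sigma^{-1}}\pi T_\sigma^{-1}$ is correct; but as written this last step is only a strategy, not a proof. It can be closed as follows. Let $w_0'$ and $w_0''$ denote the longest elements of the parabolic subgroups generated by $s_2,\ldots,s_{N-1}$ and by $s_1,\ldots,s_{N-2}$, respectively. One checks $w_0=\sigma^{-1}w_0'=\sigma w_0''$ with lengths adding in both factorizations, whence $T_{w_0}=T_{\sigma^{-1}}T_{w_0'}=T_\sigma T_{w_0''}$; moreover $\pi T_{w_0''}\pi^{-1}=T_{w_0'}$, since $\pi T_i\pi^{-1}=T_{i+1}$ carries a reduced word for $w_0''$ to one for $w_0'$. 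Therefore
\[
T_{w_0}\pi T_{w_0}^{-1}=T_{\sigma^{-1}}T_{w_0'}\pi T_{w_0''}^{-1}T_\sigma^{-1}
=T_{\sigma^{-1}}\pi T_{w_0''}T_{w_0''}^{-1}T_\sigma^{-1}=T_{\sigma^{-1}}\pi T_\sigma^{-1}.
\]
With this inserted, your argument is complete; alternatively, adopting the paper's chamber-swapping device removes the need for it altogether.
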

\begin{proof}
To prove \eqref{convok} it suffices, in view of the cocyle
property of $C$, to verify \eqref{convok} for $\lambda=\epsilon_i$.
The statement then follows from Lemma \ref{explicit}(i),
Lemma \ref{etaexplicit} and the explicit expression of $R_i(z)$.

Observe that $\lim_{z\rightarrow 0}R_i(z)=k\eta(T_i^{-1})$ for
$1\leq i<N$. Combined with Lemma \ref{explicit}(i) and the
explicit expression for $Y_j$ (see \eqref{eqYdef}) we obtain for
$1\leq j\leq N$,
\[C_{(\epsilon_j,e)}(t,\gamma)\rightarrow k^{2j-N-1}\eta(Y_j)(\gamma)
\]
as $|t^{\alpha_i}|\rightarrow 0$ for all $1\leq i<N$, hence
\[C_{(\lambda,e)}(t,\gamma)\rightarrow k^{-\langle\delta,\lambda\rangle}
\eta(Y^\lambda)(\gamma)
\]
as $|t^{\alpha_i}|\rightarrow 0$ for all $1\leq i<N$.
To derive the asymptotics of $C_{(\lambda,e)}(t,\gamma)$ as
$|t^{-\alpha_i}|\rightarrow 0$ for $1\leq i<N$ we use the
cocycle property to write
\[C_{(\lambda,e)}(t,\gamma)=C_{(w_0,e)}(t,\gamma)
C_{(w_0(\lambda),e)}(w_0t,\gamma)C_{(w_0,e)}(q^{-w_0(\lambda)}w_0t,\gamma).
\]
Note that $C_{(w_0,e)}(t,\gamma)\rightarrow
k^{\ell(w_0)}\eta(T_{w_0}^{-1})$ if $|t^{\alpha_i}|\rightarrow 0$
for all $1\leq i<N$. Hence
\[C_{(\lambda,e)}^{(0)}=k^{\langle\delta,\lambda\rangle}\eta(T_{w_0}
Y^{w_0(\lambda)}T_{w_0}^{-1}),
\]
as desired.
\end{proof}

\subsection{Relation to quantum KZ equations}
Fix $\zeta\in T$ and let $\chi_\zeta\colon\C[\{1\}\times
T]\rightarrow \C$ denote the corresponding evaluation character
$\chi_\zeta(f)=f(\zeta)$. Recall the formal principal series
$\eta:H\rightarrow \textup{End}_{\C[\{1\}\times T]}(M)$. The
corresponding complex $H$-representation
$M(\zeta)=\C\otimes_{\chi_\zeta}M$ of dimension $N!$ is the
principal series module of $H$ with central character $\zeta$. We
identify $M(\zeta)$ with $H_0$ as a complex vector space, and push
the $H$-action on $M(\zeta)$ through the linear isomorphism to
$H_0$. We denote the corresponding representation map by
\[\eta_\zeta: H\rightarrow \textup{End}(H_0).
\]
As in Subsection \ref{formal} we have as identities in $H_0$,
\begin{equation*}
\eta_\zeta(T_i)T_w=
\begin{cases}
T_{s_iw}\qquad &\hbox{ if } \ell(s_iw)=\ell(w)+1,\\
(k-k^{-1})T_w+T_{s_iw}\qquad &\hbox{ if } \ell(s_iw)=\ell(w)-1,
\end{cases}
\end{equation*}
for $1\leq i<N$ and $w\in S_N$,
\[
\eta_\zeta(\pi)T_w=\zeta_{w^{-1}(N)}T_{\sigma w},
\]
for $w\in S_N$, as well as
\[\eta_\zeta(f(Y))\xi_w(\zeta)=(w^{-1}f)(\zeta)\xi_w(\zeta),
\]
for $w\in S_N$, where $\xi_w(\zeta)\in H_0$ is the regular
$H_0$-valued function $\xi_w(\gamma)$ in $\gamma\in T$ specialized
at $\gamma=\zeta$. Extending the base field to $\mathcal{M}(T)$ we
get an algebra homomorphism $H\rightarrow
\textup{End}_{\mathcal{M}(T)}(H_0^{\mathcal{M}(T)})$, which is
also denoted by $\eta_\zeta$.

In this subsection we consider the BqKZ for specialized values of
$\gamma$. In view of Lemma \ref{singularC}(i) we may specialize
$C_{(w,e)}(t,\gamma)$ ($w\in W$) at $\gamma=\zeta$. We write
\[C_{w}^\zeta(t):=C_{(w,e)}(t,\zeta),\qquad w\in W,
\]
for the resulting specialized cocycle values, viewed as
$\textup{End}(H_0)$-valued rational functions in $t\in T$.
For $\zeta\in T$ the map $W\ni w\mapsto C_w^\zeta$ defines
a cocycle of $W$ with values in the $W$-group $\textup{GL}_{\C(T)}(H_0^{\C(T)})$.
In other words,
\[C_{ww^\prime}^\zeta(t)=C_w^\zeta(t)C_{w^\prime}^\zeta(w^{-1}t),
\qquad w,w^\prime\in W,
\]
as rational $H_0$-valued functions in $t\in T$. Comparing the
cocycle values $C_{\lambda}^\zeta$ ($\lambda\in \Z^N$) to the ones
in \cite[\S 1.3]{C} we obtain the following result.
\begin{cor}\label{qKZcor}
Fix $\zeta\in T$. The holonomic system of $q$-difference equations
\begin{equation}\label{qKZ}
C_{\lambda}^\zeta(t)f(q^{-\lambda}t)=f(t),\qquad
\forall\lambda\in\Z^N
\end{equation}
for $f\in H_0^{\mathcal{M}(T)}$ is Cherednik's quantum affine
KZ equation associated to the principal $H$-module $M(\zeta)$ with
central character $\zeta$.
\end{cor}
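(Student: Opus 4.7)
The plan is to identify the specialized cocycle $W\ni w\mapsto C_w^\zeta$ with the cocycle built directly from Cherednik's normalized affine intertwiners acting on the principal series module $M(\zeta)$, and then to read off the $q$-connection matrices indexed by $\lambda\in\Z^N$. Since both sides are cocycles of $W$ with values in $\textup{GL}_{\C(T)}(H_0^{\C(T)})$, it suffices by the cocycle identity to check agreement on a set of generators of $W$, for which the natural choice is $\{s_i\}_{i=1}^{N-1}\cup\{\pi\}$.

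First I would recall from \cite[\S 1.3]{C} the explicit form of Cherednik's qKZ $q$-connection matrices: they arise from the (normalized) affine intertwiners $\widetilde{S}_w$ acting on $M(\zeta)$ through the representation map, with the normalizing factor $d_w(x)^{-1}$ on the base field side. In our construction, the cocycle value $C_{(w,e)}$ is by definition $\tau(w,e)w^{-1}=d_w(x)^{-1}\widetilde{\sigma}_{(w,e)}w^{-1}$, and after specializing $\gamma=\zeta$ and pushing the $\mathbb{H}$-action through $\eta_\zeta$, this is literally Cherednik's normalized intertwiner acting on $M(\zeta)\simeq H_0$. Concretely, by Lemma \ref{YBlem} we have $C_{s_i}^\zeta(t)=R_i(t_i/t_{i+1})$ with $R_i(z)=c_k(z)^{-1}(\eta_\zeta(T_i)-k)+1$, and by \eqref{Cpi} together with the specialization $\eta(\pi)(\zeta)=\eta_\zeta(\pi)$, we have $C_{\pi}^\zeta(t)=\eta_\zeta(\pi)$. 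These match the generators of Cherednik's qKZ cocycle on $M(\zeta)$ exactly.

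With the generators in hand, the cocycle relation $C_{ww'}^\zeta(t)=C_w^\zeta(t)C_{w'}^\zeta(w^{-1}t)$ (valid by Corollary \ref{CorCoc} upon restriction to $W\times\{e\}$ and specialization at $\gamma=\zeta$, using Lemma \ref{singularC}(i) to ensure regularity of the specialization) propagates the agreement to all of $W$. Restriction to the translation sublattice $\Z^N\subset W$ then produces the system \eqref{qKZ} with the $q$-connection matrices given explicitly, for instance for $\lambda=\epsilon_j$, by Lemma \ref{explicit}(i) with $\eta(\pi)(\gamma)$ replaced by $\eta_\zeta(\pi)$. This is precisely the $q$-affine KZ equation in the form presented in \cite[\S 1.3]{C}.

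The only step that requires genuine care is bookkeeping the normalizations. Cherednik's convention for the $q$-connection operators (whether the cocycle is defined via $\tau(w)w^{-1}$ or $w^{-1}\tau(w)$, and whether the shift in the qKZ equation is $q^{\lambda}$ or $q^{-\lambda}$) must be matched with ours, and the scalar $d_w(x)^{-1}$ gauge needs to be the one appearing in \cite[\S 1.3]{C}. This is not an obstacle in a mathematical sense, but it is where the argument is most easily mis-stated; once the conventions are aligned on the two generating relations above, the cocycle identity closes the argument.
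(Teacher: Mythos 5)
Your proposal is correct and follows the same route as the paper, which offers no separate proof but simply states that the result follows by ``comparing the cocycle values $C_\lambda^\zeta$ ($\lambda\in\Z^N$) to the ones in \cite[\S 1.3]{C}''; your reduction to the generators $s_1,\ldots,s_{N-1},\pi$ via Lemma \ref{YBlem}, \eqref{Cpi} and the cocycle identity, with Lemma \ref{singularC}(i) justifying the specialization at $\gamma=\zeta$, is exactly the comparison the paper has in mind, made explicit.
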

Let $\textup{SOL}_\zeta\subset H_0^{\mathcal{M}(T)}$ denote the
set of solutions of the quantum KZ equations \eqref{qKZ}. Write
$\mathcal{E}(T)\subset\mathcal{M}(T)$ for the subfield of
meromorphic functions $f$ satisfying $f(q^\lambda t)=f(t)$ for all
$\lambda\in\Z^N$ as meromorphic functions in $t\in T$. The set
$\textup{SOL}_\zeta$ of solutions is a $\mathcal{E}(T)$-subspace
of $H_0^{\mathcal{M}(T)}$. Furthermore, $\textup{SOL}_\zeta$ is
invariant for the $S_N$-action
\begin{equation}\label{actionone}
(\varsigma(w)f)(t):=C_{w}^\zeta(t)f(w^{-1}t), \qquad w\in S_N
\end{equation}
on $H_0^{\mathcal{M}(T)}$ (note that $\varsigma$ does not depend on
$\zeta$ since $C_w^\zeta(t)=C_{(w,e)}(t,\zeta)$ is independent
of $\zeta$ for $w\in S_N$).
\begin{rema}\label{QGperspective}
The quantum KZ equations \eqref{qKZ} are gauge equivalent to
Frenkel and Reshetikhin's \cite{FR}
quantum KZ equations associated with the $N$-fold
tensor product representation
$\C^N(t_1)\otimes\cdots\otimes\C^N(t_N)$
of the quantum affine algebra
$\mathcal{U}_k(\widehat{sl}_N)$,
where $\C^N(t_i)$ is the evaluation
representation of the vector representation $\C^N$
of $\mathcal{U}_k(sl_N)$
(see \cite[\S 1.3.2]{C} and \cite{EFK} for the details).
\end{rema}

In view of Corollary \ref{qKZcor} the BqKZ equations \eqref{BqKZ}
are a holonomic extension of the quantum KZ equations \eqref{qKZ} by
$q$-difference equations in the central character $\zeta$ of
$M(\zeta)$. These may be thought of as analogs of isomonodromy
transformations; in fact, in view of Lemma \ref{commoneig} and
Corollary \ref{asymptotic} the $q$-difference equations in $\zeta$
(which are essentially the quantum KZ equations again!)
are reminiscent of Schlesinger transformations. This should be
compared with the quantum isomonodromic interpretation of (rational)
KZ equations as quantizations of Schlesinger equations, see
\cite{Re} and \cite{Ha}.

From a different perspective we may think of the cocycle values
$C_{(e,w)}$ ($w\in W$) as shift operators, in the sense that they
map solutions of quantum KZ equations to solutions of quantum KZ
equations with respect to shifted central characters. To formulate
the precise result, we view in the following proposition
$\gamma\mapsto C_{(e,w)}(\cdot,\gamma)$ as
$\C[T]\otimes\textup{End}(H_0)$-valued rational function in
$\gamma\in T$.
\begin{prop}\label{shift}
Let $w\in W$ and $\zeta\in T$ such that $\gamma\mapsto
C_{(e,w)}(\cdot,\gamma)$ is regular at $\gamma=\zeta$. Then
$f\mapsto C_{(e,w)}(\cdot,\zeta)f$ defines an $S_N$-equivariant linear
map $\textup{SOL}_{w^\diamondsuit{}^{-1}\zeta}\rightarrow \textup{SOL}_\zeta$.
\end{prop}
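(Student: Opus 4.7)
The whole proposition is a specialization of the cocycle identity from Corollary \ref{CorCoc}, applied to the commuting pair $(\lambda,e),(e,w)\in\mathbb{W}$. My first step is to write $C_{(\lambda,w)}$ in two ways: as $C_{(\lambda,e)}\cdot(\lambda,e)C_{(e,w)}(\lambda,e)^{-1}$ and as $C_{(e,w)}\cdot(e,w)C_{(\lambda,e)}(e,w)^{-1}$. Using the explicit conjugation action $(\mathrm{w}A\mathrm{w}^{-1})(t,\gamma)=A(\mathrm{w}^{-1}(t,\gamma))$ from Remark \ref{Wtriv} together with the definition of the $\mathbb{W}$-action on $T\times T$, equating these two expressions yields the intertwining identity
\[
C_{(\lambda,e)}(t,\gamma)\,C_{(e,w)}(q^{-\lambda}t,\gamma)=C_{(e,w)}(t,\gamma)\,C_{(\lambda,e)}(t,(w^\diamondsuit)^{-1}\gamma)
\]
in $\mathbb{K}\otimes\textup{End}(H_0)$. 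By Lemma \ref{singularC}(i) the factors $C_{(\lambda,e)}$ are regular in $\gamma$ on all of $T$, so combined with the regularity hypothesis on $\gamma\mapsto C_{(e,w)}(\cdot,\gamma)$ at $\zeta$ I may specialize at $\gamma=\zeta$ to obtain, with $\zeta':=(w^\diamondsuit)^{-1}\zeta$,
\[
C_\lambda^\zeta(t)\,C_{(e,w)}(q^{-\lambda}t,\zeta)=C_{(e,w)}(t,\zeta)\,C_\lambda^{\zeta'}(t).
\]

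Given this identity, the first assertion follows by a one-line calculation. If $f\in\textup{SOL}_{\zeta'}$ and $g(t):=C_{(e,w)}(t,\zeta)f(t)$, then
\[
C_\lambda^\zeta(t)\,g(q^{-\lambda}t)=C_\lambda^\zeta(t)\,C_{(e,w)}(q^{-\lambda}t,\zeta)\,f(q^{-\lambda}t)=C_{(e,w)}(t,\zeta)\,C_\lambda^{\zeta'}(t)\,f(q^{-\lambda}t)=g(t),
\]
using the qKZ$_{\zeta'}$-equation \eqref{qKZ} for $f$ in the last step; hence $g\in\textup{SOL}_\zeta$ for every $\lambda\in\Z^N$.

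For the $S_N$-equivariance I run the same cocycle argument on the commuting pair $(u,e),(e,w)$ with $u\in S_N$. Here the essential simplification is that $C_{(u,e)}(t,\gamma)$ is independent of $\gamma$ (Lemma \ref{YBlem} together with the cocycle condition, as observed after \eqref{actionone}), so the action of $(e,w)$ leaves it unchanged. The resulting identity
\[
C_{(e,w)}(t,\gamma)\,C_{(u,e)}(t)=C_{(u,e)}(t)\,C_{(e,w)}(u^{-1}t,\gamma),
\]
specialized at $\gamma=\zeta$ and translated through \eqref{actionone}, expresses exactly that multiplication by $C_{(e,w)}(\cdot,\zeta)$ commutes with $\varsigma(u)$, noting that $\varsigma(u)$ has the same formula on $\textup{SOL}_{\zeta'}$ and $\textup{SOL}_\zeta$ since $C_{(u,e)}$ is $\gamma$-free.

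I anticipate no serious obstacle: all the content sits inside the cocycle identity in Corollary \ref{CorCoc}, and the rest is careful bookkeeping of the involution ${}^\diamondsuit$ and of the conjugation action \eqref{star}. The regularity hypothesis on $C_{(e,w)}(\cdot,\gamma)$ at $\gamma=\zeta$ is invoked only to make the specialization $\gamma=\zeta$ well-defined; Lemma \ref{singularC}(i) handles the regularity of the remaining factors in the intertwining identity automatically.
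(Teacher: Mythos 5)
Your proposal is correct and follows essentially the same route as the paper: both assertions are obtained by writing $C_{(\lambda,w)}$ (resp. $C_{(v,w)}$) via the two factorizations $(\lambda,e)(e,w)=(e,w)(\lambda,e)$ in the cocycle identity of Corollary \ref{CorCoc}, and using that $C_{(v,e)}$ is $\gamma$-independent for $v\in S_N$. The only cosmetic difference is that you first isolate the intertwining identity between connection matrices and then apply it to $f$, while the paper carries out the same cocycle manipulation directly inside the computation of $C_\lambda^\zeta(t)\bigl(C_{(e,w)}(q^{-\lambda}t,\zeta)f(q^{-\lambda}t)\bigr)$.
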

\begin{proof}
By the cocycle property we have for
$f\in\textup{SOL}_{w^\diamondsuit{}^{-1}\zeta}$ and $\lambda\in\Z^N$,
\begin{equation*}
\begin{split}
C_{\lambda}^\zeta(t)\bigl(C_{(e,w)}
(q^{-\lambda}t,\zeta)f(q^{-\lambda}t)\bigr)
&=C_{(\lambda,w)}(t,\zeta)f(q^{-\lambda}t)\\
&=C_{(e,w)}(t,\zeta)
C_{\lambda}^{w^\diamondsuit{}^{-1}\zeta}(t)f(q^{-\lambda}t)\\
&=C_{(e,w)}(t,\zeta)f(t).
\end{split}
\end{equation*}
Hence $C_{(e,w)}(\cdot,\zeta)f\in\textup{SOL}_{\zeta}$.
The $S_N$-equivariance of the map is again a consequence of the cocycle
property of $C_{\mathrm{w}}$ ($\mathrm{w}\in\mathbb{W}$); indeed, for
$v\in S_N$ and $f\in \textup{SOL}_{w^\diamondsuit{}^{-1}\zeta}$ we have
\begin{equation*}
\begin{split}
C_{v}^\zeta(t)\bigl(C_{(e,w)}(v^{-1}t,\zeta)f(v^{-1}t)\bigr)
&=C_{(v,w)}(t,\zeta)f(v^{-1}t)\\
&=C_{(e,w)}(t,\zeta)\bigl(C_v^{w^\diamondsuit{}^{-1}\zeta}(t)
f(v^{-1}t)\bigr),
\end{split}
\end{equation*}
which is the desired result.
\end{proof}
From the quantum group perspective (see Remark
\ref{QGperspective}), Proposition \ref{shift} resembles the action
of the dynamical Weyl group on solutions of quantum KZ equations
from \cite{EV}. We expect that the second half of the BqKZ is
closely related to the Varchenko-Etingof dynamical difference
equations \cite[\S 9]{EV}; see also \cite{FMTV}, \cite{TL},
\cite{T}, \cite{TV}, \cite{TV2} and \cite{Le} for detailed studies
of various degenerate cases. An interesting aspect, e.g., in
\cite{TL} and \cite{TV}, is the observation that KZ equations are
dual to the associated dynamical equations using
$(\mathfrak{gl}_r,\mathfrak{gl}_s)$ duality (our set-up relates to
$r=s=N$). In the present theory this duality is incorporated by
the cocycle value $C_\iota$, which relates the $q$-connection
matrices $C_{(\lambda,e)}$ ($\lambda\in\Z^N$) of the quantum KZ
equation to the dual $q$-connection matrices $C_{(e,\lambda)}$ by
conjugation,
\[C_{(e,\lambda)}(t,\gamma)=C_\iota C_{(\lambda,e)}(\gamma^{-1},t^{-1})C_\iota
\]
as $\textup{End}(H_0)$-valued meromorphic functions in
$(t,\gamma)\in T\times T$. In turn, $C_\iota$ is a direct reflection
of (the existence of) Cherednik's duality anti-isomorphism of the double
affine Hecke algebra (see Theorem \ref{dualthm}).

\section{Solutions of the bispectral quantum KZ equations}
\label{SectionSols}

In this section we use asymptotic analysis to construct a
$\iota$-invariant solution $\Phi_\kappa$ of BqKZ, which we call
the basic asymptotically free solution. It depends in a mild way
on an auxiliary parameter $\kappa\in\C^\times$ (in fact,
$\mathbb{F}\Phi_\kappa$ is independent of $\kappa$). The orbit of
$\Phi_\kappa$ under the action of $\{e\}\times
S_N\subset\mathbb{S}_N$ turns out to be an $\mathbb{F}$-basis of
$\textup{SOL}$ consisting of asymptotically free solutions. Along
the way we derive various additional properties of $\Phi_\kappa$.
\subsection{The leading term}
Let $\theta\in\mathcal{M}(T)$ denote the renormalized Jacobi theta
function
\begin{equation}\label{theta}
\theta(z):=\prod_{m\geq0}(1-q^mz)(1-q^{m+1}/z)
\end{equation}
for $z\in \C^\times$. It satisfies
\begin{equation}\label{thetafunc}
\theta(q^mz)=(-z)^{-m}q^{-\frac{1}{2}m(m-1)}\theta(z),\qquad m\in\Z.
\end{equation}
For $\kappa\in\C^\times$ we define $W_\kappa\in\mathbb{K}$ by
\begin{equation}\label{W}
W_\kappa(t,\gamma):=\prod_{i=1}^N\frac{\theta(\kappa t_i\gamma_{N-i+1}^{-1})}
{\theta(\kappa k^{\langle\delta,\epsilon_i\rangle}t_i)
\theta(\kappa k^{-\langle\delta,\epsilon_i\rangle}\gamma_{N-i+1}^{-1})}.
\end{equation}
By Corollary \ref{asymptotic}, the formal asymptotic form of the quantum
KZ equations
\[C_{(\lambda,e)}(t,\gamma)f(q^{-\lambda}t,\gamma)=f(t,\gamma),\qquad
\lambda\in\Z^N
\]
in the asymptotic region $|t^{\alpha_i}|\gg 0$ ($1\leq i<N$) is
\begin{equation}\label{qKZasymptotic}
k^{\langle\delta,\lambda\rangle}\eta(T_{w_0}Y^{w_0(\lambda)}T_{w_0}^{-1})
(\gamma)f(q^{-\lambda}t,\gamma)=f(t,\gamma),\qquad \lambda\in \Z^N.
\end{equation}

\begin{lem} \label{lemWproprts} $W_\kappa\in\mathbb{K}$
enjoys the following properties.
\begin{enumerate}
    \item[\bf{(i)}] $f_\kappa^{(0)}(t,\gamma):=W_\kappa(t,\gamma)T_{w_0}$ is a
solution of \eqref{qKZasymptotic}.
\item[{\bf (ii)}] $\iota(W_\kappa)=W_\kappa$ and $\tau(\iota)f_\kappa^{(0)}=
f_\kappa^{(0)}$.
\end{enumerate}
\end{lem}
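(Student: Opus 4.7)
\emph{Part (i).} The strategy is to reduce the asymptotic quantum KZ equation \eqref{qKZasymptotic} applied to $f_\kappa^{(0)} = W_\kappa \cdot T_{w_0}$ to a scalar functional equation for the meromorphic function $W_\kappa$. I first compute the action of the operator $\eta\bigl(T_{w_0}Y^{w_0(\lambda)}T_{w_0}^{-1}\bigr)(\gamma)$ on the basis element $T_{w_0} \in H_0$. Working in the induced module $M$, the element $T_{w_0}$ corresponds to $T_{w_0}\otimes 1$; so
\[
\eta(T_{w_0}^{-1})T_{w_0}=(T_{w_0}^{-1}T_{w_0})\otimes 1 = T_e,
\]
then $\eta(Y^{w_0(\lambda)})T_e = T_e\otimes Y^{w_0(\lambda)}$, which under the identification $M\simeq H_0^{\C[\{1\}\times T]}$ corresponds to the $H_0$-valued function $\gamma\mapsto\gamma^{w_0(\lambda)}T_e$, and finally $\eta(T_{w_0})$ sends $T_e$ to $T_{w_0}$ (by \eqref{formulaTI} applied along a reduced expression for $w_0$). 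Thus
\[
\eta\bigl(T_{w_0}Y^{w_0(\lambda)}T_{w_0}^{-1}\bigr)(\gamma)T_{w_0}=\gamma^{w_0(\lambda)}T_{w_0},
\]
and \eqref{qKZasymptotic} reduces to the scalar equation
\[
k^{\langle\delta,\lambda\rangle}\gamma^{w_0(\lambda)}W_\kappa(q^{-\lambda}t,\gamma)=W_\kappa(t,\gamma),\qquad \lambda\in\Z^N.
\]

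\emph{Verifying this functional equation.} I apply \eqref{thetafunc} to each of the three theta factors in each of the $N$ terms of the product defining $W_\kappa$. The factor $\theta(\kappa k^{-\langle\delta,\epsilon_i\rangle}\gamma_{N-i+1}^{-1})$ is unaffected by $t\mapsto q^{-\lambda}t$; in the remaining two factors, the common shift $q^{-\lambda_i}$ contributes the same power of $q$ in the numerator and denominator, so those cancel and only the factors $(-\kappa t_i\gamma_{N-i+1}^{-1})^{\lambda_i}$ and $(-\kappa k^{\langle\delta,\epsilon_i\rangle}t_i)^{\lambda_i}$ remain. Their ratio is $\gamma_{N-i+1}^{-\lambda_i}\,k^{-\langle\delta,\epsilon_i\rangle\lambda_i}$. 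Taking the product over $i$ and changing the summation index via $j=N-i+1$ (so that $\prod_i\gamma_{N-i+1}^{-\lambda_i}=\gamma^{-w_0(\lambda)}$) gives
\[
W_\kappa(q^{-\lambda}t,\gamma)=k^{-\langle\delta,\lambda\rangle}\gamma^{-w_0(\lambda)}W_\kappa(t,\gamma),
\]
which is exactly what was needed.

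\emph{Part (ii).} For $\iota(W_\kappa)=W_\kappa$, I use that $\iota(t,\gamma)=(\gamma^{-1},t^{-1})$, so $\iota(W_\kappa)(t,\gamma)=W_\kappa(\gamma^{-1},t^{-1})$; the substitution $j=N-i+1$ in the defining product, combined with the identity $\langle\delta,\epsilon_{N-j+1}\rangle=-\langle\delta,\epsilon_j\rangle$, pairs every factor in $W_\kappa(\gamma^{-1},t^{-1})$ with the corresponding factor in $W_\kappa(t,\gamma)$. For the second statement $\tau(\iota)f_\kappa^{(0)}=f_\kappa^{(0)}$, I use Lemma \ref{actok} and the extension of $\widetilde{\sigma}_\iota$ to $H_0^{\mathbb{K}}$:
\[
\tau(\iota)\bigl(W_\kappa\cdot T_{w_0}\bigr)=(\iota W_\kappa)\cdot\widetilde{\sigma}_\iota(T_{w_0})=W_\kappa\cdot T_{w_0}^{*}=W_\kappa\cdot T_{w_0^{-1}}=W_\kappa\cdot T_{w_0},
\]
since $w_0=w_0^{-1}$ in $S_N$.

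\emph{Main obstacle.} The only nontrivial step is the computation in Part (i) of the action of $\eta(T_{w_0}Y^{w_0(\lambda)}T_{w_0}^{-1})$ on $T_{w_0}$, and even that is essentially forced by the definition of the induced module; the remaining work is bookkeeping with the quasi-periodicity of $\theta$ and the symmetry $\delta_{N-j+1}=-\delta_j$.
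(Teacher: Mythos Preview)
Your proof is correct and follows essentially the same route as the paper: for (i) you reduce to the scalar identity $W_\kappa(q^{-\lambda}t,\gamma)=k^{-\langle\delta,\lambda\rangle}\gamma^{-w_0(\lambda)}W_\kappa(t,\gamma)$ via the eigenvalue computation $\eta(T_{w_0}Y^{w_0(\lambda)}T_{w_0}^{-1})(\gamma)T_{w_0}=\gamma^{w_0(\lambda)}T_{w_0}$ and verify it with \eqref{thetafunc}, and for (ii) you use the reindexing $j=N-i+1$ together with $\delta_{N-j+1}=-\delta_j$ to get $\iota(W_\kappa)=W_\kappa$, then $C_\iota(T_{w_0})=T_{w_0^{-1}}=T_{w_0}$ (the paper phrases this last step via $C_\iota$ rather than $\widetilde{\sigma}_\iota$, but these coincide on $H_0$). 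The only remark is that your bookkeeping is slightly more explicit than the paper's, which simply asserts the theta computation and the $\iota$-invariance.
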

\begin{proof}
(i) Since
$\eta(T_{w_0}Y^{w_0(\lambda)}T_{w_0}^{-1})(\gamma)T_{w_0}=
\gamma^{w_0(\lambda)}T_{w_0}$ for all $\lambda\in\Z^N$, it
suffices to show that
\[W_\kappa(q^{-\lambda}t,\gamma)=k^{-\langle\delta,\lambda\rangle}
\gamma^{-w_0(\lambda)}W_\kappa(t,\gamma),\qquad \lambda\in\Z^N,
\]
which follows from \eqref{thetafunc}.

(ii) Clearly $\iota(W_\kappa)=W_\kappa$, i.e.
$W_\kappa(\gamma^{-1},t^{-1})=W_\kappa(t,\gamma)$. Since
$C_\iota(T_{w_0})=T_{w_0}$, it follows that
$\tau(\iota)f_\kappa^{(0)}=f_\kappa^{(0)}$.
\end{proof}
Observe that, more generally, $W_\kappa\in\mathbb{K}$ satisfies
the $q$-difference equations
\begin{equation}\label{fff}
W_\kappa(q^{-\lambda}t,q^{\mu}\gamma)=
k^{-\langle\delta,\lambda+\mu\rangle}t^{w_0(\mu)}\gamma^{-w_0(\lambda)}
q^{-\langle w_0(\lambda),\mu\rangle}W_\kappa(t,\gamma),\qquad
\lambda,\mu\in\Z^N.
\end{equation}

\subsection{The basic asymptotically free solution $\Phi_\kappa$}
We now gauge BqKZ by $W_\kappa\in\mathbb{K}$.
Concretely, for $\lambda,\mu\in\Z^N$ we write
\[D_{(\lambda,\mu)}(t,\gamma)=W_\kappa(t,\gamma)^{-1}C_{(\lambda,\mu)}
(t,\gamma)W_\kappa(q^{-\lambda}t,q^{\mu}\gamma)
\]
as $\textup{End}(H_0)$-valued meromorphic functions in
$(t,\gamma)\in T\times T$. It is independent of $\kappa$ in view
of \eqref{fff}. For $f\in H_0^{\mathbb{K}}$ we have
$f\in\textup{SOL}$ if and only if $g:=W_{\kappa}^{-1}f\in
H_0^{\mathbb{K}}$ satisfies the holonomic system of $q$-difference
equations
\begin{equation}\label{gaugedeqn}
D_{(\lambda,\mu)}(t,\gamma)g(q^{-\lambda}t,q^{\mu}\gamma)=g(t,\gamma),\qquad
\lambda,\mu\in\Z^N
\end{equation}
as $H_0$-valued rational functions in $(t,\gamma)\in T\times T$.

The existence of a solution $\Psi\in H_0^{\mathbb{K}}$ of \eqref{gaugedeqn}
admitting a convergent $H_0$-valued
power series expansion
\begin{equation}\label{convseries}
\Psi(t,\gamma)=\sum_{\alpha,\beta\in Q_+}K_{\alpha,\beta}t^{-\alpha}
\gamma^\beta,\qquad K_{0,0}=T_{w_0}
\end{equation}
in the asymptotic region $|t^{\alpha_i}|\gg 0$ and
$|\gamma^{-\alpha_i}|\gg 0$ ($1\leq i<N$) is guaranteed by the
following properties of the gauged $q$-connection matrices $D_{(\lambda,\mu)}$.

Consider the subring
$\mathcal{B}:=\C[y^{\alpha_1},\ldots,y^{\alpha_{N-1}}]$ of
$\C[\{1\}\times T]=\C[y_1^{\pm 1},\ldots,y_N^{\pm 1}]$.
Write $Q(\mathcal{B})$
for its quotient field and
$Q_0(\mathcal{B})$ for the subring of $Q(\mathcal{B})$
consisting of rational functions
which are analytic at the point $y^{\alpha_j}=0$ ($1\leq j<N$).
We consider $Q_0(\mathcal{A})\otimes\mathcal{B}$ and $\mathcal{A}\otimes
Q_0(\mathcal{B})$ as subrings of $\C(T\times T)$ in the natural way.
\begin{lem}\label{qconnLem}
Set $A_i=D_{(\varpi_i,e)}$ and
$B_i=D_{(e,\varpi_i)}$ for $1\leq i\leq N$.
\begin{enumerate}
\item[{\bf (i)}] $A_N=B_N=\textup{id}$ on $H_0^{\mathbb{K}}$.
\item[{\bf (ii)}] $A_i\in(Q_0(\mathcal{A})\otimes
\mathcal{B})\otimes\textup{End}(H_0)$ and
$B_j\in (\mathcal{A}\otimes Q_0(\mathcal{B}))\otimes
\textup{End}(H_0)$.
\item[{\bf (iii)}] Set $A_i^{(0,0)}\in\textup{End}(H_0)$ and
$B_j^{(0,0)}\in\textup{End}(H_0)$ for the value of $A_i$ and $B_j$
at $x^{-\alpha_r}=0=y^{\alpha_s}$ ($1\leq r,s<N$).
For $w\in S_N$ we have
\begin{equation}\label{Aas}
A_i^{(0,0)}(T_{w_0}T_w)=
\begin{cases}
0\quad &\hbox{ if }\,\, w^{-1}w_0(\varpi_i)\not=w_0(\varpi_i),\\
T_{w_0}T_w\quad &\hbox{ if }\,\, w^{-1}w_0(\varpi_i)=w_0(\varpi_i)
\end{cases}
\end{equation}
and
\begin{equation}\label{Bas}
B_i^{(0,0)}(T_{w_0}T_w)=
\begin{cases}
0\quad &\hbox{ if }\,\, w(\varpi_i)\not= \varpi_i,\\
T_{w_0}T_w\quad &\hbox{ if }\,\, w(\varpi_i)=\varpi_i.
\end{cases}
\end{equation}
\end{enumerate}
\end{lem}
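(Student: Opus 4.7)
The plan is to handle (i) by direct substitution into the gauge formula, and to treat (ii)--(iii) by asymptotic expansion of the explicit formula from Lemma~\ref{explicit}(ii), deriving the $B_i$ case from the $\iota$-duality $C_{(e,w)}(t,\gamma)=C_\iota C_{(w,e)}(\gamma^{-1},t^{-1})C_\iota$.

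For part (i), I would combine Lemma~\ref{explicit}(iii), which gives $C_{(\varpi_N,e)}(t,\gamma)=\gamma^{\varpi_N}\,\textup{id}$, with the transformation formula \eqref{fff} at $(\lambda,\mu)=(\varpi_N,0)$. Since $w_0(\varpi_N)=\varpi_N$ and $\langle\delta,\varpi_N\rangle=0$, this yields $W_\kappa(q^{-\varpi_N}t,\gamma)=\gamma^{-\varpi_N}W_\kappa(t,\gamma)$, and the two $\gamma^{\pm\varpi_N}$ factors cancel to give $A_N=\textup{id}$. The case $B_N=\textup{id}$ is the analogous argument using $C_{(e,\varpi_N)}(t,\gamma)=t^{-\varpi_N}\,\textup{id}$ and \eqref{fff} at $(\lambda,\mu)=(0,\varpi_N)$.

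For parts (ii)--(iii) on $A_i$ with $i<N$, I would start from Lemma~\ref{explicit}(ii), which writes $C_{(\varpi_i,e)}(t,\gamma)$ as $\eta(\pi)(\gamma)^i$ times an ordered product of factors $R_j(qt_a/t_b)$ with $a>b$. Each $R_j$ is rational in its argument and regular at $0$ with $R_j(0)=k\eta(T_j^{-1})$, so $C_{(\varpi_i,e)}\in Q_0(\mathcal{A})\otimes\C[T]\otimes\textup{End}(H_0)$. The gauge relation \eqref{fff} at $(\lambda,\mu)=(\varpi_i,0)$ yields $A_i(t,\gamma)=k^{-\langle\delta,\varpi_i\rangle}\gamma^{-w_0(\varpi_i)}C_{(\varpi_i,e)}(t,\gamma)$, and Lemma~\ref{etaexplicit} shows that $\gamma^{-w_0(\varpi_i)}\eta(\pi)(\gamma)^i$ multiplies each basis vector $T_v$ by the monomial $\gamma^{v^{-1}w_0(\varpi_i)-w_0(\varpi_i)}$. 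Since $w_0(\varpi_i)$ is the antidominant vector in its $S_N$-orbit this exponent lies in $Q_+$, hence the monomial lies in $\mathcal{B}$ and vanishes at $y^{\alpha_s}=0$ unless $v\in\textup{Stab}_{S_N}(w_0(\varpi_i))=S_{N-i}\times S_i$, the Young subgroup preserving $\{1,\ldots,N-i\}$ and $\{N-i+1,\ldots,N\}$. This establishes (ii) and sets up the asymptotic projection.

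For the asymptotic value of $A_i$ in (iii), comparing the block structure of Lemma~\ref{explicit}(ii) with the reduced expression \eqref{redex} for $\sigma^i$ shows that the R-matrix product tends to $k^{\ell(\sigma^i)}\eta(T_{\sigma^i}^{-1})$ with $\ell(\sigma^i)=i(N-i)=\langle\delta,\varpi_i\rangle$, so the $k$-powers cancel. The identity $T_u^{-1}T_{w_0}=T_{u^{-1}w_0}$ (iterating $T_{i_k}^{-1}T_v=T_{s_{i_k}v}$ at each length-decreasing step, which is always the case when multiplying on the left into $w_0$) then gives $\eta(T_{\sigma^i}^{-1})(T_{w_0}T_w)=T_{\sigma^{-i}w_0}T_w$; a direct permutation computation identifies $\sigma^{-i}w_0$ as the longest element of $S_{N-i}\times S_i$, and an inversion count verifies $\ell(\sigma^i u)=\ell(\sigma^i)+\ell(u)$ for every $u\in S_{N-i}\times S_i$, so $T_{\sigma^i}T_u=T_{\sigma^i u}$ on the parabolic subspace. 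For $w\in S_{N-i}\times S_i$, $T_{\sigma^{-i}w_0}T_w$ lies entirely in the parabolic Hecke subalgebra $H_{S_{N-i}\times S_i}$, and the projection recovers $T_{\sigma^i}T_{\sigma^{-i}w_0}T_w=T_{w_0}T_w$ via $T_{\sigma^i}T_{\sigma^{-i}w_0}=T_{w_0}$. For $w\notin S_{N-i}\times S_i$, decomposing $w=u'v$ with $u'\in S_{N-i}\times S_i$ and $v\neq e$ of minimal length in its coset in $(S_{N-i}\times S_i)\backslash S_N$ shows $T_{\sigma^{-i}w_0}T_w=T_{\sigma^{-i}w_0}T_{u'}T_v$ expands as $\sum_y c_y T_{yv}$ with $y\in S_{N-i}\times S_i$; no such $yv$ lies in $S_{N-i}\times S_i$, so the projection vanishes. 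The claims for $B_i$ follow from the same analysis after substituting the duality $C_{(e,\varpi_i)}(t,\gamma)=C_\iota C_{(\varpi_i,e)}(\gamma^{-1},t^{-1})C_\iota$ and \eqref{fff} at $(\lambda,\mu)=(0,\varpi_i)$, using that $C_\iota$ is the $\mathbb{K}$-linear extension of the anti-involution $T_w\mapsto T_{w^{-1}}$ of $H_0$. The main obstacle is this last case analysis: once $\sigma^{-i}w_0$ is identified with the longest element of the Young subgroup and the length additivity $\ell(\sigma^i u)=\ell(\sigma^i)+\ell(u)$ is verified, the parabolic projection picture and the vanishing statement fall into place cleanly.
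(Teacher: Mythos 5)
Your proof is correct, and while parts (i) and (ii) follow essentially the same route as the paper (reduce via \eqref{fff} and \eqref{Cpi} to $C_{(\varpi_i,e)}$, observe that the $\gamma$-dependence sits entirely in $\gamma^{-w_0(\varpi_i)}\eta(\pi)(\gamma)^i$, which acts on $T_v$ by the monomial $\gamma^{v^{-1}w_0(\varpi_i)-w_0(\varpi_i)}\in\mathcal{B}$ by antidominance of $w_0(\varpi_i)$, and get the $Q_0(\mathcal{A})$ part from the $R$-matrix product), your treatment of part (iii) is genuinely different. The paper never computes $A_i^{(0,0)}$ on the monomial basis directly: instead it notes that $\xi_w\in\mathcal{B}\otimes H_0$ specializes to $T_w$ at $y^{\alpha_j}=0$, invokes Lemma \ref{commoneig} to see that $A_i^{(0)}$ acts on the basis $\eta(T_{w_0})\xi_w$ by the scalar $y^{w^{-1}w_0(\varpi_i)-w_0(\varpi_i)}$, and then simply specializes this eigenvalue equation \eqref{afterstepone} at the origin, where the eigenbasis degenerates to $\{T_{w_0}T_w\}$ and the eigenvalues to $0$ or $1$. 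You instead evaluate the limit of the $R$-matrix product as $k^{\ell(\sigma^i)}\eta(T_{\sigma^i}^{-1})$ and carry out a hands-on analysis in $H_0$: identifying $\sigma^{-i}w_0$ as the longest element of the Young subgroup $S_{N-i}\times S_i=\mathrm{Stab}_{S_N}(w_0(\varpi_i))$, checking the length additivity $\ell(\sigma^iu)=\ell(\sigma^i)+\ell(u)$ on that subgroup (which indeed follows from $\ell(\sigma^i)+\ell(\sigma^{-i}w_0)=i(N-i)+\binom{N-i}{2}+\binom{i}{2}=\binom{N}{2}$), and using minimal right-coset representatives to see that $T_{\sigma^{-i}w_0}T_w$ has no component along the parabolic subalgebra when $w\notin S_{N-i}\times S_i$. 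Both arguments are sound; the paper's eigenfunction trick is shorter and reuses machinery (the same computation \eqref{afterstepone} reappears in Lemma \ref{lemaltgauged}), whereas yours is self-contained at the level of the finite Hecke algebra and makes the parabolic block structure of $A_i^{(0,0)}$ explicit. Your derivation of the $B_j$ statements from $C_{(e,\varpi_j)}=C_\iota\,\iota(C_{(\varpi_j,e)})\,C_\iota$ and $C_\iota(T_{w_0}T_w)=T_{w_0}T_{w_0w^{-1}w_0}$ matches the paper's, including the substitution $w\mapsto w_0w^{-1}w_0$ that converts the condition $w^{-1}w_0(\varpi_j)=w_0(\varpi_j)$ into $w(\varpi_j)=\varpi_j$.
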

\begin{proof}
(i) We only give the proof of $A_N=\textup{id}$. Since
$\varpi_N=\pi^N$ in $W$, we have
\[
A_N(t,\gamma)=W_\kappa(t,\gamma)^{-1}C_{(\varpi_N,e)}(t,\gamma)
W_\kappa(q^{-\varpi_N}t,\gamma)=\gamma^{-\varpi_N}(\eta(\pi)(\gamma))^N
=\textup{id}
\]
where we use \eqref{fff} and \eqref{Cpi} for the second equality,
and Lemma \ref{etaexplicit} and $\sigma^N=e$ for the third equality.

(ii)  Note that
\[A_i(t,\gamma)=k^{-\langle\delta,\varpi_i\rangle}\gamma^{-w_0(\varpi_i)}
C_{(\varpi_i,e)}(t,\gamma)
\]
by \eqref{fff}. Since $\varpi_i=\pi^i\sigma^{-i}$ the
cocycle property of $C$ gives
\[A_i(t,\gamma)=k^{-\langle\delta,\varpi_i\rangle}
\gamma^{-w_0(\varpi_i)}\bigl(\eta(\pi)(\gamma)\bigr)^i
C_{(\sigma^{-i},e)}(\pi^{-i}t,\gamma).
\]
It follows from
the explicit expressions for the cocycle values $C_{(s_i,e)}$ ($1\leq i<N$)
that the $\textup{End}(H_0)$-valued
rational function $C_{(\sigma^{-i},e)}(\pi^{-i}t,\gamma)$ in
$(t,\gamma)\in T\times T$ lies in
$Q_0(\mathcal{A})\otimes\textup{End}(H_0)$
(in particular, it is independent of $\gamma$).
Furthermore, for $w\in S_N$
\[
\gamma^{-w_0(\varpi_i)}\bigl(\eta(\pi)(\gamma)\bigr)^i\bigl(T_w\bigr)
=\gamma^{w^{-1}w_0(\varpi_i)-w_0(\varpi_i)}T_{\sigma^iw}
\]
by Lemma \ref{etaexplicit}, hence the $\textup{End}(H_0)$-valued
regular function $\gamma^{-w_0(\varpi_i)}\bigl(\eta(\pi)(\gamma)\bigr)^i$
in $\gamma\in T$ lies in $\mathcal{B}\otimes\textup{End}(H_0)$.
Consequently, $A_i\in (Q_0(\mathcal{A})\otimes\mathcal{B})\otimes
\textup{End}(H_0)$.
The statement for $B_j$ follows from this using the cocycle property
$C_{(e,\varpi_j)}(t,\gamma)=C_\iota C_{(\varpi_j,e)}
(\gamma^{-1},t^{-1})C_\iota$.

(iii) Recall that $\xi_w=\eta(\widetilde{S}_{w^{-1}}^*)T_e$ with
$\widetilde{S}_w$ the intertwiners of $\mathbb{H}$ (see
Proposition \ref{lemIntProp}). By induction on $\ell(w)$, using
the explicit expression \eqref{Si} of the intertwiners
$\widetilde{S}_i$, it follows that $\xi_w\in \mathcal{B}\otimes
H_0$ and that the value of $\xi_w$ at $y^{\alpha_i}=0$ ($1\leq
i<N$) is $T_w\in H_0$. Set
\[A_i^{(0)}=A_i|_{x^{-\alpha_1}=0,\ldots,x^{-\alpha_{N-1}}=0}
\in\mathcal{B}\otimes\textup{End}(H_0).
\]
By Corollary \ref{asymptotic} and \eqref{fff},
\[
A_i^{(0)}=y^{-w_0(\varpi_i)}\eta(T_{w_0}Y^{w_0(\varpi_i)}T_{w_0}^{-1}).
\]
Lemma \ref{commoneig} then gives
\begin{equation}\label{afterstepone}
A_i^{(0)}\bigl(\eta(T_{w_0})\xi_w\bigr)=
y^{w^{-1}w_0(\varpi_i)-w_0(\varpi_i)}\eta(T_{w_0})\xi_w,\qquad
\forall\, w\in S_N
\end{equation}
as identities in $\mathcal{B}\otimes H_0$. Specializing \eqref{afterstepone}
at $y^{\alpha_j}=0$ ($1\leq j<N$) yields \eqref{Aas}.

To prove \eqref{Bas} we consider
\[\widetilde{B}_j^{(0)}=B_j|_{y^{\alpha_1}=0,\ldots,y^{\alpha_{N-1}}=0}\in
\mathcal{A}\otimes\textup{End}(H_0).
\]
It is the rational $\textup{End}(H_0)$-valued function
\[
\widetilde{B}_j^{(0)}(t)=t^{w_0(\varpi_j)}C_\iota
\bigl(\eta(T_{w_0}Y^{w_0(\varpi_j)}T_{w_0}^{-1})(t^{-1})\bigr)C_\iota
\]
in $t\in T$. Denoting $\widetilde{\xi}_w\in\mathcal{A}\otimes H_0$
for the rational $H_0$-valued function $\xi_w(t^{-1})$ in $t\in
T$, it follows that
\begin{equation}\label{afterstepone2}
\widetilde{B}_j^{(0)}\bigl(C_\iota\eta(T_{w_0})\widetilde{\xi}_w\bigr)=
x^{-w^{-1}w_0(\varpi_j)+w_0(\varpi_j)}C_\iota\eta(T_{w_0})\widetilde{\xi}_w
\end{equation}
for all $w\in S_N$. The value of
$C_\iota\eta(T_{w_0})\widetilde{\xi}_w$ at $x^{-\alpha_i}=0$
($1\leq i<N$) is $C_\iota(T_{w_0}T_w)$. In addition, $C_\iota$
restricts to the anti-algebra involution on $H_0$ mapping $T_w$ to
$T_{w^{-1}}$ for $w\in S_N$, hence
\[C_{\iota}(T_{w_0}T_w)=T_{w^{-1}}T_{w_0}=T_{w_0}T_{w_0w^{-1}w_0}.
\]
Formula \eqref{Bas} then follows from specializing
\eqref{afterstepone2} at $x^{-\alpha_i}=0$ ($1\leq i<N$) and
replacing $w$ by $w_0w^{-1}w_0$
in the resulting formula.
\end{proof}
For $\epsilon>0$, put $B_{\epsilon}:=\{t\in T\mid
|t^{\alpha_i}|<\epsilon,\: \forall i\}$ and
$B_{\epsilon}^{-1}:=\{t\in T\mid t^{-1}\in B_\epsilon\}$.
\begin{thm}\label{asymTHM}
There exists a unique solution $\Psi\in H_0^{\mathbb{K}}$ of the
gauged equations \eqref{gaugedeqn} satisfying, for some
$\epsilon>0$,\\
{\bf (i)} $\Psi(t,\gamma)$ admits an $H_0$-valued power series
expansion
    \begin{equation}\label{ps}
        \Psi(t,\gamma)=\sum_{\alpha,\beta\in
        Q_+}K_{\alpha,\beta}t^{-\alpha}\gamma^{\beta},\qquad
        (K_{\alpha,\beta}\in H_0)
    \end{equation}
for $(t,\gamma)\in B_\epsilon^{-1}\times B_\epsilon$ which is
normally convergent on compacta of $B_\epsilon^{-1}\times
B_\epsilon$. In particular, $\Psi(t,\gamma)$ is analytic at
$(t,\gamma)\in B_\epsilon^{-1} \times B_\epsilon$;\\
{\bf(ii)} $K_{0,0}=T_{w_0}$.
\end{thm}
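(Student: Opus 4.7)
The plan is to construct $\Psi$ inductively, determining $K_{\alpha,\beta}$ in order of height $|\alpha|+|\beta|$ (with $|\alpha|:=\sum_{j}n_{j}$ for $\alpha=\sum_{j}n_{j}\alpha_{j}\in Q_{+}$). It suffices to impose the fundamental-weight equations
\begin{equation*}
A_{i}(t,\gamma)\,\Psi(q^{-\varpi_{i}}t,\gamma)=\Psi(t,\gamma),\qquad
B_{i}(t,\gamma)\,\Psi(t,q^{\varpi_{i}}\gamma)=\Psi(t,\gamma)\qquad(1\le i<N),
\end{equation*}
since the $\varpi_{i}$ ($1\le i\le N$) form a $\Z$-basis of $\Z^{N}$ and the cases $i=N$ are trivial by Lemma~\ref{qconnLem}(i). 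The remaining equations of \eqref{gaugedeqn} then follow from the cocycle property of $C$, and mutual consistency of the $2(N-1)$ equations is guaranteed by holonomicity \eqref{compatible}.

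Substituting \eqref{ps} and the Taylor expansions $A_{i}=\sum_{\mu,\nu\in Q_{+}}A_{i}^{(\mu,\nu)}x^{-\mu}y^{\nu}$ (and similarly for $B_{i}$) provided by Lemma~\ref{qconnLem}(ii), comparison of coefficients of $t^{-\alpha}\gamma^{\beta}$ gives
\begin{equation*}
\bigl(q^{\langle\varpi_{i},\alpha\rangle}A_{i}^{(0,0)}-\textup{id}\bigr)K_{\alpha,\beta}
=-\!\!\!\!\sum_{\substack{(\mu,\nu)\in Q_{+}\times Q_{+}\\(\mu,\nu)\ne(0,0)}}\!\!\!\!
q^{\langle\varpi_{i},\alpha-\mu\rangle}A_{i}^{(\mu,\nu)}K_{\alpha-\mu,\beta-\nu}
\end{equation*}
(with an analogous identity for $B_{i}$), whose right-hand side involves only $K_{\alpha',\beta'}$ with $|\alpha'|+|\beta'|<|\alpha|+|\beta|$. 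Decomposing $K_{\alpha,\beta}=\sum_{w\in S_{N}}k_{\alpha,\beta}^{w}\,T_{w_{0}}T_{w}$, Lemma~\ref{qconnLem}(iii) shows that $A_{i}^{(0,0)}$ is diagonal in this basis with eigenvalue $0$ or $1$, so the scalar coefficient of $T_{w_{0}}T_{w}$ in the $i$-th $A$-equation equals $q^{\langle\varpi_{i},\alpha\rangle}-1$ or $-1$, and similarly for the $B$-equations. Using $\langle\varpi_{i},\alpha\rangle=n_{i}$ for $\alpha=\sum_{j}n_{j}\alpha_{j}$, a direct case analysis shows that for every $(\alpha,\beta)\ne(0,0)$ and every $w\in S_{N}$ at least one of the $2(N-1)$ equations has a nonzero scalar coefficient: for $w\ne e$ some $j<N$ satisfies $w(\varpi_{j})\ne\varpi_{j}$ (since $\varpi_{1},\ldots,\varpi_{N}$ form a $\Z$-basis of $\Z^{N}$ and $w$ automatically fixes $\varpi_{N}$), so the $B_{j}$-equation yields $-1$; for $w=e$, either $\alpha\ne 0$ and $q^{n_{i}}-1\ne 0$ for some $i$, or $\alpha=0$, $\beta\ne 0$ and $q^{m_{j}}-1\ne 0$ for some $j$. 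Thus $k_{\alpha,\beta}^{w}$ is uniquely determined, starting from $K_{0,0}=T_{w_{0}}$ (which is consistent at order $(0,0)$ since $A_{i}^{(0,0)}T_{w_{0}}=T_{w_{0}}=B_{i}^{(0,0)}T_{w_{0}}$ by \eqref{Aas}, \eqref{Bas} with $w=e$).

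The main obstacle is normal convergence of the resulting formal series. Since $A_{i}\in(Q_{0}(\mathcal{A})\otimes\mathcal{B})\otimes\textup{End}(H_{0})$ and $B_{i}\in(\mathcal{A}\otimes Q_{0}(\mathcal{B}))\otimes\textup{End}(H_{0})$, their coefficient arrays satisfy $\|A_{i}^{(\mu,\nu)}\|,\|B_{i}^{(\mu,\nu)}\|\le C_{0}\rho^{|\mu|+|\nu|}$ for some $C_{0},\rho>0$ (with $|\nu|$, respectively $|\mu|$, bounded). The nonzero scalar coefficients identified above are bounded below in absolute value by $|1-q|>0$, uniformly in $(\alpha,\beta)$, since $0<q<1$ forces $|q^{n}-1|\ge|1-q|$ for integers $n\ge 1$. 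A standard majorant argument then yields $\|K_{\alpha,\beta}\|\le CR^{|\alpha|+|\beta|}$ for $R$ chosen large enough that $\sum_{(\mu,\nu)\ne(0,0)}(\rho/R)^{|\mu|+|\nu|}$ is small compared to $|1-q|/C_{0}$, proving normal convergence of $\Psi$ on $B_{\epsilon}^{-1}\times B_{\epsilon}$ with $\epsilon=R^{-1}$. Uniqueness of $\Psi$ is immediate from the uniqueness of the recursion.
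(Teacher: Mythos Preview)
Your outline is sound and close in spirit to the paper's argument: both verify, via Lemma~\ref{qconnLem}, that the constant terms $A_i^{(0,0)},B_j^{(0,0)}$ are simultaneously diagonalisable in the basis $\{T_{w_0}T_w\}_{w\in S_N}$ with eigenvalues in $\{0,1\}$, that the common $1$-eigenspace is $\C T_{w_0}$, and that no eigenvalue lies in $q^{-\N}$. The paper then simply invokes the general appendix result (Theorem~\ref{merversion}) for holonomic $q$-difference systems, applied with $M=2(N-1)$, $q_i=q$, and variables $z_i=x^{-\alpha_i}$, $z_{N-1+j}=y^{\alpha_j}$.

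Where your write-up differs is in attempting to prove that appendix result directly, and here two steps are underjustified. First, your induction on total height selects, for each $(\alpha,\beta,w)$, one equation with nonzero scalar coefficient to solve for $k^w_{\alpha,\beta}$; this gives uniqueness, but \emph{existence} requires the value so obtained to satisfy all $2(N-1)$ equations---including those whose leading coefficient vanishes, which still impose constraints on the right-hand side. You assert this follows from holonomicity \eqref{compatible}, and it does, but the mechanism must be spelled out. The paper's appendix (Lemma~\ref{lemprop}) sidesteps this by inducting variable-by-variable rather than by total height: at stage $m$ only the $(m{+}1)$-th equation is used to determine the expansion in $z_{m+1}$, and then holonomicity is used in a short direct computation to show that the remaining equations hold automatically, since both sides satisfy the same first-order equation with the same constant term. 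Second, your majorant sketch is plausible but incomplete: mixing different equations for different $w$'s, and controlling the number of lattice points $(\mu,\nu)\in Q_+\times Q_+$ of given height, both require care. The variable-by-variable organisation again reduces this to a clean one-variable estimate (Proposition~\ref{convprop}). Finally, convergence on $B_\epsilon^{-1}\times B_\epsilon$ alone does not yet place $\Psi$ in $H_0^{\mathbb{K}}=\mathcal{M}(T\times T)\otimes H_0$; the meromorphic continuation to all of $T\times T$ comes from iterating the $q$-difference equations, as in Remark~\ref{merocontRemark} (equivalently, the last step of Theorem~\ref{merversion}).
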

\begin{proof}
It follows from the previous lemma that the commuting
endomorphisms $A_i^{(0,0)}, B_j^{(0,0)}\in\textup{End}(H_0)$
($1\leq i,j<N$) are semisimple. For $a,b\in\C^{N-1}$ set
\[H_0[(a,b)]=\{ v\in H_0 \,\, | \,\, A_i^{(0,0)}v=a_iv \,\, \hbox{ and } \,\,
B_j^{(0,0)}v=b_{j}v\,\, (1\leq i,j<N)\},
\]
so that $H_0=\bigoplus_{(a,b)\in S}H_0[(a,b)]$ with $S$ the finite
set of $(a,b)\in\C^{N-1}\times\C^{N-1}$ for which
$H_0[(a,b)]\not=0$. By the previous lemma, $(1^{N-1},1^{N-1})\in
S$ and we have
$H_0[(1^{N-1},1^{N-1})]=\textup{span}_{\C}\{T_{w_0}\}$.
Furthermore, $a_i,b_i\not\in q^{-\mathbb{N}}$ for all $(a,b)\in S$
and $i$. Under these conditions, the holonomic system of
$q$-difference equations \eqref{gaugedeqn} admits a unique
solution $\Psi$ satisfying the desired properties; see Theorem
\ref{merversion} in the appendix (to show that the gauged BqKZ
falls in the class of holonomic systems of $q$-difference
equations to which Theorem \ref{merversion} applies, one should
take $M=2(N-1)$, $q_i=q$ for $1\leq i<N$ and variables
$z_i=x^{-\alpha_i}$ and $z_{N-1+j}=y^{\alpha_j}$ for $1\leq i,j<N$
in the appendix).
\end{proof}
\begin{rema}\label{merocontRemark}
In a small neighborhood of a fixed $(t^\prime,\gamma^\prime)\in
T\times T$, the meromorphic solution $\Psi$ of \eqref{gaugedeqn}
can be expressed in terms of the power series expansion \eqref{ps}
by the formula
\begin{equation*}
\begin{split}
\Psi(t,\gamma)&=
D_{(\lambda,\mu)}(t,\gamma)\Psi(q^{-\lambda}t,q^{\mu}\gamma)\\
&=D_{(\lambda,\mu)}(t,\gamma)\sum_{\alpha,\beta\in Q_+}K_{\alpha,\beta}
(q^{-\lambda}t)^{-\alpha}(q^\mu\gamma)^\beta,
\end{split}
\end{equation*}
where $\lambda,\mu\in\Z^N$ are such that $(q^{-\lambda}t^\prime,
q^\mu\gamma^\prime)\in B_\epsilon^{-1}\times B_\epsilon$.
\end{rema}
\begin{defi}
We call $\Phi_\kappa:=W_\kappa \Psi\in\textup{SOL}$ the basic
asymptotically free solution of BqKZ.
\end{defi}
Note that $\Phi_\kappa\in\mathbb{F}^\times\Phi_{\kappa^\prime}$
for $\kappa,\kappa^\prime\in\C^\times$. The $\kappa$-flexibility
will come in handy when we consider specializations of
$\Phi_\kappa$. In the following subsections, we derive various
properties of the basic asymptotically free solution
$\Phi_\kappa$.

\subsection{Duality}

\begin{thm}\label{selfdualTHM}
The basic asymptotically free solution $\Phi_\kappa$ of
BqKZ is self-dual, in the sense that
\[\tau(\iota)\Phi_\kappa=\Phi_\kappa.
\]
\end{thm}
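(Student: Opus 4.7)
The plan is to reduce self-duality of $\Phi_\kappa$ to the uniqueness clause in Theorem \ref{asymTHM}. Since $\Phi_\kappa=W_\kappa\Psi$ lies in $\textup{SOL}$ and $\iota\in\mathbb{S}_N$, the $\tau(\mathbb{S}_N)$-invariance of $\textup{SOL}$ gives $\tau(\iota)\Phi_\kappa\in\textup{SOL}$. I will show that $\widetilde\Psi:=W_\kappa^{-1}\tau(\iota)\Phi_\kappa$ satisfies exactly the hypotheses that characterize $\Psi$ uniquely, hence $\widetilde\Psi=\Psi$ and the claim follows.

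First I would unwind $\tau(\iota)\Phi_\kappa$ explicitly. From \eqref{actionexpl},
\[
(\tau(\iota)\Phi_\kappa)(t,\gamma)=C_\iota\,\Phi_\kappa(\gamma^{-1},t^{-1})
=C_\iota\,W_\kappa(\gamma^{-1},t^{-1})\,\Psi(\gamma^{-1},t^{-1}).
\]
By Lemma \ref{lemWproprts}(ii), $W_\kappa(\gamma^{-1},t^{-1})=W_\kappa(t,\gamma)$, so
\[
\widetilde\Psi(t,\gamma)=C_\iota\,\Psi(\gamma^{-1},t^{-1}).
\]
Because $\tau(\iota)\Phi_\kappa\in\textup{SOL}$ and gauging by $W_\kappa$ transforms BqKZ into the system \eqref{gaugedeqn}, $\widetilde\Psi$ is automatically a solution of the gauged equations.

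Next I would verify that $\widetilde\Psi$ has a power series expansion of the form required by Theorem \ref{asymTHM}. Substituting the expansion \eqref{ps} for $\Psi$ gives, on the polydisc where $(\gamma^{-1},t^{-1})\in B_\epsilon^{-1}\times B_\epsilon$ (which is precisely $(t,\gamma)\in B_\epsilon^{-1}\times B_\epsilon$),
\[
\widetilde\Psi(t,\gamma)=\sum_{\alpha,\beta\in Q_+}C_\iota(K_{\alpha,\beta})\,\gamma^{\alpha}t^{-\beta}
=\sum_{\alpha,\beta\in Q_+}C_\iota(K_{\beta,\alpha})\,t^{-\alpha}\gamma^{\beta},
\]
with the same normal convergence on compacta. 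The leading coefficient is $C_\iota(K_{0,0})=C_\iota(T_{w_0})=T_{w_0^{-1}}=T_{w_0}$, using $w_0^2=e$ and the description of $C_\iota$ as $T_w\mapsto T_{w^{-1}}$ on $H_0$. Thus $\widetilde\Psi$ satisfies (i) and (ii) of Theorem \ref{asymTHM}.

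The uniqueness statement of Theorem \ref{asymTHM} then forces $\widetilde\Psi=\Psi$, i.e.\ $\tau(\iota)\Phi_\kappa=W_\kappa\widetilde\Psi=W_\kappa\Psi=\Phi_\kappa$. The only nontrivial points are the two inputs already secured above: the $\iota$-invariance of the scalar gauge $W_\kappa$ from Lemma \ref{lemWproprts}(ii), and the fact that the leading term $T_{w_0}$ is fixed by $C_\iota$; once these are in hand, everything reduces to the uniqueness in Theorem \ref{asymTHM}, so I do not anticipate a serious obstacle.
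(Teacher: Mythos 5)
Your proposal is correct and follows essentially the same route as the paper's proof: use the $\mathbb{S}_N$-invariance of $\textup{SOL}$, the $\iota$-invariance of $W_\kappa$ from Lemma \ref{lemWproprts}(ii), the computation $C_\iota(K_{0,0})=C_\iota(T_{w_0})=T_{w_0}$, and then the uniqueness clause of Theorem \ref{asymTHM}. The only difference is that you spell out the index relabeling in the power series and the identification of the convergence domain, which the paper leaves implicit.
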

\begin{proof}
$\textup{SOL}$ is $\mathbb{S}_N$-invariant, hence
$\tau(\iota)\Phi_\kappa\in \textup{SOL}$. In addition,
\[\tau(\iota)\Phi_\kappa=W_\kappa(\tau(\iota)\Psi)
\]
because $\iota(W_\kappa)=W_\kappa$. Hence $\tau(\iota)\Psi$ is a
solution of the gauged equations \eqref{gaugedeqn} having a
convergent $H_0$-valued power series expansion
\[(\tau(\iota)\Psi)(t,\gamma)=C_\iota\Psi(\gamma^{-1},t^{-1})=
\sum_{\alpha,\beta\in Q_+}C_\iota(K_{\alpha,\beta})\gamma^\alpha t^{-\beta}
\]
for $(t,\gamma)\in B_\epsilon^{-1}\times B_\epsilon$. Since
$C_\iota(K_{0,0})=C_\iota(T_{w_0})=T_{w_0}$, we conclude from
Theorem \ref{asymTHM} that $\tau(\iota)\Psi=\Psi$, hence
$\tau(\iota)\Phi_\kappa= \Phi_\kappa$.
\end{proof}

\subsection{Singularities}\label{singularities}
Define
\[\Lambda=\{\lambda\in\Z^N \,\, | \,\, \lambda_1\geq\lambda_2\geq
\cdots\geq\lambda_N\}=\bigoplus_{i=1}^{N-1}\Z_{\geq 0}\varpi_i
\oplus\Z \varpi_N,
\]
i.e., $\Lambda$ consists of the $\lambda\in\Z^N$ such that
$\langle\lambda,\alpha\rangle\in\Z_{\geq 0}$ for all $\alpha\in
R_+$. Set
\[\mathcal{S}_+:=\{t\in T \,\, | \,\, t^{\alpha}\in k^{-2}q^{-\mathbb{N}}\,
\textup{ for some }\, \alpha\in R_+ \}.
\]
Write $\Psi(t,\gamma)=\sum_{\alpha\in
Q_+}\Gamma_\alpha(\gamma)t^{-\alpha}$ for $(t,\gamma)\in
B_\epsilon^{-1}\times B_\epsilon$, where $\Gamma_\alpha$ is the $H_0$-valued
analytic function on $B_\epsilon$ defined by the $H_0$-valued
power series
\[
\Gamma_\alpha(\gamma):=\sum_{\beta\in Q_+}K_{\alpha,\beta}\gamma^{\beta}.
\]

\begin{lem}\label{Consequence1}
The $\Gamma_\alpha$ ($\alpha\in Q_+$) extend uniquely to a
meromorphic $H_0$-valued function on $T$, analytic at
$T\setminus\mathcal{S}_+$, such that $\Psi(t,\gamma)$ admits an
$H_0$-valued power series expansion
\[
\Psi(t,\gamma)=\sum_{\alpha\in Q_+}\Gamma_\alpha(\gamma)t^{-\alpha}
\]
for $(t,\gamma)\in B_\epsilon^{-1}\times T\setminus\mathcal{S}_+$,
converging normally on compacta of $B_\epsilon^{-1}\times
T\setminus\mathcal{S}_+$.
\end{lem}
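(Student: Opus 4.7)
The plan is to extend $\Psi$ (and hence its coefficients $\Gamma_\alpha$) in the $\gamma$-variable using the $\gamma$-subsystem of the gauged BqKZ equations \eqref{gaugedeqn}, namely
\[\Psi(t,\gamma)=D_{(e,\mu)}(t,\gamma)\,\Psi(t,q^\mu\gamma),\qquad\mu\in\Z^N.\]
For any $\gamma_0\in T\setminus\mathcal{S}_+$, I pick $\mu\in\Lambda$ sufficiently strictly dominant that $q^\mu\gamma_0\in B_\epsilon$ (possible since $|q|<1$ forces $|q^{\langle\mu,\alpha_i\rangle}|\to 0$ as $\langle\mu,\alpha_i\rangle\to+\infty$), and then transport the expansion \eqref{ps} for $\Psi(t,q^\mu\gamma)$ through this identity to extract the coefficient of $t^{-\alpha}$.

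By Lemma \ref{qconnLem}(ii) and the cocycle property, $D_{(e,\mu)}(t,\gamma)$ lies in $\mathcal{A}\otimes\mathcal{M}(T)\otimes\textup{End}(H_0)$ and admits a finite expansion
\[D_{(e,\mu)}(t,\gamma)=\sum_{\alpha'\in Q_+}E_{\alpha'}^{(\mu)}(\gamma)\,t^{-\alpha'},\]
with each $E_{\alpha'}^{(\mu)}$ a meromorphic $\textup{End}(H_0)$-valued function on $T$. Multiplying by the power series \eqref{ps}, normally convergent in $t\in B_\epsilon^{-1}$ once $q^\mu\gamma\in B_\epsilon$, yields the candidate extension
\[\widetilde\Gamma_\alpha^{(\mu)}(\gamma):=\sum_{\alpha'\leq\alpha}E_{\alpha'}^{(\mu)}(\gamma)\,\Gamma_{\alpha-\alpha'}(q^\mu\gamma),\]
a finite sum of $H_0$-valued meromorphic functions defined on the open set $U_\mu:=\{\gamma\in T\mid q^\mu\gamma\in B_\epsilon\}$. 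Because the family $\{t^{-\alpha}\}_{\alpha\in Q_+}$ is $\C$-linearly independent in the space of germs at infinity in $t$, the $\widetilde\Gamma_\alpha^{(\mu)}$ are uniquely determined by $\Psi$; hence $\widetilde\Gamma_\alpha^{(\mu)}=\widetilde\Gamma_\alpha^{(\mu')}$ on $U_\mu\cap U_{\mu'}$, and the choice $\mu=0$ on $B_\epsilon$ recovers $\Gamma_\alpha$. The local pieces therefore glue into a single meromorphic extension of $\Gamma_\alpha$ to all of $T$.

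The analyticity claim on $T\setminus\mathcal{S}_+$ reduces to exhibiting, for each $\gamma_0\notin\mathcal{S}_+$, a dominant $\mu$ with $q^\mu\gamma_0\in B_\epsilon$ and every $E_{\alpha'}^{(\mu)}$ regular at $\gamma_0$. The $\gamma$-singularities of $D_{(e,\mu)}$ are controlled by combining (a) the cocycle decomposition of $D_{(e,\mu)}$ into factors $B_{\varpi_j}(t,q^{\nu_j}\gamma)$ with $\nu_j\in Q_+$, (b) the duality identity $C_{(e,\mu)}(t,\gamma)=C_\iota C_{(\mu,e)}(\gamma^{-1},t^{-1})C_\iota$ applied to Lemma \ref{singularC}, and (c) the explicit zero/pole structure of the theta-function gauge $W_\kappa$ from \eqref{W} and \eqref{fff}. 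Because $\mu\in\Lambda$ forces the shifts $\nu_j$ into $Q_+$, spurious singularities at $\gamma^\alpha=k^{-2}q^m$ with either $m\geq 0$ or $\alpha\in -R_+$ can be pushed beyond any prescribed bounded region by taking $\mu$ sufficiently large, leaving only the resonant divisor $\mathcal{S}_+$. Normal convergence on compacta of $B_\epsilon^{-1}\times(T\setminus\mathcal{S}_+)$ then follows by covering the $\gamma$-projection of any such compactum by finitely many charts $U_\mu$ and combining uniform bounds on the finitely many $E_{\alpha'}^{(\mu)}$ over these compact sets with the normal convergence of the original series on $B_\epsilon^{-1}\times B_\epsilon$.

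The main obstacle is part (c): pinning down the $\gamma$-pole locus of $D_{(e,\mu)}$ exactly as $\mathcal{S}_+$. One must carefully combine the symmetric singularities of $C_{(\mu,e)}$ on $\mathcal{S}$, the theta-function zeros and poles introduced by the gauge $W_\kappa$ and its shifted inverse $W_\kappa(\cdot,q^\mu\cdot)^{-1}$, and the freedom to enlarge $\mu$, so as to demonstrate that the only surviving divisor of singularities is the asymmetric locus $\gamma^\alpha\in k^{-2}q^{-\N}$ with $\alpha\in R_+$.
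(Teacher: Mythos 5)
Your overall strategy coincides with the paper's: transport the expansion at $\gamma$-infinity to an arbitrary point of $T\setminus\mathcal{S}_+$ via $\Psi(t,\gamma)=D_{(e,\mu)}(t,\gamma)\Psi(t,q^{\mu}\gamma)$ with $\mu\in\Lambda$ chosen so that $q^{\mu}\gamma$ lands in $B_\epsilon$, and your gluing formula for $\widetilde{\Gamma}_\alpha^{(\mu)}$ is exactly the paper's \eqref{contformula}. The problem is that the step you yourself flag as ``the main obstacle'' --- regularity of the coefficients $E_{\alpha'}^{(\mu)}(\gamma)$ precisely off $\mathcal{S}_+$ --- is the entire substance of the lemma, and the route you sketch for it does not close. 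Your item (b) (the duality identity combined with Lemma \ref{singularC}) only bounds the singular locus by the \emph{symmetric} set $\mathcal{S}$, and that bound is insensitive to $\mu$: the set $\{\gamma\,|\,\gamma^\alpha\in k^{-2}q^{\Z}\text{ for some }\alpha\in R\}$ is stable under the shifts $\gamma\mapsto q^{\nu}\gamma$, so ``taking $\mu$ sufficiently large'' cannot push any of these potential poles off a given compactum. Starting from the coarse bound, your scheme would at best prove analyticity on $T\setminus\mathcal{S}$, not on $T\setminus\mathcal{S}_+$.

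What actually closes the gap is more direct and requires no enlargement of $\mu$. First, the theta functions you worry about in item (c) are a non-issue: by \eqref{fff} with $\lambda=0$ one has $W_\kappa(t,q^{\mu}\gamma)=k^{-\langle\delta,\mu\rangle}t^{w_0(\mu)}W_\kappa(t,\gamma)$, hence $D_{(e,\mu)}(t,\gamma)=k^{-\langle\delta,\mu\rangle}t^{w_0(\mu)}C_{(e,\mu)}(t,\gamma)$ and the gauge contributes only a monomial in $t$. Second, for $\mu\in\Lambda$ the cocycle property factors $C_{(e,\mu)}(t,\gamma)$ into terms $C_{(e,\varpi_j)}(t,q^{\nu}\gamma)$ with $\nu\in\Lambda$ a partial sum of fundamental weights, and the explicit formula \eqref{fundamental} exhibits each such factor as a product of $R_i(q\gamma_r/\gamma_s)$ with $r\leq j<s$; the only pole of $R_i$ is at $z=k^{-2}$, so after the shift by $q^{\nu}$ (with $\langle\nu,\epsilon_r-\epsilon_s\rangle\geq 0$ by dominance) the pole sits at $\gamma^{\epsilon_r-\epsilon_s}=k^{-2}q^{-1-\langle\nu,\epsilon_r-\epsilon_s\rangle}\in k^{-2}q^{-\N}$ with $\epsilon_r-\epsilon_s\in R_+$, i.e.\ inside $\mathcal{S}_+$, for \emph{every} $\mu\in\Lambda$ (this is the content of Lemma \ref{poleex}). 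No negative-root and no $q^{m}$, $m\geq0$, singularities ever occur, so nothing needs to be pushed away. With this regularity in hand, your covering and normal-convergence argument goes through as in the paper.
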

\begin{proof}
Using Lemma \ref{qconnLem} we write for $\mu\in\Lambda$,
\[D_{(e,\mu)}(t,\gamma)=\sum_{\beta\in
Q_+}F_\beta^\mu(\gamma)t^{-\beta}
\]
with $F_\beta^\mu\in Q_0(\mathcal{B})\otimes\textup{End}(H_0)$ for
all $\beta\in Q^\vee_+$. Note that $F_\beta^\mu\equiv 0$ for all
but finitely many $\beta\in Q_+$.

We first show that $F_\beta^\mu(\gamma)$ is regular at $\gamma\in
T\setminus\mathcal{S}_+$. By \eqref{fff} and by the cocycle
property, $F_\beta^\mu(\gamma)$ is regular at $\gamma=\zeta$ if
$C_{(e,\varpi_j)}(\cdot,q^\nu\gamma)\in\C[T]\otimes
\textup{End}(H_0)$ is regular at $\gamma=\zeta$ for all $1\leq
j\leq N$ and $\nu\in\Lambda$. The latter statement follows from
the fact that $R_i(z)$ has only a (simple) pole at $z=k^{-2}$ and
from the explicit expression
\begin{equation}\label{fundamental}
\begin{split}
C_{(e,\varpi_j)}(t,\gamma)&=C_\iota
\bigl(\eta(\pi)(t^{-1})\bigr)^j
(R_{N-j}(q\gamma_1/\gamma_N)\cdots R_2(q\gamma_1/\gamma_{j+2})
R_1(q\gamma_1/\gamma_{j+1}))\\
&\times\cdots\times (R_{N-2}(q\gamma_{j-1}/\gamma_N)\cdots
R_{j}(q\gamma_{j-1}/\gamma_{j+2})R_{j-1}(q\gamma_{j-1}/\gamma_{j+1}))\\
&\qquad\,\,\,\times (R_{N-1}(q\gamma_j/\gamma_N)\cdots
R_{j+1}(q\gamma_j/\gamma_{j+2})R_j(q\gamma_j/\gamma_{j+1}))C_\iota,
\end{split}
\end{equation}
which follows from Lemma \ref{explicit}(ii) and the cocycle property
of $C$.

Let $U\subset T\setminus\mathcal{S}_+$ be a relatively compact
open subset. Choose $\mu\in \Lambda$ such that the closure of
$q^{\mu}U$ is contained in $B_\epsilon$, where
$q^{\mu}U:=\{q^\mu\gamma \, | \, \gamma\in U\}$. As meromorphic
$H_0$-valued function in $(t,\gamma)\in B_\epsilon^{-1}\times U$,
we have
\begin{equation*}
\begin{split}
\Psi(t,\gamma)&=D_{(e,\mu)}(t,\gamma)\Psi(t,q^\mu\gamma)\\
&=\sum_{\alpha,\beta\in Q_+}F_\beta^\mu(\gamma)
\bigl(\Gamma_\alpha(q^\mu\gamma)\bigr)
t^{-\alpha-\beta}\\
&=\sum_{\alpha\in Q_+} \Bigl(\sum_{\beta\in Q_+: \alpha-\beta\in
Q_+}
F_{\beta}^\mu(\gamma)\bigl(\Gamma_{\alpha-\beta}(q^\mu\gamma)\bigr)\Bigr)
t^{-\alpha},
\end{split}
\end{equation*}
with the sums converging normally on compacta of $B_\epsilon^{-1}\times U$
(note that the sums over $\beta$ are finite).
It follows that $\Gamma_\alpha$ ($\alpha\in Q_+$) has a unique $H_0$-valued
meromorphic extension to $T$ which, on $U$, is given by
\begin{equation}\label{contformula}
\Gamma_\alpha(\gamma)=\sum_{\beta\in Q_+: \alpha-\beta\in Q_+}
F_{\beta}^\mu(\gamma)\bigl(\Gamma_{\alpha-\beta}(q^\mu\gamma)\bigr),
\end{equation}
such that $\Psi$ on $B_\epsilon^{-1}\times U$
admits the power series expansion
\[\Psi(t,\gamma)=\sum_{\alpha\in Q_+}\Gamma_\alpha(\gamma)t^{-\alpha},
\]
which converges normally on compacta of $B_\epsilon^{-1}\times U$.
It follows from \eqref{contformula} and the previous paragraph
that $\Gamma_\alpha$ is analytic on $T\setminus\mathcal{S}_+$.
\end{proof}
The arguments from the proof of Lemma \ref{Consequence1}, applied
to both torus variables of $\Psi(t,\gamma)$ at the same time,
directly lead to the following result.
\begin{prop}\label{Psising}
The $H_0$-valued meromorphic function $\Psi(t,\gamma)$ is analytic at
$(t,\gamma)\in T\setminus\mathcal{S}_+^{-1}\times T\setminus\mathcal{S}_+$.
\end{prop}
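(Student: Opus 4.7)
The plan is to carry out the argument of Lemma~\ref{Consequence1} simultaneously in both torus variables, using the gauged BqKZ equations \eqref{gaugedeqn} to transport the analyticity of $\Psi$ on $B_\epsilon^{-1}\times B_\epsilon$ (Theorem~\ref{asymTHM}) to any point of $(T\setminus\mathcal{S}_+^{-1})\times (T\setminus\mathcal{S}_+)$ by dominant $q$-shifts.

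Fix a relatively compact open subset $V\subset (T\setminus\mathcal{S}_+^{-1})\times (T\setminus\mathcal{S}_+)$. Since $|q|<1$, I would choose $\lambda,\mu\in\Lambda$ strictly dominant and sufficiently large that $(q^{-\lambda}t,q^{\mu}\gamma)\in B_\epsilon^{-1}\times B_\epsilon$ for every $(t,\gamma)\in V$. Iterating \eqref{gaugedeqn} then gives
\begin{equation*}
\Psi(t,\gamma)=D_{(\lambda,\mu)}(t,\gamma)\,\Psi(q^{-\lambda}t,q^{\mu}\gamma)
\end{equation*}
on $V$, and the second factor is analytic on $V$ by Theorem~\ref{asymTHM}, since its arguments lie in the domain of normal convergence of the power series \eqref{ps}.

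The remaining task is to verify that the $\textup{End}(H_0)$-valued rational function $D_{(\lambda,\mu)}(t,\gamma)$ is regular on $V$. Using the cocycle property, I would factor
\begin{equation*}
D_{(\lambda,\mu)}(t,\gamma)=D_{(\lambda,e)}(t,\gamma)\,D_{(e,\mu)}(q^{-\lambda}t,\gamma),
\end{equation*}
so that the two variables decouple. That $D_{(e,\mu)}(q^{-\lambda}t,\gamma)$ is regular in $\gamma\in T\setminus\mathcal{S}_+$ (uniformly as $t$ ranges over the projection of $V$) is exactly what was established inside the proof of Lemma~\ref{Consequence1}: the explicit form \eqref{fundamental} together with the dominance of $\mu$ ensures that all poles of the constituent $R$-matrix factors (sitting at $z=k^{-2}$) are pushed into $\mathcal{S}_+$. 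Regularity of $D_{(\lambda,e)}(t,\gamma)$ in $t\in T\setminus\mathcal{S}_+^{-1}$ is the mirror statement, obtained either by the symmetric direct analysis via Lemma~\ref{explicit}(ii) with dominant $\lambda$, or, more cleanly, by conjugating the previous statement through $C_\iota$ using the dual cocycle identity $C_{(e,w)}(t,\gamma)=C_\iota C_{(w,e)}(\gamma^{-1},t^{-1})C_\iota$ recorded after Lemma~\ref{singularC} together with the $\iota$-invariance of $W_\kappa$.

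The main obstacle is the bookkeeping needed to confirm that each dominant shift indeed sends the pole locus of the relevant $R$-matrices into $\mathcal{S}_+$ (resp.\ $\mathcal{S}_+^{-1}$), rather than to stray points of $k^{\pm 2}q^{\mathbb{Z}}$; this, however, is entirely parallel to the computation already carried out in the proof of Lemma~\ref{Consequence1}, and is in fact enforced by the sign of the translation in the cocycle decomposition.
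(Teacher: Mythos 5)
Your proposal is correct and is essentially the paper's own argument: the paper proves Proposition \ref{Psising} by the one-line remark that the shift argument of Lemma \ref{Consequence1} should be applied to both torus variables simultaneously, which is exactly the decomposition $D_{(\lambda,\mu)}(t,\gamma)=D_{(\lambda,e)}(t,\gamma)D_{(e,\mu)}(q^{-\lambda}t,\gamma)$ with dominant $\lambda,\mu$ that you carry out, together with the pole-location bookkeeping for the $R$-matrix factors already done in that lemma (and its mirror image under $C_\iota$).
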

For specialized spectral parameter, we obtain the following
result.
\begin{prop}\label{spgauge}
Let $\zeta\in T\setminus \mathcal{S}_+$.\\
{\bf (i)} The $H_0$-valued meromorphic
function $\Psi(t,\gamma)$ in $(t,\gamma)\in T\times T$ can be specialized
at $\gamma=\zeta$, giving rise to a meromorphic $H_0$-valued function
$\Psi(t,\zeta)$ in $t\in T$. It has the power series expansion
\[
\Psi(t,\zeta)=\sum_{\alpha\in Q_+}\Gamma_\alpha(\zeta)t^{-\alpha}
\]
for $t\in B_\epsilon^{-1}$, normally converging on compacta of
$B_\epsilon^{-1}$.\\
{\bf (ii)} $\Psi(t,\zeta)$ satisfies the gauged $q$-difference equations
\begin{equation}\label{specialgauge}
D_{(\lambda,e)}(t,\zeta)\Psi(q^{-\lambda}t,\zeta)=\Psi(t,\zeta),\qquad
\forall \lambda\in\Z^N.
\end{equation}
\end{prop}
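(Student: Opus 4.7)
The plan is to obtain (i) by specializing the convergent power series from Lemma~\ref{Consequence1} at $\gamma=\zeta$ and then using the gauged $q$-difference equations to extend the resulting function meromorphically to all of $T$; part (ii) will then follow from specializing the equations themselves at $\gamma=\zeta$. The pivotal observation is that the gauged connection matrices $D_{(\lambda,e)}$ are Laurent polynomial in $\gamma$: by \eqref{fff},
\[
D_{(\lambda,e)}(t,\gamma)=k^{-\langle\delta,\lambda\rangle}\gamma^{-w_0(\lambda)}C_{(\lambda,e)}(t,\gamma),
\]
and combining this with Lemma~\ref{singularC}(i) gives $D_{(\lambda,e)}\in(\C(T)\otimes\C[T])\otimes\textup{End}(H_0)$, so $D_{(\lambda,e)}(t,\cdot)$ is regular at every $\gamma\in T$.

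For (i), since $\zeta\in T\setminus\mathcal{S}_+$, Lemma~\ref{Consequence1} guarantees that each $\Gamma_\alpha$ is analytic at $\zeta$ and that the series $\sum_{\alpha\in Q_+}\Gamma_\alpha(\zeta)t^{-\alpha}$ converges normally on compacta of $B_\epsilon^{-1}$, defining an analytic $H_0$-valued function $\Psi(\cdot,\zeta)$ there with the asserted expansion. To extend it to a meromorphic function on all of $T$, I would argue that $\Psi(t,\gamma)$ is meromorphic on $T\times T$ (Theorem~\ref{asymTHM} together with Remark~\ref{merocontRemark}) and analytic on the open set $B_\epsilon^{-1}\times(T\setminus\mathcal{S}_+)$ by Lemma~\ref{Consequence1}; hence its polar divisor cannot contain the hypersurface $T\times\{\zeta\}$, so the restriction $\Psi(\cdot,\zeta)$ is a well-defined meromorphic $H_0$-valued function on $T$. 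An equivalent explicit construction uses the gauged equations: for any $t\in T$, choose $\lambda\in\Z^N$ sufficiently dominant that $q^{-\lambda}t\in B_\epsilon^{-1}$, and set $\Psi(t,\zeta):=D_{(\lambda,e)}(t,\zeta)\Psi(q^{-\lambda}t,\zeta)$. Independence from the choice of $\lambda$ follows from the cocycle property of $D_{(\cdot,e)}$ together with the equation for $\Psi(\cdot,\zeta)$ already holding on $B_\epsilon^{-1}$, which itself is obtained by specializing the gauged BqKZ equations in their domain of convergence.

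Part (ii) is then essentially immediate. The identity $D_{(\lambda,e)}(t,\gamma)\Psi(q^{-\lambda}t,\gamma)=\Psi(t,\gamma)$ holds in $H_0^{\mathbb{K}}$ as an equality of meromorphic functions on $T\times T$. Both sides admit specialization at $\gamma=\zeta$: the right-hand side by part (i), and the left-hand side because $D_{(\lambda,e)}(t,\cdot)$ is regular at $\zeta$ by the observation above, while $\Psi(q^{-\lambda}t,\cdot)$ specializes to $\Psi(q^{-\lambda}t,\zeta)$ by part (i) applied at the shifted argument. The resulting identity is precisely \eqref{specialgauge}.

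The main subtlety is the meromorphic extension in (i): Lemma~\ref{Consequence1} only gives an analytic function on the asymptotic sector $B_\epsilon^{-1}$, whereas the statement demands a meromorphic function on all of $T$. Both routes above resolve this, and both hinge on the Laurent-polynomial $\gamma$-dependence of $D_{(\lambda,e)}$, which makes the gauged equations themselves specializable at $\zeta$ and hence usable as a tool for propagation.
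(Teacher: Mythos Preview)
Your proposal is correct and follows essentially the same approach as the paper: use Lemma~\ref{Consequence1} to specialize $\Psi$ on the asymptotic sector $B_\epsilon^{-1}$, then propagate to all of $T$ via the gauged equations, exploiting the fact that $D_{(\lambda,e)}$ is regular in $\gamma$. The paper invokes Lemma~\ref{qconnLem}(ii) (stated for the fundamental weights $\varpi_i$, with general $\lambda\in\Lambda$ handled by the cocycle property) for this regularity, whereas you derive it directly from Lemma~\ref{singularC}(i) and \eqref{fff}; both routes give the same conclusion, and your explicit construction coincides with the paper's argument for (i), while (ii) is handled identically by specializing the gauged equations.
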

\begin{proof}
(i) Restricting to $t\in B_\epsilon^{-1}$ for $\epsilon>0$ small
enough, the statement is correct by Lemma \ref{Consequence1}. If
$t^\prime\in T$ is arbitrary then there exists a
$\lambda\in\Lambda$ such that $q^{-\lambda}t^\prime\in
B_\epsilon^{-1}$. For $t\in T$ in a small neighborhood of
$t^\prime$ we then have
\[\Psi(t,\gamma)=D_{(\lambda,e)}(t,\gamma)\Psi(q^{-\lambda}t,\gamma).
\]
Since $D_{(\lambda,e)}\in
(Q_0(\mathcal{A})\otimes\mathcal{B})\otimes\textup{End}(H_0)$
by Lemma \ref{qconnLem}(ii) the statement now follows
in a small open neighborhood of $t^\prime$.\\
(ii) Specializing the gauged $q$-difference equations
$D_{(\lambda,e)}(t,\gamma)\Psi(q^{-\lambda}t,\gamma)=\Psi(t,\gamma)$
($\lambda\in\Z^N$) to $\gamma=\zeta$ yields the desired result.
\end{proof}

\subsection{Evaluation formula}
We write $\bigl(z;q\bigr)_\infty=\prod_{m=0}^{\infty}(1-q^mz)$ for the
$q$-shifted factorial. Recall the power series expansion
$\Psi(t,\gamma)=\sum_{\alpha\in Q_+}\Gamma_\alpha(\gamma)t^{-\alpha}$
for $|t^{\alpha_i}|\gg 0$ ($1\leq i<N$)
from Subsection \ref{singularities}. We call the following result the
evaluation formula for the basic asymptotically free solution
$\Phi_\kappa=W_\kappa\Psi$ of BqKZ, since
it implies the celebrated evaluation formula for the Macdonald
polynomials (see Subsection \ref{2}).
\begin{thm}\label{duall}
We have
\[
\Gamma_0(\gamma)=K(\gamma)T_{w_0}
\]
with $K\in\mathcal{M}(T)$ explicitly given by
\begin{equation}\label{Kexp}
K(\gamma):=\prod_{1\leq i<j\leq N}
\frac{\bigl(q\gamma_i/\gamma_j;q\bigr)_{\infty}}
{\bigl(qk^2\gamma_i/\gamma_j;q\bigr)_{\infty}}.
\end{equation}
\end{thm}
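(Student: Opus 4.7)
The plan is to extract from the gauged bispectral qKZ a $q$-difference system for $\Gamma_0(\gamma)$, compute the relevant asymptotic shift operator on $T_{w_0}$, and then identify $\Gamma_0$ via uniqueness of the power-series recursion. By Lemma \ref{qconnLem}(ii) the gauged dual connection matrices $B_l = D_{(e,\varpi_l)}$ lie in $\mathcal{A}\otimes Q_0(\mathcal{B})\otimes\textup{End}(H_0)$, hence are polynomial in the $x^{-\alpha_i}$. Equating the coefficient of $t^0$ on both sides of $B_l(t,\gamma)\Psi(t,q^{\varpi_l}\gamma) = \Psi(t,\gamma)$, using $\Psi(t,\gamma) = \sum_{\alpha\in Q_+}\Gamma_\alpha(\gamma)t^{-\alpha}$, yields
\[
B_l^\circ(\gamma)\Gamma_0(q^{\varpi_l}\gamma) = \Gamma_0(\gamma)\qquad (1\leq l\leq N),
\]
where $B_l^\circ(\gamma) := B_l(t,\gamma)|_{x^{-\alpha_i}=0}$, and $B_N^\circ = \textup{id}$ by Lemma \ref{qconnLem}(i).

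The key computation is that $T_{w_0}$ is an eigenvector of $B_l^\circ(\gamma)$ with eigenvalue $K(\gamma)/K(q^{\varpi_l}\gamma)$. Starting from the gauge relation $B_l(t,\gamma) = k^{-\langle\delta,\varpi_l\rangle}t^{w_0(\varpi_l)}C_{(e,\varpi_l)}(t,\gamma)$ and the factorization \eqref{fundamental} of $C_{(e,\varpi_l)}$ as $C_\iota(\eta(\pi)(t^{-1}))^l\mathcal{R}_l(\gamma)C_\iota$, we apply $B_l(t,\gamma)$ to $T_{w_0}$ and use the formula $(\eta(\pi)(t^{-1}))^l T_u = t^{-1}_{u^{-1}(N)}\cdots t^{-1}_{u^{-1}(N-l+1)}T_{\sigma^l u}$ from Lemma \ref{etaexplicit}. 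Upon letting $x^{-\alpha_i}\to 0$, only those $u\in S_N$ permuting $\{N-l+1,\ldots,N\}$ produce a finite nonzero $t$-monomial (which is $1$); and among these, exactly one choice $u_l := \sigma^{-l}w_0$ gives a $T_{w_0}$-term after the outer $C_\iota$. Computing the coefficient of $T_{u_l}$ in $\mathcal{R}_l(\gamma)T_{w_0}$ by iterated application of the off-diagonal component $\tfrac{k(1-z)}{1-k^2z}$ of $R_i(z)T_{w_0}$ through the R-matrix product of \eqref{fundamental}, with $z = q\gamma_a/\gamma_b$ indexed by the pairs $1\leq a\leq l<b\leq N$, and combining with the $k^{-\langle\delta,\varpi_l\rangle}$ prefactor, yields
\[
B_l^\circ(\gamma)T_{w_0} = \prod_{1\leq a\leq l<b\leq N}\frac{1-q\gamma_a/\gamma_b}{1-qk^2\gamma_a/\gamma_b}\,T_{w_0} = \frac{K(\gamma)}{K(q^{\varpi_l}\gamma)}T_{w_0},
\]
the last equality by $(qz;q)_\infty = (z;q)_\infty/(1-z)$.

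Thus $K(\gamma)T_{w_0}$ solves the recursive system derived above, and so does $\Gamma_0(\gamma)$. To identify them, expand $\Gamma_0(\gamma) = \sum_{\beta\in Q_+}K_{0,\beta}\gamma^\beta$ with $K_{0,0} = T_{w_0}$ (Theorem \ref{asymTHM}) and show by induction on $\beta\in Q_+$ that $K_{0,\beta}\in\mathbb{C}T_{w_0}$. Comparing Taylor coefficients of $\gamma^\beta$ in the recursion gives $(q^{\langle\varpi_l,\beta\rangle}B_l^{(0,0)}-\textup{id})K_{0,\beta}\in\mathbb{C}T_{w_0}$, the inclusion following from the induction hypothesis together with the eigenvector property of $T_{w_0}$. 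For any $w\in S_N\setminus\{e\}$ let $l$ be the smallest index with $w(\varpi_l)\neq\varpi_l$ (such an $l<N$ exists since $w\neq e$); then Lemma \ref{qconnLem}(iii) gives $B_l^{(0,0)}(T_{w_0}T_w) = 0$, so $q^{\langle\varpi_l,\beta\rangle}B_l^{(0,0)}-\textup{id}$ acts as $-1$ on $T_{w_0}T_w$, forcing its coefficient in $K_{0,\beta}$ to vanish. Hence $K_{0,\beta}\in\mathbb{C}T_{w_0}$, and its scalar is uniquely determined by the scalar version of the recursion, whose solution with $K(0)=1$ is exactly $K(\gamma)$.

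The main obstacle is the R-matrix bookkeeping in the second step: extracting the coefficient along $T_{u_l}$ from the iterated product in \eqref{fundamental} requires using the Yang-Baxter relations \eqref{YBform} to show that the contributions from different pathways through the R-matrices telescope into the clean pair-factorization over $(a,b)$ with $1\leq a\leq l<b\leq N$, independent of the chosen reduced expression.
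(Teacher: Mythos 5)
Your proof is correct in outline but takes a genuinely different route from the paper's. The paper first pins $\Gamma_0(\gamma)\in\mathcal{M}(T)T_{w_0}$ using the limit of the \emph{first}-variable equations $A_i(t,\gamma)\Psi(q^{-\varpi_i}t,\gamma)=\Psi(t,\gamma)$, which degenerate to the eigenvalue equations $\gamma^{-w_0(\varpi_i)}\eta(T_{w_0}Y^{w_0(\varpi_i)}T_{w_0}^{-1})\Gamma_0=\Gamma_0$; Lemma \ref{commoneig} then forces $\Gamma_0=K\cdot T_{w_0}$. Only afterwards does it invoke the $B_j$-equations, and there it only needs the $T_{w_0}$-\emph{coefficient} $a^j_{w_0}(\gamma)$ of $B_j^{(0)}(\gamma)T_{w_0}$, never the full eigenvector statement. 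You work entirely on the dual side, replacing the paper's pinning step by a Taylor-coefficient induction driven by $B_l^{(0,0)}$ and Lemma \ref{qconnLem}(iii). That is a clean, self-contained alternative, but it genuinely requires the stronger assertion that $T_{w_0}$ is an honest eigenvector of $B_l^\circ(\gamma)$, since your induction needs $B_{l,\beta'}(T_{w_0})\in\C T_{w_0}$ for \emph{every} Taylor coefficient $B_{l,\beta'}$.

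That eigenvector assertion is the one place your argument is incomplete as written. The justification you give ("exactly one choice $u_l$ gives a $T_{w_0}$-term") only controls the $T_{w_0}$-component of $B_l^\circ(\gamma)T_{w_0}$; it does not rule out nonzero components along $T_{u^{-1}\sigma^{-l}}$ for the \emph{other} $u$ surviving the limit $x^{-\alpha_i}\to 0$. To close this, write the surviving terms as $T_{ww_0}$ with $w$ a subword of the reduced expression \eqref{redex} for $\sigma^{-l}$ (so $w\leq\sigma^{-l}$, whence $\ell(w)\leq\ell(\sigma^{-l})$) and with $w(\varpi_l)=w_0(\varpi_l)$; since $\sigma^{-l}$ is the minimal-length representative of its coset $\sigma^{-l}(S_l\times S_{N-l})$, any such $w$ has $\ell(w)\geq\ell(\sigma^{-l})$, forcing $w=\sigma^{-l}$. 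So there are no other surviving terms and your displayed identity is true. Conversely, the "main obstacle" you flag at the end is not one: the R-matrix product in \eqref{fundamental} realizes a reduced word of length $l(N-l)=\ell(\sigma^{-l})$, so the coefficient of $T_{\sigma^{-l}w_0}$ is produced by the unique all-off-diagonal pathway (a proper subword yields an element of strictly smaller length), and no Yang--Baxter telescoping is needed; the coefficient is just $\prod_{a\leq l<b}c_k(q\gamma_a/\gamma_b)^{-1}$, whose powers of $k$ cancel against $k^{-\langle\delta,\varpi_l\rangle}$ exactly as you claim.
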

\begin{proof}
We use the notations of Lemma
\ref{qconnLem}. Recall that $\Psi$ satisfies the gauged $q$-difference
equations
\[A_i(t,\gamma)\Psi(q^{-\varpi_i}t,\gamma)=\Psi(t,\gamma)
\]
for $1\leq i\leq N$. In view of the proof of Lemma \ref{qconnLem}
and Lemma \ref{Consequence1}, it reduces in the limit
$|t^{-\alpha_i}|\rightarrow 0$ ($1\leq i<N$) to
\[\gamma^{-w_0(\varpi_i)}\eta(T_{w_0}Y^{w_0(\varpi_i)}T_{w_0}^{-1})(\gamma)
\Gamma_0(\gamma)=\Gamma_0(\gamma)
\]
for $1\leq i\leq N$, as $H_0$-valued meromorphic functions in
$\gamma\in T$. This forces $\Gamma_0(\gamma)=
K(\gamma)\eta(T_{w_0})\xi_e(\gamma)=K(\gamma)T_{w_0}$ for some
$K\in\mathcal{M}(T)$; see Lemma \ref{commoneig}.

It remains to show that $K$ is explicitly given by \eqref{Kexp}.
Write $L(\gamma)$ for the right hand side of \eqref{Kexp}. Then
$L\in\mathcal{M}(T)$ is characterized by the following three
properties:
\begin{enumerate}
\item[{\bf (i)}] for some $\epsilon>0$ we have a power series
expansion
\[
L(\gamma)=\sum_{\alpha\in Q_+}l_\alpha\gamma^{\alpha}
\]
for $\gamma\in B_\epsilon$, converging normally on compacta of $B_\epsilon$;
\item[{\bf (ii)}] $l_0=1$; and
\item[{\bf (iii)}] $L(\gamma)$ satisfies the $q$-difference equations
\[\prod_{\stackrel{1\leq r\leq j}{\stackrel{j+1\leq s\leq N}{}}}
\frac{1-q\gamma_r/\gamma_s}
{1-qk^2\gamma_r/\gamma_s}L(q^{\varpi_j}\gamma)=L(\gamma), \qquad
1\leq j\leq N.
\]
\end{enumerate}
It thus suffices to show that $K(\gamma)$ satisfies the three
properties (i)--(iii). It is clear that $K\in\mathcal{M}(T)$
satisfies (i); see Subsection \ref{singularities}. Theorem
\ref{asymTHM}(ii) implies (ii) for $K$. What remains is the
verification of the $q$-difference equations (iii) for $K$. Using
the notations of Lemma \ref{qconnLem}, we write
\[B_j^{(0)}:=B_j|_{x^{-\alpha_1}=0,\ldots,x^{-\alpha_{N-1}}=0}\in
Q_0(\mathcal{B})\otimes\textup{End}(H_0).
\]
We view $B_j^{(0)}(\gamma)$ as an $\textup{End}(H_0)$-valued
meromorphic function in $\gamma\in T$. Taking the limit
$|t^{-\alpha_i}|\rightarrow 0$ ($1\leq i<N$) in the gauged
$q$-difference equations
\[B_j(t,\gamma)\Psi(t,q^{\varpi_j}\gamma)=\Psi(t,\gamma),\qquad 1\leq j\leq N
\]
and using $\Gamma_0(\gamma)=K(\gamma)T_{w_0}$ we obtain
\[K(q^{\varpi_j}\gamma)B_j^{(0)}(\gamma)T_{w_0}=
K(\gamma)T_{w_0}
\]
for $1\leq j\leq N$, as meromorphic $H_0$-valued functions in $\gamma\in T$.
Writing $B_j^{(0)}(\gamma)T_{w_0}=\sum_{w\in S_N}a_w^j(\gamma)T_w$ with
$a_w^j\in\mathcal{M}(T)$ it thus suffices to show that
\begin{equation}\label{coef}
a_{w_0}^j(\gamma)=\prod_{\stackrel{1\leq r\leq
j}{\stackrel{j+1\leq s\leq N}{}}} \frac{1-q\gamma_r/\gamma_s}
{1-qk^2\gamma_r/\gamma_s}=k^{-\langle\delta,\varpi_j\rangle}
\prod_{\stackrel{1\leq r\leq j}{\stackrel{j+1\leq s\leq N}{}}}
c_k(q\gamma_r/\gamma_s)^{-1}
\end{equation}
for $1\leq j\leq N$, where the second equality follows from a direct
computation using the explicit expression \eqref{ck} of $c_k$.

By \eqref{fff} we have
\[
B_j(t,\gamma)=k^{-\langle\delta,\varpi_j\rangle}t^{w_0(\varpi_j)}
C_{(e,\varpi_j)}(t,\gamma)
\]
and $C_{(e,\varpi_j)}(t,\gamma)$ is given explicitly by
\eqref{fundamental}. Since $R_i(z)=c_k(z)^{-1}(\eta(T_i)-k)+1$,
Lemma \ref{etaexplicit} and the reduced expression \eqref{redex}
for $\sigma^i$ imply that
\[B_j(t,\gamma)T_{w_0}=k^{-\langle\delta,\varpi_j\rangle}t^{w_0(\varpi_j)}
C_\iota(\eta(\pi)(t^{-1}))^j\bigl(\sum_{w\leq \sigma^{-j}}b_w^j(\gamma)
T_{ww_0}\bigr)
\]
with $\leq $ the Bruhat order on $S_N$ and with
\[
b_{\sigma^{-j}}^j(\gamma)=\prod_{\stackrel{1\leq r\leq
j}{\stackrel{j+1\leq s\leq N}{}}} c_k(q\gamma_r/\gamma_s)^{-1}.
\]
By Lemma \ref{etaexplicit} we have
\[t^{w_0(\varpi_j)}C_\iota(\eta(\pi)(t^{-1}))^jT_{ww_0}=
t^{w_0(\varpi_j)-w_0w^{-1}w_0(\varpi_j)}T_{w_0w^{-1}\sigma^{-j}}.
\]
Hence
\[B_j^{(0)}(\gamma)T_{w_0}=k^{-\langle\delta,\varpi_j\rangle}
\sum_{w}b_w^j(\gamma)T_{w_0w^{-1}\sigma^{-j}},
\]
with the sum running over $w\in S_N$ satisfying $w\leq \sigma^{-j}$ and
$w(\varpi_j)=w_0(\varpi_j)$. In particular,
$a_{w_0}^j(\gamma)=
k^{-\langle\delta,\varpi_j\rangle}b_{\sigma^{-j}}^j(\gamma)$. This completes the
proof of \eqref{coef}.
\end{proof}

\subsection{Consistency of the bispectral quantum KZ equations}
In this subsection, we show that BqKZ is a consistent system of
$q$-difference equations, i.e.,
$\textup{dim}_{\mathbb{F}}(\textup{SOL})=\textup{dim}_{\C}(H_0)$,
by explicitly constructing an $\mathbb{F}$-basis of
$\textup{SOL}$. Since the $q$-connection matrices
$C_{(\lambda,\mu)}(t,\gamma)$ ($\lambda,\mu\in\Z^N$) depend
rationally on $(t,\gamma)\in T\times T$, the consistency of BqKZ
follows also from the abstract arguments in \cite[\S 5]{Et}.

We start with a preliminary lemma on the cocycle values
$C_{(e,w)}$ for $w\in S_N$.
\begin{lem}
Let $w\in S_N$. We have $C_{(e,w)}\in Q_0(\mathcal{B})\otimes\textup{End}(H_0)$
and
\[C_{(e,w)}^{(0)}(h)=k^{-\ell(w)}hT_{w^{-1}},\qquad h\in H_0,
\]
where
\[C_{(e,w)}^{(0)}=C_{(e,w)}|_{y^{\alpha_1}=0,\ldots,y^{\alpha_{N-1}}=0}\in
\textup{End}(H_0).
\]
\end{lem}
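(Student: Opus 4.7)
The strategy is to go through the dual statement for $C_{(w,e)}$ and transfer via the identity $C_{(e,w)}(t,\gamma)=C_\iota C_{(w,e)}(\gamma^{-1},t^{-1})C_\iota$ established in Subsection \ref{cocyclevalues}. Fix a reduced expression $w=s_{i_1}\cdots s_{i_r}$ in $S_N$. Iterating the cocycle relation $C_{(uv,e)}(t,\gamma)=C_{(u,e)}(t,\gamma)C_{(v,e)}(u^{-1}t,\gamma)$ together with Lemma \ref{YBlem} yields
\[
C_{(w,e)}(t,\gamma)=\prod_{j=1}^{r}R_{i_j}\bigl(t^{\beta_j}\bigr), \qquad \beta_j:=s_{i_1}\cdots s_{i_{j-1}}\alpha_{i_j},
\]
which in particular has no $\gamma$-dependence.

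By the standard description of the inversion set of $w^{-1}$ in terms of a reduced expression for $w$, each $\beta_j\in R_+$, so $t^{\beta_j}=m_j^{-1}$ with $m_j\in\mathcal{A}$ a monomial in $x^{-\alpha_1},\ldots,x^{-\alpha_{N-1}}$ vanishing at the origin. Rewriting $c_k(z)^{-1}=(1-z)/(k^{-1}-kz)$ as $(m_j-1)/(k^{-1}m_j-k)$ shows that each factor $R_{i_j}(t^{\beta_j})$ belongs to $Q_0(\mathcal{A})\otimes\textup{End}(H_0)$ and tends to $k^{-1}\eta(T_{i_j})$ at $m_j=0$. Multiplying, $C_{(w,e)}\in Q_0(\mathcal{A})\otimes\textup{End}(H_0)$ and
\[
C_{(w,e)}^{(0)}=k^{-r}\eta(T_{i_1})\cdots\eta(T_{i_r})=k^{-\ell(w)}\eta(T_w),
\]
the last equality because $T_w=T_{i_1}\cdots T_{i_r}$ for a reduced expression. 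Since $C_{(w,e)}(\gamma^{-1},t^{-1})$ is in fact independent of $t^{-1}$, the duality identity shows $C_{(e,w)}$ is independent of $t$, and the variable change $x^{-\alpha_i}\leftrightarrow y^{\alpha_i}$ converts $Q_0(\mathcal{A})$-regularity into $Q_0(\mathcal{B})$-regularity, yielding $C_{(e,w)}\in Q_0(\mathcal{B})\otimes\textup{End}(H_0)$ and $C_{(e,w)}^{(0)}=k^{-\ell(w)}\,C_\iota\eta(T_w)C_\iota$.

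For the explicit action on $h\in H_0$, Lemma \ref{etaexplicit} (iterated in $\ell(w)$) shows $\eta(T_w)$ acts on $H_0\subset H_0^{\mathbb{K}}$ as left multiplication by $T_w$, while $C_\iota$ restricts to the anti-algebra involution of $H_0$ sending $T_v$ to $T_{v^{-1}}$. Hence
\[
C_\iota\eta(T_w)C_\iota(h)=C_\iota\bigl(T_w\cdot C_\iota(h)\bigr)=C_\iota(C_\iota(h))\cdot C_\iota(T_w)=h\,T_{w^{-1}},
\]
which gives the claimed formula. The only delicate point is the direction of asymptotics: the arguments $t^{\beta_j}$ tend to $\infty$ (not $0$) as $x^{-\alpha_i}\to 0$, so one must use the $z\to\infty$ limit $R_i(z)\to k^{-1}\eta(T_i)$—this relies crucially on the positivity of the roots $\beta_j$, guaranteed by the reduced-expression hypothesis.
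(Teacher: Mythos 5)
Your proof is correct and follows essentially the same route as the paper: factor $C_{(w,e)}$ into $R$-matrices $R_{i_j}(t^{\beta_j})$ along a reduced expression, use positivity of the $\beta_j$ to get regularity, and transfer to $C_{(e,w)}$ via $C_{(e,w)}(t,\gamma)=C_\iota C_{(w,e)}(\gamma^{-1},t^{-1})C_\iota$ together with the fact that $C_\iota$ restricts to the anti-involution $T_v\mapsto T_{v^{-1}}$ of $H_0$. The only (immaterial) difference is that the paper rewrites $C_{(w,e)}(\gamma^{-1},t^{-1})$ as $\bigl(R_{i_r}(\gamma^{\beta_r})\cdots R_{i_1}(\gamma^{\beta_1})\bigr)^{-1}$ and uses $\lim_{z\to0}R_i(z)=k\,\eta(T_i^{-1})$, whereas you take the $z\to\infty$ limit $R_i(z)\to k^{-1}\eta(T_i)$ directly.
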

\begin{proof}
Let $w=s_{i_1}s_{i_2}\cdots s_{i_r}$ be a reduced expression for
$w\in S_N$ ($1\leq i_j<N$) and write $\beta_j:=s_{i_1}\cdots
s_{i_{j-1}}(\alpha_{i_j})\in R_+$ for $1\leq j\leq r$, where
$\beta_1$ should be read as $\alpha_{i_1}$. By Subsection
\ref{cocyclevalues} and the cocycle property, we have
\[
C_{(e,w)}(t,\gamma)=C_\iota C_{(w,e)}(\gamma^{-1},t^{-1})C_\iota
=C_\iota\bigl(R_{i_r}(\gamma^{\beta_r})\cdots R_{i_2}(\gamma^{\beta_2})
R_{i_1}(\gamma^{\beta_1})\bigr)^{-1}C_\iota.
\]
{}From the expression for $R_i(z)$ it now follows that
$C_{(e,w)}\in Q_0(\mathcal{B})\otimes\textup{End}(H_0)$.
Since $\lim_{z\rightarrow 0}R_i(z)=k\eta(T_i^{-1})$ we furthermore
have
\[C_{(e,w)}^{(0)}=k^{-\ell(w)}C_\iota\eta(T_w)C_\iota.
\]
The map $C_\iota$ is the $\mathbb{K}$-linear extension of the
anti-algebra involution of $H_0$ mapping $T_w$ to $T_{w^{-1}}$.
Hence $C_{(e,w)}^{(0)}(h)=k^{-\ell(w)}hT_{w^{-1}}$ for $h\in H_0$.
\end{proof}
Define $U\in\textup{End}(H_0)^{\mathbb{K}}:=\mathbb{K}\otimes
\textup{End}(H_0)$ by
\begin{equation}
U\bigl(k^{-\ell(w)}T_{w_0}T_{w^{-1}}\bigr)=\tau(e,w)\Phi_\kappa,\qquad
w\in S_N.
\end{equation}
Since $\textup{SOL}$ is $\mathbb{S}_N$-invariant, $U$ is an
$\textup{End}(H_0)$-valued solution of the BqKZ, i.e.
\[C_{(\lambda,\mu)}(t,\gamma)U(q^{-\lambda}t,q^{\mu}\gamma)=
U(t,\gamma),\qquad \lambda,\mu\in\Z^N
\]
as $\textup{End}(H_0)$-valued meromorphic functions in
$(t,\gamma)\in T\times T$.
\begin{lem}\label{Consequence2}
$U\in\textup{End}(H_0)^{\mathbb{K}}$ is invertible.
\end{lem}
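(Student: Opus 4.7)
The plan is to establish invertibility of $U$ by analyzing its asymptotic behavior in the regime $|t^{\alpha_i}|\to\infty$, $|\gamma^{\alpha_i}|\to 0$ and showing that in this joint limit the matrix of $U$ with respect to the basis $\{T_{w_0}T_{w^{-1}}\}_{w\in S_N}$ of $H_0$ becomes asymptotically diagonal with nonvanishing diagonal entries, forcing $\det U\not\equiv 0$.

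First I would observe that for $w\in S_N$ the cocycle value $C_{(e,w)}(t,\gamma)=C_\iota C_{(w,e)}(\gamma^{-1},t^{-1})C_\iota$ is $t$-independent, since $C_{(w,e)}$ for $w\in S_N$ is a product of the $R_i(t_i/t_{i+1})$ (Lemma \ref{YBlem}) and so depends only on the first torus argument; write $C_{(e,w)}(\gamma)$ for the resulting element. Then
$$(\tau(e,w)\Phi_\kappa)(t,\gamma)=C_{(e,w)}(\gamma)\,W_\kappa(t,w^{-1}\gamma)\,\Psi(t,w^{-1}\gamma).$$
Using Lemma \ref{Consequence1} and Theorem \ref{duall}, for generic $\gamma$ (so that $w^{-1}\gamma\in T\setminus\mathcal{S}_+$), one has $\Psi(t,w^{-1}\gamma)=K(w^{-1}\gamma)T_{w_0}+O(t^{-\alpha_j})$ as $|t^{\alpha_i}|\to\infty$, with $K$ the nonvanishing meromorphic function of Theorem \ref{duall}. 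Multiplying by $k^{\ell(w)}$ this yields
$$U(T_{w_0}T_{w^{-1}})(t,\gamma)=k^{\ell(w)}W_\kappa(t,w^{-1}\gamma)K(w^{-1}\gamma)\,C_{(e,w)}(\gamma)T_{w_0}+(\hbox{higher order in }t^{-\alpha_j}).$$

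I would then pass to the further limit $|\gamma^{\alpha_i}|\to 0$ and invoke the preceding lemma, which gives $C_{(e,w)}(\gamma)T_{w_0}\to k^{-\ell(w)}T_{w_0}T_{w^{-1}}$. Writing $U(T_{w_0}T_{w^{-1}})=\sum_{w'\in S_N}A_{w,w'}(t,\gamma)T_{w_0}T_{(w')^{-1}}$ in the basis $\{T_{w_0}T_{(w')^{-1}}\}_{w'\in S_N}$ of $H_0$, one finds in the joint asymptotic limit
$$A_{w,w'}(t,\gamma)\sim W_\kappa(t,w^{-1}\gamma)K(w^{-1}\gamma)\,\delta_{w,w'}.$$
Each diagonal entry is a nonzero meromorphic function on $T\times T$ (a theta quotient times a product of nonzero $q$-shifted factorials), so the leading term of $\det A$ is the nonvanishing product $\prod_{w\in S_N}W_\kappa(t,w^{-1}\gamma)K(w^{-1}\gamma)$; hence $\det A\not\equiv 0$ as a meromorphic function, and $U\in\textup{GL}_{\mathbb{K}}(H_0^{\mathbb{K}})$.

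The main technical difficulty will be to control the two successive asymptotic limits uniformly and to verify that the off-diagonal error terms in $A$, coming from the higher Taylor coefficients of $C_{(e,w)}(\gamma)T_{w_0}$ in $\gamma^{\alpha_i}$ and of $\Psi(t,\tilde\gamma)$ in $t^{-\alpha_j}$, vanish fast enough in the joint limit not to spoil the dominant diagonal. The required estimates follow from the expansions already established in Lemma \ref{qconnLem} and Lemma \ref{Consequence1}.
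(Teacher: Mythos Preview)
Your approach coincides with the paper's up to the last step: both factor $U=V\Xi$ with $\Xi$ the diagonal operator $k^{-\ell(w)}T_{w_0}T_{w^{-1}}\mapsto W_\kappa(t,w^{-1}\gamma)\,k^{-\ell(w)}T_{w_0}T_{w^{-1}}$, reduce to invertibility of $V$, and pass to the regime $|t^{\alpha_i}|\to\infty$, where the $w$-th column of $V$ tends to $\Gamma_0^w(\gamma)=K(w^{-1}\gamma)\,C_{(e,w)}(\gamma)T_{w_0}$. At this point the paper \emph{fixes} a generic $\zeta\in T\setminus\mathcal S$ with $\zeta^\alpha\notin q^{\Z}$ and uses the Bruhat triangularity
\[
C_{(e,w)}(\zeta)T_{w_0}=\sum_{v\le w}a_v^w(\zeta)\,T_{w_0}T_{v^{-1}},\qquad a_w^w(\zeta)\ne 0,
\]
(read off from the explicit product form of $C_{(e,w)}$) to conclude that the $t$-limit matrix $V^{(0)}(\zeta)$ is triangular with nonzero diagonal, hence invertible. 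You instead take a second limit $\gamma^{\alpha_i}\to 0$ and invoke the preceding lemma to get a diagonal limit.

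Your variant can be made to work, but one step needs more care than you give it. For $w\neq e$ the scalar $K(w^{-1}\gamma)$ has \emph{no} limit as $\gamma^{\alpha_i}\to 0$: some ratios $(w^{-1}\gamma)_i/(w^{-1}\gamma)_j=\gamma_{w(i)}/\gamma_{w(j)}$ with $i<j$, $w(i)>w(j)$ tend to $\infty$, and $(qz;q)_\infty/(qk^2z;q)_\infty$ oscillates there. Thus the ``diagonal entries'' $W_\kappa(t,w^{-1}\gamma)K(w^{-1}\gamma)$ cannot themselves serve as an asymptotic scale in the joint limit, and the sentence ``the leading term of $\det A$ is $\prod_w W_\kappa(t,w^{-1}\gamma)K(w^{-1}\gamma)$'' is not justified as stated. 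The fix is to pull $K(w^{-1}\gamma)$ out of each column of $V^{(0)}(\gamma)$ as a nonzero meromorphic scalar \emph{before} letting $\gamma^{\alpha_i}\to 0$; what remains is the matrix with columns $C_{(e,w)}(\gamma)T_{w_0}$, which lies in $Q_0(\mathcal B)\otimes\textup{End}(H_0)$ and, by the preceding lemma, specializes at $\gamma^{\alpha_i}=0$ to the diagonal matrix $\textup{diag}(k^{-\ell(w)})$. That matrix therefore has nonzero determinant as a rational function of $\gamma$, and $\det V\not\equiv 0$ follows. The paper's triangularity argument at fixed $\zeta$ bypasses this extra factorization.
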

\begin{proof}
Using the natural identification $\textup{End}(H_0)^{\mathbb{K}}\simeq
\textup{End}_{\mathbb{K}}(H_0^{\mathbb{K}})$ as $\mathbb{K}$-algebras, we need
to verify that $U\in\textup{GL}_{\mathbb{K}}(H_0^{\mathbb{K}})$.

Set $\Phi_w:=\tau(e,w)\Phi_\kappa$ and $\Psi_w:=\tau(e,w)\Psi$ for
$w\in S_N$, so that
$\Phi_w(t,\gamma)=W_\kappa(t,w^{-1}\gamma)\Psi_w(t,\gamma)$. Since
$C_{(e,w)}(t,\gamma)$ is independent of $t\in T$, we simply write
it as $C_{(e,w)}(\gamma)$. Recall the $W$-invariant subset
$\mathcal{S}\subset T$ (see \eqref{Scal}), which contains
$\mathcal{S}_+$. By Lemma \ref{singularC} and Lemma
\ref{Consequence1}, we have for some $\epsilon>0$ the power series
expansion
\[\Psi_w(t,\gamma)=\sum_{\alpha\in Q_+}
C_{(e,w)}(\gamma)\bigl(\Gamma_\alpha(w^{-1}\gamma)\bigr)
t^{-\alpha}
\]
for $(t,\gamma)\in B_\epsilon^{-1}\times T\setminus\mathcal{S}$,
converging normally on compacta of $B_\epsilon^{-1}\times
T\setminus\mathcal{S}$. We write
$\Gamma_\alpha^w(\gamma):=C_{(e,w)}(\gamma)
\bigl(\Gamma_\alpha(w^{-1}\gamma)\bigr)$ in the remainder of the
proof. It is a meromorphic function in $\gamma\in T$, analytic on
$T\setminus\mathcal{S}$, and the power series expansion of
$\Psi_w$ becomes
\begin{equation}\label{PsiWexpansion}
\Psi_w(t,\gamma)=\sum_{\alpha\in
Q_+}\Gamma_\alpha^w(\gamma)t^{-\alpha}.
\end{equation}
Observe that
\[
\Gamma_0^w(\gamma)\rightarrow C_{(e,w)}^{(0)}(T_{w_0})=
k^{-\ell(w)}T_{w_0}T_{w^{-1}},
\]
in the limit $\gamma^{\alpha_i}\rightarrow 0$ ($1\leq i<N$),
in view of the previous lemma.

Write $U=V\Xi$ with $V,\Xi$ the $\mathbb{K}$-linear endomorphisms of
$H_0^{\mathbb{K}}$ given by
\begin{equation*}
\begin{split}
\Xi\bigl(k^{-\ell(w)}T_{w_0}T_{w^{-1}}\bigr)(t,\gamma)&=
W_\kappa(t,w^{-1}\gamma)k^{-\ell(w)}T_{w_0}T_{w^{-1}},\\
V\bigl(k^{-\ell(w)}T_{w_0}T_{w^{-1}}\bigr)&=\Psi_w,
\end{split}
\end{equation*}
for $w\in S_N$. Since
$\Xi\in\textup{GL}_{\mathbb{K}}(H_0^{\mathbb{K}})$ it suffices to
show that $V\in\textup{GL}_{\mathbb{K}}(H_0^{\mathbb{K}})$. Let
$M$ be the matrix of $V$ with respect to the $\mathbb{K}$-basis
$k^{-\ell(w)}T_{w_0}T_{w^{-1}}$ ($w\in S_N$) of
$H_0^{\mathbb{K}}$. Now fix $\zeta\in T\setminus\mathcal{S}$ such that 
$\zeta^\alpha\notin q^{\Z}$ for all $\alpha\in R$. The
matrix $M(t,\gamma)$ may be specialized at $\gamma=\zeta$ and the
limit of $M(t,\zeta)$ as $t^{-\alpha_i}\rightarrow 0$ ($1\leq
i<N$) exists. 
We write $M^{(0)}(\zeta)$ for the limit and $V^{(0)}(\zeta)$ 
for the corresponding linear endomorphism of
$H_0$. We then have
\[V^{(0)}(\zeta)\bigl(k^{-\ell(w)}T_{w_0}T_{w^{-1}}\bigr)=
\Gamma_0^w(\zeta)=K(w^{-1}\zeta)C_{(e,w)}(\zeta)T_{w_0},
\]
with $K(\gamma)$ given by \eqref{Kexp}. Note that $K(w^{-1}\zeta)\neq0$ since 
$\zeta^\alpha\notin q^{\Z}$ for all $\alpha\in R$. 
By the explicit expression
for the cocycle value $C_{(e,w)}(\zeta) \in\textup{End}(H_0)$ (see
the proof of the previous lemma) we have
\[C_{(e,w)}(\zeta)(T_{w_0})=\sum_{v\leq
w}a_v^w(\zeta)T_{w_0}T_{v^{-1}},
\]
with $a_w^w(\zeta)\not=0$ and with $\leq$ the Bruhat order on
$S_N$. This implies that $V^{(0)}(\zeta)$ is a linear automorphism
of $H_0$, hence $\det(M^{(0)}(\zeta))\not=0$. Consequently,
$\det(M)\in\mathbb{K}^\times$ and
$V\in\textup{GL}_{\mathbb{K}}(H_0^{\mathbb{K}})$.
\end{proof}
\begin{prop}\label{basisprop}
{\bf (i)} $U^\prime\in\textup{End}(H_0)^{\mathbb{K}}$ is an
$\textup{End}(H_0)$-valued meromorphic solution of BqKZ if and
only if
$U^\prime=UF$ for some $F\in\textup{End}(H_0)^{\mathbb{F}}$.\\
{\bf (ii)} $U$, viewed
as $\mathbb{K}$-linear endomorphism of $H_0^{\mathbb{K}}$,
restricts to an $\mathbb{F}$-linear isomorphism
$U: H_0^{\mathbb{F}}\rightarrow \textup{SOL}$.\\
{\bf (iii)} $\{\tau(e,w)\Phi_\kappa\}_{w\in S_N}$ is an
$\mathbb{F}$-basis of $\textup{SOL}$.
\end{prop}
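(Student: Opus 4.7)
The plan is to deduce everything from Lemma \ref{Consequence2} (invertibility of $U$ in $\textup{End}(H_0)^{\mathbb{K}}$) together with the fact, noted just before that lemma, that $U$ itself is an $\textup{End}(H_0)$-valued solution of BqKZ. Once these are in hand, parts (i)--(iii) are essentially formal.

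For part (i), given $U^\prime\in\textup{End}(H_0)^{\mathbb{K}}$ I would set $F:=U^{-1}U^\prime$, which makes sense by Lemma \ref{Consequence2}. Substituting $U^\prime=UF$ into the BqKZ equations
\[
C_{(\lambda,\mu)}(t,\gamma)U^\prime(q^{-\lambda}t,q^\mu\gamma)=U^\prime(t,\gamma),
\]
the factors $C_{(\lambda,\mu)}(t,\gamma)U(q^{-\lambda}t,q^\mu\gamma)=U(t,\gamma)$ cancel, leaving precisely the condition $F(q^{-\lambda}t,q^\mu\gamma)=F(t,\gamma)$ for all $\lambda,\mu\in\Z^N$. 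This is the defining condition $F\in\textup{End}(H_0)^{\mathbb{F}}$, so the equivalence in (i) follows.

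Part (ii) is a direct corollary: the same cancellation, applied to $f:=Uh$ for $h\in H_0^{\mathbb{K}}$, shows that $Uh\in\textup{SOL}$ if and only if $h\in H_0^{\mathbb{F}}$. Injectivity of $U\colon H_0^{\mathbb{F}}\to\textup{SOL}$ is immediate from the invertibility of $U$, and surjectivity is the nontrivial direction of that equivalence (given $f\in\textup{SOL}$, set $h:=U^{-1}f$ and invoke the cancellation to see $h$ is $\Z^N\times\Z^N$-invariant, hence in $H_0^{\mathbb{F}}$).

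For part (iii), since $T_{w_0}$ is a unit in $H_0$, left multiplication by $T_{w_0}$ is a bijection on $\{T_{w^{-1}}\}_{w\in S_N}$, so $\{k^{-\ell(w)}T_{w_0}T_{w^{-1}}\}_{w\in S_N}$ is a $\C$-basis of $H_0$ and hence an $\mathbb{F}$-basis of $H_0^{\mathbb{F}}$. By the very definition of $U$, this basis is mapped to $\{\tau(e,w)\Phi_\kappa\}_{w\in S_N}$, which is therefore an $\mathbb{F}$-basis of $\textup{SOL}$ by (ii). The main obstacle was Lemma \ref{Consequence2}, already proved; beyond that the argument is purely formal, and the conceptual content of the proposition is the identification of $\textup{SOL}$ with the free $\mathbb{F}$-module $H_0^{\mathbb{F}}$ of the expected rank $N!=\dim_{\C}H_0$.
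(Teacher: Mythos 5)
Your argument is correct and follows the same route as the paper: both proofs reduce everything to the invertibility of $U$ (Lemma \ref{Consequence2}) together with the fact that $U$ solves BqKZ, and then obtain (i)--(iii) by the formal cancellation $U^{-1}U^\prime\in\textup{End}(H_0)^{\mathbb{F}}$ and the definition of $U$ on the basis $\{k^{-\ell(w)}T_{w_0}T_{w^{-1}}\}_{w\in S_N}$.
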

\begin{proof}
(i) If $U^\prime$ is an $\textup{End}(H_0)$-valued meromorphic
solution of BqKZ then, since $U$ is invertible, we have for all
$\lambda,\mu\in\Z^N$,
\[U(q^{-\lambda}t,q^\mu\gamma)^{-1}U^\prime(q^{-\lambda}t,q^\mu\gamma)=
U(t,\gamma)^{-1}U^\prime(t,\gamma).
\]
Hence $U^\prime=UF$ with $F\in\textup{End}(H_0)^{\mathbb{F}}$.
The converse implication is clear.\\
(ii) By the previous lemma we have
$U: H_0^{\mathbb{F}}\hookrightarrow \textup{SOL}$.
It is surjective, since for $g\in\textup{SOL}$, $f:=U^{-1}g\in H_0^{\mathbb{K}}$
satisfies $f(q^{-\lambda}t,q^\mu\gamma)=f(t,\gamma)$ for all $\lambda,\mu\in\Z^N$
(cf. the proof of (i)), hence $f\in H_0^{\mathbb{F}}$.\\
(iii) This is clear from (ii) and from the definition of $U$.
\end{proof}
By Proposition \ref{spgauge} and by the proofs of Lemma
\ref{Consequence2} and Proposition \ref{basisprop} we obtain the
following consistency statement for the quantum KZ equation
\eqref{qKZ} with specialized central character (see \cite{CInd}
and \cite{C}). Recall the $W$-invariant subset $\mathcal{S}\subset
T$ given by \eqref{Scal}. Recall furthermore that
$C_{(e,w)}(t,\gamma)$ for $w\in S_N$ only depends on $\gamma$, so
we simply write it as $C_{(e,w)}(\gamma)$.
\begin{cor}\label{consistencyzeta}
Fix $\zeta\in T\setminus\mathcal{S}$ such that 
$\zeta^\alpha\notin q^{\Z}$ for all $\alpha\in R$. For generic $\kappa\in\C^\times$, the
$H_0$-valued meromorphic functions $\bigl(\tau(e,w)\Phi_\kappa\bigr)(t,\gamma)$
in $(t,\gamma)\in T\times T$ ($w\in S_N$) can be specialized at $\gamma=\zeta$,
giving rise to
\begin{enumerate}
\item[{\bf (i)}] a basis
$\{C_{(e,w)}(\zeta)\Phi_\kappa(\cdot,w^{-1}\zeta)\}_{w\in S_N}$
of $\textup{SOL}_\zeta$ over $\mathcal{E}(T)$;
\item[{\bf (ii)}] an invertible $\textup{End}(H_0)$-valued
meromorphic solution $U_\zeta$
of the quantum KZ equations \eqref{qKZ}, where
$U_\zeta\in \textup{End}(H_0)^{\mathcal{M}(T)}$
is explicitly defined by
\[
U_\zeta(k^{-\ell(w)}T_{w_0}T_{w^{-1}}):=
C_{(e,w)}(\zeta)\Phi_\kappa(\cdot,w^{-1}\zeta),\qquad w\in S_N.
\]
\end{enumerate}
 \end{cor}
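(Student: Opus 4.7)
The plan is to obtain Corollary \ref{consistencyzeta} as a pointwise specialization of Proposition \ref{basisprop} at $\gamma=\zeta$, closely paralleling the proofs of Lemma \ref{Consequence2} and Proposition \ref{spgauge}.

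First I would verify that the specialization at $\gamma=\zeta$ of each element of the $\mathbb{F}$-basis $\{\tau(e,w)\Phi_\kappa\}_{w\in S_N}$ of $\textup{SOL}$ is well defined and equals $C_{(e,w)}(\zeta)\Phi_\kappa(\cdot,w^{-1}\zeta)$. Since the singular set $\mathcal{S}$ is $S_N$-invariant (the root system $R$ is), the hypothesis $\zeta\notin\mathcal{S}$ forces $w^{-1}\zeta\notin\mathcal{S}\supset\mathcal{S}_+$ for every $w\in S_N$, so Proposition \ref{spgauge}(i) allows specialization of $\Psi$ at $\gamma=w^{-1}\zeta$. For generic $\kappa$ no theta factor in $W_\kappa(\cdot,w^{-1}\zeta)$ degenerates, so $\Phi_\kappa(\cdot,w^{-1}\zeta)=W_\kappa(\cdot,w^{-1}\zeta)\Psi(\cdot,w^{-1}\zeta)$ is a well-defined $H_0$-valued meromorphic function on $T$. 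Combined with analyticity of $C_{(e,w)}(\gamma)$ at $\gamma=\zeta$ (from the lemma preceding Definition of $U$ together with $\zeta\notin\mathcal{S}$), the identity $(\tau(e,w)\Phi_\kappa)(t,\zeta)=C_{(e,w)}(\zeta)\Phi_\kappa(t,w^{-1}\zeta)$ holds as $H_0$-valued meromorphic functions in $t$. Specializing the BqKZ equation for $\tau(e,w)\Phi_\kappa$ at $\gamma=\zeta$ (using Lemma \ref{singularC} and $\zeta\notin\mathcal{S}$ to justify the specialization of the $q$-connection matrices) then yields the quantum KZ equation \eqref{qKZ}, so that $C_{(e,w)}(\zeta)\Phi_\kappa(\cdot,w^{-1}\zeta)\in\textup{SOL}_\zeta$.

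The main step is to prove that $U_\zeta(t):=U(t,\zeta)\in\textup{End}(H_0)^{\mathcal{M}(T)}$ is invertible, giving (ii). I would mimic the argument in the proof of Lemma \ref{Consequence2}, but with the spectral parameter fixed at $\zeta$ throughout. Factor $U_\zeta=V_\zeta\Xi_\zeta$ where $\Xi_\zeta$ is the diagonal operator $k^{-\ell(w)}T_{w_0}T_{w^{-1}}\mapsto W_\kappa(\cdot,w^{-1}\zeta)k^{-\ell(w)}T_{w_0}T_{w^{-1}}$ and $V_\zeta$ sends $k^{-\ell(w)}T_{w_0}T_{w^{-1}}$ to $\Psi_w(\cdot,\zeta)=C_{(e,w)}(\zeta)\Psi(\cdot,w^{-1}\zeta)$. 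For generic $\kappa$ the diagonal entries of $\Xi_\zeta$ are nonzero elements of $\mathcal{M}(T)$, so $\Xi_\zeta$ is invertible. To handle $V_\zeta$, take the limit $t^{-\alpha_i}\to 0$ ($1\leq i<N$) of its matrix: the $w$-th column tends to $\Gamma_0^w(\zeta)=K(w^{-1}\zeta)C_{(e,w)}(\zeta)T_{w_0}$ as in the proof of Lemma \ref{Consequence2}, with $K(w^{-1}\zeta)\neq0$ because $\zeta^\alpha\notin q^\Z$ for every $\alpha\in R$, and with $C_{(e,w)}(\zeta)T_{w_0}$ triangular with nonvanishing diagonal entry $a^w_w(\zeta)$ relative to the Bruhat order on $S_N$. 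Hence the limit is an invertible endomorphism of $H_0$, so $\det V_\zeta\not\equiv0$ in $\mathcal{M}(T)$ and $U_\zeta$ is invertible.

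For (i), invertibility of $U_\zeta$ immediately yields $\mathcal{E}(T)$-linear independence of the $N!$ solutions $C_{(e,w)}(\zeta)\Phi_\kappa(\cdot,w^{-1}\zeta)$ inside $\textup{SOL}_\zeta$, and an arbitrary $g\in\textup{SOL}_\zeta$ lies in their $\mathcal{E}(T)$-span by the same argument used in Proposition \ref{basisprop}(ii): $f:=U_\zeta^{-1}g\in H_0^{\mathcal{M}(T)}$ satisfies $f(q^{-\lambda}t)=f(t)$ for all $\lambda\in\Z^N$, hence $f\in H_0^{\mathcal{E}(T)}$ and $g=U_\zeta f$ expands in the proposed basis. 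The principal obstacle is bookkeeping: one must be sure that the genericity condition on $\kappa$, together with $\zeta\notin\mathcal{S}$ and $\zeta^\alpha\notin q^\Z$ for all $\alpha\in R$, rules out every potential degeneration (vanishing of theta factors, poles of $C_{(e,w)}$, and zeros of $K(w^{-1}\zeta)$) simultaneously for all $w\in S_N$, but no conceptual difficulty beyond what is already contained in Lemma \ref{Consequence2} and Proposition \ref{spgauge} is involved.
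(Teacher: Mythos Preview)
Your proposal is correct and follows precisely the route the paper itself indicates: the corollary is stated as a direct consequence of Proposition \ref{spgauge} together with the proofs of Lemma \ref{Consequence2} and Proposition \ref{basisprop}, and you have spelled out exactly that specialization argument (the factorization $U_\zeta=V_\zeta\Xi_\zeta$, the Bruhat-triangular limit, and the $U_\zeta^{-1}g$ spanning trick). The only minor remark is that the regularity of $C_{(e,w)}(\gamma)$ at $\gamma=\zeta\notin\mathcal{S}$ is most cleanly read off from the explicit product formula in the proof of that lemma (or from Lemma \ref{singularC} via $C_{(e,w)}(t,\gamma)=C_\iota C_{(w,e)}(\gamma^{-1},t^{-1})C_\iota$), rather than from its $Q_0(\mathcal{B})$-statement alone.
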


\section{The correspondence with bispectral problems}
\label{SectionCM}

For a finite dimensional affine
Hecke algebra module $M$, Cherednik \cite[Thm. 4.2]{CInd}
derived, for arbitrary root systems,
a correspondence between solutions of the quantum affine
KZ equations associated to $M$ and solutions
of a system of (possibly matrix-valued)
$q$-difference equations.
This correspondence was considered before
in \cite[Thm. 3.4]{Cref} for a special class of modules $M$,
in which case the corresponding $q$-difference operators
contain the Macdonald
$q$-difference operator
(the latter observation is due to Cherednik \cite[Thm. 4.4]{Cref} for
$\textup{GL}_N$,
and due to Kato \cite[Thm. 4.6]{Ka} and Cherednik
\cite{CInd} for general root systems).
In the classical setting ($q=1$)
it goes back to Matsuo \cite{Mat}.

In the present $\textup{GL}_N$ setting, with $M=M(\zeta)$
the minimal principal series module with specialized central
character $\zeta\in T$, these constructions give rise to
an explicit map $\chi_+$ (independent
of $\zeta$) from the solution space $\textup{SOL}_{\zeta}$ of the
quantum KZ equation \eqref{qKZ}
to the solution space of the spectral problem
for the commuting Ruijsenaars' \cite{R} trigonometric $q$-difference operators
with spectral parameter $\zeta^{-1}$, see \cite[Thm. 4.4]{Cref}
(for $\textup{GL}_N$ the Macdonald $q$-difference operators coincide with
the Ruijsenaars' operators).  We recall this result in detail
in Subsection \ref{spec}.

In this section we investigate the map $\chi_+$ when applied to
solutions of the bispectral extension of the quantum KZ equations;
see Subsection \ref{BqKZsection}. In order to do so, we need to
replace in the correspondence as described in the previous
paragraph, the role of $M(\zeta)$ by the infinite dimensional,
formal principal series module of $H$ as defined in Subsection
\ref{formal}. We then show that the same map $\chi_+$ gives rise
to an embedding of the solution space $\textup{SOL}$ of BqKZ into
the solution space of a bispectral problem for the Ruijsenaars
operators. The techniques employed in this section are analogous
to the ones for the usual correspondence (see \cite[Chpt. 1]{C}).
For the convenience of the reader we provide full details of the
arguments involved.

We fix $\kappa\in\C^\times$ throughout Subsections \ref{mono}-\ref{BiHC}.
\subsection{The monodromy cocycle}\label{mono}

Observe that $F\in\textup{End}(H_0)^{\mathbb{K}}$ is
a $\textup{End}(H_0)$-valued meromorphic solution of
BqKZ if and only if
\[\tau(\mathrm{w})F=F,\qquad \mathrm{w}\in\Z^N\times\Z^N,\]
where the $\mathbb{W}$-action $\tau$
on $\textup{End}(H_0)^{\mathbb{K}}$
is defined by
\[
(\tau(\mathrm{w})F)(t,\gamma):=
C_{\mathrm{w}}(t,\gamma)(\mathrm{w}F)(t,\gamma)=
C_{\mathrm{w}}(t,\gamma)F(\mathrm{w}^{-1}(t,\gamma))
\]
for $\mathrm{w}\in\mathbb{W}$ and $F\in\textup{End}(H_0)^{\mathbb{K}}$,
viewed as identities between
$\textup{End}(H_0)$-valued meromorphic functions in
$(t,\gamma)\in T\times T$.

By Proposition \ref{basisprop}(i), given an
$\textup{End}(H_0)$-valued meromorphic solution $F$ of
BqKZ, there exists a unique $G\in\textup{End}(H_0)^{\mathbb{F}}$
such that $F=UG$. Accordingly, $G$ describes the deviation of
$F$ from the fundamental solution $U$ of BqKZ,
and therefore can be thought of as a connection matrix.
We will consider the special cases when $F$ are the $\textup{End}(H_0)$-valued
meromorphic solutions
$\tau(\mathrm{w})U$ ($\mathrm{w}\in\mathbb{W}$) of BqKZ.

For $\mathrm{w}\in\mathbb{W}$ we set
\[
\mathcal{T}_{\mathrm{w}}:=U^{-1}(\tau(\mathrm{w})U)\in
\textup{End}(H_0)^{\mathbb{F}},
\]
that is, $\mathcal{T}_{\mathrm{w}}$ ($\mathrm{w}\in\mathbb{W}$)
is the unique element of $\textup{End}(H_0)^{\mathbb{F}}$ such that
\[
\tau(\mathrm{w})U=U\mathcal{T}_\mathrm{w}.
\]
Note that $\textup{End}(H_0)^{\mathbb{F}}$ is a $\mathbb{W}$-stable
subalgebra of $\textup{End}(H_0)^{\mathbb{K}}$ with respect to the
action $(\mathrm{w}F)(t,\gamma)=F(\mathrm{w}^{-1}(t,\gamma))$. The
following lemma now shows that the $\mathcal{T}_{\mathrm{w}}$
($\mathrm{w}\in\mathbb{W}$) define a cocycle of $\mathbb{W}$ with
values in the group of units of $\textup{End}(H_0)^{\mathbb{F}}$.
\begin{lem} {\bf(i)} $\mathcal{T}_{\mathrm{w}}=\textup{id}$
for $\mathrm{w}\in\Z^N\times\Z^N$.\\
{\bf(ii)} For $\mathrm{w},\mathrm{w}^\prime\in\mathbb{W}$ we have the
cocycle relation
\[
\mathcal{T}_{\mathrm{w}\mathrm{w}^\prime}=\mathcal{T}_{\mathrm{w}}
\mathrm{w}(\mathcal{T}_{\mathrm{w}^\prime})
\]
in $\textup{End}(H_0)^{\mathbb{F}}$.
\end{lem}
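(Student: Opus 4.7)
The plan is to exploit two ingredients: first, that $U$ is by construction a solution of BqKZ; second, that $\tau$ is a genuine $\mathbb{W}$-action on $\textup{End}(H_0)^{\mathbb{K}}$ (a consequence of the cocycle property of $C$ established in Corollary \ref{CorCoc}).

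Part (i) is essentially a reformulation of the fact that $U$ satisfies BqKZ. By the definition of $U$ in Proposition \ref{basisprop}, each column of $U$ is (up to the scalar $k^{-\ell(w)}$) of the form $\tau(e,w)\Phi_\kappa\in\textup{SOL}$, and since $\textup{SOL}$ is by construction the space of $\tau(\Z^N\times\Z^N)$-fixed elements, one has $\tau(\mathrm{w})U=U$ for every $\mathrm{w}\in\Z^N\times\Z^N$. Therefore $\mathcal{T}_{\mathrm{w}}=U^{-1}(\tau(\mathrm{w})U)=\textup{id}$.

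Part (ii) will follow from a direct computation using the multiplicative formula for $\tau$ on products. Observe that for endomorphism-valued meromorphic functions $A,B$ on $T\times T$, the definition gives
\begin{equation*}
(\tau(\mathrm{w})(AB))(t,\gamma)=C_{\mathrm{w}}(t,\gamma)A(\mathrm{w}^{-1}(t,\gamma))B(\mathrm{w}^{-1}(t,\gamma))
=(\tau(\mathrm{w})A)(t,\gamma)\cdot(\mathrm{w}B)(t,\gamma),
\end{equation*}
so that $\tau(\mathrm{w})(AB)=(\tau(\mathrm{w})A)\cdot\mathrm{w}(B)$. Applying this with $A=U$ and $B=\mathcal{T}_{\mathrm{w}'}$, and using that $\tau$ is a group action (which is exactly the cocycle identity $C_{\mathrm{w}\mathrm{w}'}=C_{\mathrm{w}}\,\mathrm{w}(C_{\mathrm{w}'})$ of Corollary \ref{CorCoc}), I compute
\begin{equation*}
U\,\mathcal{T}_{\mathrm{w}\mathrm{w}'}=\tau(\mathrm{w}\mathrm{w}')U=\tau(\mathrm{w})\bigl(\tau(\mathrm{w}')U\bigr)=\tau(\mathrm{w})(U\mathcal{T}_{\mathrm{w}'})=(\tau(\mathrm{w})U)\cdot\mathrm{w}(\mathcal{T}_{\mathrm{w}'})=U\,\mathcal{T}_{\mathrm{w}}\,\mathrm{w}(\mathcal{T}_{\mathrm{w}'}),
\end{equation*}
and cancelling $U$ (which is invertible by Lemma \ref{Consequence2}) yields the cocycle identity.

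The only subtlety worth flagging is the assertion that $\mathcal{T}_{\mathrm{w}}$ actually lies in $\textup{End}(H_0)^{\mathbb{F}}$ rather than merely in $\textup{End}(H_0)^{\mathbb{K}}$. This is where normality of $\Z^N\times\Z^N$ in $\mathbb{W}$ is used: for $(\lambda,\xi)\in\Z^N\times\Z^N$ and any $\mathrm{w}\in\mathbb{W}$, one has $\mathrm{w}^{-1}(\lambda,\xi)\mathrm{w}\in\Z^N\times\Z^N$, whence
\begin{equation*}
\tau(\lambda,\xi)\bigl(\tau(\mathrm{w})U\bigr)=\tau(\mathrm{w})\,\tau\bigl(\mathrm{w}^{-1}(\lambda,\xi)\mathrm{w}\bigr)U=\tau(\mathrm{w})U
\end{equation*}
by part (i); so $\tau(\mathrm{w})U$ is again a solution of BqKZ, and consequently the $\mathbb{K}$-valued entries of $\mathcal{T}_{\mathrm{w}}=U^{-1}(\tau(\mathrm{w})U)$ are $(\Z^N\times\Z^N)$-invariant, i.e.\ lie in $\mathbb{F}$. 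I anticipate no genuine obstacle here; the whole proof is essentially bookkeeping once the right formula $\tau(\mathrm{w})(AB)=(\tau(\mathrm{w})A)\mathrm{w}(B)$ is in hand.
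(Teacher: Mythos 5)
Your proof is correct and follows essentially the same route as the paper: part (i) is the statement that $U$ solves BqKZ, and part (ii) is the cocycle identity $C_{\mathrm{w}\mathrm{w}'}=C_{\mathrm{w}}\mathrm{w}(C_{\mathrm{w}'})$ unwound through the definition of $\mathcal{T}$ (the paper writes the computation directly in terms of $C_{\mathrm{w}}$, whereas you package it as the group-action property of $\tau$ plus the twisted multiplicativity $\tau(\mathrm{w})(AB)=(\tau(\mathrm{w})A)\,\mathrm{w}(B)$ — an equivalent bookkeeping). Your normality remark justifying $\mathcal{T}_{\mathrm{w}}\in\textup{End}(H_0)^{\mathbb{F}}$ correctly fills in what the paper delegates to Proposition \ref{basisprop}(i).
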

\begin{proof}
(i) This follows immediately from the fact that $U$ is an
$\textup{End}(H_0)$-valued meromorphic solution of BqKZ.\\
(ii) Note that
$\mathcal{T}_\mathrm{w}=U^{-1}C_{\mathrm{w}}\mathrm{w}(U)$ for
$\mathrm{w}\in\mathbb{W}$. By the cocycle condition for $C_{\mathrm{w}}\in
\textup{End}(H_0)^{\mathbb{K}}$, which reads in the present notations
as $C_{\mathrm{w}\mathrm{w}^\prime}=
C_{\mathrm{w}}\mathrm{w}(C_{\mathrm{w}^\prime})$
for $\mathrm{w},\mathrm{w}^\prime\in\mathbb{W}$, we have
\[\begin{split}
\mathcal{T}_{\mathrm{w}\mathrm{w}^\prime}
&=U^{-1}C_{\mathrm{w}\mathrm{w}^\prime}\mathrm{w}\mathrm{w}^\prime(U)
=U^{-1}C_{\mathrm{w}}\mathrm{w}(C_{\mathrm{w}^\prime}\mathrm{w'}(U))\\
&=U^{-1}C_{\mathrm{w}}\mathrm{w}(U)\mathrm{w}(U^{-1}C_{\mathrm{w}^\prime}
\mathrm{w}^\prime(U))
=\mathcal{T}_{\mathrm{w}}\mathrm{w}(\mathcal{T}_{\mathrm{w}^\prime})
\end{split}\]
for all $\mathrm{w},\mathrm{w}^\prime\in\mathbb{W}$.
\end{proof}
\begin{defi}
In analogy with the terminology in \cite[\S 1.3.3]{C} for the
quantum KZ equation, we call
$\{\mathcal{T}_{\mathrm{w}}\}_{\mathrm{w}\in\mathbb{W}}$ the monodromy cocycle
of the BqKZ.
\end{defi}
\begin{rema}
Connection matrices and Riemann-Hilbert problems for ordinary
linear $q$-difference equations have been extensively studied;
see, e.g., \cite{Bi} and \cite{Sa}. For quantum KZ equations,
connection matrices have been computed explicitly in, e.g.,
\cite{FR}, \cite[\S 12]{EFK}, and \cite{KK}.
\end{rema}

\subsection{The correspondence}\label{Corr}

Consider the algebra $\C(T\times T)\#\mathbb{W}$, where
$\mathbb{W}$ acts as field automorphisms on $\C(T\times T)$ by
the formula \eqref{doubleaction}. Recall that $\C(T\times T)\#\mathbb{W}$
naturally acts on $\mathbb{K}$. We write $Df$ for the action of
$D\in\C(T\times T)\#\mathbb{W}$ on $f\in\mathbb{K}$.

We have a representation
$\vartheta\colon\C(T\times T)\#\mathbb{W}\to
\textup{End}(\textup{End}(H_0)^{\mathbb{K}})$ given by
\[\begin{split}
\vartheta(f)F&=fF,\qquad f\in\C(T\times T),\\
\vartheta(\mathrm{w})F&=\mathrm{w}(F),\qquad \mathrm{w}\in\mathbb{W}
\end{split}
\]
for $F\in\textup{End}(H_0)^{\mathbb{K}}$.
Let $\mathbb{D}$ be the subalgebra $\C(T\times T)\#(\Z^N\times\Z^N)$
of $\C(T\times T)\#\mathbb{W}$. Under the natural action of
$\C(T\times T) \#\mathbb{W}$ on $\C(T\times T)$, the subalgebra $\mathbb{D}$
identifies with the algebra of $q$-difference
operators on $T\times T$ with rational coefficients.

Set $H_0^*:=\textup{Hom}(H_0,\C)$. We will regard
a linear functional $\chi\in H_0^*$ also as an element of
$\textup{Hom}_{\mathbb{K}}(H_0^{\mathbb{K}},\mathbb{K})$ by
$\mathbb{K}$-linear extension.
For $F\in\textup{End}(H_0)^{\mathbb{K}}$, denote
\[
\phi^F_{\chi,v}:=\chi(Fv)\in\mathbb{K},\qquad \chi\in H_0^*,v\in
H_0
\]
for its matrix coefficients.
Note that for any $D\in\C(T\times T)\#\mathbb{W}$,
$\chi\in H_0^*$ and $v\in H_0$,
\begin{equation}\label{eqD}
D\phi_{\chi,v}^F=\phi_{\chi,v}^{\vartheta(D)F}
\end{equation}
for all $F\in\textup{End}(H_0)^{\mathbb{K}}$.

\begin{lem}\label{lemTheta}
For $\mathrm{w}\in\mathbb{W}$ we have
\[
\vartheta(\mathrm{w})U=C_\mathrm{w}^{-1}U\mathcal{T}_\mathrm{w}.
\]
In particular, $\vartheta(\mathrm{w})U=C_\mathrm{w}^{-1}U$ for
$\mathrm{w}\in\Z^N\times\Z^N$.
\end{lem}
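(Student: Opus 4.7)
The plan is to unfold the definitions and chase identities; no analytic input is needed beyond what is already in place. First I would rewrite the definition \eqref{actionexpl} of the $\mathbb{W}$-action $\tau$ on $\mathrm{End}(H_0)^{\mathbb{K}}$ in the suggestive form
\[
\tau(\mathrm{w})F = C_{\mathrm{w}}\cdot\vartheta(\mathrm{w})F, \qquad F\in\mathrm{End}(H_0)^{\mathbb{K}},
\]
since $\vartheta(\mathrm{w})F=\mathrm{w}(F)$ is precisely the composition $(t,\gamma)\mapsto F(\mathrm{w}^{-1}(t,\gamma))$ appearing on the right-hand side of \eqref{actionexpl}. Applying this identity with $F=U$ gives $\tau(\mathrm{w})U=C_{\mathrm{w}}\cdot\vartheta(\mathrm{w})U$.

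Next I would invoke the defining relation of the monodromy cocycle, $\tau(\mathrm{w})U=U\mathcal{T}_{\mathrm{w}}$. Combining the two expressions for $\tau(\mathrm{w})U$ yields
\[
C_{\mathrm{w}}\cdot\vartheta(\mathrm{w})U=U\mathcal{T}_{\mathrm{w}},
\]
and multiplying on the left by $C_{\mathrm{w}}^{-1}\in\mathrm{GL}_{\mathbb{K}}(H_0^{\mathbb{K}})$ (invertibility being part of Corollary~\ref{CorCoc}) gives the claimed formula $\vartheta(\mathrm{w})U=C_{\mathrm{w}}^{-1}U\mathcal{T}_{\mathrm{w}}$. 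Finally, the stated specialization for $\mathrm{w}\in\Z^N\times\Z^N$ is immediate from part (i) of the preceding lemma, which asserts $\mathcal{T}_{\mathrm{w}}=\mathrm{id}$ on this normal subgroup; substituting this into the general formula yields $\vartheta(\mathrm{w})U=C_{\mathrm{w}}^{-1}U$.

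There is no real obstacle here: the content of the lemma is just a bookkeeping identity relating the two natural actions on $\mathrm{End}(H_0)^{\mathbb{K}}$, namely the ``geometric'' pullback $\vartheta(\mathrm{w})$ and the ``twisted'' action $\tau(\mathrm{w})=C_{\mathrm{w}}\vartheta(\mathrm{w})$, together with the definition of $\mathcal{T}_{\mathrm{w}}$ as the deviation of $\tau(\mathrm{w})U$ from $U$. The only point requiring a brief verification is that multiplication by $C_{\mathrm{w}}^{-1}$ on the left is legitimate, which is guaranteed by $C_{\mathrm{w}}\in\mathrm{GL}_{\mathbb{K}}(H_0^{\mathbb{K}})$.
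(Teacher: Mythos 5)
Your proof is correct and follows exactly the paper's argument: unfold $\vartheta(\mathrm{w})U=\mathrm{w}(U)$, use $\tau(\mathrm{w})U=C_{\mathrm{w}}\mathrm{w}(U)$ (the definition of the twisted action on $\textup{End}(H_0)^{\mathbb{K}}$ given at the start of Subsection~\ref{mono}, rather than \eqref{actionexpl} which concerns $H_0^{\mathbb{K}}$), invoke $\tau(\mathrm{w})U=U\mathcal{T}_{\mathrm{w}}$, and conclude by the invertibility of $C_{\mathrm{w}}$ and by $\mathcal{T}_{\mathrm{w}}=\textup{id}$ on $\Z^N\times\Z^N$. No gaps.
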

\begin{proof}
For $\mathrm{w}\in\mathbb{W}$
\[
\vartheta(\mathrm{w})U=\mathrm{w}(U)=C_\mathrm{w}^{-1}(\tau(\mathrm{w})U)
=C_\mathrm{w}^{-1}U\mathcal{T}_\mathrm{w}.
\]
The second claim follows from the fact that
$\mathcal{T}_\mathrm{w}=\mathrm{id}$ for
$\mathrm{w}\in\Z^N\times\Z^N$.
\end{proof}

We now are going to look for a particular linear functional
$\chi$ such that the matrix coefficients $\phi_{\chi,v}^U$
($v\in H_0$) of $U$ solves a bispectral problem with respect to
two commuting families of Ruijsenaars' trigonometric $q$-difference operators
(one family acting on the first torus component, the second
on the second torus component).
In view of \eqref{eqD} and the previous lemma,
to obtain $q$-difference equations for $\phi_{\chi,v}$
we have to deal with the cocycle value
$C_\mathrm{w}$ and the monodromy matrix
$\mathcal{T}_\mathrm{w}$ in the equations
$\mathrm{w}\phi_{\chi,v}^U=
\phi_{\chi,v}^{C^{-1}_\mathrm{w}U\mathcal{T}_\mathrm{w}}$.
It is convenient to postpone the analysis of the monodromy cocycle
by initially absorbing it into the action
$\vartheta$ of $\C(T\times T)\#\mathbb{W}$
via the twisted algebra homomorphism
\[
\vartheta_\mathcal{T}\colon\C(T\times T)\#\mathbb{W}\to
\textup{End}(\textup{End}(H_0)^{\mathbb{K}}),
\]
defined by
\[\begin{split}
\vartheta_\mathcal{T}(f)F&=fF,\qquad\qquad f\in\C(T\times T),\\
\vartheta_\mathcal{T}(\mathrm{w})F&=
\mathrm{w}(F)\mathcal{T}_\mathrm{w}^{-1},\qquad
\mathrm{w}\in\mathbb{W}
\end{split}
\]
for $F\in\textup{End}(H_0)^{\mathbb{K}}$.
Note that $\vartheta_\mathcal{T}$ is indeed an algebra
homomorphism, thanks to the cocycle condition for $\mathcal{T}$. Moreover,
$\vartheta_\mathcal{T}|_{\mathbb{D}}=\vartheta|_{\mathbb{D}}$.

For $D\in\C(T\times T)\#\mathbb{W}$ we will occasionally use the
notations
\begin{equation}\label{convnotation}
D=\sum_{\mathrm{w}\in\mathbb{W}}d_\mathrm{w}\mathrm{w}=
\sum_{\mathrm{v}\in\mathbb{S}_N}D_\mathrm{v}\mathrm{v},
\end{equation}
where $d_\mathrm{w}\in\C(T\times T)$ ($\mathrm{w}\in\mathbb{W}$)
and
$D_\mathrm{v}=\sum_{\mathrm{u}\in\Z^N\times\Z^N}d_{\mathrm{uv}}\mathrm{u}\in
\mathbb{D}$ ($\mathrm{v}\in\mathbb{S}_N$). Reformulating
\eqref{eqD} and Lemma \ref{lemTheta} in terms of the twisted
action $\vartheta_{\mathcal{T}}$ yields the following result.

\begin{lem}\label{lemThetaT}
{\bf(i)} For $\mathrm{w}\in\mathbb{W}$ we have
\[
\vartheta_\mathcal{T}(\mathrm{w})U=C_\mathrm{w}^{-1}U.
\]
{\bf(ii)} For $D\in\C(T\times T)\#\mathbb{W}$ we have
\[
\phi_{\chi,v}^{\vartheta_\mathcal{T}(D)U}=
\sum_{\mathrm{v}\in\mathbb{S}_N}D_\mathrm{v}
(\phi_{\chi,v}^{C_\mathrm{v}^{-1}U})
\]
for all $\chi\in H_0^*$ and $v\in H_0$.
\end{lem}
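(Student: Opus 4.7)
The plan is to reduce both parts to direct computations using Lemma \ref{lemTheta}, the equality $\vartheta_{\mathcal{T}}|_{\mathbb{D}} = \vartheta|_{\mathbb{D}}$, and the identity \eqref{eqD}.

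For part (i), I would start from Lemma \ref{lemTheta}, which reads $\vartheta(\mathrm{w})U = C_{\mathrm{w}}^{-1}U\mathcal{T}_{\mathrm{w}}$. Since by definition $\vartheta(\mathrm{w})U = \mathrm{w}(U)$, this yields
\[
\mathrm{w}(U) = C_{\mathrm{w}}^{-1}U\mathcal{T}_{\mathrm{w}}.
\]
Multiplying on the right by $\mathcal{T}_{\mathrm{w}}^{-1}\in\textup{End}(H_0)^{\mathbb{F}}$ and using the definition of the twisted action, $\vartheta_{\mathcal{T}}(\mathrm{w})U = \mathrm{w}(U)\mathcal{T}_{\mathrm{w}}^{-1}$, gives $\vartheta_{\mathcal{T}}(\mathrm{w})U = C_{\mathrm{w}}^{-1}U$, as required.

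For part (ii), I would decompose $D\in\C(T\times T)\#\mathbb{W}$ as $D = \sum_{\mathrm{v}\in\mathbb{S}_N} D_{\mathrm{v}}\mathrm{v}$ with $D_{\mathrm{v}}\in\mathbb{D}$, as in \eqref{convnotation}. Since $\vartheta_{\mathcal{T}}$ is an algebra homomorphism,
\[
\vartheta_{\mathcal{T}}(D)U = \sum_{\mathrm{v}\in\mathbb{S}_N}\vartheta_{\mathcal{T}}(D_{\mathrm{v}})\vartheta_{\mathcal{T}}(\mathrm{v})U.
\]
The restriction $\vartheta_{\mathcal{T}}|_{\mathbb{D}} = \vartheta|_{\mathbb{D}}$ allows one to replace $\vartheta_{\mathcal{T}}(D_{\mathrm{v}})$ by $\vartheta(D_{\mathrm{v}})$, while part (i) replaces $\vartheta_{\mathcal{T}}(\mathrm{v})U$ by $C_{\mathrm{v}}^{-1}U$. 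Thus
\[
\vartheta_{\mathcal{T}}(D)U = \sum_{\mathrm{v}\in\mathbb{S}_N}\vartheta(D_{\mathrm{v}})\bigl(C_{\mathrm{v}}^{-1}U\bigr).
\]
Applying the matrix coefficient functional $\phi_{\chi,v}^{(\cdot)} = \chi((\cdot)v)$, which is $\mathbb{K}$-linear, and invoking \eqref{eqD} for each term (with $F = C_{\mathrm{v}}^{-1}U$), produces
\[
\phi_{\chi,v}^{\vartheta_{\mathcal{T}}(D)U} = \sum_{\mathrm{v}\in\mathbb{S}_N} D_{\mathrm{v}}\bigl(\phi_{\chi,v}^{C_{\mathrm{v}}^{-1}U}\bigr),
\]
which is exactly the claimed identity.

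There is no real obstacle here: both statements are essentially bookkeeping that packages the cocycle $\mathcal{T}_{\mathrm{w}}$ into the twisted representation $\vartheta_{\mathcal{T}}$ so that the dependence on the monodromy is suppressed, leaving only the cocycle values $C_{\mathrm{w}}$ on the right-hand sides. The only minor care needed is in the decomposition $D = \sum_{\mathrm{v}} D_{\mathrm{v}}\mathrm{v}$ and the verification that $\vartheta_{\mathcal{T}}$ indeed agrees with $\vartheta$ on $\mathbb{D} = \C(T\times T)\#(\Z^N\times \Z^N)$, which is immediate since $\mathcal{T}_{\mathrm{w}} = \textup{id}$ for $\mathrm{w}\in\Z^N\times\Z^N$.
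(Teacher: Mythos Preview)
Your proof is correct and follows essentially the same approach as the paper: part (i) is immediate from Lemma \ref{lemTheta} and the definition of $\vartheta_{\mathcal{T}}$, and part (ii) is obtained by decomposing $D=\sum_{\mathrm{v}}D_{\mathrm{v}}\mathrm{v}$, using $\vartheta_{\mathcal{T}}|_{\mathbb{D}}=\vartheta|_{\mathbb{D}}$, applying part (i), and invoking \eqref{eqD}. The only cosmetic difference is that the paper applies the matrix-coefficient functional before invoking (i), whereas you simplify $\vartheta_{\mathcal{T}}(D)U$ first and then take matrix coefficients; the arguments are otherwise identical.
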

\begin{proof}
(i) This is clear from Lemma \ref{lemTheta} and the definition of
$\vartheta_\mathcal{T}$.\\
(ii) By Lemma \ref{lemTheta} and \eqref{eqD}, we obtain
\[
\phi_{\chi,v}^{\vartheta_\mathcal{T}(D)U}=
\sum_{\mathrm{v}\in\mathbb{S}_N}\phi_{\chi,v}^{\vartheta(D_\mathrm{v})
\vartheta_\mathcal{T}(\mathrm{v})U}=
\sum_{\mathrm{v}\in\mathbb{S}_N}D_\mathrm{v}
(\phi_{\chi,v}^{\vartheta_\mathcal{T}(\mathrm{v})U}).
\]
The result now follows from (i).
\end{proof}
We define the restriction map
$\textup{Res}\colon\C(T\times T)\#\mathbb{W}\to\mathbb{D}$
to be the $\C(T\times T)$-linear map
\[
\textup{Res}(D)
:=\sum_{\mathrm{v}\in\mathbb{S}_N}D_{\mathrm{v}},\qquad D\in\mathbb{D}.
\]
Lemma \ref{lemTheta}(ii) implies that if we have a
linear functional $\chi_+\in H_0^*$ such that
$\chi_+(C^{-1}_\mathrm{v}U)=\chi_+(U)$ for all
$\mathrm{v}\in\mathbb{S}_N$, then the corresponding matrix coefficients
$\phi_{\chi_+,v}^U$ ($v\in H_0$) satisfy
\begin{equation}\label{eqResD}
\textup{Res}(D)(\phi_{\chi_+,v}^U)=
\phi_{\chi_+,v}^{\vartheta_{\mathcal{T}}(D)U}
\end{equation}
for all $D\in\C(T\times T)\#\mathbb{W}$.
\begin{lem}\label{chiok}
Define $\chi_+\in H_0^*$ by $\chi_+(T_w)=k^{\ell(w)}$ for all
$w\in S_N$. Then
\[\chi_+(C_{\mathrm{v}}^{-1}F)=\chi_+(F)
\]
for $F\in\textup{End}(H_0)^{\mathbb{K}}$ and
$\mathrm{v}\in\mathbb{S}_N$.
\end{lem}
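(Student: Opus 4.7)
The plan is to reduce the statement to the invariance identity $\chi_+\circ C_{\mathrm{v}}=\chi_+$ (as $\mathbb{K}$-linear functionals on $H_0^{\mathbb{K}}$), verify this on a generating set of $\mathbb{S}_N$, and extend via the cocycle property. The equation $\chi_+(C_{\mathrm{v}}^{-1}F)=\chi_+(F)$ is $\mathbb{K}$-linear in $F\in\textup{End}(H_0)^{\mathbb{K}}$, so interpreting $\chi_+(F)$ as the $\mathbb{K}$-linear functional $\chi_+\circ F$ on $H_0^{\mathbb{K}}$, the claim is equivalent to $\chi_+\circ C_{\mathrm{v}}=\chi_+$ for every $\mathrm{v}\in\mathbb{S}_N$. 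Since $\mathbb{S}_N=\Z_2\ltimes(S_N\times S_N)$ is generated by $\iota$ together with $(s_i,e)$ and $(e,s_i)$ for $1\le i<N$, it suffices to handle these three types.

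The central observation I would record first is the character-type identity $\chi_+\circ\eta(T_i)=k\,\chi_+$ on $H_0$. Using Lemma \ref{etaexplicit}, if $\ell(s_iw)=\ell(w)+1$ then $\chi_+(\eta(T_i)T_w)=k^{\ell(w)+1}=k\chi_+(T_w)$, while if $\ell(s_iw)=\ell(w)-1$ then $\chi_+(\eta(T_i)T_w)=(k-k^{-1})k^{\ell(w)}+k^{\ell(w)-1}=k^{\ell(w)+1}$. With this in hand, the generator $(s_i,e)$ is immediate: by Lemma \ref{YBlem}, $C_{(s_i,e)}(t,\gamma)=R_i(t_i/t_{i+1})=c_k(t_i/t_{i+1})^{-1}(\eta(T_i)-k)+1$, so applying $\chi_+$ kills the first term and gives $\chi_+\circ C_{(s_i,e)}=\chi_+$.

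For the generator $\iota$, recall from Subsection \ref{cocyclevalues} that $C_\iota$ is the $\mathbb{K}$-linear extension of the anti-algebra involution $T_w\mapsto T_{w^{-1}}$ of $H_0$. Since length is invariant under inversion on $S_N$, we get $\chi_+(C_\iota T_w)=k^{\ell(w^{-1})}=\chi_+(T_w)$. The generator $(e,s_i)$ is then handled by the identity $C_{(e,w)}(t,\gamma)=C_\iota\,C_{(w,e)}(\gamma^{-1},t^{-1})\,C_\iota$ from the same subsection: composing the already-established invariances $\chi_+\circ C_\iota=\chi_+$ and $\chi_+\circ C_{(s_i,e)}=\chi_+$ (the latter used at the point $(\gamma^{-1},t^{-1})$) yields $\chi_+\circ C_{(e,s_i)}=\chi_+$.

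To pass from generators to all of $\mathbb{S}_N$, I would iterate the cocycle identity of Corollary \ref{CorCoc}, which reads $C_{\mathrm{v}\mathrm{v}'}(t,\gamma)=C_{\mathrm{v}}(t,\gamma)\,C_{\mathrm{v}'}(\mathrm{v}^{-1}(t,\gamma))$: if $\chi_+\circ C_{\mathrm{v}'}(s,\zeta)=\chi_+$ holds pointwise and $\chi_+\circ C_{\mathrm{v}}(t,\gamma)=\chi_+$ likewise, then
\[
\chi_+\circ C_{\mathrm{v}\mathrm{v}'}(t,\gamma)=\chi_+\circ C_{\mathrm{v}}(t,\gamma)\circ C_{\mathrm{v}'}(\mathrm{v}^{-1}(t,\gamma))=\chi_+\circ C_{\mathrm{v}'}(\mathrm{v}^{-1}(t,\gamma))=\chi_+.
\]
Induction on word length in the chosen generators of $\mathbb{S}_N$ then delivers the identity for every $\mathrm{v}\in\mathbb{S}_N$, and replacing $\mathrm{v}$ by $\mathrm{v}^{-1}$ gives the form stated in the lemma. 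There is no serious obstacle; the only point requiring attention is to track the pointwise nature of the invariance (so that the cocycle composition argument works for any $(t,\gamma)$), which is why I would record the character property $\chi_+\circ\eta(T_i)=k\chi_+$ as a purely algebraic statement on $H_0$ before introducing spectral parameters.
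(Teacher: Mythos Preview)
Your proof is correct and follows essentially the same route as the paper's: reduce to $\chi_+\circ C_{\mathrm{v}}=\chi_+$, verify on $\iota$ via $T_w\mapsto T_{w^{-1}}$ and on $(s_i,e)$ via $\chi_+\circ\eta(T_i)=k\chi_+$ (equivalently $\chi_+((T_i-k)h)=0$), then propagate by the cocycle condition. The paper is slightly more economical in that it observes $(e,s_i)=\iota(s_i,e)\iota$ in $\mathbb{S}_N$, so checking $\iota$ and $(s_i,e)$ already suffices without treating $(e,s_i)$ as a separate generator.
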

\begin{proof}
Since $C_\iota(T_w)=T_{w^{-1}}$ for $w\in S_N$ we have
$\chi_+\circ C_\iota= \chi_+$. By the cocycle condition for
$C_{\mathrm{w}}$ ($\mathrm{w}\in\mathbb{S}_N$) it remains to prove
that $\chi_+\circ C_{(s_i,e)}=\chi_+$ for $1\leq i<N$. But this
follows from the expression
$C_{(s_i,e)}(t,\gamma)=c_k(t_i/t_{i+1})^{-1}(\eta(T_i)-k)+1$ (see
Lemma \ref{YBlem}) since
\begin{equation}\label{chiKills}
\chi_+((T_i-k)h)=0
\end{equation}
for $1\leq i< N$ and $h\in H_0$.
\end{proof}
If $D\in\C(T\times T)\#\mathbb{W}$ satisfies
$\vartheta_\mathcal{T}(D)U=\lambda U$ for some $\lambda\in\mathbb{K}$,
then it follows from \eqref{eqResD} that the matrix coefficients
$\phi_{\chi_+,v}^U$ ($v\in H_0$) are eigenfunctions of $\textup{Res}(D)$
with eigenvalue $\lambda$. We will now construct
such a commuting family of $D$'s. It leads to the interpretation of
the $\phi_{\chi_+,v}$ ($v\in H_0$) as solutions of a bispectral problem.

The appropriate elements $D\in\C(T\times T)\#\mathbb{W}$ are
obtained as images of elements from the center $Z(H)$ of the
affine Hecke algebra $H$ under the faithful algebra homomorphism
$\rho$ from Theorem \ref{thmRho}. Since we aim at a bispectral
version, we will interpret $\rho$ as algebra map $\rho:
H\rightarrow \C(T\times T)\#\mathbb{W}$ in two different ways. We
have, on the one hand, the algebra homomorphism
\[\rho_{k^{-1},q}^x: H(k^{-1})\rightarrow \C(T\times T)\#\mathbb{W},
\]
which is the map $\rho_{k^{-1},q}$ from Theorem \ref{thmRho},
interpreted as algebra homomorphism from $H(k^{-1})$ to the
subalgebra $\C(T\times \{1\})\#(W\times \{e\})$ of $\C(T\times
T)\#\mathbb{W}$. On the other hand, we have an algebra
homomorphism
\[\rho_{k,q^{-1}}^y: H(k)\rightarrow \C(T\times T)\#\mathbb{W},
\]
defined as the map $\rho_{k,q^{-1}}$ from Theorem \ref{thmRho},
interpreted as algebra homomorphism from $H(k)$ to the subalgebra
$\C(\{1\}\times T)\#(\{e\}\times W)$ of $\C(T\times T)\#\mathbb{W}$.
Note that they can be combined into an algebra homomorphism
\[
\rho_{k^{-1},q}^x\times\rho_{k,q^{-1}}^y: H(k^{-1})\otimes H(k)\rightarrow
\C(T\times T)\#\mathbb{W}.
\]

\begin{defi}
{\bf(i)}
For $h\in H(k^{-1})$, define
\[
D^x_h:=\rho_{k^{-1},q}^x(h)\in\C(T\times T)\#\mathbb{W}.
\]
{\bf(ii)}
For $h\in H(k)$, define
\[
D^y_h:=\rho_{k,q^{-1}}^y(h)\in\C(T\times T)\#\mathbb{W}.
\]
\end{defi}

\begin{rema}\label{circle}
Let ${}^\circ\colon H(k^{-1})\rightarrow H(k)$ be the algebra
isomorphism defined by $\pi^\circ=\pi$ and $T_i^\circ=T_i^{-1}$
for $1\leq i<N$. Then
\begin{equation}\label{circleformula}
D_{h^\circ}^y=\iota D_h^x\iota,\qquad \forall h\in H(k^{-1}).
\end{equation}
This follows by verifying the identity
\[\rho_{k,q^{-1}}^y(h^\circ)=\iota\rho_{k^{-1},q}^x(h)\iota
\]
for the algebraic generators
$\pi$ and $T_i$ ($1\leq i<N$) of $H(k^{-1})$ using Theorem \ref{thmRho}.
\end{rema}

Recall the formal principal series representation, encoded by the
algebra homomorphism $\eta\colon H(k)\rightarrow
\textup{End}(H_0)^{\mathbb{K}}$ (see Subsection \ref{formal}).
\begin{prop}\label{propThetaTdagger}
{\bf (i)} For $h\in H(k^{-1})$ we have
\begin{equation}\label{een}
\vartheta_{\mathcal{T}}(D_h^x)U=\eta(h^\dagger)U,
\end{equation}
where $\dagger\colon H(k^{-1})\to H(k)$ is the unique anti-algebra
isomorphism satisfying
\[T_i^\dagger=T^{-1}_i,\qquad\pi^\dagger=\pi^{-1}\]
for $1\leq i<N$.\\
{\bf (ii)} For $h\in H(k)$ we have
\begin{equation}\label{twee}
\vartheta_{\mathcal{T}}(D_h^{y})U=C_\iota\iota\bigl(\eta(h^{\ddagger})\bigr)
C_\iota U,
\end{equation}
where $\ddagger\colon H(k)\to H(k)$ is the unique anti-algebra
involution satisfying
\[T_i^\ddagger=T_i,\qquad\pi^\ddagger=\pi^{-1}\]
for $1\leq i<N$ (note that $\dagger=\ddagger\circ {}^{\circ}$).
\end{prop}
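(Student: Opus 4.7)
My strategy is to reduce \eqref{een} to a check on the algebraic generators $\pi^{\pm 1}, T_1,\ldots,T_{N-1}$ of $H(k^{-1})$, verify it there, and then obtain \eqref{twee} by applying \eqref{een} through Cherednik's duality in the form of Remark \ref{circle}.

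For the reduction, observe that $h\mapsto\vartheta_{\mathcal{T}}(D_h^x)$ is an algebra homomorphism (composition of two), while $h\mapsto\eta(h^{\dagger})$ is an anti-algebra homomorphism. Multiplicative compatibility of \eqref{een} therefore hinges on the commutation of left multiplication by $\eta(h^{\dagger})$ with $\vartheta_{\mathcal{T}}(D_g^x)$ for all $g,h\in H(k^{-1})$. This commutation holds because $D_g^x\in\C(T\times\{1\})\#(W\times\{e\})$: scalar multiplication by a rational function of $t$ trivially commutes, and the action of $(w,e)\in W\times\{e\}$ fixes $\eta(h^{\dagger})$ (which depends only on $\gamma$; cf.\ Subsection \ref{formal}), while $\mathcal{T}_{(w,e)}^{-1}$ acts from the right.

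On the generators, \eqref{een} for $h=\pi$ is immediate from Lemma \ref{lemThetaT}(i) and \eqref{Cpi}: $\vartheta_{\mathcal{T}}(D_\pi^x)U=C_{(\pi,e)}^{-1}U=\eta(\pi)^{-1}U=\eta(\pi^{\dagger})U$. For $h=T_i$, expand $D_{T_i}^x=k^{-1}+c_{k^{-1}}(x_i/x_{i+1})(s_i-1)$ via Theorem \ref{thmRho} and apply Lemma \ref{lemThetaT}(i) together with Lemma \ref{YBlem} to substitute $\vartheta_{\mathcal{T}}(s_i,e)U=R_i(x_i/x_{i+1})^{-1}U$. The Yang-Baxter identity $R_i(z)R_i(z^{-1})=\textup{id}$ combined with the elementary identity $c_k(z^{-1})=c_{k^{-1}}(z)$ yields $R_i(z)^{-1}=c_{k^{-1}}(z)^{-1}(\eta(T_i)-k)+1$; the Hecke quadratic relation, which rewrites $T_i^{-1}=T_i+k^{-1}-k$ in $H$, then collapses the resulting expression to $\eta(T_i^{-1})U=\eta(T_i^{\dagger})U$.

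For (ii), write $h=g^{\circ}$ with $g\in H(k^{-1})$; a check on generators shows $h^{\ddagger}=g^{\dagger}$, and by Remark \ref{circle},
\[
\vartheta_{\mathcal{T}}(D_h^y)U=\vartheta_{\mathcal{T}}(\iota)\,\vartheta_{\mathcal{T}}(D_g^x)\,\vartheta_{\mathcal{T}}(\iota)\,U.
\]
Now $\vartheta_{\mathcal{T}}(\iota)U=C_{\iota}U$ by Lemma \ref{lemThetaT}(i) and $C_{\iota}^2=\textup{id}$. Since $C_{\iota}$ is constant in $(t,\gamma)$ (Remark \ref{Wtriv}), the commutation argument above lets us write $\vartheta_{\mathcal{T}}(D_g^x)C_{\iota}U=C_{\iota}\vartheta_{\mathcal{T}}(D_g^x)U=C_{\iota}\eta(g^{\dagger})U$ using part (i). A final application of $\vartheta_{\mathcal{T}}(\iota)F=\iota(F)\mathcal{T}_{\iota}^{-1}$, using that $\eta(g^{\dagger})$ depends only on $\gamma$ so that the $\iota$-swap converts it into $\iota(\eta(g^{\dagger}))$ and that $\iota(U)\mathcal{T}_{\iota}^{-1}=\vartheta_{\mathcal{T}}(\iota)U=C_{\iota}U$, produces the claimed $C_{\iota}\iota(\eta(h^{\ddagger}))C_{\iota}U$. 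The delicate step is this last unwinding — tracking how the $\iota$-swap interacts with the $\gamma$-dependence of $\eta(g^{\dagger})$ together with the right-multiplication by $\mathcal{T}_{\iota}^{-1}$; the remainder is algebraic bookkeeping.
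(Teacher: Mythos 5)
Your part (i) coincides with the paper's proof: the same reduction to the generators $\pi,T_i$ via the observation that left multiplication by the $t$-independent operator $\eta(h'^{\dagger})$ commutes with $\vartheta_{\mathcal{T}}(D_h^x)$, and the same two generator computations (the $T_i$ case via $C_{(s_i,e)}^{-1}=R_i(x_i/x_{i+1})^{-1}$ and the identity $c_{k^{-1}}(z)=c_k(z^{-1})$). Part (ii) is where you genuinely depart from the paper, and in a direction the authors explicitly disavow: their proof asserts that it is ``not possible to derive (ii) directly from (i) and from \eqref{circleformula}'' and instead repeats the generator computations for $D^y_{\pi}$ and $D^y_{T_i}$. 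Having checked your chain of equalities, I cannot locate an obstruction: $\vartheta_{\mathcal{T}}(\iota)U=C_\iota^{-1}U=C_\iota U$ is Lemma \ref{lemThetaT}(i) at $\mathrm{w}=\iota$; the commutation $\vartheta_{\mathcal{T}}(D_g^x)(C_\iota F)=C_\iota\,\vartheta_{\mathcal{T}}(D_g^x)(F)$ follows from the $\mathbb{W}$-invariance of the constant $C_\iota$ (Remark \ref{Wtriv}) by the very same argument the paper uses in part (i) for $\eta(h'^{\dagger})$; the final unwinding $\vartheta_{\mathcal{T}}(\iota)\bigl(C_\iota\eta(g^{\dagger})U\bigr)=\iota(C_\iota)\,\iota(\eta(g^{\dagger}))\,\bigl(\iota(U)\mathcal{T}_\iota^{-1}\bigr)=C_\iota\,\iota(\eta(g^{\dagger}))\,C_\iota U$ uses only that $\iota$ acts multiplicatively on $\textup{End}(H_0)^{\mathbb{K}}$, associativity, and Lemma \ref{lemThetaT}(i) once more; and $g^{\dagger}=(g^{\circ})^{\ddagger}$ is the identity $\dagger=\ddagger\circ{}^{\circ}$ recorded in the statement itself. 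What your route buys is the elimination of the second set of generator computations (which in the paper require the explicit inversion of $C_{(e,s_i)}$); what the paper's route buys is independence from Remark \ref{circle} and from the formal bookkeeping around $\mathcal{T}_\iota$. Given that the authors claim your route fails, you must not leave this as a ``plan'': write out the three displayed identities above in full, making the application of Lemma \ref{lemThetaT}(i) at $\mathrm{w}=\iota$ and the $C_\iota$-commutation explicit, since that is the only place a hidden obstruction could reside, and spelling it out is what demonstrates there is none.
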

\begin{proof}
(i) We first show that it suffices to prove \eqref{een} for
algebraic generators of $H(k^{-1})$. Indeed, if \eqref{een} is valid
for $h,h^\prime\in H(k^{-1})$, then we have
\begin{equation*}
\begin{split}
\vartheta_{\mathcal{T}}(D_{hh^\prime}^x)U&=\vartheta_{\mathcal{T}}(D_h^t)
\vartheta_{\mathcal{T}}(D_{h^\prime}^x)U
=\vartheta_{\mathcal{T}}(D_h^x)\eta(h^{\prime\dagger})U\\
&=\eta(h^{\prime\dagger})\vartheta_{\mathcal{T}}(D_h^x)U
=\eta(h^{\prime\dagger})\eta(h^\dagger)U\\
&=\eta((hh^\prime)^\dagger)U,
\end{split}
\end{equation*}
where the third equality follows since
$\lbrack \vartheta_{\mathcal{T}}(D_h^x),\eta(h^{\prime\dagger})\rbrack=0$
as endomorphisms of $\textup{End}_{\mathbb{K}}(H_0^{\mathbb{K}})$
(here $\eta(h^{\prime\dagger})$ should be viewed as element in
$\textup{End}\bigl(\textup{End}(H_0)^{\mathbb{K}}\bigr)$ by
left multiplication).
Indeed, since $\eta(h^{\prime\dagger})(t,\gamma)$ does not depend on the torus
parameter $t\in T$, it commutes with
$\vartheta_{\mathcal{T}}(D_h^x)\in
\vartheta_{\mathcal{T}}(\C(T\times\{1\})\#(W\times \{e\}))$
(which involves, besides the action of $W\times\{e\}$, only
right multiplication by the monodromy cocycle).

So it remains to verify \eqref{een} for $h=\pi\in H(k^{-1})$ and for
$h=T_i\in H(k^{-1})$ ($1\leq i<N$). For $h=\pi\in H(k^{-1})$ we
have
\[\vartheta_{\mathcal{T}}(D_\pi^x)U=\vartheta_{\mathcal{T}}((\pi,e))U=
C_{(\pi,e)}^{-1}U=\eta(\pi^\dagger)U,
\]
where the last equality follows from \eqref{Cpi}. For $h=T_i\in
H(k^{-1})$ ($1\leq i<N$), we have
\begin{equation*}
\begin{split}
\vartheta_{\mathcal{T}}(D_{T_i}^x)U&=
(k^{-1}-c_k(x_{i+1}/x_i))U+c_k(x_{i+1}/x_i)\vartheta_{\mathcal{T}}((s_i,e))U\\
&=(k^{-1}-c_k(x_{i+1}/x_i)U+c_k(x_{i+1}/x_i)C_{(s_i,e)}^{-1}U\\
&=\eta(T_i^\dagger)U,
\end{split}
\end{equation*}
where we used that $c_{k^{-1}}(z^{-1})=c_k(z)$ in the first
equality, while the second equality follows from Lemma \ref{lemThetaT}(i)
and the third equality from Lemma \ref{YBlem}.\\
(ii) Unfortunately it is not possible to derive (ii) directly from
(i) and from \eqref{circleformula}. Instead, one has to repeat the
steps of the proof of (i). It again amounts to verifying \eqref{twee}
for $h=\pi\in H(k)$ and for $h=T_i\in H(k)$ ($1\leq i<N$).
We show the second case, the first case is left to the reader.

Let $1\leq i<N$. Then we have for $T_i\in H(k)$,
\begin{equation}\label{aaa}
\begin{split}
\vartheta_{\mathcal{T}}(D_{T_i}^y)U&=
(k-c_k(y_i/y_{i+1}))U+c_k(y_i/y_{i+1})\vartheta_{\mathcal{T}}((e,s_i))U\\
&=(k-c_k(y_i/y_{i+1}))U+c_k(y_i/y_{i+1})C_{(e,s_i)}^{-1}U.
\end{split}
\end{equation}
Since
\[
C_{(e,s_i)}=C_\iota \iota(C_{(s_i,e)})C_\iota
\]
by the cocycle condition (recall Remark \ref{Wtriv}) and since
$C_\iota^2=\textup{id}$ and
$C_{(s_i,e)}(t,\gamma)^{-1}=C_{(s_i,e)}(s_it,\gamma)$, Lemma
\ref{YBlem} implies that
\[C_{(e,s_i)}^{-1}=c_k(y_{i}/y_{i+1})^{-1}
(C_\iota\iota(\eta(T_i))C_\iota-k)+1.
\]
Substituting in \eqref{aaa} gives
$\vartheta_{\mathcal{T}}(D_{T_i}^y)U=
C_\iota\iota(\eta(T_i^{\ddagger}))C_\iota U$, as desired.
\end{proof}
The following lemma plays an important role in the bispectral
version of the correspondence. Recall that the center $Z(H)$ of
the affine Hecke algebra $H$ is given by $\C_Y[T]^{S_N}$
(Bernstein, see \cite{L}).
\begin{lem}\label{centerok}
For $p\in\mathbb{C}[T]^{S_N}$ we have
\[p(Y)^{\dagger}=p(Y^{-1}),\qquad p(Y)^{\ddagger}=p(Y^{-1}).
\]
\end{lem}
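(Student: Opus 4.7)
For the $\dagger$-identity, I would compute $Y_i^{\dagger}$ directly from the definition \eqref{eqYdef} and the defining properties of $\dagger$. Applying the anti-algebra isomorphism $\dagger$ (with $T_i^{\dagger}=T_i^{-1}$ and $\pi^{\dagger}=\pi^{-1}$) to $Y_i=T_{i-1}^{-1}\cdots T_1^{-1}\pi T_{N-1}\cdots T_i$ and reversing the factors yields
\[
Y_i^{\dagger}=T_i^{-1}T_{i+1}^{-1}\cdots T_{N-1}^{-1}\pi^{-1}T_1T_2\cdots T_{i-1}=Y_i^{-1}.
\]
Since the $Y_i$ commute and $\dagger$ is anti-multiplicative, $(Y^{\lambda})^{\dagger}=Y^{-\lambda}$ for every $\lambda\in\Z^N$, and linearity gives $p(Y)^{\dagger}=p(Y^{-1})$ for every $p\in\C[T]$ (symmetry of $p$ is not needed here).

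For the $\ddagger$-identity the analogous direct computation gives $Y_i^{\ddagger}=T_iT_{i+1}\cdots T_{N-1}\pi^{-1}T_1^{-1}\cdots T_{i-1}^{-1}$, which is not $Y_i^{-1}$, so the symmetry of $p$ is essential. I would first observe that $p(Y)^{\ddagger}$ is central: $\ddagger$ is an anti-algebra involution of $H(k)$, so it preserves the center $Z(H(k))=\C_Y[T]^{S_N}$ (Bernstein); hence $p(Y)^{\ddagger}=q(Y)$ for a unique symmetric $q\in\C[T]^{S_N}$, and the task is to show $q(x)=p(x^{-1})$. To identify $q$ I would invoke the factorization $\dagger=\ddagger\circ{}^{\circ}$ from Remark \ref{circle}, i.e.\ $\ddagger=\dagger\circ({}^{\circ})^{-1}$ with $({}^{\circ})^{-1}\colon H(k)\to H(k^{-1})$ sending $T_i\mapsto T_i^{-1}$ and $\pi\mapsto\pi$. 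Granting the sub-claim $({}^{\circ})^{-1}(p(Y^{(k)}))=p(Y^{(k^{-1})})$ for symmetric $p$, the $\dagger$-calculation applied inside $H(k^{-1})$ (which gives $(Y_i^{(k^{-1})})^{\dagger}=(Y_i^{(k)})^{-1}$ by the same argument as above) then yields
\[
p(Y)^{\ddagger}=\dagger\bigl(p(Y^{(k^{-1})})\bigr)=p\bigl((Y^{(k)})^{-1}\bigr)=p(Y^{-1}),
\]
as required.

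The principal obstacle is therefore the sub-claim that ${}^{\circ}$ intertwines the central elements $p(Y^{(k^{-1})})$ and $p(Y^{(k)})$ for every symmetric $p$; equivalently, that the algebra automorphism of $\C[T]^{S_N}\cong Z(H(\cdot))$ induced by ${}^{\circ}$ is the identity. For the generator $e_N$ this is immediate, since $e_N(Y)=Y_1\cdots Y_N=\pi^N$ and ${}^{\circ}$ fixes $\pi$. For the remaining generators $e_1,\ldots,e_{N-1}$ I would expand $T_j^{-1}=T_j-(k-k^{-1})$ inside ${}^{\circ}(Y_i^{(k^{-1})})$ and use the cyclic Dynkin relation $\pi T_j\pi^{-1}=T_{j+1}$ to show that the $(k-k^{-1})$-correction terms telescope and cancel after summation over the $S_N$-orbit defining $e_i(Y)$; this is transparent for $e_1$ (already visible at $N=2,3$, where $T_j^{\pm 1}\pi^{-1}=\pi^{-1}T_{j+1}^{\pm 1}$ produces the needed cancellations) and extends to all $e_i$ by an inductive bookkeeping argument using the Bernstein relations $T_i^{-1}Y_iT_i^{-1}=Y_{i+1}$. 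The main labor is in this combinatorial cancellation.
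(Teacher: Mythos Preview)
Your treatment of the $\dagger$-identity is correct and matches the paper exactly.

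For the $\ddagger$-identity your route differs from the paper's and leaves the essential step unfinished. You factor $\ddagger=\dagger\circ({}^{\circ})^{-1}$ and reduce to the sub-claim that ${}^{\circ}$ fixes $p(Y)$ for symmetric $p$. In the paper this statement is Corollary~\ref{centerokcor}, obtained \emph{after} Lemma~\ref{centerok} as an immediate consequence; your plan inverts this order and therefore must supply an independent proof of the sub-claim. Your proposed argument --- expand $T_j^{-1}=T_j-(k-k^{-1})$ and observe that the correction terms ``telescope and cancel after summation over the $S_N$-orbit'' --- is asserted but not carried out. Beyond $e_N$ (which is genuinely immediate) and a hint for $e_1$ at $N=2,3$, you offer only ``an inductive bookkeeping argument using the Bernstein relations.'' Already for $e_1$ at general $N$, comparing $\sum_i T_{i-1}\cdots T_1\,\pi\,T_{N-1}^{-1}\cdots T_i^{-1}$ with $\sum_i T_{i-1}^{-1}\cdots T_1^{-1}\,\pi\,T_{N-1}\cdots T_i$ produces nested $(k-k^{-1})$-corrections whose vanishing is not visibly telescopic, and using $\pi T_j\pi^{-1}=T_{j+1}$ drags in $T_0$. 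For higher $e_i$ the combinatorics is worse. This is the heart of the matter, and it is not done.

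The paper avoids ${}^{\circ}$ entirely and instead computes $Y_i^{\ddagger}$ in closed form: one proves
\[
Y_i^{\ddagger}=T_{w_0}\,Y_{N-i+1}^{-1}\,T_{w_0}^{-1},
\]
by reducing via $Y_{i+1}=T_i^{-1}Y_iT_i^{-1}$ and $T_{w_0}T_i^{-1}=T_{N-i}^{-1}T_{w_0}$ to the single case $i=1$, which is checked from two reduced expressions for $w_0$. For symmetric $p$ this gives $p(Y)^{\ddagger}=T_{w_0}\,p(Y_N^{-1},\ldots,Y_1^{-1})\,T_{w_0}^{-1}=T_{w_0}\,p(Y^{-1})\,T_{w_0}^{-1}=p(Y^{-1})$, the last step because $p(Y^{-1})$ is central. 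This is a short explicit computation, and Corollary~\ref{centerokcor} then falls out for free.
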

\begin{proof}
By \eqref{eqYdef} it immediately follows that
$Y_i^\dagger=Y_i^{-1}$ for $1\leq i\leq N$. This implies the first
formula.

For the second formula, it suffices to show that
\begin{equation}\label{ddaggerto}
Y_i^\ddagger=T_{w_0}Y_{N-i+1}^{-1}T_{w_0}^{-1}
\end{equation}
in $H(k)$ for $1\leq i\leq N$, since we then have, for
$p\in\C[T]^{S_N}$,
\[p(Y)^\ddagger=T_{w_0}p(Y_{N}^{-1},\ldots,Y_1^{-1})T_{w_0}^{-1}=
T_{w_0}p(Y^{-1})T_{w_0}^{-1}=p(Y^{-1}),
\]
where the last equality follows from the fact that $p(Y^{-1})\in Z(H(k))$.
To prove \eqref{ddaggerto}, note that $T_{w_0}T_i^{-1}=T_{N-i}^{-1}T_{w_0}$,
and $Y_{i+1}=T_i^{-1}Y_iT_i^{-1}$ by \eqref{eqYdef}, for
$1\leq i<N$. Hence \eqref{ddaggerto} holds for
$Y_{i+1}$ if it is true for $Y_{i}$. It thus remains
to prove \eqref{ddaggerto} for $i=1$. We will use the following
observation. Write $\sigma_i=s_is_{i+1}\cdots s_{N-1}$ ($1\leq i<N$) and
$\tau_i=s_{j}\cdots s_{N-2}$ ($1\leq j<N-1$), which are reduced
expressions in $S_N$. Then the longest Weyl group
element $w_0\in S_N$ can be written as
\begin{equation}\label{redw0}
\begin{split}
w_0&=\sigma_{N-1}\sigma_{N-2}\cdots\sigma_1\\
&=\sigma_1(\tau_{N-2}\tau_{N-3}\cdots \tau_1),
\end{split}
\end{equation}
and $\ell(w_0)$ is the sum of the lengths of the factors in the
respective products in \eqref{redw0}.

By \eqref{eqYdef}, formula \eqref{ddaggerto} for $i=1$
will be valid if
\begin{equation}\label{dd3}
T_{\sigma_1}\pi^{-1}=T_{w_0}\pi^{-1}T_{\sigma_1}T_{w_0}^{-1}
\end{equation}
in $H(k)$. By the first expression in \eqref{redw0} and the fact that
$\pi^{-1}T_{i+1}\pi=T_i$ for $1\leq i<N-1$, we have in $H(k)$,
\[\pi^{-1}T_{\sigma_1}T_{w_0}^{-1}=\pi^{-1}T_{\sigma_2}^{-1}T_{\sigma_3}^{-1}
\cdots T_{\sigma_{N-1}}^{-1}=T_{\tau_1}^{-1}T_{\tau_2}^{-1}\cdots
T_{\tau_{N-2}}^{-1}\pi^{-1},
\]
so that \eqref{dd3} will follow from
\[T_{\sigma_1}=T_{w_0}T_{\tau_1}^{-1}T_{\tau_2}^{-1}\cdots T_{\tau_{N-2}}^{-1}
\]
in $H(k)$. But this is a direct consequence of the second expression of $w_0$
in \eqref{redw0}.
\end{proof}
\begin{cor}\label{centerokcor}
For $p\in\C[T]^{S_N}$, we have $p(Y)^\circ=p(Y)$, where
${}^\circ\colon H(k^{-1})\rightarrow H(k)$ is the algebra
isomorphism defined in Remark \ref{circle}.
\end{cor}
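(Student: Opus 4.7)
The plan is to reduce the claim to Lemma \ref{centerok} by exploiting the factorization ${}^\circ = \ddagger\circ\dagger$. The parenthetical remark at the end of Proposition \ref{propThetaTdagger} states $\dagger=\ddagger\circ{}^\circ$; since $\ddagger$ is an involution on $H(k)$, this inverts to give ${}^\circ = \ddagger\circ\dagger$ as maps $H(k^{-1})\to H(k)$. One can double-check this directly on the algebraic generators: $\pi^\dagger=\pi^{-1}$ and then $(\pi^{-1})^\ddagger=\pi$, so $\pi^\circ=\pi$; likewise $T_i^\dagger=T_i^{-1}$ and $(T_i^{-1})^\ddagger=T_i^{-1}$, so $T_i^\circ=T_i^{-1}$, matching Remark \ref{circle}.

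Next I would apply this factorization to $p(Y)\in H(k^{-1})$ for symmetric $p\in\C[T]^{S_N}$. By Lemma \ref{centerok},
\[
p(Y)^\dagger = p(Y^{-1})\in H(k).
\]
Now set $\widetilde{p}(x):=p(x^{-1})$, which is again an element of $\C[T]^{S_N}$ since the $S_N$-action commutes with inversion. Then $p(Y^{-1})=\widetilde{p}(Y)$ as elements of $H(k)$, and applying Lemma \ref{centerok} a second time yields
\[
\widetilde{p}(Y)^\ddagger = \widetilde{p}(Y^{-1}) = p(Y).
\]
Composing, we obtain
\[
p(Y)^\circ = \bigl(p(Y)^\dagger\bigr)^\ddagger = p(Y^{-1})^\ddagger = p(Y),
\]
which is the desired identity.

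There is no genuine obstacle here: the content of the corollary is already packaged in Lemma \ref{centerok}, and the only small verification needed is the formal manipulation ${}^\circ=\ddagger\circ\dagger$, together with the observation that inversion preserves $S_N$-symmetry of Laurent polynomials. The cleanest presentation is therefore the three-line computation above, with a short preamble establishing the factorization of ${}^\circ$.
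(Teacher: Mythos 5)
Your proof is correct and follows exactly the paper's route: the paper's one-line proof invokes Lemma \ref{centerok} together with the identity $\dagger=\ddagger\circ{}^\circ$, which is precisely the factorization ${}^\circ=\ddagger\circ\dagger$ you use. Your write-up simply makes explicit the two applications of Lemma \ref{centerok} (to $p$ and to $\widetilde p(x)=p(x^{-1})$) that the paper leaves implicit.
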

\begin{proof}
This follows from the previous lemma and the fact that
$\dagger=\ddagger\circ{}^\circ$.
\end{proof}

\begin{defi}
{\bf (i)} Define
\[L_p^{x}:=\textup{Res}(D^x_{p(Y)})\in\mathbb{D},\qquad p\in\C[T]^{S_N},
\]
where $p(Y)$ is the corresponding
element in $\C_Y[T]^{S_N}=Z(H(k^{-1}))$.\\
{\bf (ii)} Define
\[L_p^{y}:=\textup{Res}(D^y_{p(Y)})\in\mathbb{D},\qquad p\in\C[T]^{S_N},
\]
where $p(Y)$ is the
corresponding element in $Z(H(k))$.
\end{defi}
By Corollary \ref{centerokcor} and \eqref{circleformula} we have
\[L_p^y=\iota L_p^x\iota,\qquad \forall\, p\in\C[T]^{S_N}.
\]
Furthermore, it is well-known (see \cite{C} and \cite{M}) that the
$L_p^x\in \C(T\times\{1\})\#(\Z^N\times \{e\})\subset\mathbb{D}$
are pairwise commuting and $S_N\times S_N$-invariant,
\[\mathrm{w}L_p^x\mathrm{w}^{-1}=L_p^x,\qquad \forall\,
\mathrm{w}\in S_N\times S_N.
\]
Similarly, the $L_p^y=\iota L_p^x\iota\in \C(\{1\}\times
T)\#(\{e\}\times\Z^N)\subset\mathbb{D}$ are pairwise commuting and
$S_N\times S_N$-invariant. Clearly also $\lbrack
L_p^x,L_{p^\prime}^y\rbrack=0$ for all $p,p^\prime\in\C[T]^{S_N}$
in $\mathbb{D}$.

For the elementary symmetric functions $e_i\in\C[T]^{S_N}$
($1\leq i\leq N$) given by
\[
e_i(t)=\sum_{\stackrel{I\subseteq\{1,\ldots,N\}}{\#I=i}}\prod_{j\in I}t_j,
\]
the corresponding $L_{e_i}^x$, viewed as elements in
\[\C(T)\#_q\mathbb{Z}^N\simeq \C(T\times \{1\})\#(\mathbb{Z}^N\times \{e\})
\subset\mathbb{D},
\]
are explicitly given by
\begin{equation}\label{Ruijsoper}
L_{e_i}^x=\sum_{\stackrel{I\subseteq\{1,\ldots,N\}}{\#I=i}}
\left(\prod_{\stackrel{r\in I}{s\not\in
I}}\frac{kx_r-k^{-1}x_s}{x_r-x_s}\right) \sum_{r\in
I}\epsilon_r\in\C(T)\#_q\Z^N,\qquad 1\leq i\leq N;
\end{equation}
see, e.g., \cite[\S 1.3.5]{C} and \cite{NK}. Hence, the
$L_{e_i}^x$ ($1\leq i\leq N$) are, under their natural
interpretation as $q$-difference operators on $\mathcal{M}(T)$,
Ruijsenaars' commuting, trigonometric $q$-difference operators
from \cite{R}.

\begin{defi}
Consider the bispectral problem
\begin{equation}\label{eqBiSP}
\begin{split}
(L_p^xf)(t,\gamma)&=p(\gamma^{-1})f(t,\gamma),\qquad \forall\,
p\in\C[T]^{S_N},\\
(L_p^{y}f)(t,\gamma)&=p(t)f(t,\gamma),\qquad \forall\, p\in\C[T]^{S_N}
\end{split}
\end{equation}
for $f\in\mathbb{K}$, where the equations \eqref{eqBiSP} are viewed as
identities between meromorphic functions in $(t,\gamma)\in T\times T$.
We write $\textup{BiSP}\subset\mathbb{K}$ for the set of
solutions $f\in\mathbb{K}$ of \eqref{eqBiSP}.
\end{defi}

\begin{rema}
The bispectral problem for ordinary linear differential operators
was introduced by Duistermaat and Gr{\"u}nbaum in \cite{DG}. Many
different types of bispectral problems have since been considered.
In particular, in \cite{GH} the bispectral problem for ordinary
linear second-order $q$-difference operators is investigated. For
$N=2$, our bispectral problem belongs to this class.
\end{rema}

The preceding remarks on the invariance properties of the
$L_p^x$ and the $L_p^y$ ($p\in\C[T]^{S_N}$) directly give
\begin{lem}
$\textup{BiSP}$ is an $\mathbb{S}_N$-invariant
$\mathbb{F}$-subspace of $\mathbb{K}$ with respect to the
usual $\mathbb{S}_N$-action $(\mathrm{w}f)(t,\gamma)=
f(\mathrm{w}^{-1}(t,\gamma))$ on $f\in\mathbb{K}$.
\end{lem}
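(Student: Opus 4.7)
The plan is to verify directly the two claims: $\mathbb{F}$-linearity and $\mathbb{S}_N$-invariance of $\textup{BiSP}$. Both will follow from the structural properties of the operators $L_p^x$ and $L_p^y$ collected right before the statement.

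For the $\mathbb{F}$-subspace claim, I would note that $L_p^x\in \C(T\times\{1\})\#(\Z^N\times\{e\})$ involves only $q$-shifts of the first torus variable by elements of $\Z^N$, and $L_p^y\in\C(\{1\}\times T)\#(\{e\}\times\Z^N)$ involves only $q$-shifts of the second torus variable. A function $g\in\mathbb{F}$ is, by definition, $\Z^N\times\Z^N$-invariant, hence commutes with both $L_p^x$ and $L_p^y$ as multiplication operators on $\mathbb{K}$. Thus, for $f\in\textup{BiSP}$ and $g\in\mathbb{F}$, one has $L_p^x(gf)=g L_p^x f=g\,p(\gamma^{-1})f=p(\gamma^{-1})(gf)$ and similarly $L_p^y(gf)=p(t)(gf)$, so $gf\in\textup{BiSP}$. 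Closure under addition is immediate.

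For the $\mathbb{S}_N$-invariance, I would treat the generators $(v,v')\in S_N\times S_N$ and $\iota$ separately. For $(v,v')$, since $\diamondsuit$ is trivial on $S_N$ the action is simply $(v,v')(t,\gamma)=(vt,v'\gamma)$. Using the stated $S_N\times S_N$-invariance of the operators, $(v,v')L_p^x(v,v')^{-1}=L_p^x$, together with the $S_N$-invariance of $p\in\C[T]^{S_N}$, a direct substitution in $L_p^xf=p(\gamma^{-1})f$ yields $L_p^x((v,v')f)=p((v')^{-1}\gamma^{-1})(v,v')f=p(\gamma^{-1})(v,v')f$, and similarly the $L_p^y$-equation is preserved. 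For $\iota$, the key identity is $L_p^y=\iota L_p^x\iota$ together with $\iota^2=e$, which rearranges to $L_p^x\iota=\iota L_p^y$ and $L_p^y\iota=\iota L_p^x$ as operators on $\mathbb{K}$. Applying this to $f\in\textup{BiSP}$ and using that the action of $\iota$ turns the multiplier $p(\gamma^{-1})$ into $p(t)$ (and vice versa) gives $L_p^x(\iota f)=\iota(L_p^y f)=\iota(p(t)f)=p(\gamma^{-1})\iota f$ and $L_p^y(\iota f)=p(t)\iota f$, so $\iota f\in\textup{BiSP}$.

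Finally, since $\mathbb{S}_N$ is generated by $S_N\times S_N$ and $\iota$, the previous two checks together yield the full $\mathbb{S}_N$-invariance. I do not expect any real obstacle here; the argument is a bookkeeping exercise with the symmetry properties of the Ruijsenaars-type operators as already established in the preceding discussion, and the role of the $\iota$-conjugation relation $L_p^y=\iota L_p^x\iota$ is exactly to make the two halves of the bispectral system swap cleanly.
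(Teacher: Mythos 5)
Your proof is correct and follows exactly the route the paper intends: the paper simply asserts that the lemma follows "directly" from the preceding remarks on the invariance properties of $L_p^x$ and $L_p^y$ (namely that $L_p^x$, $L_p^y$ act as $q$-difference operators in the first, respectively second, torus variable, are $S_N\times S_N$-invariant, and satisfy $L_p^y=\iota L_p^x\iota$), and your argument is precisely the verification of these points. No gaps.
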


We can now prove
the following bispectral version of the correspondence
between solutions of the quantum KZ equations and the spectral problem of
the $L_p^x$ ($p\in\C[T]^{S_N}$).

\begin{thm}\label{selfdualbi}
The linear functional $\chi_+\in H_0^*$ (see Lemma \ref{chiok})
defines a $\mathbb{S}_N$-equivariant $\mathbb{F}$-linear map
\[
\chi_+\colon \textup{SOL}\to\textup{BiSP}.
\]
\end{thm}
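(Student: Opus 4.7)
The plan is to verify two things: (i) $\chi_+(f) \in \textup{BiSP}$ for every $f \in \textup{SOL}$, and (ii) $\chi_+\circ\tau(\mathrm{w}) = \mathrm{w}\circ\chi_+$ for $\mathrm{w} \in \mathbb{S}_N$ ($\mathbb{F}$-linearity being immediate from the $\C$-linearity of $\chi_+$). The key idea is to reduce both properties to the fundamental solution $U$, since Proposition \ref{basisprop} provides an $\mathbb{F}$-basis of $\textup{SOL}$ of the form $Uh$ ($h \in H_0$), and identity \eqref{eqResD} tells us exactly how operators in $\mathbb{D}$ act on the matrix coefficients $\phi_{\chi_+, h}^U$.

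For (i), I would write an arbitrary $f \in \textup{SOL}$ as $f = Uv$ with $v = \sum_i a_i h_i \in H_0^{\mathbb{F}}$ ($a_i \in \mathbb{F}$, $\{h_i\}$ a $\C$-basis of $H_0$), so that $\chi_+(f) = \sum_i a_i \phi_{\chi_+, h_i}^U$. Because the $a_i$ are fixed by the $\Z^N \times \Z^N$-shifts, operators in $\mathbb{D}$ commute with multiplication by $a_i$, and it suffices to establish the bispectral equations on each $\phi_{\chi_+, h_i}^U$. Combining \eqref{eqResD} with Proposition \ref{propThetaTdagger} and Lemma \ref{centerok} (which gives $p(Y)^\dagger = p(Y)^\ddagger = p(Y^{-1})$ for $p \in \C[T]^{S_N}$) reduces the task to evaluating the matrix coefficients of $\eta(p(Y^{-1}))U$ and of $C_\iota\iota(\eta(p(Y^{-1})))C_\iota U$ against $\chi_+$.

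The conceptual crux is Bernstein's central character: since $p(Y^{-1}) \in Z(H)$ for symmetric $p$, the endomorphism $\eta(p(Y^{-1}))(\gamma) \in \textup{End}(H_0)$ is the scalar $p(\gamma^{-1})$ (it commutes with every $\eta(T_w)$ and sends $T_e$ to $p(\gamma^{-1})T_e$). This immediately gives $L_p^x \phi_{\chi_+, h_i}^U = p(\gamma^{-1})\phi_{\chi_+, h_i}^U$. For $L_p^y$, since $\iota$ swaps $(t,\gamma) \leftrightarrow (\gamma^{-1},t^{-1})$ and $\eta(p(Y^{-1}))$ depends only on the $\gamma$-component, $\iota(\eta(p(Y^{-1})))(t,\gamma)$ acts on $H_0$ as the scalar $p(t)$; since scalars commute with $C_\iota$, this yields $L_p^y \phi_{\chi_+, h_i}^U = p(t)\phi_{\chi_+, h_i}^U$.

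Finally, (ii) follows from the pointwise identity $\chi_+ \circ C_\mathrm{w}(t,\gamma) = \chi_+$ for $\mathrm{w} \in \mathbb{S}_N$, which is equivalent to the statement of Lemma \ref{chiok} by inverting: indeed, $(\tau(\mathrm{w})f)(t,\gamma) = C_\mathrm{w}(t,\gamma)f(\mathrm{w}^{-1}(t,\gamma))$ then gives $\chi_+(\tau(\mathrm{w})f)(t,\gamma) = \chi_+(f(\mathrm{w}^{-1}(t,\gamma))) = (\mathrm{w}\chi_+(f))(t,\gamma)$. The main obstacle is recognizing the central-character observation that powers step (i); once one sees that symmetric $p(Y^{-1})$ acts as a scalar on $H_0$ via $\eta$, the remaining ingredients — identity \eqref{eqResD}, Lemma \ref{centerok}, and the tracking of the $\iota$-twist in the $y$-direction through Proposition \ref{propThetaTdagger}(ii) — assemble into a mechanical verification.
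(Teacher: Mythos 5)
Your proposal is correct and follows essentially the same route as the paper: reduce to the matrix coefficients $\phi_{\chi_+,v}^U$ via Proposition \ref{basisprop}, apply \eqref{eqResD} together with Proposition \ref{propThetaTdagger} and Lemma \ref{centerok}, and conclude because the central element $p(Y^{-1})$ acts as the scalar $p(\gamma^{-1})$ in the formal principal series, with equivariance coming from Lemma \ref{chiok}. The only cosmetic difference is that the paper justifies the scalar action by citing the eigenbasis $\{\xi_w\}$ of Lemma \ref{commoneig} (formula \eqref{eigfc}), whereas you use the equivalent cyclic-vector/centrality argument.
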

\begin{proof}
The $\mathbb{K}$-linear extended linear functional $\chi_+$
defines an $\mathbb{S}_N$-equivariant, $\mathbb{F}$-linear map
$\chi_+: H_0^{\mathbb{K}}\rightarrow \mathbb{K}$, since Lemma
\ref{chiok} implies that $\chi_+(\tau(\mathrm{w})f)=
\mathrm{w}(\chi_+f)$ for $\mathrm{w}\in\mathbb{S}_N$ and $f\in
H_0^{\mathbb{K}}$. Hence $\chi_+$ restricts to an
$\mathbb{S}_N$-equivariant, $\mathbb{F}$-linear map $\chi_+:
\textup{SOL}\rightarrow \mathbb{K}$.

It remains to show that $\chi_+(f)\in\textup{BiSP}$ if
$f\in\textup{SOL}$. Let $f\in\textup{SOL}$. By Proposition
\ref{basisprop} and $\mathbb{F}$-linearity, it suffices only to
consider $f$ of the form $f=Uv$ for $v\in H_0$. Then
$\chi_+(f)=\chi_+(Uv)=\phi_{\chi_+,v}^U$. For $p\in\C[T]^{S_N}$ we
have
\[\begin{split}
\left(L_p^{x}\phi_{\chi_+,v}^U\right)(t,\gamma)&=
\left(\textup{Res}(D^x_{p(Y)})(\phi_{\chi_+,v}^U)\right)(t,\gamma)=
\phi_{\chi_+,v}^{\vartheta_{\mathcal{T}}(D^x_{p(Y)})U}(t,\gamma)\\
&=\phi_{\chi_+,v}^{\eta(p(Y)^\dagger)U}(t,\gamma)
=p(\gamma^{-1})\phi_{\chi_+,v}^U(t,\gamma)
\end{split}\]
as meromorphic functions in $(t,\gamma)\in T\times T$,
where the last equality follows from Lemma \ref{centerok},
\eqref{eigfc} and the fact $p\in\C[T]^{S_N}$. Similarly,
\[\begin{split}
\left(L_p^{y}\phi_{\chi_+,v}^U\right)(t,\gamma)&=
\left(\textup{Res}(D^y_{p(Y)})(\phi_{\chi_+,v}^U)\right)(t,\gamma)=
\phi_{\chi_+,v}^{\vartheta_{\mathcal{T}}(D^y_{p(Y)})U}(t,\gamma)\\
&=\phi_{\chi_+,v}^{C_\iota\iota(\eta(p(Y)^\ddagger))C_\iota U}(t,\gamma)
=p(t)\phi_{\chi_+,v}^U(t,\gamma)
\end{split}\]
as meromorphic functions in $(t,\gamma)\in T\times T$,
hence $f=\phi_{\chi_+,v}^U\in\textup{BiSP}$.
\end{proof}

\subsection{Bispectral Harish-Chandra series}\label{BiHC}

\begin{defi}
We call $\Phi_\kappa^+:=\chi_+(\Phi_\kappa)\in\textup{BiSP}$ the basic
Harish-Chandra series solution of the bispectral problem.
\end{defi}
\begin{cor}
The solution $\Phi_\kappa^+\in\textup{BiSP}$ of the bispectral
problem is selfdual, i.e.,
\[\Phi_\kappa^+(t,\gamma)=\Phi_\kappa^+(\gamma^{-1},t^{-1})
\]
as meromorphic functions in $(t,\gamma)\in T\times T$.
\end{cor}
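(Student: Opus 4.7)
The plan is to deduce the self-duality of $\Phi_\kappa^+$ directly from the self-duality of $\Phi_\kappa$ established in Theorem \ref{selfdualTHM}, using the $\mathbb{S}_N$-equivariance of the correspondence $\chi_+\colon\textup{SOL}\to\textup{BiSP}$ recorded in Theorem \ref{selfdualbi}.

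First, I would recall that $\tau(\iota)\Phi_\kappa=\Phi_\kappa$ by Theorem \ref{selfdualTHM}. Applying $\chi_+$ to both sides and invoking the $\mathbb{S}_N$-equivariance of $\chi_+$ (with $\iota\in\mathbb{S}_N$), I get
\[
\Phi_\kappa^+=\chi_+(\Phi_\kappa)=\chi_+(\tau(\iota)\Phi_\kappa)=\iota\bigl(\chi_+(\Phi_\kappa)\bigr)=\iota\Phi_\kappa^+,
\]
where the $\mathbb{S}_N$-action on $\mathbb{K}$ is the one from \eqref{doubleaction}. Unpacking the definition $\iota(t,\gamma)=(\gamma^{-1},t^{-1})$ yields
\[
\Phi_\kappa^+(t,\gamma)=\Phi_\kappa^+(\gamma^{-1},t^{-1})
\]
as $\mathbb{K}$-valued (i.e., meromorphic) functions on $T\times T$, which is the claim.

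The only real content beyond citing the previous two theorems is the verification that the $\mathbb{S}_N$-equivariance of $\chi_+$ is exactly what is needed here; that was already addressed in the proof of Theorem \ref{selfdualbi} via the identity $\chi_+\circ C_\iota=\chi_+$ (which in turn rests on $C_\iota(T_w)=T_{w^{-1}}$ and $\ell(w)=\ell(w^{-1})$). Thus no obstacle arises: the statement is essentially a direct consequence of the self-duality of $\Phi_\kappa$ transported through the $\iota$-equivariant map $\chi_+$.
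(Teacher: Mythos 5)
Your proposal is correct and follows essentially the same route as the paper: the paper's proof likewise applies $\chi_+$ to the identity $\Phi_\kappa(t,\gamma)=C_\iota\Phi_\kappa(\gamma^{-1},t^{-1})$ from Theorem \ref{selfdualTHM} and concludes via $\chi_+\circ C_\iota=\chi_+$, which is exactly the $\iota$-equivariance you invoke. No gaps.
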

\begin{proof}
By Theorem \ref{selfdualTHM}, we have
\[\Phi_\kappa^+(t,\gamma)=\chi_+(C_\iota\Phi_\kappa(\gamma^{-1},t^{-1})).
\]
But $\chi_+C_\iota=\chi_+$, hence the result.
\end{proof}
\begin{rema}
In \cite{ES}, for special values of $k$, the function
$\Phi_\kappa^+$ is constructed as formal power series in terms of
generalized characters of Verma modules over the quantum group
$\mathcal{U}_q(\mathfrak{sl}_N)$ (see also \cite{EK1},
\cite{EK2}). The quantum group approach also leads to the
self-duality of $\Phi_\kappa^+$; see \cite[Thm. 5.6]{ES} (see
\cite{EK2}).
\end{rema}

Note that $\Phi_\kappa^+=W_\kappa\Psi^+$ with $\Psi^+=\chi_+(\Psi)$.
For $\alpha\in Q_+$ set
\[\Gamma_\alpha^+(\gamma)=\chi_+(\Gamma_\alpha(\gamma))
\]
as meromorphic function in $\gamma\in T$.
By Lemma \ref{Consequence1} and Theorem \ref{duall},
$\Gamma_\alpha^+$ is analytic at $T\setminus\mathcal{S}_+$
and
\[\Gamma_0^+(\gamma)=k^{{N\choose 2}}K(\gamma)
\]
with $K$ given by \eqref{Kexp}.
Recall that
the solution space $\textup{BiSP}$ of the bispectral problem
is $\mathbb{S}_N$-stable. In particular, we have solutions
$\Phi_w^+\in\textup{BiSP}$ given by
\begin{equation}\label{Phiw}
\Phi_w^+(t,\gamma):=\Phi_\kappa^+(t,w^{-1}\gamma).
\end{equation}
These are solutions of the bispectral problem
which are asymptotically free in the asymptotic sector
$\{t\in T \, | \, |t^{\alpha_i}|\gg 0\,\, \forall\, 1\leq i<N\}$
in the following sense: by Lemma \ref{Consequence1} we have
$\Phi_w^+(t,\gamma)=W_\kappa(t,w^{-1}\gamma)\Psi_w^+(t,\gamma)$
with $\Psi_w^+(t,\gamma):=\Psi^+(t,w^{-1}\gamma)$ admitting,
for $\epsilon>0$ sufficiently small, the power
series expansion
\begin{equation}\label{psPsi+}
\Psi_w^+(t,\gamma)=\sum_{\alpha\in Q_+}
\Gamma_\alpha^+(w^{-1}\gamma)t^{-\alpha}
\end{equation}
for $(t,\gamma)\in B_\epsilon^{-1}\times T\setminus w(\mathcal{S}_+)$,
converging normally in compacta of $B_\epsilon^{-1}\times T\setminus
w(\mathcal{S}_+)$.

\begin{prop}\label{HCindependent}
The set of asymptotic solutions $\{\Phi_w^+\}_{w\in
S_N}\subset\textup{BiSP}$ of the bispectral problem is
$\mathbb{F}$-linearly independent.
\end{prop}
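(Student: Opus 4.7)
The plan is to establish $\mathbb{F}$-linear independence by asymptotic analysis, driving $t$ deep into $B_\epsilon^{-1}$ via iterated $q$-dilation by a strictly dominant weight. Suppose
\[
\sum_{w\in S_N}f_w(t,\gamma)\Phi_w^+(t,\gamma)=0,\qquad f_w\in\mathbb{F},
\]
and restrict attention to the open set $B_\epsilon^{-1}\times\bigl(T\setminus\bigcup_{w}w(\mathcal{S}_+)\bigr)$, where the factorizations $\Phi_w^+=W_\kappa(t,w^{-1}\gamma)\Psi_w^+$ with the convergent expansion \eqref{psPsi+} hold simultaneously for all $w\in S_N$.

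Fix $\rho=(N-1,N-2,\dots,0)$ and substitute $t\mapsto q^{-n\rho}t$ for $n\in\mathbb{Z}_{\geq 0}$. Using $f_w\in\mathbb{F}$, the transformation rule \eqref{fff} (applied with $\mu=0$), and the identity $(w^{-1}\gamma)^{-w_0(\lambda)}=\gamma^{-ww_0(\lambda)}$ that follows from the action conventions \eqref{WactionT}, the relation becomes, after cancelling the common factor $k^{-n\langle\delta,\rho\rangle}$,
\[
\sum_{w\in S_N}f_w(t,\gamma)\,\gamma^{-nww_0(\rho)}\,W_\kappa(t,w^{-1}\gamma)\,\Psi_w^+(q^{-n\rho}t,\gamma)=0.
\]
Since $\langle\rho,\alpha\rangle\geq 1$ for every $\alpha\in Q_+\setminus\{0\}$, dominated convergence applied to the series \eqref{psPsi+} gives $\Psi_w^+(q^{-n\rho}t,\gamma)\to\Gamma_0^+(w^{-1}\gamma)=k^{\binom{N}{2}}K(w^{-1}\gamma)$ as $n\to\infty$, the $n$-independent majorant being supplied by the normal convergence in Lemma \ref{Consequence1}.

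Because $w_0(\rho)=(0,1,\dots,N-1)$ is a regular strictly dominant weight, the $N!$ vectors $ww_0(\rho)\in\mathbb{Z}^N$ are pairwise distinct. For each $w^*\in S_N$ the set
\[
U_{w^*}:=\bigl\{\gamma\in T\bigm||\gamma^{w^*w_0(\rho)}|<|\gamma^{ww_0(\rho)}|\text{ for every }w\neq w^*\bigr\}
\]
is open and nonempty (one just chooses $|\gamma_1|,\dots,|\gamma_N|$ in a strict ordering that makes $w^*$ the unique minimizer of $w\mapsto\sum_j (w_0(\rho))_j\log|\gamma_{w(j)}|$). On $B_\epsilon^{-1}\times U_{w^*}$, dividing the displayed identity by $\gamma^{-nw^*w_0(\rho)}$ puts the factor $(\gamma^{w^*w_0(\rho)-ww_0(\rho)})^n$ in front of the $w$-th term; for every $w\neq w^*$ this factor has absolute value strictly less than $1$, so it tends to $0$ as $n\to\infty$, and the limit of the identity collapses to
\[
f_{w^*}(t,\gamma)\,W_\kappa(t,w^{*-1}\gamma)\,\Gamma_0^+(w^{*-1}\gamma)=0.
\]
Generic nonvanishing of $W_\kappa$ and of $K$ forces $f_{w^*}$ to vanish on a nonempty open subset of $T\times T$, hence identically by meromorphicity. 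Letting $w^*$ range over $S_N$ annihilates every coefficient $f_w$, completing the proof.

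The main delicate point I foresee is the termwise passage to the limit inside the infinite series defining $\Psi_w^+(q^{-n\rho}t,\gamma)$; this is dispatched cleanly using $|q^{n\langle\rho,\alpha\rangle}|\leq 1$ on $Q_+$ together with the normal convergence on compacta from Lemma \ref{Consequence1}, which supplies a summable majorant independent of $n$.
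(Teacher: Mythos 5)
Your proposal is correct and follows essentially the same route as the paper's proof: dilate $t$ by a regular dominant weight (you use $\rho$, the paper uses $\delta$; both work), apply the quasi-periodicity \eqref{fff} of $W_\kappa$, and isolate each coefficient by choosing $\gamma$ in the open region where one of the pairwise distinct exponentials $\gamma^{-nww_0(\cdot)}$ dominates, using the leading term $\Gamma_0^+=k^{\binom{N}{2}}K$ from Theorem \ref{duall}. The only cosmetic difference is that you keep $\gamma$ varying over an open set and invoke dominated convergence explicitly, while the paper first specializes to generic $\zeta\in uw_0(B_1)$ via Proposition \ref{spgauge}.
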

\begin{proof}
Suppose that
\[\sum_{w\in S_N}a_w(t,\gamma)\Phi_w^+(t,\gamma)=0
\]
as meromorphic functions in $(t,\gamma)\in T\times T$ with
coefficients $a_w\in\mathbb{F}$ ($w\in S_N$).
Replacing $t$ by $q^{-m\delta}t$ ($m\in\mathbb{N}$)
and using \eqref{fff} we obtain
\begin{equation}\label{alm0}
\sum_{w\in S_N}k^{-m\langle\delta,\delta\rangle}\gamma^{-mww_0(\delta)}
a_w(t,\gamma)W_\kappa(t,w^{-1}\gamma)\Psi_w^+(q^{-m\delta}t,\gamma)=0
\end{equation}
as meromorphic functions in $(t,\gamma)\in T\times T$. Fix $u\in S_N$.
We are going to derive from \eqref{alm0}
that $a_u=0$. For this we will use the fact that for $w\not=u$,
\begin{equation}\label{thistouse}
\lim_{m\rightarrow\infty}\zeta^{m(uw_0(\delta)-ww_0(\delta))}=
\lim_{m\rightarrow\infty}(w_0u^{-1}\zeta)^{m(\delta-w_0u^{-1}ww_0(\delta))}=0
\end{equation}
if $\zeta\in uw_0(B_1)$.

Recall the $W$-invariant subset $\mathcal{S}\subset T$ (see \eqref{Scal}),
which contains $\mathcal{S}_+$.
For generic $\zeta\in T$ (concretely, $\zeta\not\in\mathcal{S}$, and
$a_w(t,\gamma)$ and $W_\kappa(t,w^{-1}\gamma)$ specializable at
$\gamma=\zeta$ for all $w\in S_N$),
it follows from Proposition \ref{spgauge}
and \eqref{alm0} that, for all $m\in\mathbb{N}$,
\begin{equation}\label{alm}
\sum_{w\in S_N}\zeta^{m(uw_0(\delta)-ww_0(\delta))}
a_w(t,\zeta)W_\kappa(t,w^{-1}\zeta)\Psi_w^+(q^{-m\delta}t,\zeta)=0
\end{equation}
as meromorphic function in $t\in T$.
Using \eqref{thistouse} and
the power series expansion \eqref{psPsi+},
the limit $m\rightarrow \infty$ of \eqref{alm} yields,
for generic $\zeta\in uw_0(B_1)$,
\[a_u(t,\zeta)W_\kappa(t,u^{-1}\zeta)\Gamma_0^+(u^{-1}\zeta)=0
\]
as meromorphic function in $t\in T$. This implies $a_u=0$, as desired.
\end{proof}
\begin{cor}
The map $\chi_+: \textup{SOL}\rightarrow \textup{BiSP}$ is
injective.
\end{cor}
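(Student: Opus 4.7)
The plan is to reduce injectivity of $\chi_+$ to the linear independence statement in Proposition \ref{HCindependent}, using the basis of $\textup{SOL}$ furnished by Proposition \ref{basisprop}(iii).

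First I would observe that by Proposition \ref{basisprop}(iii) every $f \in \textup{SOL}$ admits a unique expansion
\[
f = \sum_{w \in S_N} a_w \, \tau(e,w)\Phi_\kappa, \qquad a_w \in \mathbb{F}.
\]
Applying $\chi_+$, which is $\mathbb{F}$-linear and $\mathbb{S}_N$-equivariant by Theorem \ref{selfdualbi}, yields
\[
\chi_+(f) = \sum_{w \in S_N} a_w \, \chi_+(\tau(e,w)\Phi_\kappa) = \sum_{w \in S_N} a_w \, (e,w)\,\chi_+(\Phi_\kappa) = \sum_{w \in S_N} a_w \, (e,w)\Phi_\kappa^+.
\]

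Next, I would identify $(e,w)\Phi_\kappa^+$ with the asymptotic solution $\Phi_w^+$ from \eqref{Phiw}. Indeed, the action of $\mathbb{S}_N$ on $\mathbb{K}$ is $((e,w)g)(t,\gamma) = g((e,w)^{-1}(t,\gamma)) = g(t, w^{-1}\gamma)$, since $w^\diamondsuit = w$ for $w \in S_N$. Hence
\[
((e,w)\Phi_\kappa^+)(t,\gamma) = \Phi_\kappa^+(t, w^{-1}\gamma) = \Phi_w^+(t,\gamma),
\]
so the equation $\chi_+(f) = 0$ becomes $\sum_{w \in S_N} a_w \Phi_w^+ = 0$.

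Finally, Proposition \ref{HCindependent} asserts that $\{\Phi_w^+\}_{w \in S_N}$ is $\mathbb{F}$-linearly independent in $\textup{BiSP}$, hence every $a_w = 0$ and therefore $f = 0$. There is no real obstacle here: the entire argument is structural, and the only nontrivial input — the asymptotic linear independence of the $\Phi_w^+$ in distinct asymptotic sectors — has already been established.
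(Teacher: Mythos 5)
Your argument is correct and is essentially the paper's own proof: the paper likewise notes $\chi_+(\tau(e,w)\Phi_\kappa)=\Phi_w^+$ and then combines the basis statement of Proposition \ref{basisprop} with the $\mathbb{F}$-linear independence of Proposition \ref{HCindependent}. Your explicit verification of the $\mathbb{S}_N$-equivariance step and of the identity $(e,w)\Phi_\kappa^+=\Phi_w^+$ just spells out what the paper leaves implicit.
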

\begin{proof}
Note that $\chi_+\bigl(\tau(e,w)\Phi_\kappa)=\Phi_w^+$ ($w\in S_N$).
The statement follows now directly from Proposition \ref{HCindependent} and
Proposition \ref{basisprop}.
\end{proof}

\subsection{Specialized central character and Harish-Chandra series}
\label{spec}
We write
\[\textup{SP}_\zeta=\{f\in \mathcal{M}(T) \,\, | \,\,
L_p^{x}f=p(\zeta^{-1})f\quad\forall\, p\in\C[T]^{S_N}\}
\]
for the spectral problem of the Ruijsenaars $q$-difference operators
with fixed spectral parameter $\zeta\in T$. Note that $\textup{SP}_\zeta\subset
H_0^{\mathcal{M}(T)}$ is $S_N$-stable, with $S_N$-action on
$H_0^{\mathcal{M}(T)}$
given by $(wf)(t)=f(w^{-1}t)$ for $f\in H_0^{\mathcal{M}(T)}$ and
$w\in S_N$.

By \cite[Prop. 5.2]{Et}, the quantum KZ equations \eqref{qKZ} are
consistent for all values $\zeta\in T$ of the central character.
The arguments from Subsection \ref{Corr}, applied to the quantum
KZ equations \eqref{qKZ} for fixed $\zeta$ and with the role of
$U$ taken over by an invertible matrix solution $U_\zeta$ of
\eqref{qKZ}, result in the following special case of the
Cherednik-Matsuo correspondence from \cite{Cref,CInd} (concretely,
in the notations of \cite{CInd}, take the principal series module
$V=M(\zeta)$ in \cite[Thm. 4.2]{CInd} and let $\tau$ be the
projection from $M(\zeta)$, along the direct sum decomposition of
$M(\zeta)$ in $H_0$-isotypical components, onto the trivial
component).
\begin{prop}\label{CMspecialized}
Let $\zeta\in T$. Then $\chi_+$ defines an $\mathcal{E}(T)$-linear
$S_N$-equivariant map $\chi_+\colon \textup{SOL}_\zeta \rightarrow
\textup{SP}_\zeta$.
\end{prop}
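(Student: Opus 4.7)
The plan is to mimic the proof of Theorem \ref{selfdualbi} in the specialized setting, with the $y$-half of BqKZ suppressed and the spectral parameter $\gamma$ frozen at $\zeta$. First I would invoke \cite[Prop.~5.2]{Et} to obtain, for every $\zeta\in T$, an invertible fundamental solution $U_\zeta\in\textup{End}(H_0)^{\mathcal{M}(T)}$ of $C_\lambda^\zeta(t)U_\zeta(q^{-\lambda}t)=U_\zeta(t)$; by invertibility every $f\in\textup{SOL}_\zeta$ factors uniquely as $f=U_\zeta v$ with $v\in H_0^{\mathcal{E}(T)}$. The $\mathcal{E}(T)$-linearity of $\chi_+$ on $\textup{SOL}_\zeta$ is then immediate from $\C$-linearity of $\chi_+$, and its $S_N$-equivariance follows from Lemma \ref{chiok} applied with $\mathrm{v}=(w,e)\in\mathbb{S}_N$ specialized at $\gamma=\zeta$: for $f\in\textup{SOL}_\zeta$ and $w\in S_N$,
\[
\chi_+\bigl((\varsigma(w)f)(t)\bigr)=\chi_+\bigl(C_w^\zeta(t)f(w^{-1}t)\bigr)=\chi_+\bigl(f(w^{-1}t)\bigr).
\]

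Next, in analogy with Subsection \ref{mono}, I would define the specialized monodromy cocycle $\mathcal{T}_w^\zeta:=U_\zeta^{-1}(\varsigma(w)U_\zeta)\in\textup{End}(H_0)^{\mathcal{E}(T)}$ for $w\in S_N$, and build the twisted algebra homomorphism
\[
\vartheta_{\mathcal{T}^\zeta}\colon\C(T)\#W\longrightarrow\textup{End}\bigl(\textup{End}(H_0)^{\mathcal{M}(T)}\bigr)
\]
which twists the $S_N$-action on $\textup{End}(H_0)^{\mathcal{M}(T)}$ by $\mathcal{T}^\zeta$ and leaves the $\Z^N$-action untwisted; the direct analog of Lemma \ref{lemThetaT}(i) then reads $\vartheta_{\mathcal{T}^\zeta}(w)U_\zeta=(C_w^\zeta)^{-1}U_\zeta$ for every $w\in W$ (using that $U_\zeta$ already intertwines the $\Z^N$-action by qKZ). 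The crucial identity, proved exactly as in Proposition \ref{propThetaTdagger}(i) by reducing to the algebraic generators $\pi,T_i$ of $H(k^{-1})$ via multiplicativity and using that $\eta_\zeta(h^\dagger)\in\textup{End}(H_0)$ is a $t$-independent constant and so commutes with $\vartheta_{\mathcal{T}^\zeta}(\C(T)\#W)$, is
\[
\vartheta_{\mathcal{T}^\zeta}(D_h^x)U_\zeta=\eta_\zeta(h^\dagger)U_\zeta,\qquad h\in H(k^{-1}).
\]

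Specializing to $h=p(Y)$ with $p\in\C[T]^{S_N}$, Lemma \ref{centerok} gives $p(Y)^\dagger=p(Y^{-1})\in Z(H(k))$, and the latter acts on $M(\zeta)\simeq H_0$ by the central character $p(\zeta^{-1})$ (immediate from \eqref{eigfc} combined with $S_N$-invariance of $p$). Hence $\vartheta_{\mathcal{T}^\zeta}(D_{p(Y)}^x)U_\zeta=p(\zeta^{-1})U_\zeta$. The specialized analog of Lemma \ref{lemThetaT}(ii) together with \eqref{eqResD} -- whose hypothesis $\chi_+((C_{\mathrm{v}})^{-1}F)=\chi_+(F)$ is supplied by Lemma \ref{chiok} -- then yields, for every $v\in H_0$,
\[
L_p^x\,\phi_{\chi_+,v}^{U_\zeta}=\chi_+\bigl(\vartheta_{\mathcal{T}^\zeta}(D_{p(Y)}^x)U_\zeta v\bigr)=p(\zeta^{-1})\,\phi_{\chi_+,v}^{U_\zeta},
\]
so $\chi_+(U_\zeta v)\in\textup{SP}_\zeta$ for all $v\in H_0$; by $\mathcal{E}(T)$-linearity this extends to every $f=U_\zeta v\in\textup{SOL}_\zeta$ with $v\in H_0^{\mathcal{E}(T)}$.

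The main obstacle will be securing an invertible fundamental solution $U_\zeta$ for \emph{every} $\zeta\in T$: Corollary \ref{consistencyzeta} only covers generic $\zeta\notin\mathcal{S}$ with $\zeta^\alpha\notin q^\Z$, and the general consistency statement \cite[Prop.~5.2]{Et} is exactly what bridges the gap. Beyond this, the argument is a routine specialization at $\gamma=\zeta$ of the BqKZ constructions of Subsections \ref{mono}--\ref{Corr}, made transparent by the fact that $C_w^\zeta(t)=C_{(w,e)}(t,\zeta)$ is in fact independent of $\zeta$ for $w\in S_N$.
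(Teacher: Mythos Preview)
Your proposal is correct and follows precisely the approach the paper indicates: the paper does not give a detailed proof but only the sentence preceding the proposition, which says to repeat the arguments of Subsection~\ref{Corr} with the role of $U$ taken by an invertible matrix solution $U_\zeta$ of \eqref{qKZ}, whose existence for all $\zeta\in T$ is guaranteed by \cite[Prop.~5.2]{Et}. You have simply spelled out those details faithfully.
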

For a further analysis of the map
$\chi_+: \textup{SOL}_\zeta\rightarrow \textup{SP}_\zeta$,
we refer to \cite{CInd} and \cite[\S 1.3.4]{C}.

Harish-Chandra type series solutions of the spectral problem
of the Ruijsenaars $q$-difference operators $L_p^{x}$ ($p\in\C[T]^{S_N}$)
with fixed spectral parameter $\zeta\in T$ were studied in, e.g.,
\cite{EK1} and \cite{KK} (see also \cite{LS} for arbitrary
root systems). The results of the previous subsection allow
us to reobtain these solutions
by specialization of the basic Harish-Chandra series
$\Phi_\kappa^+$. It leads to
new results on the convergence and singularities of
these solutions,
which we state now explicitly.

By Subsection \ref{singularities}, for generic $\kappa\in
\mathbb{C}^\times$
the basic Harish-Chandra series
$\Phi_\kappa^+(t,\gamma)$ is specializable at $\gamma=\zeta$
when $\zeta\in T\setminus\mathcal{S}_+$. Concretely,
for $\zeta\in T\setminus\mathcal{S}_+$ and generic
$\kappa\in\mathbb{C}^\times$,
we can write
\[\Phi_\kappa^+(t,\zeta)=W_\kappa(t,\zeta)\Psi^+(t,\zeta)
\]
as meromorphic function in $t\in T$, where $\Psi^+=\chi_+(\Psi)$
(see Subsection \ref{BiHC}). Due to the results in Subsection
\ref{singularities} (see Proposition \ref{Psising}) we obtain the
following result.
\begin{cor}
For $\zeta\in T\setminus\mathcal{S}_+$, the meromorphic function
$\Psi^+(t,\zeta)$ in $t\in T$ is analytic at $t\in T\setminus
\mathcal{S}_+^{-1}$.
\end{cor}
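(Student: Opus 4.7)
The plan is to deduce this corollary directly from the corresponding statement for the $H_0$-valued gauged solution $\Psi$ by applying the $\mathbb{C}$-linear functional $\chi_+$, since $\chi_+$ preserves both meromorphy and analyticity on the finite-dimensional space $H_0$.

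First, I would fix $\zeta\in T\setminus\mathcal{S}_+$. By Proposition \ref{spgauge}(i), the $H_0$-valued meromorphic function $\Psi(t,\gamma)$ in $(t,\gamma)\in T\times T$ can be specialized at $\gamma=\zeta$ to yield an $H_0$-valued meromorphic function $\Psi(t,\zeta)$ in $t\in T$. The key additional input is Proposition \ref{Psising}, which asserts that $\Psi(t,\gamma)$ is analytic as a function of two variables on $T\setminus\mathcal{S}_+^{-1}\times T\setminus\mathcal{S}_+$. Restricting to the slice $\gamma=\zeta\in T\setminus\mathcal{S}_+$ therefore gives that $\Psi(t,\zeta)$ is analytic on $T\setminus\mathcal{S}_+^{-1}$ as an $H_0$-valued function in $t$.

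Next, by definition $\Psi^+(t,\zeta)=\chi_+(\Psi(t,\zeta))$, where $\chi_+\in H_0^*$ is the linear functional of Lemma \ref{chiok}. Since $H_0$ is finite dimensional and $\chi_+$ is a fixed $\mathbb{C}$-linear functional on $H_0$, applying $\chi_+$ coordinatewise to an $H_0$-valued analytic (respectively meromorphic) function on an open set $U\subset T$ yields a scalar analytic (respectively meromorphic) function on $U$. Consequently, the analyticity of $\Psi(t,\zeta)$ at $t\in T\setminus\mathcal{S}_+^{-1}$ immediately implies the analyticity of $\Psi^+(t,\zeta)$ on the same set.

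There is essentially no obstacle here beyond invoking the two previous results correctly; the corollary is a direct specialization-and-evaluation consequence of Proposition \ref{Psising} combined with Proposition \ref{spgauge}(i), together with the elementary observation that a continuous $\mathbb{C}$-linear functional on a finite-dimensional vector space sends analytic vector-valued functions to analytic scalar-valued functions.
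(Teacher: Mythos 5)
Your argument is exactly the one the paper intends: the corollary is stated as an immediate consequence of Proposition \ref{Psising} (analyticity of $\Psi$ on $T\setminus\mathcal{S}_+^{-1}\times T\setminus\mathcal{S}_+$), obtained by restricting to the slice $\gamma=\zeta$ and applying the linear functional $\chi_+$, which trivially preserves analyticity of $H_0$-valued functions. The proposal is correct and matches the paper's (implicit) proof.
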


Let $\zeta\in T\setminus \mathcal{S}$, where $\mathcal{S}\subset
T$ is the $W$-invariant set \eqref{Scal}. For $\kappa\in\C^\times$
such that $W_\kappa(t,w^{-1}\gamma)$ may be specialized at
$\gamma=\zeta$ for all $w\in S_N$, the asymptotic solutions
$\Phi_w^+(t,\gamma)$ ($w\in S_N$) of the bispectral problem (see
\eqref{Phiw}) may thus be specialized at $\gamma=\zeta$, giving
rise to solutions $\Phi_w^+(\cdot;\zeta)\in\textup{SP}_\zeta$
($w\in S_N$); see Corollary \ref{consistencyzeta} and Proposition
\ref{CMspecialized}. Observe that for $\epsilon>0$ sufficiently
small,
\[\Phi_w^+(t,\zeta)=W_\kappa(t,w^{-1}\zeta)
\sum_{\alpha\in Q_+}\Gamma_\alpha^+(w^{-1}\zeta)
t^{-\alpha}
\]
for $t\in B_\epsilon^{-1}$, with normal convergence of the power series
on compacta of $B_{\epsilon}^{-1}$. Since $\zeta\not\in\mathcal{S}$
we furthermore have
\[\Gamma_0^+(w^{-1}\zeta)=k^{{N\choose 2}}K(w^{-1}\zeta)\not=0,
\]
with $K$ given by \eqref{Kexp}.

\begin{defi} Let $\zeta\in T\setminus\mathcal{S}$.
The $\Phi_w^+(\cdot;\zeta)\in \textup{SP}_\zeta$ ($w\in S_N$)
are the Harish-Chandra series solutions of the spectral problem
$L_p^{x}f=p(\zeta^{-1})f$ ($p\in\C[T]^{S_N}$).
\end{defi}

\begin{rema}
In \cite{EK1} (and \cite{LS}) the Harish-Chandra series are
investigated as formal power series solutions to the spectral
problem of the Ruijsenaars operators. The advantage of the present
approach is the fact that it implies the convergence of the formal
power series, basically as a consequence of a general statement
about convergence of formal power series solutions of holonomic
systems of $q$-difference equations (see the appendix). Chalykh's
\cite{Ch} Baker-Akhiezer functions arise as Harish-Chandra series
solutions for special values of $k$; see \cite[\S 4.4]{LS}. In
\cite{KK}, the Harish-Chandra series solutions of the Ruijsenaars
operators are constructed as matrix coefficients of products of
vertex operators. By this approach, one obtains an explicit
integral representation of the Harish-Chandra series.
\end{rema}

\begin{rema}
Observe that
\begin{equation}\label{limform}
\lim_{\stackrel{\lambda\in\Lambda:}{\lambda\rightarrow\infty}}
\frac{\Phi_\kappa^+(t,q^\lambda k^{-\delta})}
{W_\kappa(t,q^\lambda k^{-\delta})}=\Gamma_0^+(t^{-1})=
k^{{N\choose 2}}K(t^{-1}),
\end{equation}
with $\lambda\rightarrow\infty$ meaning
$\lambda_i-\lambda_{i+1}\rightarrow \infty$ for all $1\leq i<N$.
Thus, $K$ (see \eqref{Kexp}) is a normalized limit of the
asymptotic solutions $\Phi_\kappa^+(\cdot,q^\lambda
k^{-\delta})\in \textup{SP}_{q^{\lambda}k^{-\delta}}$. The
solution space $\textup{SP}_{q^{\lambda}k^{-\delta}}$ contains the
symmetric Macdonald polynomial of degree $\lambda\in\Lambda$. It
turns out though that $\Phi_\kappa^+(\cdot,q^\lambda k^{-\delta})$
is not a multiple of the Macdonald polynomial of degree
$\lambda\in\Lambda$, but
$\Phi_\kappa^+(\cdot,q^{w_0(\lambda)}k^{\delta})$ is (this will
become apparent in the next section). On the other hand, the
leading coefficient $K$ (see \eqref{Kexp}) also naturally appears
as a normalized limit of the Macdonald polynomial when the degree
$\lambda\in\Lambda$ of the polynomial tends to infinity; see
\cite[Lemma 4.3]{CWhit} (this limit was proven in \cite{RuF} in
the $L^2$-sense).
\end{rema}

\section{Polynomial theory}\label{SectionPolynomial}

We assume throughout this section that
$k\in\mathbb{C}^\times$ satisfies the generic conditions
\begin{equation}\label{generic}
\begin{split}
k^{2j}&\not\in q^{\mathbb{Z}} ,\qquad \forall\, 1\leq j\leq N,\\
k^{\langle \delta, \varpi_j-w(\varpi_j)\rangle}&\not\in q^{\mathbb{Z}},\qquad
\forall\, 1\leq j<N, \forall\, w\in S_N: w(\varpi_j)\not=\varpi_j.
\end{split}
\end{equation}

\subsection{Polynomial solutions of the quantum KZ equation}
We are going to use a special case of Proposition \ref{shift} to
create $S_N$-invariant (with respect to the $S_N$-action
$\varsigma$ on $\textup{SOL}_\zeta$; see \eqref{actionone})
polynomial solutions of the quantum KZ equations.

\begin{lem}\label{poleex}
Let $\lambda\in\Lambda$. The
 possible
poles of the $\C[T]\otimes\textup{End}(H_0)$-valued rational function
\[
\gamma\mapsto C_{(e,-\lambda)}(\cdot,q^\lambda\gamma)=
C_{(e,\lambda)}(\cdot,\gamma)^{-1}
\]
in $\gamma\in T$ are at $\gamma^\alpha\in k^2q^{-\mathbb{N}}$ for some
$\alpha\in R^+$. The possible poles of
\[\gamma\mapsto C_{(e,\lambda)}(\cdot,\gamma)
\]
are at $\gamma^\alpha\in k^{-2}q^{-\mathbb{N}}$ for some $\alpha\in R^+$.
\end{lem}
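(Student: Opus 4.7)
The plan is to reduce to the fundamental-weight case and then assemble via the cocycle identity. The explicit formula \eqref{fundamental} presents $C_{(e,\varpi_j)}(t,\gamma)$ as a product of factors $R_i(q\gamma_r/\gamma_s)$ with $1\leq r\leq j<s\leq N$, conjugated by $C_\iota$ and multiplied by the $\gamma$-regular prefactor $(\eta(\pi)(t^{-1}))^j$. Since $R_i(z)$ has its only pole at $z=k^{-2}$, the $\gamma$-poles of $C_{(e,\varpi_j)}(\cdot,\gamma)$ all lie on hyperplanes $\gamma^\alpha=k^{-2}q^{-1}$ with $\alpha=\epsilon_r-\epsilon_s\in R^+$, and in particular in $k^{-2}q^{-\mathbb{N}}$.

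For general $\lambda\in\Lambda=\bigoplus_{i=1}^{N-1}\mathbb{Z}_{\geq 0}\varpi_i\oplus\mathbb{Z}\varpi_N$ I would first absorb the $\varpi_N$-component: by Lemma \ref{explicit}(iii) together with the identity $C_{(e,w)}(t,\gamma)=C_\iota C_{(w,e)}(\gamma^{-1},t^{-1})C_\iota$, the cocycle value $C_{(e,\varpi_N)}$ is $\gamma$-independent and invertible, so it affects neither pole locus under consideration. The residual dominant part $\lambda-a_N\varpi_N\in\bigoplus_{i<N}\mathbb{Z}_{\geq 0}\varpi_i$ admits a decomposition $\varpi_{j_1}+\cdots+\varpi_{j_r}$ with $j_m<N$, and iterating the cocycle identity yields
\[
C_{(e,\lambda)}(t,\gamma)=\bigl(\gamma\text{-regular prefactor}\bigr)\cdot\prod_{m=1}^{r}C_{(e,\varpi_{j_m})}\!\bigl(t,q^{\mu_m}\gamma\bigr),\qquad \mu_m:=\sum_{l<m}\varpi_{j_l}\in\Lambda.
\]
Under the shift $\gamma\mapsto q^{\mu_m}\gamma$, a pole at $\gamma^\alpha=k^{-2}q^{-1}$ of the $m$-th factor is translated to $\gamma^\alpha=k^{-2}q^{-1-\langle\mu_m,\alpha\rangle}$. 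Dominance of $\mu_m$ and $\alpha\in R^+$ force $\langle\mu_m,\alpha\rangle\in\mathbb{Z}_{\geq 0}$, so all poles of $C_{(e,\lambda)}(\cdot,\gamma)$ remain in $k^{-2}q^{-\mathbb{N}}$ along positive roots, establishing the second assertion.

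The first assertion follows by inverting the factorization term by term. By the Yang--Baxter identity $R_i(z)R_i(z^{-1})=\textup{id}$ from Lemma \ref{YBlem}, we have $R_i(q\gamma_r/\gamma_s)^{-1}=R_i(q^{-1}\gamma_s/\gamma_r)$, whose single pole is at $\gamma^\alpha=k^2q^{-1}$ with $\alpha=\epsilon_r-\epsilon_s\in R^+$. Repeating the shift analysis confines the poles of $C_{(e,\lambda)}(\cdot,\gamma)^{-1}$ to $\gamma^\alpha\in k^2q^{-\mathbb{N}}$, $\alpha\in R^+$, as required. There is no real obstacle here; the only bookkeeping point is that the partial sums $\mu_m$ lie in $\Lambda$, which is automatic once the decomposition into fundamental weights is made with non-negative multiplicities.
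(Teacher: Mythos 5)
Your proof is correct and follows essentially the same route as the paper's: the paper's (very terse) proof likewise reduces to the explicit product formula \eqref{fundamental} for $C_{(e,\varpi_j)}$, the fact that $R_i(z)$ has its single pole at $z=k^{-2}$ (hence $R_i(z)^{-1}=R_i(z^{-1})$ has it at $z=k^{2}$), and the cocycle property with dominant shifts as in the proof of Lemma \ref{Consequence1}. Your write-up just makes the bookkeeping of the shifts $q^{\mu_m}$ and the harmless $\varpi_N$-component explicit.
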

\begin{proof}
Since $R_i(z)$ has only a (simple) pole at $z=k^{-2}$, this
follows from \eqref{fundamental} and the cocycle property of $C$;
see Lemma \ref{Consequence1}.
\end{proof}
Since $k$ satisfies $k^{2j}\not\in q^{\mathbb{Z}}$ for $1\leq
j\leq N$ by \eqref{generic}, the spectrum of $\eta_{q^\lambda
k^{-\delta}}\bigl(\C_Y[T])$ is simple and the $\xi_w(q^\lambda
k^{-\delta})$ ($w\in S_N$) form a $\C$-basis of $H_0$ for all
$\lambda\in\Lambda$. Furthermore, for such $k$ we have that
$\gamma\mapsto C_{(e,\lambda)}(\cdot,\gamma)^{\pm 1}$ is regular
at $\gamma=k^{-\delta}$ for all $\lambda\in\Lambda$; see Lemma
\ref{poleex}. The additional conditions on $k$ in \eqref{generic}
will play a role in Subsection \ref{1} and Subsection \ref{2}.

Proposition \ref{shift} now immediately implies the following result.
\begin{cor}\label{corshift}
Let $\lambda\in\Lambda$. Then $f\mapsto
C_{(e,\lambda)}(\cdot,k^{-\delta})^{-1}f$ defines an
$S_N$-equivariant isomorphism
$\textup{SOL}_{k^{-\delta}}\rightarrow \textup{SOL}_{q^\lambda
k^{-\delta}}$.
\end{cor}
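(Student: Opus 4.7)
The plan is to deduce this from Proposition \ref{shift} applied to $w=-\lambda\in\Z^N\subset W$ and $\zeta=q^\lambda k^{-\delta}$. Since $\lambda^\diamondsuit=-\lambda$, the element $(w^\diamondsuit)^{-1}=\lambda^{-1}=-\lambda$ acts on $T$ by multiplication by $q^{-\lambda}$, so $(w^\diamondsuit)^{-1}\zeta=k^{-\delta}$. Once the regularity hypothesis of Proposition \ref{shift} is verified, the proposition supplies an $S_N$-equivariant $\mathcal{E}(T)$-linear map $\textup{SOL}_{k^{-\delta}}\to\textup{SOL}_{q^\lambda k^{-\delta}}$ given by $f\mapsto C_{(e,-\lambda)}(\cdot,q^\lambda k^{-\delta})f$. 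To match this with the map stated in the corollary, I would use the cocycle identity $C_{(e,-\lambda)}(t,q^\lambda\gamma)\,C_{(e,\lambda)}(t,\gamma)=C_{(e,0)}(t,\gamma)=\textup{id}$, which gives $C_{(e,-\lambda)}(\cdot,q^\lambda k^{-\delta})=C_{(e,\lambda)}(\cdot,k^{-\delta})^{-1}$.

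To carry out the regularity check, I would invoke Lemma \ref{poleex}: the poles of $\gamma\mapsto C_{(e,-\lambda)}(\cdot,\gamma)$ lie on loci $\gamma^\alpha\in k^{2}q^{-\mathbb{N}}$ with $\alpha\in R_+$, while the poles of $\gamma\mapsto C_{(e,\lambda)}(\cdot,\gamma)$ lie on loci $\gamma^\alpha\in k^{-2}q^{-\mathbb{N}}$. For $\alpha=\epsilon_i-\epsilon_j\in R_+$ ($i<j$) one computes $\langle\delta,\alpha\rangle=2(j-i)$, so that $(q^\lambda k^{-\delta})^\alpha\in k^{2}q^{-\mathbb{N}}$ would force $k^{2(j-i+1)}\in q^{\mathbb{Z}}$ and $(k^{-\delta})^\alpha\in k^{-2}q^{-\mathbb{N}}$ would force $k^{2(j-i-1)}\in q^{-\mathbb{N}}$. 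Both possibilities are ruled out by the first condition in \eqref{generic}, noting that $j-i+1\in\{2,\ldots,N\}$ and that the degenerate case $j-i-1=0$ reduces to $1\notin q^{-\mathbb{N}}$, which holds because $0<q<1$. Hence $C_{(e,-\lambda)}(\cdot,q^\lambda k^{-\delta})$ and $C_{(e,\lambda)}(\cdot,k^{-\delta})$ are both well-defined elements of $\C[T]\otimes\textup{End}(H_0)$, and they are mutual inverses.

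To upgrade the linear map to an isomorphism I would apply Proposition \ref{shift} a second time, now with $w=\lambda$ and $\zeta=k^{-\delta}$. Then $(w^\diamondsuit)^{-1}\zeta=q^\lambda k^{-\delta}$, and the proposition supplies an $S_N$-equivariant map in the opposite direction $\textup{SOL}_{q^\lambda k^{-\delta}}\to\textup{SOL}_{k^{-\delta}}$ given by $g\mapsto C_{(e,\lambda)}(\cdot,k^{-\delta})g$, which by construction is a two-sided inverse of the forward map. No step here is a real obstacle: the argument is essentially bookkeeping for Proposition \ref{shift}, together with the pole-location information supplied by Lemma \ref{poleex} and the first line of the genericity assumption \eqref{generic}.
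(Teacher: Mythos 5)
Your proposal is correct and follows essentially the same route as the paper: the paper likewise verifies, via Lemma \ref{poleex} and the first condition in \eqref{generic}, that $\gamma\mapsto C_{(e,\lambda)}(\cdot,\gamma)^{\pm1}$ is regular at $\gamma=k^{-\delta}$, and then applies Proposition \ref{shift} in both directions. The only nitpick is that Lemma \ref{poleex} locates the poles of $\gamma\mapsto C_{(e,-\lambda)}(\cdot,q^{\lambda}\gamma)$ (not of $\gamma\mapsto C_{(e,-\lambda)}(\cdot,\gamma)$) on the loci $\gamma^{\alpha}\in k^{2}q^{-\mathbb{N}}$, but since the discrepancy is only a shift by $q^{\langle\lambda,\alpha\rangle}$ your exclusion $k^{2(j-i+1)}\notin q^{\mathbb{Z}}$ covers both readings, so the argument stands.
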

The special interest in the quantum KZ equations for the
particular central characters $\gamma= q^\lambda
k^{-\delta}$ ($\lambda\in\Lambda$)
comes from the fact that it admits $S_N$-invariant
polynomial solutions. The key step in deriving this result is the
following lemma.
\begin{lem}\label{lemconstant}
The element $v_+:=\sum_{w\in S_N}k^{\ell(w)}T_w\in H_0$ is a
constant $S_N$-invariant solution of the quantum KZ equation with
central character $k^{-\delta}$. In other words,
\[C_{\lambda}^{k^{-\delta}}(t)v_+=v_+,\qquad \forall\,\lambda\in\Z^N.
\]
\end{lem}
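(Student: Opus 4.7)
The strategy is to use the cocycle identity $C_{ww'}^\zeta(t)=C_w^\zeta(t)C_{w'}^\zeta(w^{-1}t)$ (and the derived identity $C_{w^{-1}}^\zeta(t)=C_w^\zeta(wt)^{-1}$ coming from $C_e^\zeta=\mathrm{id}$) to reduce the claim to a finite set of generator checks: if $C_w^{k^{-\delta}}(t)v_+=v_+$ holds for all $t$ for each $w$ in a generating set of $W$, then it holds for all $w\in W$, and in particular for all $\lambda\in\Z^N$. Since $W$ is generated by $s_1,\ldots,s_{N-1}$ and $\pi^{\pm 1}$, it suffices to verify $C_{s_i}^{k^{-\delta}}(t)v_+=v_+$ for all $t\in T$ and $1\le i<N$, and $C_\pi^{k^{-\delta}}(t)v_+=v_+$ for all $t\in T$.

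For the $s_i$ case, Lemma~\ref{YBlem} gives $C_{s_i}^\zeta(t)=c_k(t_i/t_{i+1})^{-1}(\eta(T_i)-k)+1$, which is independent of $\zeta$, so the task reduces to the purely finite-Hecke identity $\eta(T_i)v_+=kv_+$. This is the standard fact that $v_+$ is a $k$-eigenvector of every $T_i$ in $H_0$; it follows from Lemma~\ref{etaexplicit} by partitioning $S_N$ into the $s_i$-ascent set $A_i^+=\{w:\ell(s_iw)=\ell(w)+1\}$ and descent set $A_i^-$, pairing $w\in A_i^+$ with $s_iw\in A_i^-$, and collecting coefficients. Since this step is independent of the central character, it gives the $s_i$-case for every choice of $\zeta$, not just $k^{-\delta}$.

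For the $\pi$ case, equation \eqref{Cpi} reduces the problem to showing $\eta(\pi)(k^{-\delta})v_+=v_+$. Using Lemma~\ref{etaexplicit} ($\eta(\pi)(\gamma)T_w=\gamma_{w^{-1}(N)}T_{\sigma w}$) and reindexing $u=\sigma w$, this becomes
\[
\sum_{u\in S_N}k^{\ell(\sigma^{-1}u)-\delta_{u^{-1}(1)}}T_u=\sum_{u\in S_N}k^{\ell(u)}T_u,
\]
equivalently, the combinatorial length identity
\[
\ell(\sigma^{-1}u)-\ell(u)=\delta_{u^{-1}(1)}=N+1-2u^{-1}(1), \qquad u\in S_N.
\]
This is the one non-formal step of the proof. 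I would verify it by a direct inversion count: with $m=u^{-1}(1)$, the permutation $\sigma^{-1}u$ sends $m$ to $N$ and $i\mapsto u(i)-1$ for $i\ne m$, so inversions not involving position $m$ are preserved, the $m-1$ inversions of $u$ of the form $(i,m)$ with $i<m$ disappear (as $u(m)=1$ was automatically smaller than everything to its left), and $N-m$ new inversions $(m,j)$ with $j>m$ appear (as $(\sigma^{-1}u)(m)=N$ is now automatically larger than everything to its right). Hence $\ell(\sigma^{-1}u)-\ell(u)=(N-m)-(m-1)=N+1-2m$, matching $\delta_m$ as required. Once this identity is established, the first two paragraphs combine to give the lemma; this length computation is the only place where the specific value $\zeta=k^{-\delta}$ (equivalently, the specific exponent $\delta$) enters the argument.
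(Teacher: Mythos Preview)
Your proof is correct and essentially follows the paper's approach. The paper also reduces to two ingredients: $R_i(z)v_+=v_+$ (equivalent to your $\eta(T_i)v_+=kv_+$) and the length identity $\ell(\sigma^{-i}w)-\ell(w)=\langle\delta,w^{-1}\varpi_i\rangle$, which it likewise reduces to $i=1$ and proves by counting positive roots sent to negatives---the same inversion count you carry out. The only organizational difference is that the paper checks the $q$-connection matrices $C_{\varpi_i}^{k^{-\delta}}$ directly via the explicit formula of Lemma~\ref{explicit}(ii) (and treats $S_N$-invariance separately), whereas you use the cocycle to reduce to the Coxeter-type generators $\{s_i,\pi\}$ of $W$; both routes land on the same two computations.
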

\begin{proof}
Note that $R_i(z)v_+=v_+$, so by Lemma \ref{etaexplicit} and Lemma
\ref{explicit}(ii), for any $\zeta\in T$,
\[
C_{\varpi_i}^\zeta(t)v_+=\eta_\zeta(\pi)^iv_+= \sum_{w\in
S_N}k^{\ell(w)}\zeta^{w^{-1}w_0\varpi_i}T_{\sigma^iw}= \sum_{w\in
S_N}k^{\ell(\sigma^{-i}w)}\zeta^{w^{-1}\varpi_i}T_w.
\]
Then use $\ell(\sigma^{-i}w)-\ell(w)=\langle
\delta,w^{-1}\varpi_i\rangle$ for $1\leq i\leq N$ (for the proof
of this formula, it suffices to prove it for $i=1$. In that case,
look at the positive roots that are mapped to negative roots by
$\sigma^{-1}w$). It implies that
\[
C_{\varpi_i}^\zeta(t)v_+=\sum_{w\in S_N}
k^{\ell(w)}(k^\delta\zeta)^{w^{-1}\varpi_i}T_w.
\]
In particular, $C_{\varpi_i}^{k^{-\delta}}(t)v_+=v_+$ for all $i$.
Note, furthermore, that $R_i(z)v_+=v_+$ implies that
$\varsigma(s_i)v_+=C_{s_i}^{k^{-\delta}}(t)v_+=v_+$ for all $1\leq
i<N$ (with $\varsigma$ given by \eqref{actionone}). Hence,
$v_+\in\textup{SOL}_{k^{-\delta}}$ is $S_N$-invariant.
\end{proof}
\begin{prop}\label{trr}
For $\lambda\in\Lambda$, the nonzero
$S_N$-invariant solution
\[Q_\lambda:=C_{(e,\lambda)}(\cdot,k^{-\delta})^{-1}v_+\in
\textup{SOL}_{q^\lambda k^{-\delta}}
\]
of the quantum KZ equation is an $H_0$-valued Laurent polynomial
on $T$ satisfying
\begin{equation}\label{triangularity}
Q_\lambda(t)=\sum_{\alpha\in
Q_+}K_\alpha(\lambda)t^{\lambda-\alpha},
\end{equation}
with $K_\alpha(\lambda)\in H_0$ (all but finitely many terms zero).
\end{prop}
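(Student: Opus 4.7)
The assertions that $Q_\lambda$ is nonzero, $S_N$-invariant, and belongs to $\textup{SOL}_{q^\lambda k^{-\delta}}$ follow immediately from Corollary \ref{corshift}: that corollary exhibits $f\mapsto C_{(e,\lambda)}(\cdot,k^{-\delta})^{-1}f$ as an $S_N$-equivariant isomorphism $\textup{SOL}_{k^{-\delta}}\to\textup{SOL}_{q^\lambda k^{-\delta}}$, and by Lemma \ref{lemconstant} the element $v_+\in\textup{SOL}_{k^{-\delta}}$ is a nonzero $S_N$-invariant constant solution.

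That $Q_\lambda(t)$ is a Laurent polynomial in $t$ with values in $H_0$ follows directly from Lemma \ref{poleex}. That lemma asserts that $\gamma\mapsto C_{(e,\lambda)}(\cdot,\gamma)^{-1}$ is $\mathbb{C}[T]\otimes\textup{End}(H_0)$-valued (in particular Laurent polynomial in $t$) and rational in $\gamma$, with possible poles only at $\gamma^\alpha\in k^2q^{-\mathbb{N}}$ for $\alpha\in R_+$. A direct computation yields $(k^{-\delta})^\alpha=k^{-2(j-i)}$ for $\alpha=\epsilon_i-\epsilon_j\in R_+$ with $i<j$, which lies in $k^2q^{-\mathbb{N}}$ only if $k^{2(j-i+1)}\in q^{\mathbb{N}}$; the genericity condition $k^{2m}\notin q^{\mathbb{Z}}$ ($1\le m\le N$) from \eqref{generic} forbids this. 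Hence $\gamma=k^{-\delta}$ avoids all poles, so $C_{(e,\lambda)}(\cdot,k^{-\delta})^{-1}\in\mathbb{C}[T]\otimes\textup{End}(H_0)$ and $Q_\lambda\in H_0\otimes\mathbb{C}[T]$.

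For the triangularity \eqref{triangularity}, my plan is to combine the cocycle identity and duality to rewrite
\[
Q_\lambda(t)=C_\iota\bigl(C_{(-\lambda,e)}(q^{-\lambda}k^\delta,t^{-1})v_+\bigr),
\]
using $C_\iota v_+=v_+$ (which follows from $\ell(w)=\ell(w^{-1})$). Next, one unfolds $C_{(-\lambda,e)}(q^{-\lambda}k^\delta,t^{-1})$ along a reduced expression of $-\lambda\in W$ via the cocycle, obtaining an ordered product of $R_j$-factors (with arguments constant in $t$, depending only on the fixed parameter $q^{-\lambda}k^\delta$) and $\eta(\pi^{-1})(t^{-1})$-factors. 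By Lemma \ref{etaexplicit}, each $\eta(\pi^{-1})(t^{-1})$-factor acts on $H_0$ by multiplication by a coordinate $t_j^{-1}$ composed with the permutation $T_w\mapsto T_{\sigma^{-1}w}$. Using the decomposition $\varpi_i=\pi^i\sigma^{-i}$ from \eqref{omegasigma}, the cumulative effect of these $\eta(\pi^{-1})(t^{-1})$-factors contributes Laurent monomials of the form $t^{\lambda-\alpha}$ with $\alpha\in Q_+$; the leading ($\alpha=0$) contribution is $t^\lambda$ with coefficient identified as a nonzero scalar multiple of $v_+$ (nonzero under \eqref{generic}), while the remaining monomials realize the lower triangular terms $K_\alpha(\lambda)t^{\lambda-\alpha}$. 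Termination of the expansion is automatic from the Laurent polynomial property established above.

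The main technical obstacle is the combinatorial analysis of how the interleaved $\eta(\pi^{-1})(t^{-1})$- and $R_j$-factors combine to yield only monomials supported in $\lambda-Q_+$, and in particular the nonvanishing of the leading coefficient $K_0(\lambda)$. A cleaner alternative, should the direct combinatorics become cumbersome, is to invoke the asymptotic expansion of $\Psi$ in Theorem \ref{asymTHM} at $\gamma=q^\lambda k^{-\delta}$ and match the asymptotic leading term of $Q_\lambda$ against a suitable symmetrization of the basis $\{\Phi_\kappa(\cdot,w^{-1}q^\lambda k^{-\delta})\}_{w\in S_N}$ provided by Corollary \ref{consistencyzeta}; the leading coefficient here is controlled by Theorem \ref{duall}, and any $S_N$-invariant Laurent polynomial element of $\textup{SOL}_{q^\lambda k^{-\delta}}$ with support in $\lambda-Q_+$ is determined up to a scalar by matching its top term.
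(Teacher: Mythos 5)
Your proposal follows the paper's proof almost exactly: the first assertions via Corollary \ref{corshift} and Lemma \ref{lemconstant}, the specializability at $\gamma=k^{-\delta}$ via Lemma \ref{poleex} together with \eqref{generic} (your pole computation $(k^{-\delta})^{\alpha}=k^{-2(j-i)}$ is correct and is exactly the remark the paper makes just before Corollary \ref{corshift}), and the triangularity via the cocycle unfolding, the identity $\eta(\pi)(t^{-1})^{-i}T_w=t^{w^{-1}\varpi_i}T_{\sigma^{-i}w}$, and the rewriting $Q_\lambda(t)=C_\iota C_{(-\lambda,e)}(q^{-\lambda}k^{\delta},t^{-1})v_+$. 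Two comments. First, the ``main technical obstacle'' you flag — the interleaving of $R_j$- and $\eta(\pi^{-1})(t^{-1})$-factors — dissolves if, instead of unfolding $-\lambda$ into simple reflections, you group the factors by fundamental weights as in \eqref{fundamental} (equivalently Lemma \ref{explicit}(ii) conjugated by $C_\iota$): in each block $C_{(e,\pm\varpi_j)}$ the \emph{entire} $t$-dependence sits in the single factor $\bigl(\eta(\pi)(t^{-1})\bigr)^{\pm j}$, while every $R$-factor depends only on the (specialized) second variable and hence acts by a constant matrix. The triangularity then reduces to the single weight inequality $w^{-1}\varpi_j\in\varpi_j-Q_+$ for $w\in S_N$, summed over the decomposition $\lambda=\sum_i m_i\varpi_i$; no further combinatorics is needed. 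Second, your identification of the coefficient of $t^{\lambda}$ as a nonzero multiple of $v_+$ is incorrect for $\lambda\neq 0$: by Corollary \ref{leadingQ} and \eqref{Cxirelation} it is a multiple of $\eta_{q^{\lambda}k^{-\delta}}(T_{w_0})\xi_{w_0}(q^{\lambda}k^{-\delta})$, which coincides with a multiple of $v_+$ only at $\lambda=0$. This error is harmless here, since the proposition asserts neither the value nor the nonvanishing of $K_0(\lambda)$ (the latter is established later, precisely via the comparison with $\Phi_\kappa$ that you sketch as your fallback), but you should not rely on it.
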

\begin{proof}
Note that Corollary \ref{corshift} and
Lemma \ref{lemconstant} imply that
$0\not=Q_\lambda\in\textup{SOL}_{q^\lambda k^{-\delta}}$ and that
$Q_\lambda$ is $S_N$-invariant. The triangularity property
\eqref{triangularity} follows from the cocycle property,
\eqref{fundamental}, the explicit form of the $R_i(z)$ and the fact that
\[\eta(\pi)(t^{-1})^{-i}T_w=t^{w^{-1}\varpi_i}T_{\sigma^{-i}w},\qquad
w\in S_N,
\]
which is a direct consequence of Lemma \ref{etaexplicit}.
\end{proof}
\subsection{Duality}
\begin{lem}\label{evaluationLemma}
For $\lambda\in\Lambda$, we have $Q_\lambda(k^\delta)=v_+$.
\end{lem}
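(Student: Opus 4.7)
The plan is to reduce the claim to the two ingredients already established: Lemma \ref{lemconstant}, which says that $v_+$ is a constant solution of $\textup{qKZ}_{k^{-\delta}}$, together with the duality formula relating $C_{(e,\lambda)}$ and $C_{(\lambda,e)}$ via the involution $C_\iota$.

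By definition, $Q_\lambda(k^\delta) = C_{(e,\lambda)}(k^\delta, k^{-\delta})^{-1}v_+$, so the claim $Q_\lambda(k^\delta) = v_+$ is equivalent to
\[
C_{(e,\lambda)}(k^\delta, k^{-\delta})\, v_+ = v_+.
\]
First I would compute $C_\iota v_+$. Since $C_\iota(T_w) = T_{w^{-1}}$ for $w \in S_N$ and $\ell(w) = \ell(w^{-1})$, we get
\[
C_\iota v_+ = \sum_{w\in S_N} k^{\ell(w)}T_{w^{-1}} = \sum_{w\in S_N} k^{\ell(w)}T_w = v_+.
\]

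Next I would apply the identity
\[
C_{(e,\lambda)}(t,\gamma) = C_\iota\, C_{(\lambda,e)}(\gamma^{-1}, t^{-1})\, C_\iota
\]
(established in Subsection \ref{cocyclevalues}) at $(t,\gamma) = (k^\delta, k^{-\delta})$, noting that $(k^{-\delta})^{-1} = k^\delta$ and $(k^\delta)^{-1} = k^{-\delta}$. Using $C_\iota v_+ = v_+$ twice,
\[
C_{(e,\lambda)}(k^\delta, k^{-\delta})\, v_+ = C_\iota\, C_{(\lambda,e)}(k^\delta, k^{-\delta})\, v_+.
\]
Finally, I would invoke Lemma \ref{lemconstant}: since $v_+$ is a constant solution of $\textup{qKZ}_{k^{-\delta}}$, we have $C_\lambda^{k^{-\delta}}(t)\, v_+ = v_+$ for all $t \in T$, and in particular $C_{(\lambda,e)}(k^\delta, k^{-\delta})\, v_+ = v_+$. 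Substituting gives $C_{(e,\lambda)}(k^\delta, k^{-\delta})\, v_+ = C_\iota v_+ = v_+$, which completes the proof.

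There is no real obstacle here: the result is essentially a packaged consequence of Lemma \ref{lemconstant} together with the $C_\iota$-symmetry. The only point requiring a tiny amount of care is to ensure one evaluates at the self-dual point $(k^\delta, k^{-\delta})$, which is invariant under the involution $(t,\gamma) \mapsto (\gamma^{-1}, t^{-1})$; this is exactly what makes Lemma \ref{lemconstant} (stated for $C_{(\lambda,e)}$) available to control $C_{(e,\lambda)}$ at this specific point.
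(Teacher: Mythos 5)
Your proof is correct and follows essentially the same route as the paper: both arguments combine $C_\iota v_+=v_+$, the duality relation $C_{(e,w)}(t,\gamma)=C_\iota C_{(w,e)}(\gamma^{-1},t^{-1})C_\iota$, and Lemma \ref{lemconstant}, exploiting that $(k^\delta,k^{-\delta})$ is fixed by $\iota$. The only cosmetic difference is that the paper rewrites the inverse directly as the cocycle value $C_{(e,-\lambda)}(k^\delta,q^\lambda k^{-\delta})$, whereas you pass to the equivalent non-inverted identity $C_{(e,\lambda)}(k^\delta,k^{-\delta})v_+=v_+$, which is legitimate since this cocycle value and its inverse are regular at $\gamma=k^{-\delta}$ under \eqref{generic}.
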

\begin{proof}
Since $v_+\in\textup{SOL}_{k^{-\delta}}$ and $C_\iota v_+=v_+$, we
obtain
\begin{equation*}
\begin{split}
Q_\lambda(k^\delta)&=C_{(e,-\lambda)}(k^{\delta},q^\lambda
k^{-\delta})v_+\\
&=C_\iota C_{(-\lambda,e)}
(q^{-\lambda}k^{\delta},k^{-\delta})C_\iota v_+=v_+,
\end{split}
\end{equation*}
for $\lambda\in\Lambda$.
\end{proof}
The polynomial solutions $Q_\lambda$ of the quantum KZ equations
are self-dual in the following sense.
\begin{prop}\label{dualPROP}
For $\lambda,\mu\in\Lambda$, we have
\[Q_\lambda(q^{-\mu} k^{\delta})=C_\iota Q_\mu(q^{-\lambda} k^{\delta}).\]
\end{prop}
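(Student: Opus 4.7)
The plan is to manipulate both sides with the cocycle formulas for $C$, reducing the claim to an equality of two cocycle values applied to $v_+$ at a single carefully chosen evaluation point, and then close the argument using Lemma \ref{lemconstant} and $C_\iota v_+=v_+$.

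First I would use the cocycle inverse formula $C_{(e,\lambda)}(t,\gamma)^{-1}=C_{(e,-\lambda)}(t,q^\lambda\gamma)$ to rewrite $Q_\lambda(t)=C_{(e,-\lambda)}(t,q^\lambda k^{-\delta})\,v_+$, and analogously for $Q_\mu$. Substituting $t=q^{-\mu}k^\delta$ in the first and $t=q^{-\lambda}k^\delta$ in the second, and then applying the duality $C_{(e,w)}(t,\gamma)=C_\iota C_{(w,e)}(\gamma^{-1},t^{-1})C_\iota$ from Subsection \ref{cocyclevalues}, together with $C_\iota v_+=v_+$ (which follows from $C_\iota T_w=T_{w^{-1}}$ and $\ell(w^{-1})=\ell(w)$), I would bring the two sides into the forms
\[
Q_\lambda(q^{-\mu}k^\delta)=C_\iota\,C_{(-\lambda,e)}(q^{-\lambda}k^\delta,q^\mu k^{-\delta})\,v_+,\qquad C_\iota Q_\mu(q^{-\lambda}k^\delta)=C_\iota\,C_{(e,-\mu)}(q^{-\lambda}k^\delta,q^\mu k^{-\delta})\,v_+.
\]
Cancelling the common $C_\iota$, the proposition is equivalent to
\[
C_{(-\lambda,e)}(q^{-\lambda}k^\delta,q^\mu k^{-\delta})\,v_+=C_{(e,-\mu)}(q^{-\lambda}k^\delta,q^\mu k^{-\delta})\,v_+.
\]

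Next I would exploit the two factorizations of $C_{(-\lambda,-\mu)}$ afforded by the cocycle property, which yield the commutation
\[
C_{(-\lambda,e)}(t,\gamma)\,C_{(e,-\mu)}(q^\lambda t,\gamma)=C_{(e,-\mu)}(t,\gamma)\,C_{(-\lambda,e)}(t,q^{-\mu}\gamma),
\]
and evaluate at $(t,\gamma)=(q^{-\lambda}k^\delta,q^\mu k^{-\delta})$, where $q^\lambda t=k^\delta$ and $q^{-\mu}\gamma=k^{-\delta}$ simultaneously. The inner factors $C_{(e,-\mu)}(k^\delta,q^\mu k^{-\delta})$ and $C_{(-\lambda,e)}(q^{-\lambda}k^\delta,k^{-\delta})$ then both fix $v_+$: the second directly by Lemma \ref{lemconstant}, and the first, rewritten via the cocycle inverse and the duality as $C_\iota C_{(\mu,e)}(k^\delta,k^{-\delta})^{-1}C_\iota$, by the same lemma combined with $C_\iota v_+=v_+$. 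Cancelling these fixed factors on both sides of the commutation identity yields the reduced equality.

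The only delicate point is identifying the evaluation $(q^{-\lambda}k^\delta,q^\mu k^{-\delta})$ as the unique specialization at which both inner arguments collapse to the distinguished central characters $k^{\pm\delta}$ where Lemma \ref{lemconstant} applies; once this choice is made, the argument is a straightforward cocycle manipulation.
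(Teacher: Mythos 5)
Your argument is correct and rests on exactly the same ingredients as the paper's proof: the cocycle identities for $C$, Lemma \ref{lemconstant} applied at the points where the second argument collapses to $k^{-\delta}$, the duality relation $C_{(e,w)}(t,\gamma)=C_\iota C_{(w,e)}(\gamma^{-1},t^{-1})C_\iota$, and $C_\iota v_+=v_+$. The paper merely organizes the bookkeeping differently --- it inserts $C_{(-\mu,e)}(q^{-\mu}k^{\delta},k^{-\delta})v_+=v_+$ to assemble $C_{(-\mu,-\lambda)}(q^{-\mu}k^{\delta},q^{\lambda}k^{-\delta})v_+$ and then conjugates once by $C_\iota$, whereas you extract the $C_\iota$'s first and then compare the two factorizations of $C_{(-\lambda,-\mu)}$ at the same evaluation point; the two computations are equivalent.
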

\begin{proof}
For $\lambda,\mu\in\Lambda$ we have, using $v_+\in\textup{SOL}_{k^{-\delta}}$
and the previous lemma,
\begin{equation}\label{dualstep}
\begin{split}
    Q_\lambda(q^{-\mu}k^{\delta})&=C_{(e,-\lambda)}(q^{-\mu}k^\delta,
    q^\lambda k^{-\delta})v_+\\
    &=C_{(e,-\lambda)}(q^{-\mu}k^\delta,
    q^\lambda k^{-\delta})C_{(-\mu,e)}(q^{-\mu}k^\delta
    ,k^{-\delta})v_+\\
    &=C_{(-\mu,-\lambda)}(q^{-\mu}k^\delta,
    q^\lambda k^{-\delta})v_+.
\end{split}
\end{equation}
Since $C_{(-\mu,-\lambda)}(q^{-\mu}k^\delta,q^\lambda k^{-\delta})=
C_\iota C_{(-\lambda,-\mu)}(q^{-\lambda}k^\delta,
    q^\mu k^{-\delta})C_\iota$ and $C_\iota v_+=v_+$, we conclude from
\eqref{dualstep} that
$Q_\lambda(q^{-\mu}k^\delta)=C_\iota Q_\mu(q^{-\lambda}k^{\delta})$.
\end{proof}
\subsection{Relation to the basic asymptotically free solution}\label{1}
In this subsection, we relate the polynomial solutions $Q_\lambda$
($\lambda\in\Lambda$) of the quantum KZ equations to the basic
asymptotic solution $\Phi_\kappa$. Some care is needed though: it
is not possible to specialize all the asymptotic solutions
$C_{(e,w)}(t,\gamma)\Phi_\kappa(t,w^{-1}\gamma)$ ($w\in S_N$) to
$\gamma=q^{\lambda}k^{-\delta}$ ($\lambda\in\Lambda$) since
$q^{\lambda}k^{-\delta}\in\mathcal{S}$; see Corollary
\ref{consistencyzeta}. We shall see that
$C_{(e,w_0)}(t,\gamma)\Phi_\kappa(t,w_0\gamma)$ can be specialized
at $\gamma=q^\lambda k^{-\delta}$, which is sufficient for our
purposes.

\begin{lem}\label{lemaltgauged}
Let $\lambda\in\Lambda$. There exists a unique
$\Xi_\lambda\in\textup{SOL}_{q^{\lambda}k^{-\delta}}$ such that,
for $\epsilon>0$ sufficiently small, we have an $H_0$-valued power
series expansion
\[\Xi_\lambda(t)=\sum_{\alpha\in Q^+}
\widetilde{\Gamma}_\alpha(\lambda)t^{\lambda-\alpha}
\]
converging normally on compacta of $B_\epsilon^{-1}$ and with
leading coefficient
\[\widetilde{\Gamma}_0(\lambda)=\eta_{q^{\lambda}k^{-\delta}}(T_{w_0})
\xi_{w_0}(q^{\lambda}k^{-\delta}).
\]
\end{lem}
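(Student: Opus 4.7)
The plan is to gauge the quantum KZ equation with central character $\zeta:=q^{\lambda}k^{-\delta}$ by $t^{\lambda}$ and then apply the appendix existence theorem (Theorem \ref{merversion}) for holonomic $q$-difference systems with prescribed asymptotic behavior, exactly as in the proof of Theorem \ref{asymTHM}. Setting $\Xi_\lambda(t)=t^{\lambda}h_\lambda(t)$, the system $C_\mu^\zeta(t)\Xi_\lambda(q^{-\mu}t)=\Xi_\lambda(t)$ ($\mu\in\mathbb{Z}^N$) becomes
\[
\widetilde{C}_\mu(t)h_\lambda(q^{-\mu}t)=h_\lambda(t),\qquad
\widetilde{C}_\mu(t):=q^{-\langle\mu,\lambda\rangle}C_\mu^{\zeta}(t),
\]
and I would construct $h_\lambda$ as the unique $H_0$-valued power series in $t^{-\alpha}$ ($\alpha\in Q_+$) solving this gauged system, with the value at $x^{-\alpha_i}=0$ ($1\le i<N$) equal to the prescribed $\widetilde{\Gamma}_0(\lambda)$.

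First I would verify, using Corollary \ref{asymptotic}, that $\widetilde{C}_\mu\in Q_0(\mathcal{A})\otimes\textup{End}(H_0)$ with leading value
\[
\widetilde{C}_\mu^{(0)}=q^{-\langle\mu,\lambda\rangle}k^{\langle\delta,\mu\rangle}\eta_\zeta\bigl(T_{w_0}Y^{w_0(\mu)}T_{w_0}^{-1}\bigr)
\]
as $x^{-\alpha_i}\to 0$. Next I would diagonalize the commuting family $\{\widetilde{C}_{\varpi_i}^{(0)}\}_{1\le i<N}$ on the joint eigenbasis $\{\eta_\zeta(T_{w_0})\xi_w(\zeta)\}_{w\in S_N}$: the specialized eigenvalue formula $\eta_\zeta(Y^{\nu})\xi_w(\zeta)=(w\zeta)^{\nu}\xi_w(\zeta)$ coming from Lemma \ref{commoneig} gives, after a short computation using $(w\zeta)^{w_0\varpi_i}=\zeta^{w^{-1}w_0\varpi_i}$,
\[
\widetilde{C}_{\varpi_i}^{(0)}\bigl(\eta_\zeta(T_{w_0})\xi_w(\zeta)\bigr)
=q^{\langle\lambda,\,w^{-1}w_0\varpi_i-\varpi_i\rangle}\,k^{\langle\delta,\,\varpi_i-w^{-1}w_0\varpi_i\rangle}\,\eta_\zeta(T_{w_0})\xi_w(\zeta).
\]
For $w=w_0$ all these eigenvalues equal $1$, so the distinguished joint eigenvector is precisely $\widetilde{\Gamma}_0(\lambda)=\eta_\zeta(T_{w_0})\xi_{w_0}(\zeta)$. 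Once the non-resonance condition on the remaining joint eigenvalues is checked, Theorem \ref{merversion} produces the desired $h_\lambda$ and one recovers $\Xi_\lambda=t^{\lambda}h_\lambda$; the normal convergence on compacta of $B_\epsilon^{-1}$ is built into that theorem.

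The hard step is to verify, for every $w\neq w_0$, that the joint eigenvalue tuple on $\eta_\zeta(T_{w_0})\xi_w(\zeta)$ is such that no entry is $1$ and all entries avoid $q^{-\mathbb{N}}$. Writing $v=w^{-1}w_0\neq e$, I would pick an index $j$ with $1\le j<N$ and $v\varpi_j\neq\varpi_j$ (such $j$ exists since $S_N$ acts faithfully on $\{\varpi_1,\ldots,\varpi_{N-1}\}$). The second generic assumption in \eqref{generic} then forces $k^{\langle\delta,\varpi_j-v\varpi_j\rangle}\notin q^{\mathbb{Z}}$, whence the $j$th joint eigenvalue $q^{\langle\lambda,v\varpi_j-\varpi_j\rangle}k^{\langle\delta,\varpi_j-v\varpi_j\rangle}$ lies outside $q^{\mathbb{Z}}$ and in particular outside $q^{-\mathbb{N}}$. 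With this spectral condition secured, the remainder of the argument is formally identical to the proof of Theorem \ref{asymTHM}.
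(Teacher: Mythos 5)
Your proposal follows the paper's proof essentially verbatim: gauge by $t^{\lambda}$, identify the leading $q$-connection matrices via Corollary \ref{asymptotic}, diagonalize them on the basis $\eta_{q^{\lambda}k^{-\delta}}(T_{w_0})\xi_w(q^{\lambda}k^{-\delta})$ with the same eigenvalues $\gamma_{w,i}$, and invoke Theorem \ref{merversion}. The only slip is in the non-resonance check: what Theorem \ref{merversion} actually requires is that \emph{every} coordinate $\gamma_{w,i}$ of every joint eigenvalue tuple avoids $q^{-\mathbb{N}}$ (not that some coordinate does, and not that no coordinate equals $1$, which is neither needed nor true in general), and this follows coordinate-wise from the dichotomy you already use — either $w^{-1}w_0(\varpi_i)=\varpi_i$, in which case $\gamma_{w,i}=1\notin q^{-\mathbb{N}}$, or \eqref{generic} puts $\gamma_{w,i}$ outside $q^{\mathbb{Z}}$.
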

\begin{proof}
Consider the gauged quantum KZ equations for $1\leq i\leq N$,
\begin{equation}\label{altgauged}
\widetilde{A}_i(t)\widetilde{\Xi}(q^{-\varpi_i}t)=\widetilde{\Xi}(t),
\qquad \widetilde{\Xi}\in H_0^{\mathcal{M}(T)},
\end{equation}
with $q$-connection matrices $\widetilde{A}_i(t)=
q^{-\langle\lambda,\varpi_i\rangle} C_{(\varpi_i,e)}(t,q^\lambda
k^{-\delta})$. Note that $\widetilde{A}_N(t)= \textup{id}$; see
Lemma \ref{qconnLem}. Observe that $\widetilde{\Xi}$ is a solution
of the holonomic system \eqref{altgauged} of $q$-difference
equations if and only if $x^\lambda\widetilde{\Xi}\in
\textup{SOL}_{q^\lambda k^{-\delta}}$. By Corollary
\ref{asymptotic}, we have $\widetilde{A}_i\in
Q_0(\mathcal{A})\otimes\textup{End}(H_0)$ and
\begin{equation}\label{tildeA}
\widetilde{A}_i^{(0)}=q^{-\langle\lambda,\varpi_i\rangle}
k^{\langle\delta,\varpi_i\rangle}
\eta_{q^{\lambda}k^{-\delta}}
\bigl(T_{w_0}Y^{w_0(\varpi_i)}T_{w_0}^{-1}\bigr).
\end{equation}
The $\widetilde{A}_i^{(0)}$
($1\leq i<N$) are semisimple endomorphisms of $H_0$.
A basis of simultaneous eigenvectors of $H_0$ is given by
$\eta_{q^{\lambda}k^{-\delta}}(T_{w_0})\xi_w(q^\lambda k^{-\delta})$
($w\in S_N$).
In fact,
\[\widetilde{A}_i^{(0)}
\bigl(\eta_{q^{\lambda}k^{-\delta}}(T_{w_0})\xi_w(q^\lambda k^{-\delta})\bigr)=
\gamma_{w,i}
\eta_{q^{\lambda}k^{-\delta}}(T_{w_0})\xi_w(q^\lambda k^{-\delta})
\]
for all $1\leq i<N$ and $w\in S_N$ with
\[\gamma_{w,i}
=q^{\langle \lambda, w^{-1}w_0(\varpi_i)-\varpi_i\rangle}
k^{\langle\delta,\varpi_i-w^{-1}w_0(\varpi_i)\rangle};
\]
see \eqref{afterstepone}. Note, in particular, that
$\gamma_{w,i}\not\in q^{-\mathbb{N}}$ for all $w\in S_N$ and all
$1\leq i<N$ by the generic conditions \eqref{generic} on $k$, and
that $\gamma_{w_0,i}=1$ for all $1\leq i<N$. Hence, Theorem
\ref{merversion} in the appendix, applied to \eqref{altgauged} by
taking $M=N-1$, $q_i=q$ and variables $z_i=x^{-\alpha_i}$ ($1\leq
i<N$) shows that there exists a unique
$\widetilde{\Xi}\in\mathcal{M}(T)\otimes H_0$ satisfying
\eqref{altgauged} and admitting an $H_0$-valued power series
expansion
\[
\widetilde{\Xi}(t)=
\sum_{\alpha\in Q^+}\widetilde{\Gamma}_\alpha(\lambda)t^{-\alpha}
\]
converging normally on compacta of $B_\epsilon^{-1}$ for some $\epsilon>0$
small enough, and having as leading coefficient
$\widetilde{\Gamma}_0(\lambda)=
\eta_{q^{\lambda}k^{-\delta}}(T_{w_0})\xi_{w_0}(q^\lambda k^{-\delta})$.
This directly implies the lemma.
\end{proof}

Recall that the cocycle values $C_{(e,w)}(t,\gamma)$ ($w\in S_N$)
are independent of $t\in T$. We suppress $t$ from the notation
and simply write $C_{(e,w)}(\gamma)$. Recall that $C_{(e,w)}(\gamma)$ for
$w\in S_N$ is an
$\textup{End}(H_0)$-valued regular function in $\gamma\in T$.
\begin{thm}\label{polred}
Fix $\lambda\in\Lambda$.
For $\kappa\not\in q^{\mathbb{Z}}$,
the basic asymptotic solution $\Phi_\kappa(t,\gamma)$
of BqKZ can be specialized at $\gamma=q^{w_0(\lambda)}k^\delta$,
giving rise to a $H_0$-valued meromorphic function $\Phi_\kappa(t,
q^{w_0(\lambda)}k^\delta)$ in $t\in T$.
Then
\begin{equation}\label{relation}
Q_\lambda(t)=r_\kappa C_{(e,w_0)}(q^{\lambda}k^{-\delta})
\Phi_{\kappa}(t,q^{w_0(\lambda)}k^{\delta})
\end{equation}
with
\begin{equation}\label{rkappa}
r_\kappa=\theta(\kappa)^{N}k^{-{N\choose 2}}\prod_{1\leq i<j\leq N}
\frac{\bigl(k^{2(j-i+1)};q\bigr)_{\infty}}
{\bigl(k^{2(j-i)};q\bigr)_{\infty}}.
\end{equation}
\end{thm}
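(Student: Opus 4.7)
The strategy is to identify both $Q_\lambda$ and $g_\lambda(t) := C_{(e,w_0)}(q^\lambda k^{-\delta}) \Phi_\kappa(t, q^{w_0(\lambda)} k^\delta)$ as scalar multiples of the unique asymptotic solution $\Xi_\lambda$ from Lemma \ref{lemaltgauged}, and then to compute the resulting proportionality constant.

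First I would verify specializability. Writing $\Phi_\kappa = W_\kappa \Psi$, the first genericity condition in \eqref{generic} places $\gamma := q^{w_0(\lambda)} k^\delta$ outside $\mathcal{S}_+$ (its roots $\gamma^{\epsilon_i - \epsilon_j}$ equal $q^{\lambda_{N-i+1} - \lambda_{N-j+1}} k^{2(j-i)}$, which would lie in $k^{-2} q^{-\mathbb{N}}$ only if $k^{2(j-i+1)} \in q^{\mathbb{Z}}$ with $j-i+1 \leq N$), so $\Psi(t, \gamma)$ is specializable at this point by Proposition \ref{Psising}. Using \eqref{thetafunc} together with $\theta(\kappa) \neq 0$ for $\kappa \notin q^{\mathbb{Z}}$, a direct computation from definition \eqref{W} yields the clean simplification
\begin{equation*}
W_\kappa(t, q^{w_0(\lambda)} k^\delta) = \theta(\kappa)^{-N}\, k^{\langle\delta, \lambda\rangle}\, t^\lambda.
\end{equation*}
The same genericity prevents $C_{(e,w_0)}(\gamma)$ from having poles at $\gamma = q^\lambda k^{-\delta}$, and Proposition \ref{shift} applied with $w = w_0$ and $\zeta = q^\lambda k^{-\delta}$ (so that $w_0^{\diamondsuit^{-1}}\zeta = q^{w_0(\lambda)} k^\delta$) places $g_\lambda$ in $\textup{SOL}_{q^\lambda k^{-\delta}}$.

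Combining the previous step with the power series expansion $\Psi(t, \gamma) = \sum_\alpha \Gamma_\alpha(\gamma) t^{-\alpha}$ from Lemma \ref{Consequence1}, we obtain on $B_\epsilon^{-1}$
\begin{equation*}
g_\lambda(t) = \theta(\kappa)^{-N}\, k^{\langle\delta, \lambda\rangle} \sum_{\alpha \in Q_+} C_{(e,w_0)}(q^\lambda k^{-\delta}) \Gamma_\alpha(q^{w_0(\lambda)} k^\delta)\, t^{\lambda - \alpha}.
\end{equation*}
Thus both $x^{-\lambda} Q_\lambda$ (by Proposition \ref{trr}) and $x^{-\lambda} g_\lambda$ solve the gauged quantum KZ system \eqref{altgauged}, with leading coefficients at $x^{-\alpha_i} = 0$ that are simultaneous eigenvectors of every $\widetilde{A}_i^{(0)}$ with eigenvalue $1$. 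From \eqref{tildeA} and Lemma \ref{commoneig}, the eigenvalue of $\widetilde{A}_i^{(0)}$ on $\eta_{q^\lambda k^{-\delta}}(T_{w_0})\xi_w(q^\lambda k^{-\delta})$ equals $q^{\langle\lambda, u(\varpi_i)-\varpi_i\rangle} k^{\langle\delta, \varpi_i - u(\varpi_i)\rangle}$ with $u := w^{-1} w_0$; requiring this to equal $1$ for every $i$ and invoking the second genericity condition in \eqref{generic} forces $u(\varpi_i) = \varpi_i$ for all $i$, hence $w = w_0$. So the common eigenspace is one-dimensional, spanned by $\widetilde{\Gamma}_0(\lambda)$, and the uniqueness statement of Lemma \ref{lemaltgauged} yields $Q_\lambda = r_\kappa g_\lambda$ for a unique scalar $r_\kappa \in \C^\times$.

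To determine $r_\kappa$ explicitly, I would evaluate both sides at $t = k^\delta$ using $Q_\lambda(k^\delta) = v_+$ from Lemma \ref{evaluationLemma}, and apply the self-duality $\Phi_\kappa(k^\delta, q^{w_0(\lambda)} k^\delta) = C_\iota \Phi_\kappa(q^{-w_0(\lambda)} k^{-\delta}, k^{-\delta})$ of Theorem \ref{selfdualTHM} together with $C_\iota v_+ = v_+$. Using the BqKZ equations at $\gamma = k^{-\delta}$ and the constant solution $v_+$ of Lemma \ref{lemconstant}, the computation reduces to the normalized leading coefficient $K(k^{-\delta}) T_{w_0}$ from Theorem \ref{duall}, where the $q$-shifted factorial product in \eqref{rkappa} emerges. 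The principal technical challenge is establishing $\lambda$-independence of $r_\kappa$: this hinges on non-trivial cancellation between the $\lambda$-dependent factors $K(q^{w_0(\lambda)} k^\delta)$, $k^{\langle\delta,\lambda\rangle}$ and the leading structure of $C_{(e,w_0)}(q^\lambda k^{-\delta}) T_{w_0}$ when everything is expressed in the common $\widetilde{\Gamma}_0(\lambda)$-framework; alternatively, once the proportionality $Q_\lambda = r_\kappa(\lambda) g_\lambda$ is in hand, specializing at $\lambda = 0$ (where $Q_0 = v_+$ is constant) directly identifies $r_\kappa$ and its $\lambda$-independence is then automatic by the uniqueness already established.
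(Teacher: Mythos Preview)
Your overall strategy matches the paper's: both $Q_\lambda$ and $g_\lambda$ are identified as nonzero multiples of $\Xi_\lambda$ via Lemma \ref{lemaltgauged}, yielding $Q_\lambda = r_\kappa(\lambda)\,g_\lambda$ for some scalar $r_\kappa(\lambda)$. Your specializability argument and the one-dimensionality of the relevant eigenspace are essentially as in the paper.

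The gap is in the $\lambda$-independence of $r_\kappa(\lambda)$. Your alternative (b) is circular: the uniqueness in Lemma \ref{lemaltgauged} is uniqueness \emph{for each fixed} $\lambda$, so it produces a scalar $r_\kappa(\lambda)$ that a priori varies with $\lambda$. Specializing at $\lambda=0$ computes $r_\kappa(0)$ but says nothing about $r_\kappa(\lambda)$ for other $\lambda$. Your alternative (a) amounts to knowing the leading coefficient $K_0(\lambda)$ of $Q_\lambda$ independently, but in the paper that formula (Corollary \ref{leadingQ}) is a \emph{consequence} of Theorem \ref{polred}, not an input to it; you would need a separate derivation of $K_0(\lambda)$, which you do not supply.

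The paper resolves this by a duality argument you do not have: using the $S_N$-invariance of $Q_\lambda$, Proposition \ref{dualPROP}, and the self-duality of $\Phi_\kappa$ (Theorem \ref{selfdualTHM}), one computes $Q_\lambda(q^{-\mu}k^{\delta})$ in two ways and obtains
\[
r_\kappa(\lambda)\,C_{(w_0,w_0)}(q^{-\mu}k^{\delta},q^{\lambda}k^{-\delta})\Phi_\kappa(q^{-w_0(\mu)}k^{-\delta},q^{w_0(\lambda)}k^{\delta})
= r_\kappa(\mu)\,(\text{same expression}),
\]
whence $r_\kappa(\lambda)=r_\kappa(\mu)$ whenever $Q_\lambda(q^{-\mu}k^{\delta})\neq 0$. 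Taking $\mu=0$ and using $Q_\lambda(k^{\delta})=v_+\neq 0$ gives $r_\kappa(\lambda)=r_\kappa(0)$ for all $\lambda$. The explicit value of $r_\kappa$ is then extracted from the $T_e$-coefficient of $v_+=Q_0=r_\kappa\,\theta(\kappa)^{-N}K(k^{\delta})C_{(e,w_0)}(k^{-\delta})T_{w_0}$, which is a direct computation not requiring the self-duality manipulation you sketch.
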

\begin{proof}
We first show that both $Q_\lambda$ and the right-hand side of
\eqref{relation} are nonzero scalar multiples of $\Xi_\lambda$.

We start with the right-hand side of \eqref{relation}. Since
$\Phi$ is $\mathbb{S}_N$-stable, we have
\[\Phi_{w_0}:=\tau(e,w_0)\Phi_\kappa\in\textup{SOL}.
\]
Concretely, it is given by
\[\Phi_{w_0}(t,\gamma)=C_{(e,w_0)}(\gamma)\Phi_{\kappa}(t,w_0(\gamma))=
W_\kappa(t,w_0(\gamma))C_{(e,w_0)}(\gamma)\Psi(t,w_0(\gamma)).
\]
Since
$w_0(q^{\lambda}k^{-\delta})=q^{w_0(\lambda)}k^\delta\not\in\mathcal{S}_+$
by \eqref{generic}, we may, in view of Proposition \ref{spgauge},
specialize $\Phi_{w_0}(t,\gamma)$ at
$\gamma=q^{\lambda}k^{-\delta}$, obtaining
$\Phi_{w_0}(\cdot,q^{\lambda}k^{-\delta})\in
\textup{SOL}_{q^{\lambda}k^{-\delta}}$. By \eqref{W} we have
\begin{equation}\label{Wevaluation}
W_\kappa(t,w_0(q^{\lambda}k^{-\delta}))=
k^{\langle\delta,\lambda\rangle}\theta(\kappa)^{-N}t^\lambda,
\end{equation}
hence by Proposition \ref{spgauge},
\[\Phi_{w_0}(t,q^{\lambda}k^{-\delta})=
k^{\langle\delta,\lambda\rangle}\theta(\kappa)^{-N}
\sum_{\alpha\in Q_+}
\Gamma_\alpha^{w_0}t^{\lambda-\alpha}
\]
with $\Gamma_\alpha^{w_0}=C_{(e,w_0)}(q^{\lambda}k^{-\delta})
\Gamma_\alpha(q^{w_0(\lambda)}k^{\delta})$. From the definitions
of $C_{(e,w_0)}$, $d_w$, $\eta$ and $\xi_{w_0}$ (see Subsections
\ref{intsec}, \ref{constr} and \ref{formal}) we have
\begin{equation}\label{Cxirelation}
\begin{split}
C_{(e,w_0)}(\gamma)T_{w_0}&=
d_{w_0}(\gamma^{-1})^{-1}\eta(T_{w_0})\xi_{w_0}(\gamma)\\
&=\left(\prod_{\alpha\in R^+}\frac{1}{k-k^{-1}\gamma^\alpha}\right)
\eta(T_{w_0})\xi_{w_0}(\gamma)
\end{split}
\end{equation}
as $H_0$-valued regular functions in $\gamma\in T$. By Theorem
\ref{duall}, the leading coefficient $\Gamma_0^{w_0}$ thus
simplifies to
\begin{equation*}
\begin{split}
\Gamma_0^{w_0}&=
K(q^{w_0(\lambda)}k^{\delta})C_{(e,w_0)}(q^{\lambda}k^{-\delta})
T_{w_0}\\
&=K(q^{w_0(\lambda)}k^{\delta})d_{w_0}(q^{-\lambda}k^{\delta})^{-1}
\eta_{q^{\lambda}k^{-\delta}}(T_{w_0})\xi_{w_0}(q^{\lambda}k^{-\delta}),
\end{split}
\end{equation*}
where $K$ is given by \eqref{Kexp}. Combined with the previous
lemma, we conclude that
\begin{equation}\label{version1}
\Phi_{w_0}(t,q^{\lambda}k^{-\delta})=
k^{\langle\delta,\lambda\rangle}\theta(\kappa)^{-N}
K(q^{w_0(\lambda)}k^{\delta})d_{w_0}(q^{-\lambda}k^{\delta})^{-1}
\Xi_\lambda(t).
\end{equation}
In view of \eqref{generic}, $\Phi_{w_0}(t,q^{\lambda}k^{-\delta})$
thus is a nonzero constant multiple of $\Xi_\lambda(t)$.

Next, we consider
$0\not=Q_\lambda\in\textup{SOL}_{q^{\lambda}k^{-\delta}}$. By
Lemma \ref{lemaltgauged} and \eqref{triangularity}, it suffices to
note that $K_0(\lambda)$ is a constant multiple of
$\eta_{q^{\lambda}k^{-\delta}}(T_{w_0})
\xi_{w_0}(q^{\lambda}k^{-\delta})$, which follows directly from
the fact that $K_0(\lambda)\in H_0$ satisfies
\[\widetilde{A}_i^{(0)}K_0(\lambda)=K_0(\lambda), \qquad
\forall 1\leq i\leq N,
\]
where $\widetilde{A}_i$ is given by \eqref{tildeA}; see the proof
Lemma of \ref{lemaltgauged}. Thus, $Q_\lambda(t)$ is a nonzero
constant multiple of $\Xi_\lambda(t)$, and we conclude that
\[Q_\lambda(t)=r_\kappa(\lambda)\Phi_{w_0}(t,q^{\lambda}k^{-\delta}),
\]
for some $r_\kappa(\lambda)\in\C^\times$. We first show that
$r_\kappa(\lambda)$ is independent of $\lambda\in\Lambda$.

For $w\in S_N$, we write $C_{(w,e)}(t)$ for the
$\gamma$-independent value $C_{(w,e)}(t,\gamma)$ of the cocycle.
Let $\lambda,\mu\in\Lambda$. By the $S_N$-invariance of
$Q_\lambda$, we then have, on the one hand,
\[\begin{split}
Q_\lambda(q^{-\mu}k^{\delta})&=
C_{(w_0,e)}(q^{-\mu}k^{\delta})
Q_\lambda(q^{-w_0(\mu)}k^{-\delta})\\
&=r_\kappa(\lambda)C_{(w_0,e)}(q^{-\mu}k^{\delta})
C_{(e,w_0)}(q^\lambda k^{-\delta})
\Phi_\kappa(q^{-w_0(\mu)}k^{-\delta},q^{w_0(\lambda)}k^{\delta})\\
&=r_\kappa(\lambda)C_{(w_0,w_0)}(q^{-\mu}k^{\delta},q^\lambda k^{-\delta})
\Phi_\kappa(q^{-w_0(\mu)}k^{-\delta},q^{w_0(\lambda)}k^{\delta}).
\end{split}\]
On the other hand, using the self-duality of $Q_\lambda$
(see Proposition \ref{dualPROP})
and of $\Phi_\kappa$ (see Theorem \ref{selfdualTHM}),
\[\begin{split}
Q_\lambda(q^{-\mu}k^{\delta})&=C_\iota Q_\mu(q^{-\lambda}k^{\delta})=
C_\iota C_{(w_0,e)}(q^{-\lambda}k^{\delta})
Q_\mu(q^{-w_0(\lambda)}k^{-\delta})\\
&=r_\kappa(\mu)C_\iota C_{(w_0,e)}(q^{-\lambda}k^{\delta})
C_{(e,w_0)}(q^\mu k^{-\delta})
\Phi_\kappa(q^{-w_0(\lambda)}k^{-\delta},q^{w_0(\mu)}k^{\delta})\\
&=r_\kappa(\mu)C_\iota C_{(w_0,w_0)}(q^{-\lambda}k^{\delta},q^\mu k^{-\delta})
\Phi_\kappa(q^{-w_0(\lambda)}k^{-\delta},q^{w_0(\mu)}k^{\delta})\\
&=r_\kappa(\mu)C_{(w_0,w_0)}(q^{-\mu}k^{\delta},q^{\lambda}k^{-\delta})
C_\iota \Phi_\kappa(q^{-w_0(\lambda)}k^{-\delta},q^{w_0(\mu)}k^{\delta})\\
&=r_\kappa(\mu)C_{(w_0,w_0)}(q^{-\mu}k^{\delta},q^{\lambda}k^{-\delta})
\Phi_\kappa(q^{-w_0(\mu)}k^{-\delta},q^{w_0(\lambda)}k^{\delta}).
\end{split}\]
We conclude that $r_\kappa(\lambda)=r_\kappa(\mu)$ if
$Q_\lambda(q^{-\mu}k^{\delta})\not=0$. In particular, since
$Q_\lambda(k^{\delta})=v_+\not=0$, we have $r_\kappa(\lambda)=r_\kappa(0)$
for all $\lambda\in\Lambda$.

It remains to compute $r_\kappa:=r_\kappa(0)$. Using the fact that
$C_{(e,s_i)}(\gamma)=C_\iota R_i(\gamma^{-\alpha_i}) C_\iota$,
with $R_i(z)=c_k(z)^{-1}(\eta(T_i^{-1})-k^{-1})+1$ for $1\leq
i<N$, as well as that
$C_\iota(T_{w^{-1}}^{-1}T_{w_0})=T_{w_0w^{-1}}$ for all $w\in
S_N$, we get $C_{(e,w_0)}(\gamma)T_{w_0}=\sum_{w\leq
w_0}e_w(\gamma)T_{w_0w^{-1}}$ as $H_0$-valued regular function in
$\gamma\in T$ with $e_w\in\C[T]$ and with
\[e_{w_0}(\gamma)=\prod_{\beta\in R^+}c_k(\gamma^{-\beta})^{-1}.
\]
Taking the $T_e$-coefficient in the expansion of the formula
\[v_+=Q_0=r_\kappa\Phi_{w_0}(\cdot,k^{-\delta})=
r_\kappa\theta(\kappa)^{-N}K(k^{\delta})C_{(e,w_0)}(k^{-\delta})T_{w_0}
\]
with respect to the $\C$-basis $\{T_{w}\}_{w\in S_N}$ of $H_0$,
we conclude that
\[r_\kappa=\theta(\kappa)^NK(k^\delta)^{-1}
\prod_{\beta\in R^+}c_k(k^{\langle\delta,\beta\rangle}).
\]
Substituting the explicit expressions \eqref{ck} and \eqref{Kexp}
of $c_k$ and $K$, respectively, we get the desired formula
\eqref{rkappa} for $r_\kappa$.
\end{proof}
The following formula is an analog for the $Q_\lambda$
($\lambda\in\Lambda$) of the evaluation formula for the self-dual
symmetric Macdonald polynomials (see Subsection \ref{2}).
\begin{cor}\label{leadingQ}
Let $\lambda\in\Lambda$ and write $Q_\lambda(t)=
\sum_{\alpha\in Q_+}K_\alpha(\lambda)t^{\lambda-\alpha}$ with
$K_\alpha(\lambda)\in H_0$ (see Proposition \ref{trr}). The
leading coefficient $K_0(\lambda)$ is given by
\[K_0(\lambda)=k^{\langle\delta,\lambda\rangle}
\left(\prod_{1\leq i<j\leq N}\prod_{m=0}^{\lambda_i-\lambda_j-1}
\frac{1-q^{-m}k^{2(j-i)}}{1-q^{-m}k^{2(j-i+1)}}\right)
k^{-{N\choose 2}}P(k^2)C_{(e,w_0)}(q^{\lambda}k^{-\delta})T_{w_0}\]
with
\begin{equation}\label{Poincare}
P(k^2)=\prod_{1\leq i<j\leq N}\frac{1-k^{2(j-i+1)}}{1-k^{2(j-i)}}.
\end{equation}
\end{cor}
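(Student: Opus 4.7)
The plan is to read off $K_0(\lambda)$ directly from Theorem \ref{polred}. Writing $\Phi_\kappa=W_\kappa\Psi$ and using the evaluation \eqref{Wevaluation}, namely $W_\kappa(t,q^{w_0(\lambda)}k^\delta)=k^{\langle\delta,\lambda\rangle}\theta(\kappa)^{-N}t^\lambda$, together with the power series expansion $\Psi(t,\gamma)=\sum_{\alpha\in Q_+}\Gamma_\alpha(\gamma)t^{-\alpha}$ of Subsection \ref{singularities} and Theorem \ref{duall} (which identifies $\Gamma_0(\gamma)=K(\gamma)T_{w_0}$), the coefficient of $t^\lambda$ in $\Phi_\kappa(t,q^{w_0(\lambda)}k^\delta)$ equals $k^{\langle\delta,\lambda\rangle}\theta(\kappa)^{-N}K(q^{w_0(\lambda)}k^\delta)T_{w_0}$. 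Applying $r_\kappa C_{(e,w_0)}(q^\lambda k^{-\delta})$ gives
\[
K_0(\lambda)=r_\kappa\,\theta(\kappa)^{-N}\,k^{\langle\delta,\lambda\rangle}\,K(q^{w_0(\lambda)}k^\delta)\,C_{(e,w_0)}(q^\lambda k^{-\delta})T_{w_0},
\]
so the task reduces to rewriting $r_\kappa\theta(\kappa)^{-N}K(q^{w_0(\lambda)}k^\delta)$ in the form displayed in the corollary.

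The factor $\theta(\kappa)^{N}$ in the formula \eqref{rkappa} for $r_\kappa$ cancels the one in the denominator. To evaluate $K$ at $q^{w_0(\lambda)}k^\delta$, note that the $i$-th component of this point is $q^{\lambda_{N-i+1}}k^{N-2i+1}$, so that for $i<j$ one has $\gamma_i/\gamma_j=q^{\lambda_{N-i+1}-\lambda_{N-j+1}}k^{2(j-i)}$. Relabelling $(r,s)=(N-j+1,N-i+1)$ (so $r<s$ and $s-r=j-i$), formula \eqref{Kexp} becomes
\[
K(q^{w_0(\lambda)}k^\delta)=\prod_{1\leq r<s\leq N}\frac{(q^{1-(\lambda_r-\lambda_s)}k^{2(s-r)};q)_\infty}{(q^{1-(\lambda_r-\lambda_s)}k^{2(s-r+1)};q)_\infty}.
\]
Applying the elementary identity $(q^{1-m}a;q)_\infty=\prod_{j=0}^{m-1}(1-q^{-j}a)\cdot(qa;q)_\infty$ for $m\in\Z_{\geq 0}$ (valid since $\lambda_r-\lambda_s\geq 0$) separates each factor into the desired finite product $\prod_{m=0}^{\lambda_r-\lambda_s-1}(1-q^{-m}k^{2(s-r)})/(1-q^{-m}k^{2(s-r+1)})$ times an $\infty$-factorial ratio independent of $\lambda$.

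What remains is to combine the $\lambda$-independent infinite products with the ratio $\prod_{i<j}(k^{2(j-i+1)};q)_\infty/(k^{2(j-i)};q)_\infty$ coming from $r_\kappa$. Using $(k^{2d};q)_\infty=(1-k^{2d})(qk^{2d};q)_\infty$ the combined product telescopes pairwise into
\[
\prod_{1\leq i<j\leq N}\frac{1-k^{2(j-i+1)}}{1-k^{2(j-i)}}=P(k^2),
\]
which matches \eqref{Poincare}. Assembling all the pieces produces exactly the formula stated in the corollary. None of the steps presents a real obstacle; the only care required is in the reindexing and in correctly partitioning the $q$-shifted factorials into the polynomial and infinite parts.
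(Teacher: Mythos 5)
Your proposal is correct and follows the paper's own proof exactly: extract the leading coefficient from the relation $Q_\lambda(t)=r_\kappa C_{(e,w_0)}(q^{\lambda}k^{-\delta})\Phi_{\kappa}(t,q^{w_0(\lambda)}k^{\delta})$ of Theorem \ref{polred} using \eqref{Wevaluation} and Theorem \ref{duall}, then substitute the explicit expressions for $K$ and $r_\kappa$. The paper leaves the final substitution as a one-line remark; your reindexing and splitting of the $q$-shifted factorials supplies precisely the omitted bookkeeping.
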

\begin{proof}
By \eqref{relation} and Theorem \ref{duall}, we have for
$\lambda\in\Lambda$,
\[K_0(\lambda)=r_\kappa k^{\langle\delta,\lambda\rangle}
\theta(\kappa)^{-N}K(q^{w_0(\lambda)}k^{\delta})
C_{(e,w_0)}(q^{\lambda}k^{-\delta})T_{w_0},
\]
with $K$ given by \eqref{Kexp} and $r_\kappa$ given by \eqref{rkappa}.
Substituting the explicit expressions for $K$ and $r_\kappa$
we get the desired expression.
\end{proof}
The following consequence should be compared with the general
expansion formula of $v_+=\sum_{w\in S_N}k^{\ell(w)}T_w\in H_0$ in
terms of the $\xi_w(\gamma)$ ($w\in S_N$); see \cite[Lemma 2.27
(2)]{Op}.
\begin{cor}
The element $v_+=\sum_{w\in S_N}k^{\ell(w)}T_w\in H_0$ can be
written as
\begin{equation}\label{expressionsvplus}
\begin{split}
v_+&=k^{-{N\choose 2}}P(k^2)C_{(e,w_0)}(k^{-\delta})T_{w_0}\\
&=\left(\prod_{1\leq i<j\leq N}\frac{1}{1-k^{2(i-j)}}\right)
\eta_{k^{-\delta}}(T_{w_0})\xi_{w_0}(k^{-\delta}).
\end{split}
\end{equation}
\end{cor}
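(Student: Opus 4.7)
The plan is to derive both equalities of \eqref{expressionsvplus} as direct consequences of earlier results already in hand, specialising Corollary \ref{leadingQ} at the ``trivial'' weight $\lambda = 0$ and then re-expressing the intertwiner action in terms of $\xi_{w_0}$. First, by Proposition \ref{trr}, the polynomial solution at $\lambda = 0$ is $Q_0 = C_{(e,0)}(\cdot,k^{-\delta})^{-1} v_+ = v_+$, which is the constant $H_0$-valued function. The triangular expansion $Q_0(t) = \sum_{\alpha \in Q_+} K_\alpha(0)\, t^{-\alpha}$ therefore forces $K_\alpha(0) = 0$ for $\alpha \neq 0$ and $K_0(0) = v_+$. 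Substituting $\lambda = 0$ into the formula of Corollary \ref{leadingQ}: the exponent $k^{\langle \delta,\lambda\rangle}$ becomes $1$, and for each pair $1 \leq i < j \leq N$ the inner product $\prod_{m=0}^{-1}(\cdots)$ is empty, hence equal to $1$. This directly yields the first equality $v_+ = k^{-\binom{N}{2}} P(k^2)\, C_{(e,w_0)}(k^{-\delta}) T_{w_0}$.

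For the second equality, I would invoke formula \eqref{Cxirelation}, which states
\[
C_{(e,w_0)}(\gamma) T_{w_0} = d_{w_0}(\gamma^{-1})^{-1} \eta(T_{w_0}) \xi_{w_0}(\gamma) = \Bigl(\prod_{\alpha \in R^+} \tfrac{1}{k - k^{-1}\gamma^{\alpha}}\Bigr)\, \eta(T_{w_0}) \xi_{w_0}(\gamma).
\]
Specialising at $\gamma = k^{-\delta}$ and combining with the first equality reduces the claim to the purely scalar identity
\[
k^{-\binom{N}{2}} P(k^2)\, d_{w_0}(k^{\delta})^{-1} = \prod_{1 \leq i < j \leq N} \frac{1}{1 - k^{2(i-j)}}.
\]

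The last remaining task is the verification of this scalar identity, which is the only place that requires actual computation; I expect this to be a short bookkeeping step rather than a genuine obstacle. Using $\langle \delta, \epsilon_i - \epsilon_j \rangle = 2(j - i)$ for $i < j$, each factor of $d_{w_0}(k^\delta)$ equals $k - k^{-2(j-i)-1} = -k^{-2(j-i)-1}\bigl(1 - k^{2(j-i+1)}\bigr)$, so the numerators $1 - k^{2(j-i+1)}$ of $P(k^2)$ cancel against the factors arising from $d_{w_0}(k^\delta)^{-1}$; the remaining $k$-powers accumulate to $\prod_{i<j} k^{2(j-i)+1}$, and matched against $\prod_{i<j}k^{2(j-i)}$ (coming from rewriting $(1 - k^{2(i-j)})^{-1} = k^{2(j-i)}(k^{2(j-i)} - 1)^{-1}$) the surplus is exactly $k^{\binom{N}{2}}$, cancelling the prefactor $k^{-\binom{N}{2}}$. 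The signs work out because each bracket $(1 - k^{2(j-i)})$ in the denominator of $P(k^2)$ pairs with $-(1 - k^{2(j-i+1)})$, and one checks that there is an even number of sign flips modulo the cancellation with the denominators in $\prod (1 - k^{2(i-j)})^{-1} = \prod (-k^{2(j-i)})(1 - k^{2(j-i)})^{-1}$. This completes the proof.
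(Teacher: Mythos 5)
Your proposal is correct and follows the paper's own route: the paper likewise obtains the first equality by noting $v_+=Q_0=K_0(0)$ and specializing Corollary \ref{leadingQ} at $\lambda=0$, and deduces the second from \eqref{Cxirelation}. The only difference is that you write out the scalar bookkeeping ($k^{-\binom{N}{2}}P(k^2)\,d_{w_0}(k^{\delta})^{-1}=\prod_{i<j}(1-k^{2(i-j)})^{-1}$), which the paper leaves implicit; your verification of it is accurate.
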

\begin{proof}
We have $v_+=Q_0=K_0(0)$, hence the previous corollary
gives the first equality of \eqref{expressionsvplus}.
The second equality then follows from \eqref{Cxirelation}.
\end{proof}
Applying the map $\chi_+$ to the first line
of \eqref{expressionsvplus} gives
\[\sum_{w\in S_N}k^{2\ell(w)}=P(k^2)
\]
with $P(k^2)$ given by \eqref{Poincare}, which is a well-known
product formula for the Poincar{\'e} series of $S_N$; see
\cite[Cor. (2.5)]{MacP}.

\subsection{Relation to symmetric self-dual Macdonald polynomials}\label{2}
In this subsection we collect various consequences of the previous
subsections for the symmetric Laurent polynomials
$\chi_+\bigl(Q_\lambda)\in\C[T]^{S_N}$ ($\lambda\in\Lambda$). We
keep the generic conditions \eqref{generic} on
$k\in\mathbb{C}^\times$. We denote
\[E_\lambda:=P(k^2)^{-1}\chi_+(Q_\lambda)\in\C[T]^{S_N},\qquad \lambda\in\Lambda.
\]
By Proposition, \ref{trr} we have
\[E_\lambda(t)=\sum_{\alpha\in Q_+}K_\alpha^+(\lambda)t^{\lambda-\alpha}
\]
with $K_\alpha^+(\lambda)=P(k^2)^{-1}\chi_+(K_\alpha(\lambda))\in\C$
all but finitely many zero, and with leading coefficient
$K_0^+(\lambda)\not=0$ by Corollary \ref{leadingQ} and \eqref{generic}.
\begin{thm}\label{Macdonaldconc}
The $E_\lambda\in\C[T]^{S_N}$ ($\lambda\in\Lambda$) are the symmetric self-dual
Macdonald polynomials. In other words, the $E_\lambda$ are the
unique symmetric regular functions on $T$ satisfying
\begin{equation}\label{eigkar}
L_p^{x}(E_\lambda)=p(q^{-\lambda}k^{\delta})E_\lambda\quad
\forall\, p\in\C[T]^{S_N}
\end{equation}
and $E_\lambda(k^{\delta})=1$ for all $\lambda\in\Lambda$.
\end{thm}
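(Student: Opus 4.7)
The plan is to check directly that the $E_\lambda$ satisfy the two listed properties, and then to invoke a standard uniqueness argument based on triangularity plus spectral separation.

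For the eigenvalue equation, I would start from $Q_\lambda\in\textup{SOL}_{q^\lambda k^{-\delta}}$ (Proposition \ref{trr}) and apply Proposition \ref{CMspecialized}, which yields $\chi_+(Q_\lambda)\in\textup{SP}_{q^\lambda k^{-\delta}}$. Unwinding the definition of $\textup{SP}_\zeta$ with $\zeta=q^\lambda k^{-\delta}$ gives
\[
L_p^x\,\chi_+(Q_\lambda)=p\bigl(q^{-\lambda}k^{\delta}\bigr)\chi_+(Q_\lambda)\qquad\forall\,p\in\C[T]^{S_N},
\]
so after dividing by the scalar $P(k^2)$ we obtain \eqref{eigkar} for $E_\lambda$. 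The symmetry of $E_\lambda$ follows from the $S_N$-equivariance of $\chi_+\colon \textup{SOL}_\zeta\rightarrow\textup{SP}_\zeta$ together with the $\varsigma(S_N)$-invariance of $Q_\lambda$. The evaluation property comes directly from Lemma \ref{evaluationLemma}: since $Q_\lambda(k^\delta)=v_+$, we have
\[
E_\lambda(k^\delta)=P(k^2)^{-1}\chi_+(v_+)=P(k^2)^{-1}\sum_{w\in S_N}k^{\ell(w)}\chi_+(T_w)=P(k^2)^{-1}\sum_{w\in S_N}k^{2\ell(w)}=1,
\]
the last equality being the well-known product formula for the Poincar\'e series of $S_N$ (already observed at the end of Subsection \ref{1}).

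For uniqueness, I would use the triangular structure \eqref{triangularity}: applying $\chi_+$ to $Q_\lambda(t)=\sum_{\alpha\in Q_+}K_\alpha(\lambda)t^{\lambda-\alpha}$ yields
\[
E_\lambda(t)=\sum_{\alpha\in Q_+}K_\alpha^+(\lambda)\,t^{\lambda-\alpha},
\]
and $S_N$-symmetrizing this expansion shows that $E_\lambda$ is a $\C$-linear combination of the monomial symmetric functions $m_\mu$ with $\mu\in\Lambda$ and $\mu\preceq\lambda$ in the dominance order, with the leading coefficient $K_0^+(\lambda)$ of $m_\lambda$ nonzero (nonvanishing here follows from Corollary \ref{leadingQ} combined with the generic conditions \eqref{generic}, which guarantee that none of the factors in the formula for $K_0(\lambda)$ nor the Poincar\'e factor $P(k^2)$ vanish). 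Thus $E_\lambda$ lies in the finite-dimensional space $V_\lambda:=\bigoplus_{\mu\in\Lambda,\mu\preceq\lambda}\C m_\mu$, and has nonzero coefficient on $m_\lambda$.

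The last step is to observe that the Ruijsenaars operators $L_{e_i}^x$ preserve each $V_\lambda$ and act on it in upper-triangular form with respect to the dominance basis $\{m_\mu\}_{\mu\preceq\lambda}$, with diagonal entries $e_i(q^{-\mu}k^{\delta})$ on the $m_\mu$-slot. Under the genericity hypotheses \eqref{generic}, these diagonal entries separate the different $\mu\preceq\lambda$ in $\Lambda$, so the simultaneous eigenspace of $\{L_p^x\}_{p\in\C[T]^{S_N}}$ in $V_\lambda$ with eigenvalue character $p\mapsto p(q^{-\lambda}k^\delta)$ is one-dimensional. Hence $E_\lambda$ is determined up to scalar by \eqref{eigkar} together with the membership in $V_\lambda$ and the nonvanishing of the $m_\lambda$-coefficient, and the normalization $E_\lambda(k^\delta)=1$ fixes the scalar, proving uniqueness. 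The main delicate point will be verifying cleanly that the spectral values $p(q^{-\mu}k^\delta)$ for distinct dominant $\mu\preceq\lambda$ are actually separated by $\C[T]^{S_N}$ under the assumptions \eqref{generic}; this reduces to showing that the points $q^{-\mu}k^\delta$ lie in distinct $S_N$-orbits, which is a standard consequence of the Hecke-algebra genericity of $k$ that has already been used throughout Section \ref{SectionPolynomial}.
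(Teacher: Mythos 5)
Your proposal is correct and follows essentially the same route as the paper: the eigenvalue equations come from Proposition \ref{CMspecialized} applied to $Q_\lambda\in\textup{SOL}_{q^\lambda k^{-\delta}}$, uniqueness comes from the spectral separation of the $S_N$-orbits $S_N(q^{-\lambda}k^\delta)$ guaranteed by \eqref{generic}, and the normalization comes from Lemma \ref{evaluationLemma} together with the Poincar\'e series identity $\sum_{w\in S_N}k^{2\ell(w)}=P(k^2)$. The only difference is that you spell out the triangularity of the $L_{e_i}^x$ on the dominance-ordered monomial basis, which the paper leaves implicit as standard Macdonald theory.
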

\begin{proof}
By Proposition \ref{CMspecialized}, $E_\lambda\in\C[T]^{S_N}$
satisfies \eqref{eigkar}.
Since the $S_N$-orbits $S_N(q^{-\lambda}k^{\delta})$
($\lambda\in\Lambda$) in $T$ are pairwise different by \eqref{generic},
the eigenvalue equations \eqref{eigkar}
uniquely characterize $E_\lambda\in\C[T]^{S_N}$ up to a nonzero
constant multiple. Now
\[E_\lambda(k^{\delta})=P(k^2)^{-1}\chi_+(Q_\lambda(k^{\delta}))=1
\]
by Lemma \ref{evaluationLemma}, which fixes the normalization of
the solution $E_\lambda\in\C[T]^{S_N}$ of \eqref{eigkar} uniquely.
\end{proof}
The duality property of $Q_{\lambda}$ (see Proposition
\ref{dualPROP}) immediately gives the well-known duality property
of the Macdonald polynomials.
\begin{cor}
The Macdonald polynomials $E_\lambda$ ($\lambda\in\Lambda$) are self-dual,
in the sense that
\[E_\lambda(q^{-\mu}k^{\delta})=E_\mu(q^{-\lambda}k^{\delta})
\]
for all $\lambda,\mu\in\Lambda$.
\end{cor}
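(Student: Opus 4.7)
The plan is to deduce this corollary essentially as a one-line consequence of the self-duality of the polynomial solutions $Q_\lambda$ of the quantum KZ equation established in Proposition \ref{dualPROP}, combined with the key intertwining property $\chi_+\circ C_\iota=\chi_+$ that was isolated inside the proof of Lemma \ref{chiok}.

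More precisely, I would first recall that by definition $E_\lambda=P(k^2)^{-1}\chi_+(Q_\lambda)$ as an element of $\C[T]^{S_N}$, and that $\chi_+\in H_0^*$ satisfies $\chi_+(T_w)=k^{\ell(w)}$ for all $w\in S_N$. Since $C_\iota$ is the $\mathbb{K}$-linear extension of the anti-algebra involution of $H_0$ sending $T_w\mapsto T_{w^{-1}}$ and $\ell(w^{-1})=\ell(w)$, we have $\chi_+\circ C_\iota=\chi_+$ on $H_0^{\mathbb{K}}$.

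Now I would evaluate both sides of Proposition \ref{dualPROP} under $\chi_+$. For $\lambda,\mu\in\Lambda$, that proposition gives the identity $Q_\lambda(q^{-\mu}k^\delta)=C_\iota Q_\mu(q^{-\lambda}k^\delta)$ in $H_0$. Applying $\chi_+$ and using $\chi_+\circ C_\iota=\chi_+$, one obtains
\[
\chi_+\bigl(Q_\lambda(q^{-\mu}k^\delta)\bigr)=\chi_+\bigl(Q_\mu(q^{-\lambda}k^\delta)\bigr),
\]
and dividing by the nonzero scalar $P(k^2)$ yields the desired equality $E_\lambda(q^{-\mu}k^\delta)=E_\mu(q^{-\lambda}k^\delta)$.

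Since the real work has already been carried out upstream (the construction of the $Q_\lambda$ from the dual qKZ cocycle, their self-duality coming from the cocycle identity $C_{(-\mu,-\lambda)}(q^{-\mu}k^\delta,q^\lambda k^{-\delta})=C_\iota C_{(-\lambda,-\mu)}(q^{-\lambda}k^\delta,q^\mu k^{-\delta})C_\iota$ together with $C_\iota v_+=v_+$, and the identification $E_\lambda=P(k^2)^{-1}\chi_+(Q_\lambda)$ with Macdonald polynomials in Theorem \ref{Macdonaldconc}), there is no remaining obstacle; the only subtlety is the observation that the duality anti-involution $C_\iota$, which encodes Cherednik's anti-isomorphism $*$ of $\mathbb{H}$ at the level of BqKZ, is precisely the symmetry under which the symmetrizing functional $\chi_+$ is invariant.
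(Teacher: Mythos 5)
Your proof is correct and follows exactly the route the paper intends: apply $\chi_+$ to the identity of Proposition \ref{dualPROP}, use $\chi_+\circ C_\iota=\chi_+$ (a consequence of $\chi_+(T_w)=k^{\ell(w)}$ and $\ell(w^{-1})=\ell(w)$), and divide by $P(k^2)$. The paper simply calls this step ``immediate''; your write-up supplies the same details.
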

\begin{rema}
The self-duality of (the suitably normalized) Macdonald
polynomials was initially proved by Koornwinder using Pieri
formulas in an unpublished manuscript (the argument is reproduced
in \cite[VI (6.6)]{MacBook}). Cherednik (\cite[Thm. 1.4.6]{C} and
\cite[Thm. 3.2]{CMac}) reproduced the self-duality of the
Macdonald polynomials using the anti-involution $*$ (see Theorem
\ref{dualthm}) on the double affine Hecke algebra.
\end{rema}
We also immediately reobtain the well-known evaluation formula for
the symmetric Macdonald polynomials; see \cite[VI (6.11)]{MacBook}
(the parameters $(n,q,t)$ in \cite[Chpt. VI]{MacBook} correspond
to $(N,q^{-1},k^2)$ in our notations).
\begin{cor}
For $\lambda\in\Lambda$ let
$P_\lambda:=K_0^+(\lambda)^{-1}E_\lambda\in\C[T]^{S_N}$ be the
monic symmetric Macdonald polynomial of degree $\lambda$. Then
\begin{equation}\label{evalf}
P_\lambda(k^{\delta})=k^{-\langle\delta,\lambda\rangle}
\prod_{1\leq i<j\leq N}\prod_{m=0}^{\lambda_i-\lambda_j-1}
\frac{1-q^{-m}k^{2(j-i+1)}}{1-q^{-m}k^{2(j-i)}}.
\end{equation}
\end{cor}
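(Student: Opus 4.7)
The plan is to unwind $P_\lambda(k^\delta)$ directly from the two preceding results, namely Corollary \ref{leadingQ} (the explicit form of the leading coefficient of $Q_\lambda$) and Lemma \ref{chiok} (the invariance of $\chi_+$ under the cocycle values). Since $P_\lambda=K_0^+(\lambda)^{-1}E_\lambda$ is the monic renormalization of $E_\lambda$ and $E_\lambda(k^\delta)=1$ by Theorem \ref{Macdonaldconc}, the first observation is simply
\[
P_\lambda(k^\delta)=K_0^+(\lambda)^{-1},
\]
so it is enough to compute $K_0^+(\lambda)=P(k^2)^{-1}\chi_+(K_0(\lambda))$ explicitly and then invert.

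Next, I would substitute the formula for $K_0(\lambda)$ from Corollary \ref{leadingQ} into the definition of $K_0^+(\lambda)$ and pull out the scalar factors. This reduces the entire problem to computing the single quantity $\chi_+\bigl(C_{(e,w_0)}(q^\lambda k^{-\delta})T_{w_0}\bigr)$. Here Lemma \ref{chiok} does all the work: since $(e,w_0)\in\mathbb{S}_N$ and the cocycle values are invertible, the identity $\chi_+\circ C_{\mathrm{v}}^{-1}=\chi_+$ is equivalent (by applying it to $C_{\mathrm{v}}F$) to $\chi_+\circ C_{\mathrm{v}}=\chi_+$, so that
\[
\chi_+\bigl(C_{(e,w_0)}(q^\lambda k^{-\delta})T_{w_0}\bigr)=\chi_+(T_{w_0})=k^{\ell(w_0)}=k^{\binom{N}{2}}.
\]

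Assembling the two ingredients gives
\[
\chi_+(K_0(\lambda))=k^{\langle\delta,\lambda\rangle}\Bigl(\prod_{1\leq i<j\leq N}\prod_{m=0}^{\lambda_i-\lambda_j-1}\frac{1-q^{-m}k^{2(j-i)}}{1-q^{-m}k^{2(j-i+1)}}\Bigr)k^{-\binom{N}{2}}P(k^2)\cdot k^{\binom{N}{2}},
\]
so the factors $k^{\pm\binom{N}{2}}$ cancel and division by $P(k^2)$ produces
\[
K_0^+(\lambda)=k^{\langle\delta,\lambda\rangle}\prod_{1\leq i<j\leq N}\prod_{m=0}^{\lambda_i-\lambda_j-1}\frac{1-q^{-m}k^{2(j-i)}}{1-q^{-m}k^{2(j-i+1)}}.
\]
Inverting this yields exactly the right-hand side of \eqref{evalf}.

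There is no serious obstacle: all the hard content (the intertwiner-based formula for $K_0(\lambda)$, and the $\mathbb{S}_N$-invariance of $\chi_+$) has already been set up. The argument is essentially bookkeeping; the only subtle point worth flagging is that Lemma \ref{chiok} applies to the cocycle value indexed by $(e,w_0)$, which is what lets one strip away the $\gamma$-dependence of $C_{(e,w_0)}(q^\lambda k^{-\delta})$ and collapse the whole $w_0$-piece down to the scalar $k^{\binom{N}{2}}$.
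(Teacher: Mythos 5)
Your proposal is correct and follows the paper's own route: $P_\lambda(k^\delta)=K_0^+(\lambda)^{-1}$ from the normalization $E_\lambda(k^\delta)=1$, then the explicit formula for $K_0^+(\lambda)$ from Corollary \ref{leadingQ}. The paper leaves the evaluation $\chi_+\bigl(C_{(e,w_0)}(q^\lambda k^{-\delta})T_{w_0}\bigr)=k^{\binom{N}{2}}$ implicit in the phrase ``Corollary \ref{leadingQ} gives''; you have simply spelled out that step via Lemma \ref{chiok}, which is exactly the intended justification.
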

\begin{proof}
By the previous theorem we have $P_\lambda(k^{\delta})=K_0^+(\lambda)^{-1}$.
Corollary \ref{leadingQ} gives
\[K_0^+(\lambda)=k^{\langle\delta,\lambda\rangle}
\prod_{1\leq i<j\leq N}\prod_{m=0}^{\lambda_i-\lambda_j-1}
\frac{1-q^{-m}k^{2(j-i)}}{1-q^{-m}k^{2(j-i+1)}},
\]
which implies the desired result.
\end{proof}

\section{Appendix on holonomic systems of
$q$-difference equations}\label{appendix}

In the appendix we detail the construction of power series
solutions of holonomic systems of $q$-difference equations.
Special cases have been investigated in, e.g., \cite{Ao},
\cite{FR} and \cite[\S 12]{EFK}. Many arguments go back to
classical works \cite{Ad}, \cite{Bi}, \cite{Carm}, \cite{Tr} on
ordinary linear $q$-difference equations.

We begin with the construction of formal asymptotic solutions to
holonomic systems of $q$-difference equations. Let
$\mathbb{C}[[z]]=\mathbb{C}[[z_1,\ldots,z_M]]$ denote the ring of
formal power series in $M$ indeterminates $z_1,\ldots,z_M$ over
the complex numbers. Let $V$ be a finite-dimensional complex
vector space and let
\[
A_i\in\mathbb{C}[[z]]\otimes \textup{End}(V)
\]
for $i=1,\ldots,M$. Since $\mathbb{C}[[z]]\otimes\textup{End}(V)$
is isomorphic to
$\textup{End}_{\mathbb{C}[[z]]}(\mathbb{C}[[z]]\otimes V)$ as
$\C[[z]]$-module, we can view the $A_i$ as
$\mathbb{C}[[z]]$-linear endomorphisms of $\mathbb{C}[[z]]\otimes
V$. Fix $0<q_i<1$ for $1\leq i\leq M$. Define the $q_i$-dilation
operators
\[
\mathcal{T}_i\colon\mathbb{C}[[z]]\to\mathbb{C}[[z]]
\]
for $i=1,\ldots,M$ as the complex linear maps
\[
\mathcal{T}_i(\sum_{\mathbf{m}}d_{\mathbf{m}}z^{\mathbf{m}}):=
\sum_{\mathbf{m}}q_i^{m_i}d_{\mathbf{m}}z^{\mathbf{m}}\qquad
(d_{\mathbf{m}}\in\C),
\]
where we use multi-index notation $z^{\mathbf{m}}=z_1^{m_1}\cdots
z_M^{m_M}$ for $\mathbf{m}=(m_1,\ldots,m_M)$ with
$m_j\in\mathbb{Z}_{\geq 0}$. We also view $\mathcal{T}_i$ as
operators on $\mathbb{C}[[z]]\otimes V$ and on
$\mathbb{C}[[z]]\otimes \textup{End}(V)$. Consider the system of
first-order linear $q$-difference equations
\begin{equation}\label{eqGeneralq-diff}
    A_i\mathcal{T}_if=f,\qquad (i=1,\ldots,M)
\end{equation}
for $f\in\mathbb{C}[[z]]\otimes V$.

For $f\in\mathbb{C}[[z]]\otimes V$ and $A\in\mathbb{C}[[z]]\otimes
\textup{End}(V)$, we introduce the notations
\[
\begin{split}
f^{(m)}&:=f|_{z_{m+1}=\ldots=z_M=0}\in\mathbb{C}[[z_1,\ldots,z_{m}]]\otimes V\\
A^{(m)}&:=A|_{z_{m+1}=\ldots=z_M=0}
\in\mathbb{C}[[z_1,\ldots,z_{m}]]\otimes\textup{End}(V)
\end{split}
\]
for $0\leq m\leq M$, with the convention that $f^{(M)}=f$ and
$A^{(M)}=A$. We make the following assumptions on the system of
$q$-difference equations \eqref{eqGeneralq-diff}:

\textbf{(a)} The system \eqref{eqGeneralq-diff} is holonomic, that
is
\begin{equation}\label{eqGeneralHolonomic}
    A_i\mathcal{T}_i(A_j)= A_j\mathcal{T}_j(A_i)
\end{equation}
for all $1\leq i,j\leq M$. Note that the holonomy implies that the
leading coefficients $A_i^{(0)}\in\textup{End}(V)$ of $A_i$
mutually commute, i.e.,
$$[A_i^{(0)},A_j^{(0)}]=0$$
for all $1\leq i,j\leq M$.

\textbf{(b)} The complex linear endomorphisms
$A_1^{(0)},\ldots,A_M^{(0)}$ of $V$ are semisimple. Combined with
{\bf (a)} we thus have
\[V=\bigoplus_{\gamma\in S}V[\gamma]\]
with $V[\gamma]:=\{v\in V\mid A_i^{(0)}v=\gamma_iv\:\:\forall i\}$
($\gamma\in\C^M$) and $S:=\{\gamma\in\mathbb{C}^M\mid
V[\gamma]\neq\{0\}\}$.

\textbf{(c)} $(1^M):=(1,\ldots,1)\in\mathbb{C}^M$ belongs to $S$.

\textbf{(d)} $\gamma_k\notin q_k^{-\N}$ for all $\gamma\in S$ and
$1\leq k\leq M$.

\begin{prop}\label{propGeneralSol}
Fix $v\in V[(1^M)]$. Consider the system \eqref{eqGeneralq-diff}
of $q$-difference equations and suppose that
\textup{\textbf{(a)}-\textbf{(d)}} are satisfied. Then there
exists a unique solution $\Phi_v\in\mathbb{C}[[z]]\otimes V$ of
\eqref{eqGeneralq-diff} such that
\[\Phi_v^{(0)}=v.\]
\end{prop}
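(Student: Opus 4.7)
The plan is to construct $\Phi_v = \sum_{\mathbf{m}} c_{\mathbf{m}} z^{\mathbf{m}}$ coefficient by coefficient. Writing $A_i = \sum_{\mathbf{l}} A_{i,\mathbf{l}}\,z^{\mathbf{l}}$ so that $A_{i,\mathbf{0}} = A_i^{(0)}$, matching the $z^{\mathbf{m}}$-coefficient of $A_i\mathcal{T}_i\Phi_v = \Phi_v$ yields, for every $i$ and every $\mathbf{m}$, the recursion
\[
(1 - q_i^{m_i} A_i^{(0)})\,c_{\mathbf{m}} \;=\; b_{\mathbf{m}}^{(i)} \;:=\; \sum_{\mathbf{0}\leq\mathbf{n}<\mathbf{m}} q_i^{n_i}\,A_{i,\mathbf{m}-\mathbf{n}}\,c_{\mathbf{n}}, \qquad (\ast_i)
\]
with strict componentwise inequality. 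Reading off (a) at $z=0$ shows the $A_i^{(0)}$ pairwise commute, so by (b) they are simultaneously diagonalizable, giving $V = \bigoplus_{\gamma\in S} V[\gamma]$. At $\mathbf{m}=\mathbf{0}$, $(\ast_i)$ reduces to $A_i^{(0)} c_{\mathbf{0}} = c_{\mathbf{0}}$ for every $i$, whose common solution space is $V[(1^M)]\neq\{0\}$ by (c); the normalization $\Phi_v^{(0)}=v$ then forces $c_{\mathbf{0}}=v$.

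I intend to determine the remaining $c_{\mathbf{m}}$ by induction on $d = |\mathbf{m}| = m_1+\cdots+m_M$. Applying $P_\gamma$ to $(\ast_i)$ reduces it to the scalar equation $(1-q_i^{m_i}\gamma_i)\,P_\gamma c_{\mathbf{m}} = P_\gamma b_{\mathbf{m}}^{(i)}$; condition (d) ensures that $1-q_i^{m_i}\gamma_i$ vanishes only in the resonant case $m_i=0,\gamma_i=1$. For $\mathbf{m}\neq\mathbf{0}$ one can always pick an index $i$ with $m_i>0$, and then $(\ast_i)$ unambiguously prescribes $P_\gamma c_{\mathbf{m}}$ for every $\gamma\in S$. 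The main obstacle is verifying that this prescription does not depend on the choice of $i$ and that the resonant instances of $(\ast_j)$ (those with $m_j=0,\gamma_j=1$) are automatically satisfied. Both will be reduced to the single compatibility identity
\[
(1-q_i^{m_i}A_i^{(0)})\,b_{\mathbf{m}}^{(j)} \;=\; (1-q_j^{m_j}A_j^{(0)})\,b_{\mathbf{m}}^{(i)}. \qquad(\star)
\]

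To establish $(\star)$ I plan to recast the holonomy (a) as the operator commutation $L_iL_j = L_jL_i$ on $\C[[z]]\otimes V$, where $L_i := A_i\mathcal{T}_i - 1$, and apply it to the partial sum $f_{d-1} := \sum_{|\mathbf{n}|<d}c_{\mathbf{n}}z^{\mathbf{n}}$. By the inductive hypothesis $L_if_{d-1}$ has no terms of degree $<d$, and its degree-$d$ part is precisely $\sum_{|\mathbf{m}|=d}b_{\mathbf{m}}^{(i)}z^{\mathbf{m}}$; extracting the $z^{\mathbf{m}}$-coefficient of $L_j(L_if_{d-1}) = L_i(L_jf_{d-1})$ from this leading behaviour delivers $(\star)$ at once. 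Projecting $(\star)$ onto $V[\gamma]$ then forces $P_\gamma b_{\mathbf{m}}^{(j)} = 0$ in the resonant case (pairing the resonant $j$ with an $i$ having $m_i>0$, for which $1-q_i^{m_i}\gamma_i\neq 0$ by (d)) and makes the expressions for $P_\gamma c_{\mathbf{m}}$ coming from any two nonresonant indices coincide, completing the inductive step. Uniqueness is obtained by running the same induction on the difference $\Phi_v-\Phi_v'$ of two solutions with equal degree-zero terms: for each $\mathbf{m}\neq\mathbf{0}$ the invertible instance of $(\ast_i)$ for some $i$ with $m_i>0$ forces the coefficient to vanish.
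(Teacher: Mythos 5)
Your argument is correct, but it is organized quite differently from the paper's. The paper inducts on the number of \emph{variables}: Lemma \ref{lemprop} extends a solution $f_m$ in $z_1,\dots,z_m$ to $f_{m+1}$ by solving a single one-variable recurrence $(1-q_{m+1}^{n}A_{m+1}^{(m)})f_{m;n}=\sum_{l=1}^{n}q_{m+1}^{n-l}A_{m+1;l}^{(m)}f_{m;n-l}$ over the ring $\C[[z_1,\dots,z_m]]$ (invertibility via the determinant and condition (d)), and then disposes of the remaining equations not by a compatibility identity but by a uniqueness trick: holonomy shows that $g_r=A_r^{(m+1)}\mathcal{T}_rf_{m+1}$ satisfies the same characterizing recurrence and initial term as $f_{m+1}$, hence equals it. You instead induct on the \emph{total degree}, solve all $M$ equations simultaneously for each coefficient $c_{\mathbf m}$ after projecting onto the joint eigenspaces $V[\gamma]$, and verify consistency through the explicit identity $(\star)$, which you correctly extract as the degree-$d$ part of $L_iL_jf_{d-1}=L_jL_if_{d-1}$; the resonant equations ($m_j=0$, $\gamma_j=1$) are then automatically satisfied. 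Your route uses the semisimplicity assumption (b) more essentially (through the spectral projections) and makes the resonance analysis behind (d) completely transparent, while the paper's route is better adapted to the convergence estimates of Proposition \ref{convprop}, which are likewise run one variable at a time. One small correction: in the definition of $b_{\mathbf m}^{(i)}$ the sum must run over all $\mathbf n$ with $n_k\le m_k$ for every $k$ and $\mathbf n\neq\mathbf m$, not over $\mathbf n$ with $n_k<m_k$ in every component; your later identification of the degree-$d$ part of $L_if_{d-1}$ with $\sum_{|\mathbf m|=d}b_{\mathbf m}^{(i)}z^{\mathbf m}$ is consistent only with the former reading, so I take this to be a notational slip rather than a gap.
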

\begin{proof}
The proposition is a consequence of the following lemma.

\begin{lem}\label{lemprop}
Let $0\leq m<M$. Suppose one has a solution
\[
f_m\in\mathbb{C}[[z_1,\ldots,z_{m}]]\otimes V
\]
of the system of equations
\begin{equation}\label{k}
\begin{split}
A_r^{(m)}\mathcal{T}_rf_m&=f_m,\qquad 1\leq r\leq m,\\
A_s^{(m)}f_m&=f_m,\qquad m<s\leq M.
\end{split}
\end{equation}
Then there exists a unique
\begin{equation*}
\begin{split}
f_{m+1}&= \sum_{n\geq 0}f_{m;n}z_{m+1}^n\in
\mathbb{C}[[z_1,\ldots,z_{m+1}]]\otimes V
\end{split}
\end{equation*}
with $f_{m;n}\in\mathbb{C}[[z_1,\ldots,z_{m}]]\otimes V$ and
$f_{m;0}=f_m$ satisfying \eqref{k} with the role of $m$ replaced
by $m+1$:
\begin{equation}\label{k+1}
\begin{split}
A_r^{(m+1)}\mathcal{T}_rf_{m+1}&=f_{m+1},\qquad 1\leq r\leq m+1,\\
A_s^{(m+1)}f_{m+1}&=f_{m+1},\qquad m+1<s\leq M.
\end{split}
\end{equation}
\end{lem}

The proposition follows directly from the lemma as follows. Note
that $f_0:=v\in V[(1^M)]$ is a solution of \eqref{k} for $m=0$.
The formal $V$-valued series $f_{M}\in\C[[z]]\otimes V$, obtained
by repeated application of the lemma starting from $f_0=v$, gives
a formal $V$-valued series solution of \eqref{eqGeneralq-diff}
satisfying $f_M^{(0)}=v$. For uniqueness, assume that $f\in
\mathbb{C}[[z]]\otimes V$ is another formal $V$-valued series
satisfying $f^{(0)}=v$ and solving \eqref{eqGeneralq-diff}. We
have $f^{(0)}=v=f_0$ and $f^{(m)}$ solves \eqref{k} for all $0\leq
m<M$. Hence, by the uniqueness part of the lemma,
$f=f^{(M)}=f_{M}$.

We now proceed to prove the lemma. We assume that we have a formal
power series solution $f_m\in\C[[z_1,\ldots,z_m]]\otimes V$ of
\eqref{k} for some $0\leq m<M$. We write
\begin{equation}\label{defArn}
A_{r}^{(m+1)}= \sum_{n\geq 0}A_{r;n}^{(m)}z_{m+1}^{n},
\end{equation}
where
$A_{r;n}^{(m)}\in\mathbb{C}[[z_1,\ldots,z_{m}]]\otimes\textup{End}(V)$
and $A_{r;0}^{(m)}=A_r^{(m)}$. By a direct computation one
verifies that
\[f_{m+1}=
\sum_{n\geq 0}f_{m;n}z_{m+1}^{n}\in\C[[z_1,\ldots,z_{m+1}]]\otimes
V
\]
with $f_{m;n}\in\mathbb{C}[[z_{1},\ldots,z_{m}]]\otimes V$ and
$f_{m;0}=f_m$ satisfies the $q$-difference equation
\[A_{m+1}^{(m+1)}\mathcal{T}_{m+1}f_{m+1}=
f_{m+1}
\]
if and only if
\begin{equation}\label{recurrence}
\big(1-q_{m+1}^{n}A_{m+1}^{(m)}\big)f_{m;n}
=\sum_{l=1}^nq_{m+1}^{n-l}A_{m+1;l}^{(m)} f_{m;n-l}
\end{equation}
for all $n\in\mathbb{Z}_{\geq 0}$. The recurrence relations
\eqref{recurrence} admit a unique solution
$(f_{m;n})_{n\in\mathbb{Z}_{\geq 0}}$ with $f_{m;n}\in
\C[[z_1,\ldots,z_m]]\otimes V$ and with initial condition
$f_{m;0}=f_m$. Indeed, note that \eqref{recurrence} is valid for
$n=0$ since $f_{m;0}=f_m$ satisfies \eqref{k}. For $n\geq1$, we
have
\[
\det\bigl(1-q_{m+1}^{n}A_{m+1}^{(m)}\bigr)\in
\mathbb{C}[[z_1,\ldots,z_{m}]]^{\times},
\]
since
\[\det\big(1-q_{m+1}^{n}A_{m+1}^{(m)}\big)|_{z_1=\ldots=z_{m}=0}=
\det\big(1-q_{m+1}^{n}A_{m+1}^{(0)}\big)= \prod_{\gamma\in
S}(1-q_{m+1}^{n}\gamma_{m+1}^{\dim(V[\gamma])}) \neq0
\]
by assumption \textbf{(d)}. Cramer's rule then implies that
\eqref{recurrence} admits a unique solution
$(f_{m;n})_{n\in\mathbb{Z}_{\geq 0}}$ with $f_{m;0}=f_m$.

We conclude that there exists a unique
\[f_{m+1}=\sum_{n\geq 0}f_{m;n}z_{m+1}^{n}
\]
with $f_{m;n}\in\mathbb{C}[[z_1,\ldots,z_{m}]]\otimes V$ and
$f_{m;0}=f_m$ satisfying the $q$-difference equation
\begin{equation}\label{qdiffm+1}
A_{m+1}^{(m+1)}\mathcal{T}_{m+1}f_{m+1}=f_{m+1}.
\end{equation}
It remains to show that $f_{m+1}$ also satisfies \eqref{k+1} for
$r=1,\ldots,m$ and for $s=m+2,\ldots,M$.

Fix $1\leq r\leq m$ and write $g_r:=
A_r^{(m+1)}\mathcal{T}_rf_{m+1}$. Its expansion in powers of
$z_{m+1}$ is written as
\[g_r=\sum_{n\geq 0}g_{r;n}z_{m+1}^n
\]
with $g_{r;n}\in\mathbb{C}[[z_1,\ldots,z_{m}]]\otimes V$ and
$g_{r;0}=A_r^{(m)}\mathcal{T}_rf_m=f_m$, where the last equality
follows from the fact that $f_m$ is assumed to satisfy \eqref{k}.
Furthermore, using the holonomy \eqref{eqGeneralHolonomic} and the
$q$-difference equation \eqref{qdiffm+1} in $z_{m+1}$ satisfied by
$f_{m+1}$, we have
\begin{equation*}
\begin{split}
A_{m+1}^{(m+1)}\mathcal{T}_{m+1}g_r
&=A_{m+1}^{(m+1)}\mathcal{T}_{m+1}(A_r^{(m+1)})\mathcal{T}_r\mathcal{T}_{m+1}
f_{m+1}\\
&=A_r^{(m+1)}\mathcal{T}_{r}(A_{m+1}^{(m+1)})
    \mathcal{T}_r\mathcal{T}_{m+1}f_{m+1}\\
&=A_r^{(m+1)}\mathcal{T}_r
\bigl(A_{m+1}^{(m+1)}\mathcal{T}_{m+1}f_{m+1}\bigr)\\
&=A_r^{(m+1)}\mathcal{T}_rf_{m+1}=g_r.
\end{split}
\end{equation*}
We conclude that $g_r$ satisfies the characterizing properties of
$f_{m+1}$. Hence $g_r=f_{m+1}$, i.e.,
\[A_r^{(m+1)}\mathcal{T}_rf_{m+1}=f_{m+1}.
\]

Fix $m+1<s\leq M$ and write $g_s:= A_s^{(m+1)}f_{m+1}$. By a
similar argument as used in the previous paragraph, we now show
that $g_s=f_{m+1}$. We write
\[g_s=\sum_{n\geq
0}g_{s;n}z_{m+1}^n
\]
with $g_{s;n}\in\C[[z_1,\ldots,z_{m}]]\otimes V$ and
$g_{s;0}=A_s^{(m)}f_m=f_m$, where the last equality follows by the
assumption that $f_m$ satisfies \eqref{k}. Using the holonomy
\eqref{eqGeneralHolonomic}, the $q$-difference equation
\eqref{qdiffm+1}, and the obvious fact that
$\mathcal{T}_s(A_{m+1}^{(m+1)})=A_{m+1}^{(m+1)}$ since $s>m+1$, we
have
\begin{equation*}
\begin{split}
A_{m+1}^{(m+1)}\mathcal{T}_{m+1}g_s
&=A_{m+1}^{(m+1)}\mathcal{T}_{m+1}(A_s^{(m+1)})\mathcal{T}_{m+1}f_{m+1}\\
&=A_s^{(m+1)}\mathcal{T}_s(A_{m+1}^{(m+1)})\mathcal{T}_{m+1}f_{m+1}\\
&=A_s^{(m+1)}A_{m+1}^{(m+1)}\mathcal{T}_{m+1}f_{m+1}\\
&=A_s^{(m+1)}f_{m+1}=g_s.
\end{split}
\end{equation*}
We conclude that $g_s$ satisfies the characterizing properties of
$f_{m+1}$. Hence $g_s=f_{m+1}$, i.e.
\[A_s^{(m+1)}f_{m+1}=f_{m+1}.
\]
This completes the proof of Lemma \ref{lemprop}, and hence the
proof of Proposition \ref{propGeneralSol}.
\end{proof}

We investigate the analytical properties of the solution $\Phi_v$
when the $q$-connection matrices $A_i$ ($1\leq i\leq M$) satisfy,
besides the conditions {\bf (a)-(d)}, the following analyticity
condition:

{\bf (e)} For some $\epsilon>0$ the formal
$\textup{End}(V)$-valued series $A_i\in\C[[z]]\otimes
\textup{End}(V)$ ($1\leq i\leq M$) converges normally on compacta
of the open polydisc $D_\epsilon^{M}:=\{z\in\C^M \, | \,
|z_i|<\epsilon\,\, \forall\, i\}$.

In other words, if we expand $A_i$ along a basis of
$\textup{End}(V)$, condition {\bf (e)} requires its coefficients
in $\C[[z]]$ to converge normally on compacta of $D_\epsilon^M$.
\begin{prop}\label{convprop}
Suppose that the $q$-connection matrices $A_i\in\C[[z]]\otimes
\textup{End}(V)$ ($1\leq i\leq M$) satisfy {\bf (a)-(e)}. Let
$v\in V[(1^M)]$. There exists an $\epsilon>0$ such that the formal
$V$-valued series $\Phi_v\in\C[[z]]\otimes V$ converges normally
on compacta of $D_\epsilon^M$.
\end{prop}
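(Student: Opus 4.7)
The plan is to establish analyticity of $\Phi_v$ near the origin by an inductive argument that mirrors the existence proof of Proposition \ref{propGeneralSol}. Recall that $\Phi_v = f_M$ is constructed in Lemma \ref{lemprop} by successively introducing the variables $z_1, z_2, \ldots, z_M$. I will show by induction on $m$ that each $f_m$ converges normally on compacta of some polydisc $D_{\epsilon_m}^m$ with $\epsilon_m > 0$. The base case $f_0 = v$ is trivial. For the inductive step, the formal series $f_{m+1} = \sum_{n \geq 0} f_{m;n} z_{m+1}^n$ has coefficients determined by the recurrence \eqref{recurrence}, so it suffices to produce a polydisc $\overline{D_\delta^m}$ on which each $f_{m;n}$ is analytic and satisfies $\|f_{m;n}(z)\| \leq C \rho^n$ for suitable constants $C, \rho > 0$.

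The key technical input is a uniform-in-$n$ bound on the inverses $(1 - q_{m+1}^n A_{m+1}^{(m)}(z))^{-1}$. By hypothesis \textbf{(d)} together with semisimplicity, $1 - q_{m+1}^n A_{m+1}^{(0)}$ is invertible on $V$ for every $n \geq 0$. Hypothesis \textbf{(e)} combined with the induction hypothesis lets us shrink $\delta$ so that $A_{m+1}^{(m)}$ is analytic and uniformly bounded on $\overline{D_\delta^m}$. For $n$ sufficiently large, $\|q_{m+1}^n A_{m+1}^{(m)}(z)\| < 1/2$ uniformly on $\overline{D_\delta^m}$, and the Neumann series yields a uniform bound on the inverse. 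For the finitely many remaining small values of $n$, each operator $1 - q_{m+1}^n A_{m+1}^{(m)}(\cdot)$ is invertible at $z = 0$, and continuity of matrix inversion ensures (after shrinking $\delta$ once more) invertibility with a uniform norm bound throughout $\overline{D_\delta^m}$. Combining these regimes yields a constant $K > 0$ with
\[
\sup_{n \geq 0} \sup_{z \in \overline{D_\delta^m}} \|(1 - q_{m+1}^n A_{m+1}^{(m)}(z))^{-1}\| \leq K.
\]

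Cauchy estimates applied to the analytic function $A_{m+1}^{(m+1)}$ provided by condition \textbf{(e)} furnish constants $M_1, r > 0$ with $\|A_{m+1;l}^{(m)}(z)\| \leq M_1 r^{-l}$ for all $l \geq 0$ and all $z \in \overline{D_\delta^m}$ (shrinking $\delta$ a final time if needed). Substituting into the recurrence rewritten as
\[
f_{m;n}(z) = (1 - q_{m+1}^n A_{m+1}^{(m)}(z))^{-1} \sum_{l=1}^n q_{m+1}^{n-l} A_{m+1;l}^{(m)}(z) f_{m;n-l}(z),
\]
an elementary majorant-series argument by induction on $n$ establishes $\|f_{m;n}(z)\| \leq C \rho^n$ on $\overline{D_\delta^m}$ for any $\rho$ larger than $K M_1/r$ (and $C$ chosen once by comparing against a geometric series dominating the initial term $f_{m;0} = f_m$). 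This yields normal convergence of $f_{m+1}$ on $\overline{D_\delta^m} \times \{|z_{m+1}| < 1/(2\rho)\}$, closing the induction and proving the proposition for $m+1 = M$.

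The main obstacle is the uniform-in-$n$ control of the inverses $(1 - q_{m+1}^n A_{m+1}^{(m)}(z))^{-1}$. This decouples cleanly into a large-$n$ regime (handled by the Neumann series) and finitely many exceptional small-$n$ values whose inverses must be controlled by propagating hypothesis \textbf{(d)} from the single point $z = 0$ to a neighborhood via continuity and compactness. Once the uniform inverse bound and the Cauchy majorants are in place, the geometric decay of $\|f_{m;n}\|$ follows by a routine recursive estimate, and the rest of the proof is bookkeeping.
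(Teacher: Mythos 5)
Your proposal is correct and follows essentially the same route as the paper: induction on the number of variables, Cauchy estimates giving $\|A_{m+1;l}^{(m)}\|\leq M\epsilon^{-l}$, a uniform-in-$n$ bound on $(1-q_{m+1}^nA_{m+1}^{(m)})^{-1}$, and a geometric majorant for the recurrence. The only cosmetic difference is that you obtain the uniform resolvent bound via a Neumann series for large $n$ plus continuity for the finitely many remaining $n$, whereas the paper bounds $\det(1-q^nA_{m+1}^{(m)})^{-1}$ uniformly and applies Cramer's rule; both are standard and equivalent in effect.
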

\begin{proof}
For ease of notation, we will write $\Phi$ instead of $\Phi_v$. By
induction on $m=0,\ldots,M$ we prove that there exists
$\epsilon>0$ such that $\Phi^{(m)}\in\C[[z_1,\ldots,z_m]]\otimes
V$ converges normally on compacta of $D_\epsilon^m$.

For $m=0$, there is nothing to prove. Fix $0\leq m<M$ and suppose
$\Phi^{(m)}$ converges normally on compacta of $D_\delta^m$ for
some $\delta>0$. Write
\[
\Phi^{(m+1)}=\sum_{n\geq 0}\Phi_{m;n}z_{m+1}^n
\]
with $\Phi_{m;n}\in\mathbb{C}[[z_1,\ldots,z_{m}]]\otimes V$ and
$\Phi_{m;0}=\Phi^{(m)}$. Recall from the proof of Lemma
\ref{lemprop} that the formal $V$-valued power series $\Phi_{m;n}$
($n\geq 1$) are unique characterized by the recurrence relations
\begin{equation}\label{rec2}
\Phi_{m;n}
=\sum_{l=1}^nq_{m+1}^{n-l}
\bigl(1-q_{m+1}^nA_{m+1}^{(m)}\bigr)^{-1}A_{m+1;l}^{(m)}
\Phi_{m;n-l}
\end{equation}
for all $n\geq 1$. We use this recurrence formula to find bounds
for $\Phi_{m;n}$ in a neighborhood of $0\in\C^m$.

Turn the finite-dimensional complex vector space $V$ into an inner
product space, with corresponding norm denoted by $\|\cdot\|$. We
also write $\|\cdot\|$ for the operator norm of the associated
finite-dimensional normed space $\textup{End}(V)$. We continue the
proof of the proposition with two technical sublemmas. First we
find a proper uniform bound for $A_{m+1;l}^{(m)}$ for all $l$ (see
\eqref{rec2}).

\begin{lem}
There exists $\epsilon>0$ and $M>0$ such that
$\|A_{m+1;l}^{(m)}\|\leq M\epsilon^{-l}$ on
$\overline{D}_\epsilon^m$ for all $l\geq 0$.
\end{lem}
\begin{proof}
By {\bf (e)} there exists an $\epsilon>0$ such that
$A_{m+1}^{(m+1)}\in\C[[z_1,\ldots,z_{m+1}]]\otimes
\textup{End}(V)$ converges normally on compacta of the polydisc
$D_{2\epsilon}^{m+1}$. Consequently, for
$\epsilon<\epsilon^\prime<2\epsilon$ we have that
$\|A_{m+1}^{(m+1)}\|$ is uniformly bounded on the polydisc
$D_{\epsilon^\prime}^{m+1}$, say by $M>0$. In particular, we get
\[
\|(\partial^l_{z_{m+1}}A^{(m+1)}_{m+1})(z_1,\ldots,z_{m+1})|_{z_{m+1}=0}\|
\leq M{\epsilon}^{-l}l!
\]
for all $(z_1,\ldots,z_m)\in \overline{D}_\epsilon^m$ and for all
$l\geq 0$ (see, e.g., \cite{Ho} Theorem 2.2.7). This proves the
lemma in view of the definition \eqref{defArn} of
$A_{m+1;l}^{(m)}$.
\end{proof}

\begin{lem}
There exists an $\epsilon>0$ such that
$\Phi_{m;n}\in\C[[z_1,\ldots,z_m]]\otimes V$ converges normally on
compacta of $D_\epsilon^m$ for all $n\geq 0$. Furthermore, there
exists a constant $C>0$ (independent of $n$) such that
    \[
    \|\Phi_{m;n}\|\leq
    \frac{C}{1+C}\left(\frac{1+C}{q_{m+1}\epsilon}\right)^n
    \|\Phi^{(m)}\|
    \]
on $D_\epsilon^m$ for all $n\geq 1$.
\end{lem}
\begin{proof}
In the proof of this lemma, we write $q$ instead of $q_{m+1}$. By
assumption, $\Phi_{m;0}=\Phi^{(m)}$ converges normally on compacta
of $D_\epsilon^m$ if $0<\epsilon<\delta$. We now use the
recurrence relation \eqref{rec2} to obtain the desired results for
$\Phi_{m;n}$ with $n\geq 1$.

By the proof of Lemma \ref{lemprop} and since $0<q<1$, there
exists some $\epsilon>0$ (independent of $n\geq 1$) such that
$\det(1-q^nA_{m+1}^{(m)})^{-1}$ is analytic on $D_{\epsilon}^m$
for all $n\geq 1$ and such that $|\det(1-q^nA_{m+1}^{(m)})^{-1}|$
is bounded on the closure $\overline{D}_{\epsilon}^m$ of
$D_{\epsilon}^m$, with bound independent of $n\geq 1$. For such
$\epsilon$, it follows from \eqref{rec2} that $\Phi_{m;n}$
converges normally on compacta of $D_\epsilon^m$ for all $n\geq
1$. Furthermore, by {\bf (e)}, $0<q<1$, and Cramer's rule, it
implies that for $\epsilon>0$ small enough,
\[\|(1-q^nA_{m+1}^{(m)})^{-1}\|\leq C^\prime
\]
on $\overline{D}_\epsilon^{m}$ for all $n\geq 1$, with
$C^\prime>0$ also independent of $n$. By \eqref{rec2}, $0<q<1$ and
the previous lemma, we thus obtain for $\epsilon>0$ small enough,
\begin{equation}\label{eqDelta}
    \|\Phi_{m;n}\|\leq C'\sum_{l=1}^nq^{n-l}\|A_{m+1;l}^{(m)}\|\:
    \|\Phi_{m;n-l}\|\leq C \sum_{l=1}^n\left(\frac{1}{q\epsilon}\right)^l
     \|\Phi_{m;n-l}\|
\end{equation}
on $\overline{D}_\epsilon^m$ for all $n\geq 1$ with the constant
$C=C'M>0$ (independent of $n$).

Now, we have the following claim (cf. \cite{EFK} \S10.6): the
recurrence relation
\[
g_n=C\sum_{l=1}^n\left(\frac{1}{q\epsilon}\right)^lg_{n-l},\quad
(n>0)
\]
with $g_0\in\mathbb{R}$ fixed is uniquely solved by
\[
g_n=\frac{C}{C+1}\left(\frac{1+C}{q\epsilon}\right)^ng_{0}
\]
for $n\geq 1$. Being obvious for $n=1$, the claim follows using
induction for $n>1$ by
\[
\begin{split}
    g_n&=C\left(\frac{1}{q\epsilon}\right)^ng_{0}
        +C\sum_{l=1}^{n-1}\left(\frac{1}{q\epsilon}\right)^lg_{n-l}\\
       &=C\left(\frac{1}{q\epsilon}\right)^ng_{0}
        +\frac{C^2}{C+1}\sum_{l=1}^{n-1}\left(\frac{1}{q\epsilon}\right)^{l}
        \left(\frac{(1+C)}{q\epsilon}\right)^{n-l}g_{0}\\
       &=C\left(\frac{1}{q\epsilon}\right)^ng_{0}
       \left(1+C\sum_{l=0}^{n-2}(1+C)^l\right)\\
       &=C\left(\frac{1}{q\epsilon}\right)^ng_{0}
       \left(1+C\left(\frac{(1+C)^{n-1}-1}{1+C-1}\right)\right)\\
       &=\frac{C}{C+1}\left(\frac{1+C}{q\epsilon}\right)^ng_{0}.
\end{split}
\]
Combined with \eqref{eqDelta}, the lemma now follows immediately.
\end{proof}
To conclude the proof of the proposition, note that the previous
lemma shows that
\[
\Phi^{(m+1)}=\sum_{n\geq
0}\Phi_{m;n}z_{m+1}^n\in\C[[z_1,\ldots,z_{m+1}]]
\otimes\textup{End}(V)
\]
converges normally on compacta of $D_{\epsilon^\prime}^{m+1}$ if
we take $\epsilon^\prime>0$ sufficiently small. This concludes the
proof of the induction step.
\end{proof}
We interpret the $q$-dilation operators $\mathcal{T}_i$ as
automorphisms of $\mathcal{M}(\C^M)$ by
\[(\mathcal{T}_if)(z)=f(z_1,\ldots,z_{i-1},q_iz_i,z_{i+1},\ldots,z_M).
\]
\begin{thm}\label{merversion}
Suppose $A_i\in\mathcal{M}(\C^M)\otimes\textup{End}(V)$ ($1\leq
i\leq M$) satisfy the holonomy conditions
\eqref{eqGeneralHolonomic} as meromorphic $\textup{End}(V)$-valued
functions on $\C^M$. Suppose that the $A_i$ are analytic at
$0\in\C^M$ and that their power series expansions at $0\in\C^M$
satisfy the conditions {\bf (b)-(d)}.

Let $v\in V[(1^M)]$. There exists a unique
$\Phi_v\in\mathcal{M}(\C^M)\otimes V$ solving the holonomic system
\eqref{eqGeneralq-diff} of $q$-difference equations and
coinciding, in a small neighborhood of $0\in\C^M$, with the
converging $V$-valued power series solution $\Phi_v$ from
Proposition \ref{convprop}.
\end{thm}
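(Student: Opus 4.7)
The plan is to use the $q$-difference equations themselves to extend the locally convergent solution $\Phi_v$ from Proposition \ref{convprop} to a global meromorphic solution on $\C^M$. Since $0 < q_i < 1$, for any $z' \in \C^M$ there exists $\mathbf{n} = (n_1, \ldots, n_M) \in \Z^M_{\geq 0}$ with $q^\mathbf{n} z' := (q_1^{n_1} z'_1, \ldots, q_M^{n_M} z'_M)$ inside the convergence polydisc $D_\epsilon^M$ guaranteed by Proposition \ref{convprop}. The idea is to iterate the relation $\Phi_v(z) = A_i(z)\, \Phi_v(z_1, \ldots, q_i z_i, \ldots, z_M)$ backwards from the convergence region to define $\Phi_v(z')$ in terms of the known values of $\Phi_v$ near the origin.

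To implement this, I would first build a meromorphic $\textup{End}(V)$-valued cocycle $\{A_\mathbf{n}\}_{\mathbf{n} \in \Z^M_{\geq 0}}$ on $\C^M$, defined inductively by $A_0 = \textup{id}$ and
$$A_{\mathbf{n}+e_i}(z) = A_i(z)\, A_\mathbf{n}(z_1, \ldots, q_i z_i, \ldots, z_M).$$
The holonomy condition \eqref{eqGeneralHolonomic} is precisely the compatibility needed for this to be independent of the order of iteration, which in turn yields the cocycle identity
$$A_{\mathbf{n}+\mathbf{m}}(z) = A_\mathbf{n}(z)\, A_\mathbf{m}(q^\mathbf{n} z), \qquad \mathbf{n}, \mathbf{m} \in \Z^M_{\geq 0}.$$
Since each $A_\mathbf{n}$ is a finite product of $q$-shifted copies of the meromorphic functions $A_i$, it is meromorphic on $\C^M$.

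I would then define the extension $\tilde\Phi_v$ by $\tilde\Phi_v(z') := A_\mathbf{n}(z')\, \Phi_v(q^\mathbf{n} z')$ for any $\mathbf{n}$ with $q^\mathbf{n} z' \in D_\epsilon^M$, interpreting $\Phi_v$ on the right via its convergent power series. Independence from the choice of $\mathbf{n}$ follows from the cocycle identity together with the fact that $\Phi_v$ satisfies \eqref{eqGeneralq-diff} throughout $D_\epsilon^M$: if both $q^\mathbf{n} z'$ and $q^{\mathbf{n}+\mathbf{m}} z'$ lie in $D_\epsilon^M$ then $\Phi_v(q^\mathbf{n} z') = A_\mathbf{m}(q^\mathbf{n} z')\, \Phi_v(q^{\mathbf{n}+\mathbf{m}} z')$. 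In a neighborhood of $z'$ on which $q^\mathbf{n} z$ stays inside $D_\epsilon^M$, $\Phi_v(q^\mathbf{n} z)$ is analytic and $A_\mathbf{n}(z)$ is meromorphic, so $\tilde\Phi_v$ is meromorphic near $z'$; patching gives a global meromorphic extension, which agrees with $\Phi_v$ on $D_\epsilon^M$ by taking $\mathbf{n} = 0$. That $\tilde\Phi_v$ solves \eqref{eqGeneralq-diff} on all of $\C^M$ is immediate from the inductive definition of $A_\mathbf{n}$. Uniqueness is handled similarly: any other meromorphic solution $\Phi'$ agreeing with $\Phi_v$ near $0$ must satisfy $\Phi'(z) = A_\mathbf{n}(z)\, \Phi'(q^\mathbf{n} z)$ for every $\mathbf{n}$, so choosing $\mathbf{n}$ with $q^\mathbf{n} z$ near $0$ forces $\Phi'(z) = \tilde\Phi_v(z)$ as meromorphic functions.

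The main obstacle is verifying that the cocycle $A_\mathbf{n}$ is genuinely well-defined, that is, that the two natural ways to compute $A_{e_i+e_j}$ coincide as meromorphic functions rather than merely as formal series near $0$. This is exactly the content of the holonomy hypothesis \eqref{eqGeneralHolonomic}, which is stated as a meromorphic identity on $\C^M$; an induction on $|\mathbf{n}|$ extends this to arbitrary $\mathbf{n}$. Beyond this point, the argument is largely bookkeeping about $q$-shifts and a standard gluing of local meromorphic pieces, with the normal convergence from Proposition \ref{convprop} ensuring that the right-hand side $A_\mathbf{n}(z)\, \Phi_v(q^\mathbf{n} z)$ is meromorphic (not merely formal) throughout a neighborhood of $z'$.
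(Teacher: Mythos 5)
Your argument is correct and is essentially the paper's own proof: both define the meromorphic cocycle $A_\lambda$ inductively via the holonomy conditions and extend the locally convergent solution by $\Phi_v(z)=A_\lambda(z)\Phi_v(q^\lambda z)$, with uniqueness following by the same backward iteration. Your write-up is somewhat more explicit about the well-definedness of the extension and the gluing, but the route is the same.
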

\begin{proof}
Since the $A_i$ are assumed to be analytic at $0\in\C^M$, their
power series expansions at $0\in\C^M$ are converging normally on
compacta of some open polydisc $D_\epsilon^M$ ($\epsilon>0$).
Hence, condition {\bf (e)} is automatically satisfied.

Let $\Phi_v\in\C[[z]]\otimes V$ be the power series solution from
Proposition \ref{convprop} and let $\epsilon>0$ such that $\Phi_v$
converges normally on compacta of $D_\epsilon^M$. Let
$z^\prime\in\C^M$ and $U\subset\C^M$ some open locally compact
neighborhood of $z^\prime$. Since $0<q_i<1$ ($1\leq i\leq M$),
there exists a $\lambda\in\mathbb{Z}_{\geq 0}^M$ such that
$q^\lambda U\subset D_\epsilon^M$, where $q^\lambda
z=(q_1^{\lambda_1}z_1,\ldots,q_M^{\lambda_M}z_M)$. Define $\Phi_v$
as $V$-valued meromorphic function on $z\in U$ by
\begin{equation}\label{Phiextend}
\Phi_v(z)=A_\lambda(z)\Phi_v(q^\lambda z),
\end{equation}
where $A_\lambda\in\mathcal{M}(\C^M)\otimes\textup{End}(V)$ is
defined inductively by
\[A_{\lambda+\mu}(z)=A_\lambda(z)A_{\mu}(q^\lambda z),\qquad
\forall \lambda,\mu\in\mathbb{Z}_{\geq 0}^M,
\]
and $A_{\epsilon_i}=A_i$ ($1\leq i\leq M$), where the $\epsilon_i$
($1\leq i\leq M$) are the standard generators of the additive
monoid $\mathbb{Z}_{\geq 0}^M$. Of course, the definition of
$A_\lambda(z)$ makes sense by the holonomy conditions for the
$A_i$. Furthermore, \eqref{Phiextend} together with the holonomy
conditions for the $A_i$ show that the power series solution
$\Phi_v$ of \eqref{eqGeneralHolonomic} has a unique extension to a
meromorphic $V$-valued solution on $\C^M$ of
\eqref{eqGeneralHolonomic}.
\end{proof}


\end{document}